\theoremstyle{theorem}
\newtheorem{thmx}{Theorem}
\newtheorem{theorem}{Theorem}
\newtheorem{proposition}[theorem]{Proposition}
\newtheorem{conjecture}{Conjecture}
\newtheorem{hypothesis}[theorem]{Hypothesis}
\newtheorem{corollary}[theorem]{Corollary}
\newtheorem{lemma}[theorem]{Lemma}
\theoremstyle{definition}
\newtheorem{definition}[theorem]{Definition}
\theoremstyle{remark}
\newtheorem{remark}[theorem]{Remark}
\newtheorem{example}[theorem]{\textbf{Example}}
\numberwithin{equation}{section}
\numberwithin{theorem}{section}
\def\hat{\widehat}
\let\li\overline
\newcommand{\tu}[1]{\textup{#1}}
\newcommand{\Ad}{\tu{Ad}}
\newcommand{\Aut}{\tu{Aut}}
\newcommand{\A}{\mathbb A}
\newcommand{\Bbad}{B_{\textup{bad}}}
\newcommand{\Bgood}{B_{\textup{good}}}
\newcommand{\BGSO}{\tu{B}_{\tu{GSO}}}
\newcommand{\BGSpin}{\tu{B}_{\GSpin}}
\newcommand{\BSO}{\tu{B}_{\SO}}
\newcommand{\C}{\mathbb C}
\newcommand{\cpt}{\tu{cmpt}}
\newcommand{\End}{\tu{End}}
\newcommand{\Frob}{\tu{Frob}}
\newcommand{\GL}{{\tu{GL}}}
\newcommand{\GO}{\tu{GO}}
\newcommand{\GPin}{\tu{GPin}}
\newcommand{\GSO}{{\tu{GSO}}}
\newcommand{\GSPin}{\tu{GSpin}}
\newcommand{\GSpin}{\tu{GSpin}}
\newcommand{\GSp}{\tu{GSp}}
\newcommand{\Gal}{{\tu{Gal}}}
\newcommand{\Gm}{\mathbb{G}_{\tu{m}}}
\newcommand{\G}{\mathbb G}
\newcommand{\HT}{\tu{HT}}
\newcommand{\Hodge}{\tu{Hodge}}
\newcommand{\Hom}{\tu{Hom}}
\newcommand{\Id}{\tu{id}}
\newcommand{\Ind}{\tu{Ind}}
\newcommand{\Int}{{\tu{Int}}}
\newcommand{\Ker}{\tu{ker}}
\newcommand{\LG}{{}^L G}
\newcommand{\Lie}{\tu{Lie}\,}
\newcommand{\OO}{\tu{O}}
\newcommand{\Out}{\tu{Out}}
\newcommand{\PGL}{\tu{PGL}}
\newcommand{\PO}{\tu{PO}}
\newcommand{\PSO}{\tu{PSO}}
\newcommand{\Q}{\mathbb Q}
\newcommand{\Res}{\tu{Res}}
\newcommand{\R}{\mathbb R}
\newcommand{\SEbad}{S^E_{\tu{bad}}}
\newcommand{\SFbad}{S^F_{\tu{bad}}}
\newcommand{\SL}{\tu{SL}}
\newcommand{\SO}{{\tu{SO}}}
\newcommand{\SU}{{\tu{SU}}}
\newcommand{\Sbad}{S_{\tu{bad}}}
\newcommand{\Sh}{\tu{Sh}}
\newcommand{\Spin}{\tu{Spin}}
\newcommand{\Sp}{\tu{Sp}}
\newcommand{\St}{\tu{St}}
\newcommand{\Sym}{{\tu{Sym}}}
\newcommand{\TGSO}{\uT_{\GSO}}
\newcommand{\TGSpin}{\uT_{\GSpin}}
\newcommand{\TGspin}{\uT_{\GSpin}}
\newcommand{\TSO}{\uT_{\SO}}
\newcommand{\TSpin}{\uT_{\Spin}}
\newcommand{\Tr}{\tu{Tr}\,}
\newcommand{\WGSO}{\tu{W}_{\GSO}}
\newcommand{\ZGSpin}{Z(\GSpin_{2n})}
\newcommand{\Zhat}{\widehat{\mathbb{Z}}}
\newcommand{\Zspin}{Z(\Spin_{2n})}
\newcommand{\Z}{\mathbb Z}
\newcommand{\ad}{\tu{ad}}
\newcommand{\bad}{\tu{bad}}
\newcommand{\cE}{\mathcal E}
\newcommand{\cH}{\mathcal H}
\newcommand{\cL}{\mathcal L}
\newcommand{\cN}{\mathcal N}
\newcommand{\cO}{\mathcal O}
\newcommand{\cS}{\mathcal S}
\newcommand{\cV}{\mathcal V}
\newcommand{\centSO}{(\tu{cent$^\circ$})}
\newcommand{\cmpt}{\tu{cpt}} 
\newcommand{\coh}{\tu{shim}}  
\newcommand{\cusp}{\tu{cusp}}
\newcommand{\der}{\tu{der}}
\newcommand{\diag}{{\tu{diag}}}
\newcommand{\eps}{\varepsilon}
\newcommand{\ep}{\tu{ep}}
\newcommand{\fke}{\mathfrak e}
\newcommand{\fkp}{\mathfrak{p}}
\newcommand{\grootvierkant}[4]{{\lhk\begin{matrix} #1 & #2 \cr #3 & #4 \end{matrix}\rhk}}
\newcommand{\hra}{\hookrightarrow}
\newcommand{\iS}{\mathfrak S}
\newcommand{\ig}{\mathfrak g}
\newcommand{\inv}{^{-1}}
\newcommand{\isomto}{\overset \sim \to}
\newcommand{\isom}{\stackrel{\sim}{\to}}
\newcommand{\istd}{i_{\std}}
\newcommand{\lbr}{\left\lbrace}
\newcommand{\lhk}{\left(}
\newcommand{\lql}{\overline{\Q}_\ell}
\newcommand{\lqp}{\overline{\Q}_p}
\newcommand{\ol}{\overline}
\newcommand{\one}{\textbf{\tu{1}}}
\newcommand{\osim}{\overset {\circ} \sim}
\newcommand{\pp}{{\mathfrak p}}
\newcommand{\pri}{\tu{pri}}
\newcommand{\pr}{\tu{pr}}
\newcommand{\p}{\mathfrak p}
\newcommand{\ql}{\Q_\ell}
\newcommand{\qp}{{\mathbb{Q}_p}}
\newcommand{\qq}{{\mathfrak q}}
\newcommand{\qst}{{\qq_\tu{St}}}
\newcommand{\quand}{\quad \tu{and} \quad}
\newcommand{\ra}{\rightarrow}
\newcommand{\rbr}{\right\rbrace}
\newcommand{\reg}{\tu{reg}}
\newcommand{\rhk}{\right)}
\newcommand{\simil}{\tu{sim}}
\newcommand{\spin}{\tu{spin}} 
\newcommand{\ssimple}{ {\tu{ss}} }
\newcommand{\std}{\tu{std}}
\newcommand{\surjects}{\twoheadrightarrow}
\newcommand{\tra}{{}^\tt}
\newcommand{\tspin}{\widetilde{\spin}}
\newcommand{\uH}{\tu{H}}
\newcommand{\uM}{\tu{M}}
\newcommand{\uO}{\tu{O}}
\newcommand{\uT}{\tu{T}}
\newcommand{\uc}{\tu{c}}
\newcommand{\vierkant}[4]{{\lhk \begin{smallmatrix} #1 & #2 \cr #3 & #4 \end{smallmatrix} \rhk } }
\newcommand{\vol}{\tu{vol}}
\newcommand{\wh}[1]{\widehat{#1}}
\newcommand{\wotimes}{\widehat \otimes}
\newcommand{\wo}{\wotimes}
\newcommand{\wt}{\widetilde}
\renewcommand{\tt}{\tu{t}}
\renewcommand{\SS}{{\mathbb S}}
\newcommand{\PSp}{\textup{PSp}}
\title{Galois representations for even general special orthogonal groups}
\author{Arno Kret \and Sug Woo Shin}
\begin{document}
\date{\today}

\begin{abstract}
We prove the existence of $\GSpin_{2n}$-valued Galois representations corresponding to cohomological cuspidal automorphic representations of certain quasi-split forms of $\GSO_{2n}$ under the local hypotheses that there is a Steinberg component and that the archimedean parameters are regular for the standard representation. This is based on the cohomology of Shimura varieties of abelian type, of type $D^{\mathbb{H}}$, arising from forms of $\GSO_{2n}$. As an application, under similar hypotheses, we compute automorphic multiplicities, prove meromorphic continuation of (half) spin $L$-functions, and improve on the construction of $\SO_{2n}$-valued Galois representations by removing the outer automorphism ambiguity.
\end{abstract}

\maketitle
\tableofcontents

\newpage

\section*{Introduction}

Inspired by conjectures of Langlands and Clozel's work~\cite{ClozelAnnArbor} for the group $G = \GL_n$, Buzzard--Gee~\cite[Conj.~5.16]{BuzzardGee} formulate the following version of the Langlands correspondence (in one direction) for an arbitrary connected reductive group $G$ over a number field $F$. Let $\A_F$ denote the ring of ad\`eles over $F$. Write $\hat G$ (resp.~$^L G$) for the Langlands dual group (resp.~$L$-group) of $G$ over $\lql$. When $g\in {}^L G(\lql)$, let $g_{\tu{ss}}$ denote its semisimple part.

\begin{conjecture}\label{conj:BuzzardGee}
Let $\ell$ be a prime number and fix an isomorphism $\iota \colon \C \isomto \lql$. Let $\pi$ be a cuspidal $L$-algebraic automorphic representation of $G(\A_F)$. Then there exists a Galois representation
$$
\rho_\pi = \rho_{\pi, \iota} \colon \Gal(\li F/F) \to {}^L G(\lql),
$$
such that for all but finitely many primes $\qq$ of $F$ (excluding $\qq|\ell$ and those such that $\pi_{\qq}$ are ramified), the $\hat G$-conjugacy class of $\rho_\pi(\Frob_{\qq})_{\tu{ss}} \in {}^L G(\lql)$ is the Satake parameter of $\pi_{\qq}$ via $\iota$.
\end{conjecture}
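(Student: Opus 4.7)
The Buzzard--Gee conjecture is open in general, so any realistic plan must restrict to a tractable subcase; guided by the abstract, the target is $G$ a quasi-split form of $\GSO_{2n}$ with $\pi_\infty$ cohomological of regular standard-weight and with a Steinberg component at some finite place. The standard recipe, going back to Kottwitz, Morel, Harris--Taylor, Shin, and Kret--Shin in the classical-group context, is to realize $\rho_\pi$ inside the $\ell$-adic intersection cohomology of a suitable Shimura variety and to identify Frobenius eigenvalues with Satake parameters via the Langlands--Kottwitz method.

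Concretely, the plan would be: (i) fix an inner form $G'$ of $\GSO_{2n}$ that is compact at all but one archimedean place and that carries a Shimura datum of type $D^{\mathbb{H}}$; (ii) transfer $\pi$ to an automorphic representation $\pi'$ of $G'(\A_F)$, which away from the places where $G$ and $G'$ differ is the identity, and at the remaining finite places is made possible by a Jacquet--Langlands-type correspondence with the Steinberg hypothesis ensuring non-vanishing of the transferred local component; (iii) isolate the $\pi'$-isotypic Hecke summand of the intersection cohomology of $\Sh_{G'}$, where cohomological regularity guarantees that only the tempered part of the cohomology survives in the relevant degree; (iv) compute this summand Galois-equivariantly via Kottwitz's conjecture---which, since the Shimura variety is of abelian (not Hodge or PEL) type, cannot be obtained by direct point-counting on a moduli space and must instead be run on a Hodge-type cover using Kisin--Pappas integral models and then descended; (v) assemble the standard and (half-)spin pieces produced in (iv) into a single $\GSpin_{2n}$-valued Galois representation and verify Satake matching at the unramified primes.

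The obstacles I anticipate are three. First, the abelian-type nature of $D^{\mathbb{H}}$ Shimura varieties forces the Langlands--Kottwitz computation to pass through a Hodge-type cover and a descent argument rather than through a direct moduli-theoretic point count, which is technically heavy. Second, the cohomology must be pseudostabilized and matched against Arthur's endoscopic classification for $\SO_{2n}$ (suitably promoted to $\GSO_{2n}$); the Steinberg hypothesis is exactly what eliminates endoscopic and CAP contributions and forces the global Arthur parameter to be simple generic, so that a single $\GSpin_{2n}$-valued $\rho_\pi$ can be extracted. Third, and most delicate, one must resolve the $\Out(\SO_{2n})$ ambiguity inherent to the $\SO_{2n}$ constructions, by showing that the cohomological realization canonically pins the Galois representation down as a $\GSpin_{2n}$-valued object rather than one defined only up to outer conjugation; this removal of the outer-automorphism ambiguity is explicitly highlighted as a main output in the abstract, and I would expect it to be the principal technical fulcrum of the paper.
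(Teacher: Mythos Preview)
The statement you are asked to address is a \emph{conjecture}, and the paper does not prove it in general; it proves the special case $G=\GSO_{2n}^{E/F}$ under the hypotheses (St), (L-coh), and (std-reg) (Theorem~A). Your proposal correctly restricts to this subcase, so the relevant comparison is with the paper's proof of Theorem~A.

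Your outline captures the broad shape---inner form, Shimura cohomology, Langlands--Kottwitz, stabilization---but misidentifies where the pieces of $\rho_\pi$ come from and understates the central difficulty. Two concrete gaps:

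\textbf{(a) Two Shimura data, not one; the standard piece is not cohomological.} The paper uses \emph{two} non-isomorphic Shimura data $(\Res_{F/\Q}G,X^+)$ and $(\Res_{F/\Q}G,X^-)$, one for each half-spin representation $\spin^\pm$; the cohomology of $\Sh^\eps$ realizes only $\spin^\eps\circ\rho_\pi$. The standard piece $\std\circ\rho_\pi=\rho_{\pi^\flat}$ does \emph{not} come from Shimura cohomology at all: it is built by restricting $\pi$ to $\SO_{2n}^{E/F}$, applying Arthur's transfer to $\GL_{2n}$, and invoking the known $\GL_{2n}$ Galois constructions. Your step (iv)--(v) conflates these sources.

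\textbf{(b) The assembly step is the crux, and your plan for it is missing.} Given $\rho_{\pi^\flat}$ and $\rho_\pi^{\Sh,\pm}$, one cannot formally splice them into a $\GSpin_{2n}$-valued $\rho_\pi$: $\GSpin_{2n}$ is not acceptable in Larsen's sense, so element-by-element conjugacy of Frobenii does not imply global conjugacy. The paper's mechanism is the diagram
\[
\xymatrix{
\Gamma \ar@/^1pc/[rr]^{\rho_\pi^{\Sh}} \ar@/_1pc/[dr]_{\rho_{\pi^\flat}} & & \GL_{2^n} \ar[d] \\
& \SO_{2n} \ar[r]_{\overline{\spin}} & \PGL_{2^n}
}
\]
together with the Saxl--Seitz classification of reductive subgroups of $\SO_{2n}$ containing a regular unipotent. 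The Steinberg hypothesis is used here, not merely to kill endoscopy: it forces $\rho_{\pi^\flat}(\Gamma)$ to contain a regular unipotent, which makes the Zariski closure of the image fall into a short explicit list, and this in turn forces the outer diagram to commute on the nose so that $\rho_\pi$ exists. After that, a further delicate step (Proposition~\ref{prop:separation}) separates $\spin^+\rho_\pi$ from $\spin^-\rho_\pi$ by a Chebotarev argument tailored to each possible image. Your proposal treats (v) as routine and locates the outer-automorphism issue as the main hurdle, whereas in the paper that removal (Theorem~B for $\SO_{2n}$) is a \emph{consequence} of Theorem~A, not its engine.
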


The conjecture of Buzzard-Gee is more precise (and does not assume cuspidality). They describe the image of each complex conjugation element and $\ell$-adic Hodge-theoretic properties of $\rho_\pi$. Moreover they predict \cite[Conj.~5.17]{BuzzardGee} that the compatibility holds at every $\qq$ coprime to $\ell$ such that $\pi_\qq$ is unramified. In fact $\rho_\pi(\Frob_{\qq})$, instead of its semisimple part, appears in their conjecture. While $\rho_\pi(\Frob_{\qq})$ is expected to be always semisimple, this seems to be a problem of different nature and out of reach. Thus we state the conjecture with $\rho_\pi(\Frob_{\qq})_{\tu{ss}}$.

For most recent results on Conjecture~\ref{conj:BuzzardGee} for $\GL_n$ (in the regular case), we refer to \cite{ScholzeTorsion, HLTT} and the references therein. Arthur's endoscopic classification \cite{ArthurBook} (see \cite{Mok,KMSW} for unitary groups)\footnote{The endoscopic classification is conditional in the following sense. At this time, the postponed articles [A25], [A26] and [A27] in the bibliography of \cite{ArthurBook} have not appeared. The proof of the weighted fundamental lemma for non-split groups has not become available yet either.} provides a crucial input for constructing Galois representations as in the conjecture for symplectic, special orthogonal, and unitary groups by reducing the question to the case of general linear groups. When the group is $\SO_{2n}$, however, such an approach proves only a weaker local-global compatibility up to outer automorphisms (see (SO-i) in Theorem \ref{thm:ExistGaloisSO} below), falling short of proving Conjecture \ref{conj:BuzzardGee} (even under local hypotheses); we will return to this point as an application of our main theorem. 

Our goal is to prove Conjecture~\ref{conj:BuzzardGee} for a quasi-split form $G^*$ of $\GSO_{2n}$ over a totally real field under certain local hypotheses, as a sequel to our work \cite{GSp}  where we proved the conjecture for $\GSp_{2n}$ under similar local hypotheses. The group $\GSO_{2n}$ is closely related to the classical group $\SO_{2n}$, just like $\GSp_{2n}$ is to $\Sp_{2n}$, but the similitude groups may well be regarded as non-classical groups. An important reason is that the Langlands dual groups of $\GSO_{2n}$ and $\GSp_{2n}$, namely the general spin groups $\GSpin_{2n}$ and $\GSpin_{2n+1}$, do not admit standard \emph{embeddings} (into general linear groups of proportional rank). This makes the problem both nontrivial and interesting. Furthermore, since the groups $\GSp_{2n}$ and $\GSO_{2n}$ appear as endoscopic groups of each other for varying $n$ \cite[Sect.~2.1]{BinXuLPackets}, results for the one group likely have applications for the other, especially if one tries to prove cases of Conjecture \ref{conj:BuzzardGee} without local hypotheses.

To be more precise, we set up some notation. Let $F$ be a totally real number field, and $n \in \Z_{\geq 3}$. Let $\GSO_{2n}$ denote the connected split reductive group over $F$ which is the identity component of the orthogonal similitude group $\GO_{2n}$. (See \S\ref{sect:RootDatum} below for an explicit definition.)

Our setup depends on the parity of $n$:
\begin{itemize}[leftmargin=+1in]
  \item[($n$ \textbf{even})] $E=F$, and $G^*=\GSO_{2n}$ (the split form over $F$),
  \item[($n$ \textbf{odd})] $E$ is a totally imaginary quadratic extension of $F$, and $G^*$ is a non-split quasi-split form of $\GSO_{2n}$ relative to $E/F$ (explicitly given as \eqref{eq:QuasiSplitGSO}).
\end{itemize}
We write $\GSO^{E/F}_{2n}$ for the $F$-group $G^*$ in either case. The setup is naturally designed so that there are Shimura varieties for (an inner twist of) $\Res_{F/\Q} G^*$. In particular $G^*(F_y)$ has discrete series at every infinite place $y$ of $F$. (Indeed $G^*(F_y)$ has no discrete series if we swap the parity of $n$ above.) There is a short exact sequence of $F$-groups
$$1\ra \SO^{E/F}_{2n} \longrightarrow \GSO^{E/F}_{2n} \stackrel{\simil}{\longrightarrow} \G_m \ra 1,$$
where $\SO^{E/F}_{2n}$ is a quasi-split form of $\SO_{2n}$, defined similarly as $\GSO^{E/F}_{2n}$, and $\simil$ denotes the similitude character.
It is convenient to use the version of $L$-group relative to $E/F$, with coefficients in either $\C$ or $\lql$:
$$
^L G^* = \hat G^* \rtimes \Gal(E/F) = \GSpin_{2n} \rtimes \Gal(E/F),
$$
where the nontrivial element of $\Gal(E/F)$ acts non-trivially on $\GSpin_{2n}$. (This identifies $^L G^*$ with $\GPin_{2n}$ if $[E:F]=2$.)
An important feature of the (general) spin groups $\GSpin_m$ ($m \in \Z_{\geq 2}$) is their spin representation $\spin_m \colon \GSpin_m \rightarrow \GL_{2^{\lfloor m/2 \rfloor}}$. In case $m$ is even, this representation is reducible and splits up into a direct sum $\spin_m = \spin_m^+ \oplus \spin_m^-$ of two irreducible representations of dimension $2^{\lfloor m/2 \rfloor -1}$. These representations $\spin_m^\pm$ are called the \emph{half-spin} representations. Two other important representations are the \emph{standard representation} and the \emph{spinor norm} (see Lemma~\ref{lem:SurjectionOntoGSO} for $\pr^\circ$)
$$
\std \colon \GSpin_m \stackrel{\pr^\circ}{\to} \SO_m \to \GL_m, \quand \cN \colon \GSpin_m \to \GL_1.
$$
If $m$ is odd, $\spin$ is faithful. In the even case $m = 2n$, none of the representations $\spin^+, \spin^-, \std$, or $\cN$ is faithful, but $\spin$ is faithful.

Let $\pi$ be a cuspidal automorphic representation of $\GSO^{E/F}_{2n}(\A_F)$. Consider the following hypotheses on $\pi$, where $|\simil|$ denotes the composite $\GSO^{E/F}_{2n}(F\otimes_\Q \R)\stackrel{\simil}{\ra} (F\otimes \R)^\times \stackrel{|\cdot|}{\ra} \R^\times_{>0}$:
\begin{itemize}[leftmargin=+1in]
\item[\textbf{(St)}] There is a finite $F$-place $\qst$ such that $\pi_{\qst}$ is the Steinberg representation of $G^*(F_{\qst})$ twisted by a character.
\item[\textbf{(L-coh)}] $\pi_\infty|\simil|^{-n(n-1)/4}$ is $\xi$-cohomological for an irreducible algebraic representation $\xi=\otimes_{y:F\hra \C} \xi_y$ of the group $(\Res_{F/\Q} G^*) \otimes_\Q \C\simeq \prod_{y:F\hra \C} (G^*\otimes_{F,y}\C)$.
\item[\textbf{(std-reg)}] The infinitesimal character of $\xi_y$ for every $y:F\hra \C$, which is a regular Weyl group orbit in the Lie algebra of $\hat{G}^*=\GSpin_{2n}(\C)$, remains regular under the standard representation $\GSpin_{2n}\ra \GL_{2n}$.
\end{itemize}
In (L-coh), `$\xi$-cohomological' means that the tensor product with $\xi$ has nonvanishing relative Lie algebra cohomology in some degree (\S\ref{sect:Notation} below). Condition (L-coh) implies that $\pi$ is L-algebraic. The other two conditions should be superfluous as they do not appear in Conjecture \ref{conj:BuzzardGee}. Condition (St) plays an essential role in our argument, and would take significant new ideas and effort to get rid of. We assume (std-reg) for the reason that certain results for regular-algebraic self-dual cuspidal automorphic representations of $\GL_N$, $N>2$, are missing in the non-regular case. However we need less than (std-reg) for our argument to work. The necessary input for us to proceed without (std-reg) is formulated as Hypothesis~\ref{hypo:non-std-reg}, which we expect to be quite nontrivial but within reach nonetheless. Thus we assume either (std-reg) or Hypothesis \ref{hypo:non-std-reg} in the main theorem, hoping that (std-reg) will be removed as soon as the hypothesis is verified.

Let $S_{\mathrm{bad}} = S_{\bad}(\pi)$ denote the finite set of rational primes $p$ such that either $p=2$, $p$ ramifies in $F$, or $\pi_\qq$ ramifies at a place $\qq$ of $F$ above $p$. The following theorem assigns an $\ell$-adic Galois representation to $\pi$  for each prime number $\ell$ and each isomorphism $\iota \colon \C \isomto \lql$.

\begin{thmx}\label{thm:A}
Assume that $\pi$ satisfies conditions (St) and (L-coh). If (std-reg) does not hold for $\pi$, further assume Hypothesis \ref{hypo:non-std-reg} (for an $\SO_{2n}(\A_F)$-subrepresentation of $\pi$). Then there exists, up to $\hat G$-conjugation, a unique semisimple Galois representation attached to $\pi$ and $\iota$
$$
\rho_{\pi} = \rho_{\pi,\iota} \colon \Gal(\li F/F) \to \LG^*
$$
such that the following hold.
\begin{enumerate}[label=(A\arabic*)]
\item\label{thm:Ai} For every prime $\qq$ of $F$ not above $S_{\mathrm{bad}}\cup \{\ell\}$, $\rho_{\pi}(\Frob_\qq)_{\tu{ss}}$ is $\hat G^*$-conjugate to $\iota\phi_{\pi_\qq}(\Frob_\qq)$, where $\phi_{\pi_\qq}$ is the unramified Langlands parameter of $\pi_{\qq}$.
\item\label{thm:Aii} The composition 
$$
\Gal(\li F/F) \overset {\rho_{\pi}} \to \LG^* \overset {\pr^\circ} \to \SO_{2n}(\lql)  \rtimes \Gal(E/F)
$$
corresponds to a cuspidal automorphic $\SO_{2n}^{E/F}(\A_F)$-{sub\-re\-pre\-sen\-ta\-tion} $\pi^\flat$ contained in $\pi$ in that $\pr^\circ(\rho_{\pi}(\Frob_{\qq})_{\tu{ss}})$ is $\SO_{2n}(\lql)$-conjugate to the Satake parameter of $\pi^\flat_\qq$ via $\iota$ at every $\qq$ not above $S_{\mathrm{bad}}\cup \{\ell\}$.
Further, the composition
$$
\Gal(\li F/F) \overset {\rho_{\pi}} \to \LG^*  \overset {\cN} \to \GL_1(\lql) 
$$
corresponds to the central character of $\pi$ via class field theory and $\iota$.

\item\label{thm:Aiii} For every $\qq|\ell$, the representation $\rho_{\pi, \qq}$ is de Rham (in the sense that $r \circ \rho_{\pi,\qq}$ is de Rham for all representations $r$ of $\widehat G^*$). Moreover
\begin{itemize}
\item[(a)] The Hodge--Tate cocharacter of $\rho_{\pi, \qq}$ is explicitly determined by $\xi$. More precisely, for all $y \colon F \to \C$ such that $\iota y$ induces $\qq$, we have
$$
\mu_{\tu{HT}}(\rho_{\pi, \qq}, \iota y) = \iota \mu_{\tu{Hodge}}(\xi_y) - \frac{n(n-1)}{4} \simil. 
$$
(We still write $\simil$ to mean the cocharacter of $\GSpin_{2n}$ dual to
$\simil:G^*\ra \G_m$. See \S\ref{sect:Notation} below for the Hodge--Tate and Hodge cocharacters $\mu_{\tu{HT}}$ and $\mu_{\tu{Hodge}}$.\footnote{More precisely, the Hodge cocharacter is a half-integral cocharacter, but subtracting $n(n-1)/4$ times $\tu{sim}$ makes it integral. The two cocharacters in (A3)(a) are well-defined up to conjugacy (i.e., they are conjugacy classes of cocharacters), but the formula makes sense because $\simil$ is a central cocharacter.})
\item[(b)] If $\pi_\qq$ has nonzero invariants under a hyperspecial (resp.~Iwahori) subgroup of $G^*(F_\qq)$ then either $\rho_{\pi, \qq}$ or a quadratic character twist is crystalline (resp.~semistable).
\item[(c)] If $\ell \notin S_{\mathrm{bad}}$ then $\rho_{\pi, \qq}$ is crystalline.
\end{itemize}
\item\label{thm:Aiv} For every $v|\infty$, $\rho_{\pi,v}$ is odd (see \S\ref{sect:Notation} and Remark~\ref{rem:complex-conj-GSO} below).
\item\label{thm:Av}
The Zariski closure of the image of $\rho_\pi(\Gal(\ol F/E))$ in $\PSO_{2n}$ maps onto one of the following four subgroups of
$\PSO_{2n}$:
\begin{itemize}
\item[(a)] $\PSO_{2n}$,
\item[(b)] $\PSO_{2n-1}$ (as a reducible subgroup),
\item[(c)] the image of a principal $\SL_2$ in $\PSO_{2n}$, or
\item[(d)] (only when $n=4$) $G_2$ (embedded in $\SO_7 \subset \PSO_8$) or $\SO_7$ (as an irreducible subgroup via the projective spin representation).
\end{itemize}
\item\label{thm:Avi}
If $\rho' \colon \Gal(\li F/F) \to \LG^*$ is another semisimple Galois representation such that, for almost all finite $F$-places $\qq$ where $\rho'$ and $\rho_\pi$ are unramified, the semisimple parts $\rho'(\Frob_\qq)_{\textup{ss}}$ and $\rho_{\pi}(\Frob_\qq)_{\textup{ss}}$ are conjugate, then $\rho$ and $\rho'$ are conjugate.
\end{enumerate}
\end{thmx}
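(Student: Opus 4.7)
The plan is to build $\rho_\pi$ in two stages: first produce an $\SO^{E/F}_{2n}$-valued Galois representation $\rho^\flat$ via Arthur's classification applied to an $\SO^{E/F}_{2n}(\A_F)$-subrepresentation $\pi^\flat$ of $\pi$, and then lift it to $\LG^{*}=\GSpin_{2n}\rtimes\Gal(E/F)$ using the $\ell$-adic cohomology of the Shimura varieties of type $D^{\mathbb{H}}$ for an inner form of $\Res_{F/\Q}G^*$ that is compact at all but one archimedean place. For the first stage, I would invoke Arthur's endoscopic classification for the quasi-split $\SO^{E/F}_{2n}$: the Steinberg hypothesis (St) at $\qst$ forces the Arthur parameter of $\pi^\flat$ to be a simple generic one, so its transfer to $\GL_{2n}(\A_F)$ is a self-dual cuspidal automorphic representation $\Pi$ that is Steinberg at $\qst$ and, by (L-coh) and (std-reg), regular L-algebraic at infinity. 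Standard results then attach a Galois representation $r_\Pi\colon \Gal(\ol F/F)\to\GL_{2n}(\lql)$; the self-duality of $\Pi$ together with the parity inherited from the $\SO_{2n}$ origin factors $r_\Pi$ through $\SO_{2n}(\lql)\rtimes\Gal(E/F)$, yielding $\rho^\flat$. When (std-reg) fails, Hypothesis~\ref{hypo:non-std-reg} serves as the substitute input for this step.

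The second and main stage is the lift of $\rho^\flat$ to a $\LG^{*}$-valued $\rho_\pi$. The plan is to exploit the defining feature of type $D^{\mathbb{H}}$ Shimura data: the representation of $\widehat{G^{*}}=\GSpin_{2n}$ realized on the $\ell$-adic intersection cohomology of the Shimura variety is (a twist of) a half-spin representation $\spin^\pm_{2n}$. Thus the $\pi_f$-isotypic component of the cohomology, after localizing by the Hecke eigensystem of $\pi$, carries a $\GSpin_{2n}(\lql)\rtimes\Gal(E/F)$-valued representation whose composition with $\spin^\pm$ matches the Satake parameters of $\pi$ via $\iota$. Combining this with $\rho^\flat$ on the $\std$ side and with the $\ell$-adic avatar of the central character of $\pi$ on the $\cN$ side pins down $\rho_\pi$ up to $\hat G^{*}$-conjugacy, since $\std$, $\spin^\pm$, and $\cN$ jointly separate $\LG^{*}$-conjugacy classes of semisimple elements. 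Multiplicity-one and stabilization arguments, made possible by isolating the $\pi$-contribution through its Steinberg component at $\qst$, supply the necessary control.

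Once $\rho_\pi$ has been constructed, the listed properties follow as follows. (A1) is Langlands--Kottwitz--Fujiwara compatibility at primes of good reduction, combined with the matching above. (A2) is built into the construction, using class field theory for the $\cN$ part. For (A3), the Hodge--Tate formula follows from the Hodge decomposition on Shimura cohomology together with Faltings' $p$-adic comparison, with the shift $-\tfrac{n(n-1)}{4}\simil$ accounting for the discrepancy between the C-algebraic and L-algebraic normalizations (equivalently, for the half-integrality of the Hodge cocharacter); the unramified/Iwahori statements reduce via $\std\oplus\spin\oplus\cN$ to known results for $r_\Pi$ and for the spinor-norm Hecke character. (A4) reflects the action of complex conjugation through a polarization on the Shimura variety, as recorded in Remark~\ref{rem:complex-conj-GSO}. (A5) is a group-theoretic consequence of (St): the projection of $\rho_\pi(\Frob_{\qst})$ to $\PSO_{2n}$ contains a regular unipotent element, and the classification of algebraic subgroups of $\PSO_{2n}$ containing a regular unipotent cuts the possibilities down to the four listed cases. (A6) is Chebotarev applied to $\std\oplus\spin^+\oplus\spin^-\oplus\cN$, together with the joint-separation property used above.

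The hard part is the second stage. Concretely: arranging the inner form so that the associated Shimura datum is genuinely of type $D^{\mathbb{H}}$ and realizes a half-spin representation on cohomology; carrying out the endoscopic stabilization of that cohomology for $\GSO^{E/F}_{2n}$ despite the absence of a faithful standard embedding of $\GSpin_{2n}$ into a general linear group; and isolating the $\pi$-contribution with the correct multiplicity so that the resulting $\GSpin$-valued Galois representation is well-defined up to $\hat G^{*}$-conjugacy rather than merely up to a larger ambiguity. The remaining properties are then largely formal consequences of this construction.
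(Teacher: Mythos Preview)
Your outline has the right skeleton but misidentifies where the real work lies and contains two genuine gaps.

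\textbf{The Shimura cohomology does not produce a $\GSpin$-valued representation.} You write that the $\pi_f$-isotypic cohomology ``carries a $\GSpin_{2n}(\lql)\rtimes\Gal(E/F)$-valued representation.'' It does not: it carries two $\GL_{2^{n-1}}(\lql)$-valued representations $\rho^{\Sh,\pm}_\pi$, one for each Shimura datum $(G,X^\pm)$. The entire problem is to manufacture a $\GSpin_{2n}$-valued $\rho_\pi$ whose compositions with $\spin^\pm$, $\std$, and $\cN$ recover $\rho^{\Sh,\pm}_\pi$, $\rho_{\pi^\flat}$, and $\omega_\pi$ respectively. Knowing that the Frobenius conjugacy classes match in each of these representations does \emph{not} suffice: $\GSpin_{2n}$ is not acceptable in the sense of Larsen, so element-by-element conjugacy in a faithful family of representations does not imply global conjugacy of homomorphisms. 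The paper's mechanism is the commutative diagram \eqref{eq:BigDiagram}: one shows that the projectivization of $\rho^{\Sh,+}_\pi\oplus\rho^{\Sh,-}_\pi$ factors through $\overline{\spin}\circ\rho_{\pi^\flat}$, and then that the image lands in $\spin(\GSpin_{2n})$. The key input making this work is the Saxl--Seitz classification of reductive subgroups of $\SO_{2n}$ containing a regular unipotent (Proposition~\ref{prop:containingRegularUnipotent}), which forces the Zariski closure of the image of $\rho_{\pi^\flat}$ to be connected modulo center. Without this, the diagram argument fails. The same ingredient is essential for (A6): your proposed proof via Chebotarev plus joint separation is exactly what non-acceptability blocks; the paper instead proves Proposition~\ref{prop:local-conjugacy-global-conjugacy}, again via Saxl--Seitz.

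\textbf{Two further steps are missing.} First, the construction above only yields $\rho_\pi$ on $\Gamma_E$; when $n$ is odd one must extend to $\Gamma_F$. This is not automatic and is handled in \S12 via an abstract extension lemma (Appendix~\ref{app:extending}) together with a verification that ${}^{c_y}\rho_\pi\simeq\theta\circ\rho_\pi$ and a determination of the correct one among the two possible extensions using the image of complex conjugation. Second, after constructing $\rho_\pi$ one only knows $\spin(\rho_\pi)\simeq\rho^{\Sh,+}_\pi\oplus\rho^{\Sh,-}_\pi$; one must still separate the two halves, i.e.\ show $\spin^\varepsilon(\rho_\pi)\simeq\rho^{\Sh,\varepsilon}_\pi$ for the correct $\varepsilon$ (Proposition~\ref{prop:separation}). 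This requires a case analysis over the possible images from (A5) and is not formal. Minor points: the transfer $\pi^\sharp$ of $\pi^\flat$ to $\GL_{2n}$ need not be cuspidal---it can be an isobaric sum $\GL_{2n-1}\boxplus\GL_1$ (Proposition~\ref{prop:SO-to-GL}); and (A4) comes from the known image of complex conjugation for $\rho_{\pi^\flat}$ (ultimately Caraiani--Le~Hung/Taylor/Ta\"ibi), not from a polarization on the Shimura variety.
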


\begin{remark}
The proof of the above theorem relies crucially on the main results of Arthur's book \cite{ArthurBook}, which are currently conditional as explained in footnote 2. In particular Theorem A, and in turn Theorems B, C and D are conditional on the same results mentioned in this footnote. 
\end{remark}

As explained below Conjecture \ref{conj:BuzzardGee}, the existence of Galois representations
\begin{equation}\label{eq:IntroRhoPiFlat}
\rho_{\pi^\flat} \colon \Gal(\li F/F) \to \SO_{2n}(\lql) \rtimes \Gal(E/F)
\end{equation}
in a weaker form is known for cuspidal automorphic representations $\pi^\flat$ of $\SO_{2n}^{E/F}(\A_F)$ satisfying (coh$^\circ$), (St$^\circ$), and (std-reg$^\circ$) (see Section~\ref{sect:GSO-valued-Galois} for these conditions), and possibly a larger class of representations though we have not worked it out. The main ingredients are Arthur's transfer \cite[Thm.~1.5.2]{ArthurBook} from $\SO_{2n}^{E/F}(\A_F)$ to $\GL_{2n}(\A_F)$, and collective results on the Langlands correspondence for $\GL_{2n}(\A_F)$ in the self-dual case. Statements (SO-i)--(SO-v) of Theorem~\ref{thm:ExistGaloisSO} below summarize what we know about $\rho_{\pi^\flat}$. A main drawback of Theorem~\ref{thm:ExistGaloisSO} is that the conjugacy class of each $\rho_{\pi^\flat}(\Frob_{\qq})_{\tu{ss}}$ is determined only up to $\tu{O}_{2n}$-conjugacy, rather than $\SO_{2n}$-conjugacy.

Using Theorem \ref{thm:A} we can upgrade Theorem~\ref{thm:ExistGaloisSO} and remove this ``outer'' ambiguity (coming from the outer automorphism) as long as $\pi^\flat$ can be extended to a cohomological representation $\pi$ of $\GSO_{2n}^{E/F}$. If $\pi$ is $\xi$-cohomological then $\xi$ must satisfy condition (cent) of \S\ref{sect:Shimura}, so a necessary condition for such a cohomological extension to exist is the following condition (which is void for $F=\Q$):
\begin{itemize}
  \item[\textbf{\centSO}] the central character $ \{\pm 1\}=\mu_2(F_y)\ra \C^\times$ of $\pi_y$ at each infinite place $y$ of $F$ is independent of $y$.
\end{itemize}

\begin{thmx}\label{thm:B}
Let $\pi^\flat$ be a cuspidal automorphic representation of $\SO^{E/F}_{2n}(\A_F)$ satisfying \centSO, (coh$^\circ$), (St$^\circ$), and (std-reg$^\circ$). Then Conjecture \ref{conj:BuzzardGee} holds (for every $\ell$ and $\iota$). The associated Galois representation $\rho_{\pi^\flat}$ is characterized uniquely up to $\SO_{2n}(\lql)$-conjugation.
\end{thmx}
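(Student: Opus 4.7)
The strategy is to reduce Theorem~\ref{thm:B} to Theorem~\ref{thm:A} by lifting $\pi^\flat$ to a cuspidal automorphic representation $\pi$ of $\GSO^{E/F}_{2n}(\A_F)$. The archimedean condition \centSO\ is precisely what is needed to obtain a cohomological lift: it ensures the central characters of the $\pi^\flat_y$ agree across the infinite places $y$, which is the necessary compatibility for a global extension to $\GSO^{E/F}_{2n}$. The first step is therefore to construct such a lift $\pi$, a cuspidal automorphic representation of $\GSO^{E/F}_{2n}(\A_F)$ containing $\pi^\flat$ as an $\SO^{E/F}_{2n}(\A_F)$-subrepresentation, so that $\pi$ satisfies the three hypotheses (St), (L-coh), and (std-reg) of Theorem~\ref{thm:A}. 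The Steinberg condition at $\qst$ transfers from $\SO^{E/F}_{2n}$ to $\GSO^{E/F}_{2n}$ up to a character twist by a standard local argument, and (std-reg) is inherited from (std-reg$^\circ$) since both are conditions on the infinitesimal character shared by $\pi$ and $\pi^\flat$. Condition (L-coh) is arranged by choosing an appropriate central character at infinity, which is exactly the flexibility afforded by \centSO.

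Given such a lift, Theorem~\ref{thm:A} produces a Galois representation
$$
\rho_\pi \colon \Gal(\li F/F) \to \LG^* = \GSpin_{2n}(\lql) \rtimes \Gal(E/F),
$$
unique up to $\hat G^*$-conjugation. Define
$$
\rho_{\pi^\flat} := \pr^\circ \circ \rho_\pi \colon \Gal(\li F/F) \to \SO_{2n}(\lql) \rtimes \Gal(E/F).
$$
Property~\ref{thm:Aii} of Theorem~\ref{thm:A} says precisely that $\rho_{\pi^\flat}$ matches the Satake parameters of some $\SO^{E/F}_{2n}(\A_F)$-subrepresentation of $\pi$ at almost all finite places. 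By construction of the lift this subrepresentation can be arranged to be $\pi^\flat$, so Conjecture~\ref{conj:BuzzardGee} holds for $\pi^\flat$ with this $\rho_{\pi^\flat}$.

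Uniqueness up to $\SO_{2n}(\lql)$-conjugation has two aspects. First, any two lifts $\pi$ and $\pi'$ of $\pi^\flat$ differ by twisting by a Hecke character $\chi$ factoring through the similitude character; the corresponding Galois representations $\rho_\pi$ and $\rho_{\pi'}$ differ by a twist along the central $\G_m \subset Z(\GSpin_{2n})$ dual to $\simil$, which is killed by $\pr^\circ$. So $\rho_{\pi^\flat}$ is independent of the chosen lift. Second, since $\rho_\pi$ itself is unique up to $\hat G^*$-conjugation by property~\ref{thm:Avi} of Theorem~\ref{thm:A}, and $\pr^\circ$ sends $\hat G^*$-conjugacy classes to $\SO_{2n}(\lql)$-conjugacy classes, the resulting $\rho_{\pi^\flat}$ is well-defined up to $\SO_{2n}(\lql)$-conjugation. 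This is the key improvement over Theorem~\ref{thm:ExistGaloisSO}: passing through the $\GSpin_{2n}$-valued lift removes the $\OO_{2n}$-versus-$\SO_{2n}$ ambiguity inherent in the endoscopic construction based on the standard embedding.

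The main obstacle is the construction of the lift $\pi$ satisfying all three hypotheses of Theorem~\ref{thm:A} simultaneously. The local extensions—of a Steinberg representation of $\SO^{E/F}_{2n}$ to a character twist of Steinberg on $\GSO^{E/F}_{2n}$, and of the archimedean components—are routine, but assembling these into a global cuspidal automorphic representation while maintaining the L-cohomological shift by $|\simil|^{-n(n-1)/4}$ requires care; this is most naturally carried out using Clozel--Labesse-type extension results, or alternatively via the $L$-packet structure available for $\GSO^{E/F}_{2n}$. Once this lift is in hand, the rest of the proof is a direct invocation of Theorem~\ref{thm:A}.
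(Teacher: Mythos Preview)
Your proposal follows exactly the paper's strategy: lift $\pi^\flat$ to a cuspidal automorphic $\pi$ on $\GSO^{E/F}_{2n}(\A_F)$ satisfying the hypotheses of Theorem~\ref{thm:A}, then set $\rho_{\pi^\flat} := \pr^\circ \circ \rho_\pi$. You correctly flag the construction of the automorphic lift as the main obstacle, but you are vague about how it is carried out. The paper does not invoke a Clozel--Labesse-type extension result; instead it (i) extends $\omega_{\pi^\flat}$ to a Hecke character $\chi$ on $\A_F^\times$ with archimedean components $z\mapsto z^w$ as dictated by \centSO, (ii) constructs $\pi$ place-by-place as an irreducible constituent of $\Ind_{\GL_1(F_v)\SO^{E/F}_{2n}(F_v)}^{\GSO^{E/F}_{2n}(F_v)} \chi_v \pi^\flat_v$, and (iii) appeals to Bin Xu's multiplicity identity \cite[Lem.~5.4]{BinXuLPackets} to conclude that some twist of $\pi$ is discrete automorphic. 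The verification that $\pi$ satisfies (St) and (L-coh) is then done via Lemmas~\ref{lem:SteinbergCheck} and~\ref{lem:coh-GSO-coh-SO}. Finally, your uniqueness argument establishes that the \emph{construction} is independent of the choice of lift, but the paper's stronger assertion---that $\rho_{\pi^\flat}$ is determined by its unramified Frobenius images---rests on Proposition~\ref{prop:local-conjugacy-global-conjugacy} for $H=\SO_{2n}\rtimes\Gamma_{E/F}$, which you should cite explicitly.
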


See Theorem \ref{thm:ExistGaloisSO-Plus} below for a precise and stronger statement. The crux of the argument lies in showing that $\pi^\flat$ extends to an automorphic representation $\pi$ of $\GSO_{2n}^{E/F}(\A_F)$ satisfying conditions of Theorem~\ref{thm:A}. As Theorem~\ref{thm:A} has no outer ambiguity, this yields Theorem~\ref{thm:B}.

Theorem \ref{thm:B} offers a new perspective on the local Langlands correspondence for quasi-split forms of $\SO_{2n}$ over $p$-adic fields. By localizing the theorem at finite places, we get a candidate for the correspondence, not just up to $\OO_{2n}$-conjugacy as in \cite{ArthurBook}. More precisely, let $H$ denote a quasi-split form of $\SO_{2n}$ over a $p$-adic field $k$, assumed to be split if $n$ is even. Then we can find $E/F$ as above (depending on the parity of $n$) and a prime $\qq$ of $F$ such that $F_\qq\simeq k$ and $\SO^{E/F}_{2n,\qq} \simeq H$. If $\sigma$ is an irreducible discrete series representation of $H(k)$ then a candidate for the $L$-parameter for $\sigma$ is described by the following procedures.
\begin{enumerate}
  \item Find $\pi^\flat$ satisfying \centSO, (coh$^\circ$), (St$^\circ$), and (std-reg$^\circ$) such that $\pi^\flat_\qq\simeq \sigma$.
  \item Obtain $\rho_{\pi^\flat}$ from Theorem \ref{thm:B} (which relies on Theorem \ref{thm:A}).
  \item Take $\tu{WD}(\rho_{\pi^\flat}|_{\Gamma_{F_\qq}})$, which can be viewed as an $L$-parameter for $H(k)$.
\end{enumerate}
The globalization in (1) is possible by a standard trace formula argument proving the limit multiplicity formula. See \S\ref{sect:Notation} below for the definition of WD. The $L$-parameter resulting from the above is in the $\OO_{2n}$-orbit of the $L$-parameter in \cite{ArthurBook} by Theorem \ref{thm:ExistGaloisSO} (SO-i), but could a priori depend on various choices. It is an interesting problem to relate the global construction here to the purely local constructions by Kaletha \cite{KalRegSC,KalSC} and Fargues--Scholze \cite{FarguesScholze}. In fact all this can be mimicked for $\GSO_{2n}$ in place of $\SO_{2n}$, using Theorem \ref{thm:A} rather than Theorem \ref{thm:B}, so a similar question may be asked in the $\GSO_{2n}$-case.

As another application of Theorem \ref{thm:A}, we compute the automorphic multiplicities $m(\pi)$ for certain automorphic representations $\pi$ of $\GSO_{2n}^{E/F}(\A_F)$.

\begin{thmx}\label{thm:AutomMult}
Let $\pi$ be a cuspidal automorphic representation of $\GSO_{2n}^{E/F}(\A_F)$
satisfying (L-coh), (St) and (std-reg). Then we  have $m(\pi) = 1$.
\end{thmx}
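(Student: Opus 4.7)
The plan is to transfer the problem to $\SO_{2n}^{E/F}$: apply Theorem~\ref{thm:A} to extract an $\SO$-subrepresentation $\pi^\flat$, establish $m(\pi^\flat)=1$ via Arthur's multiplicity formula, and conclude $m(\pi)=1$ by Clifford theory, in parallel with the $\GSp$ argument of \cite{GSp}.

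By Theorem~\ref{thm:A}~(A2), there is a cuspidal automorphic subrepresentation $\pi^\flat\subset \pi|_{\SO^{E/F}_{2n}(\A_F)}$, and the hypotheses (L-coh), (St), (std-reg) descend to the $\SO$-analogues (coh$^\circ$), (St$^\circ$), (std-reg$^\circ$) for $\pi^\flat$ under $\pr^\circ$, since Steinberg restricts to Steinberg up to a character and the cohomological and standard-regularity conditions factor through the $\SO_{2n}$-quotient.

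To show $m(\pi^\flat)=1$ I would invoke \cite{ArthurBook}. Condition (St$^\circ$) forces the global Arthur parameter of $\pi^\flat$ to be a generic (tempered) $L$-parameter $\phi$: any nontrivial $\SL_2$-factor in the Arthur parameter would be incompatible with the rigidity of the Steinberg $L$-packet at $\qst$. Arthur's multiplicity formula then reads
$$
m(\pi^\flat) = |S_\phi|^{-1} \sum_{s\in S_\phi} \epsilon_\phi(s) \langle s, \pi^\flat\rangle,
$$
which a priori lies in $\{0,1\}$ as an integer average of signs. Since $\pi^\flat$ is automorphic, $m(\pi^\flat)\ge 1$, so $m(\pi^\flat)=1$. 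The outer-automorphism ambiguity in \cite{ArthurBook} for $\SO_{2n}^{E/F}$ is controlled by Theorem~\ref{thm:A}, whose $\GSO$-valued Galois representation distinguishes $\pi^\flat$ from its outer twist.

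Finally, Clifford theory for the normal inclusion $\SO_{2n}^{E/F}\subset\GSO_{2n}^{E/F}$ with abelian quotient $\G_m$ yields $m(\pi)=m(\pi^\flat)$: cuspidal representations of $\GSO_{2n}^{E/F}(\A_F)$ extending $\pi^\flat$ with a fixed central character have the same multiplicity as $\pi^\flat$ itself. The central character of $\pi$ is pinned down by $\cN\circ\rho_\pi$ via Theorem~\ref{thm:A}~(A2), so no twist ambiguity remains and $m(\pi)=m(\pi^\flat)=1$. The main obstacle is the Arthur step: one must verify that (St$^\circ$) indeed forces the global parameter to be generic so the multiplicity formula collapses, and must handle the outer-automorphism subtleties of \cite{ArthurBook}; the Clifford passage back to $\GSO_{2n}^{E/F}$ is then essentially formal given the central-character control from Theorem~\ref{thm:A}.
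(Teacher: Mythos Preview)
Your overall shape is right---restrict to $\SO_{2n}^{E/F}$, use Arthur there, then climb back up---but the climb back up is not ``essentially formal,'' and this is where your argument has a genuine gap.

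The problem is your Clifford-theory step. You assert that fixing the central character pins down $\pi$ among extensions of $\pi^\flat$, so that $m(\pi)=m(\pi^\flat)$. But the relevant twist ambiguity lives in the group $Y(\pi)$ of characters $\omega$ of $\GSO_{2n}^{E/F}(\A_F)$ trivial on $\GSO_{2n}^{E/F}(F)\,\A_F^\times\,\SO_{2n}^{E/F}(\A_F)$ with $\pi\otimes\omega\simeq\pi$; these are \emph{trivial on the center}, so knowing $\cN\circ\rho_\pi$ tells you nothing about them. The precise relation between $m(\pi)$ and the $\SO$-side multiplicity is Xu's formula \cite[Prop.~1.7, Cor.~5.10]{BinXuLPackets}:
\[
m(\pi)=m_{\tilde\psi}\,\bigl|Y(\pi)/\alpha(\cS_\phi)\bigr|,\qquad m_{\tilde\psi}=m_\psi/\#\Sigma_Y(\pi),
\]
and the substantive work is proving $Y(\pi)=\{\mathbf 1\}$. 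The paper does this by converting $\pi\simeq\pi\otimes\omega$ into $\rho_\pi\simeq\chi\cdot\rho_\pi$ via Theorem~\ref{thm:A} and Proposition~\ref{prop:local-conjugacy-global-conjugacy}, then invoking Lemma~\ref{lem:for-mult-one} (which uses the regular-unipotent image and the classification of Proposition~\ref{prop:containingRegularUnipotent}) to force $\chi=1$. This is the step you are missing; it is not a consequence of Clifford theory plus central-character control.

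Two smaller points. First, Arthur's multiplicity statement for $\SO_{2n}^{E/F}$ is only as $\tilde\cH$-modules (outer-automorphism orbits), so one does not get $m(\pi^\flat)\in\{0,1\}$ directly; the paper uses (std-reg) to see $\pi^\flat\not\simeq\pi^\flat\circ\theta^\circ$ at the level of infinitesimal characters, whence $m(\pi^\flat)+m(\pi^\flat\circ\theta^\circ)\le m_\psi\le 2$ forces $m(\pi^\flat)=1$ and $m_\psi=2$. Theorem~\ref{thm:A} is not what resolves this particular ambiguity. Second, that $m_\psi=2$ (not $1$) is then exactly cancelled by $\#\Sigma_Y(\pi)=2$, again coming from $\pi\not\simeq\pi^\theta$ via (std-reg); your sketch does not track this.
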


To compute $m(\pi)$ for $\GSO_{2n}^{E/F}$ we rely on Theorem \ref{thm:A}, Arthur's multiplicity formula \cite{ArthurBook} and a result of Bin Xu \cite{BinXuLPackets} to show that $m(\pi) = m(\pi^\flat)$ for $\pi^\flat \subset \pi$ a well-chosen $\SO_{2n}^{E/F}(\A_F)$-subrepresentation. We remark that Arthur's multiplicity formula computes multiplicities up to an outer automorphism orbit, but $m(\pi)$ in the theorem is the honest multiplicity.

Our final application is meromorphic continuation of the (half) spin-$L$ functions. Let $\pi$ be a cuspidal automorphic representation of $\GSO^{E/F}_{2n}(\A_F)$ unramified away from a finite set of places $S$. To make uniform statements, define a set
\begin{equation}\label{eq:fke}
\fke :=\begin{cases}
  \{+,-\}, & \mbox{if}~n~\mbox{is even (thus}~E=F),\\
  \{\emptyset\}, & \mbox{if}~n~\mbox{is odd (thus}~[E:F]=2),\\
\end{cases}
\end{equation}
with the understanding that $\spin^\emptyset=\spin$. The partial (half-)spin $L$-function for $\pi$ away from $S$ is by definition
\begin{equation}\label{eq:PartialhalfSpinLfunction}
L^S(s,\pi,\spin^\varepsilon):=\prod_{\fkp\notin S} \frac{1}{\det(1-q_\fkp^{-s}\spin^\varepsilon(\phi_{\pi_\fkp}(\Frob_{\fkp})))},\qquad \varepsilon\in \fke,
\end{equation}
where $q_{\fkp} := \#(\cO_F/\fkp)$ and $\phi_{\pi_\fkp}$ is the unramified $L$-parameter of $\pi_\fkp$. Consider the following hypothesis for $L$-parameters $\phi_{\pi_y}$ at infinite places $y$.
\vspace{.05in}
\begin{itemize}[leftmargin=+1in]
\item[\textbf{(spin-reg)}] $\spin^\varepsilon(\phi_{\pi_y})$ is regular for every infinite place $y$ of $F$ and every $\varepsilon\in \fke$.\vspace{.05in}
\end{itemize} 
When $n\ge 3$, (spin-reg) implies (std-reg). This hypothesis ensures that $\spin^\varepsilon(\rho_\pi)$ has distinct Hodge--Tate weights. Our construction and Theorem \ref{thm:A} allow us to apply the potential automorphy theorem of Barnet-Lamb--Gee--Geraghty--Taylor \cite{BLGGT14-PA} to the weakly compatible system of $\spin^\varepsilon(\rho_\pi)$ (as $\ell$ and $\iota$ vary). Thereby we obtain the following.

\begin{thmx}\label{thm:meromorphic}
Assume $n\ge 3$. Let $\pi$ be a cuspidal automorphic representation of $\GSO_{2n}^{E/F}(\A_F)$ satisfying (L-coh), (St) and (spin-reg). Then there exists a finite totally real extension $F'/F$ (which can be chosen to be disjoint from any prescribed finite extension of $F$ in $\ol{F}$) such that $\spin^\varepsilon\circ \rho_\pi|_{\Gal(\li F/F')}$ is automorphic for each $\varepsilon\in \fke$. More precisely, there exists a cuspidal automorphic representation $\Pi^\varepsilon$ of $\GL_{2^n/|\fke|}(\A_{F'})$ such that
\begin{itemize}
\item for each finite place $\qq'$ of $F'$ not above $S_{\mathrm{bad}}\cup \{\ell\}$, the representation $\iota^{-1}\spin^\varepsilon\circ\rho_\pi|_{W_{F'_{\qq'}}}$ is unramified and its Frobenius semisimplification is the Langlands parameter for~$\Pi^\varepsilon_w$,
\item at each infinite place $y'$ of $F'$ above a place $y$ of $F$, we have $\phi_{\Pi^\varepsilon_{y'}}|_{W_\C}\simeq \spin^\varepsilon\circ \phi_{\pi_y}|_{W_\C}$.
\end{itemize}
In particular the partial spin $L$-function $L^S(s,\pi,\spin^\varepsilon)$ admits a meromorphic continuation and is holomorphic and nonzero in an explicit right half plane (e.g., in the region $\Re(s)\ge 1$ if $\pi$ has unitary central character).
\end{thmx}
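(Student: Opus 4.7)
The plan is to apply the potential automorphy theorem of Barnet-Lamb--Gee--Geraghty--Taylor \cite{BLGGT14-PA} to the weakly compatible system obtained by post-composing the $\GSpin_{2n}$-valued (or $\GPin_{2n}$-valued) Galois representations of Theorem~\ref{thm:A} with $\spin^\varepsilon$. To set this up, for each pair $(\ell,\iota)$ let $r_{\ell,\iota}:=\spin^\varepsilon\circ \rho_{\pi,\iota}\colon \Gal(\li F/F)\to \GL_N(\lql)$ with $N=2^n/|\fke|$. Note that $\spin^\varepsilon$ is defined on $\LG^*$ in both parity cases: for $n$ even it is a representation of $\hat G^*=\GSpin_{2n}$ directly, and for $n$ odd the outer involution swaps $\spin^+\leftrightarrow \spin^-$, so $\spin=\spin^+\oplus\spin^-$ extends to $\GPin_{2n}\simeq {}^LG^*$. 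Compatibility of $\{r_{\ell,\iota}\}$ as $\ell$ and $\iota$ vary follows from \ref{thm:Ai}: the characteristic polynomial of $r_{\ell,\iota}(\Frob_\qq)$ at unramified primes $\qq$ is determined by the Satake parameter of $\pi_\qq$ via $\spin^\varepsilon$ and has algebraic coefficients in a number field.

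Next one verifies the hypotheses of the BLGGT potential automorphy theorem for this compatible system. Regularity of the Hodge--Tate cocharacter of $r_{\ell,\iota}$ at each place above $\ell$ combines \ref{thm:Aiii}(a) with (spin-reg), since $\mu_{\HT}(r_{\ell,\iota})=\spin^\varepsilon\bigl(\iota\mu_{\Hodge}(\xi_y)-\tfrac{n(n-1)}{4}\simil\bigr)$ and (spin-reg) says precisely that this is regular. Crystallinity away from $S_{\bad}\cup\{\ell\}$ is \ref{thm:Aiii}(c), and de Rham-ness at all primes above $\ell$ is \ref{thm:Aiii}. Essential self-duality holds because of the identity $(\spin^\varepsilon)^\vee\simeq \spin^{\varepsilon'}\otimes \cN^{-1}$ in $\GSpin_{2n}$ (with $\varepsilon'=\varepsilon$ or $-\varepsilon$ according to the parity of $n$), which in both cases yields an essentially conjugate self-dual polarization on $r_{\ell,\iota}$ with similitude character $\cN\circ \rho_\pi$; its sign is odd by \ref{thm:Aiv}. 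Finally, for the image condition one uses \ref{thm:Av}: the Zariski closure of the image of $\rho_\pi|_{\Gal(\ol F/E)}$ maps onto one of the listed subgroups of $\PSO_{2n}$, and in each case the half-spin representation remains irreducible after restriction (for (b) this is the classical branching $\spin^\pm_{2n}|_{\Spin_{2n-1}}\simeq \spin_{2n-1}$; for (c) Kostant's theorem gives an explicit decomposition into a unique $\Sym^{N-1}$; for (d) triality and the fact that $\spin^\pm_8|_{\SO_7}\simeq \spin_7$ and $\spin_7|_{G_2}\simeq 1\oplus V_7$, so one restricts to the derived subgroup), thereby furnishing big image for a density-one set of $\ell$.

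Applying \cite{BLGGT14-PA} to this compatible system then produces a totally real extension $F'/F$ (linearly disjoint from any prescribed finite extension of $F$) and a cuspidal automorphic representation $\Pi^\varepsilon$ of $\GL_N(\A_{F'})$ whose Langlands parameters at the finite places of $F'$ away from $S_{\bad}\cup\{\ell\}$ match $\iota^{-1}r_{\ell,\iota}|_{W_{F'_{\qq'}}}$; the archimedean match $\phi_{\Pi^\varepsilon_{y'}}|_{W_\C}\simeq \spin^\varepsilon\circ \phi_{\pi_y}|_{W_\C}$ then follows from the Hodge--Tate information \ref{thm:Aiii}(a) via the archimedean local Langlands correspondence on $\GL_N$. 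Meromorphic continuation of $L^S(s,\pi,\spin^\varepsilon)$ then follows by the standard Brauer-induction argument (descending over a solvable tower of totally real intermediate fields with automorphic data), and holomorphy and non-vanishing in the stated half-plane follow from the corresponding properties of cuspidal $L$-functions on $\GL_N(\A_{F'})$ after tracking the twist by $|\simil|^{-n(n-1)/4}$ and the central character. I expect the main obstacle to lie in the image verification step, in particular handling cases \ref{thm:Av}(c) and (d) and confirming the residual big-image assumption for density-one $\ell$, since the potential automorphy theorem's hypotheses on residual image (e.g.\ adequacy) have to be checked from (A5) and may require refinements to the image description in the reducible or principal-$\SL_2$ cases.
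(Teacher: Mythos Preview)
Your overall strategy---build a weakly compatible system from $\spin^\varepsilon\circ\rho_{\pi,\iota}$ and feed it into a potential automorphy theorem---matches the paper's, but there is a genuine gap in the choice of input theorem. You invoke \cite[Thm.~A]{BLGGT14-PA}, whose hypotheses include \emph{irreducibility} of the compatible system, and you attempt to verify this case-by-case from \ref{thm:Av}. That verification fails. In case (d) you yourself write $\spin_7|_{G_2}\simeq \mathbf 1\oplus V_7$, which is reducible; ``restricting to the derived subgroup'' does not repair this. In case (c) the claim that the restriction to the principal $\SL_2$ is a single $\Sym^{N-1}$ is false for $n\ge 4$: the weights of $\spin_{2n-1}$ under the principal cocharacter occur with multiplicities larger than $1$, so the restriction decomposes into several $\SL_2$-pieces (this is exactly the content of \cite[Prop.~6.1]{GrossMinuscule}, cited in the paper). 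Thus $\spin^\varepsilon\circ\rho_\pi$ may well be reducible, and the residual big-image/adequacy hypotheses of \cite{BLGGT14-PA} cannot be checked from \ref{thm:Av}. The paper makes this point explicitly in the Remark following its proof.

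The fix is to replace \cite{BLGGT14-PA} by \cite[Thm.~A]{PatrikisTaylor}, which is designed precisely for this situation: it trades the irreducibility hypothesis for a \emph{purity} hypothesis on the compatible system. Purity is available here (it comes from temperedness of $\pi$ at unramified places, equivalently \ref{thm:Aii} via (SO-ii)), and the remaining inputs---regularity of Hodge--Tate weights from (spin-reg) and \ref{thm:Aiii}(a), de Rham/crystalline properties from \ref{thm:Aiii}, essential self-duality with multiplier $\cN\circ\rho_\pi$ of sign $(-1)^{n(n-1)/2}$---are exactly what you already assembled. With Patrikis--Taylor in place of BLGGT, the rest of your argument (potential automorphy over a totally real $F'$, the archimedean match, and meromorphic continuation via Brauer induction) goes through.
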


We now give a sketch of the argument for Theorem A. For simplicity, we put ourselves in the \emph{split} case (when $n$ is even), and assume $F = \Q$ to simplify notation. We also ignore all character twists and duals in the following sketch and keep the isomorphism $\iota:\C\simeq \lql$ implicit. (See the main text for correct twists and duals.)

The basic idea is to construct $\rho_\pi$ and prove its expected properties by understanding what should be $\spin^+ \circ\rho_\pi$, $\spin^- \circ\rho_\pi$, $\std\circ \rho_\pi$, and $\cN \circ \rho_\pi$. One already has access to $\std\circ \rho_\pi$ via Arthur's endoscopic classification and known instances of the global Langlands correspondence. The seemingly innocuous $\cN \circ \rho_\pi$ is not so trivial to combine with the other representations, but refer to the proof of Proposition \ref{prop:FinalProp}. Most importantly, we realize $\spin^+ \circ\rho_\pi$ and $\spin^- \circ\rho_\pi$ in the cohomology of suitable Shimura varieties; this is the port of embarkation.

In fact $\rho_\pi$ would not be recovered from $\spin^+ \circ\rho_\pi$, $\spin^- \circ\rho_\pi$, $\std\circ \rho_\pi$, and $\cN \circ \rho_\pi$ in general due to essential group-theoretic difficulties (e.g., $\GSpin_{2n}$ is not acceptable in the sense of \cite{LarsenConjugacy, Larsen2}), but condition (St) mitigates the matter. Another important role of (St) is to remove complexity associated with endoscopy.

Our Shimura varieties are associated with an inner twist $G/\Q$ of the split group $\GSO_{2n}$ (unique up to isomorphism) which splits at all primes $p \neq p_{\St}$, and whose derived subgroup is isomorphic to the quaternionic orthogonal group $\SO^*(2n)$ over $\R$ (which is not isomorphic to $\SO(a,b)$ for any signature $a+b=2n$). Concretely $G(\R)$ is isomorphic to the group $\GSO_{2n}^J(\R)$ in \S\ref{sect:forms-of-GSO} below.

The group $G$ admits two abelian-type Shimura data $(G, X^\eps)$ with $\eps\in \{+,-\}$, corresponding to the two edges of the ``fork''  in the Dynkin diagram of type $D_n$ (see Section~\ref{sect:Shimura}). These two Shimura data are not isomorphic. (The analogous Shimura data \emph{are} isomorphic via an outer automorphism when $n$ is odd; see Lemma (ii) below. Even then, we distinguish the two data as the outer automorphism changes isomorphism classes of representations.)

Let $\pi$ be as in Theorem \ref{thm:A}. Using a trace formula argument, we transfer $\pi$ to a $\xi$-cohomological cuspidal automorphic representation $\pi^\natural$ of $G(\A)$ with isomorphic unramified local components as $\pi$ such that $\pi^\natural$ is Steinberg at a finite prime. Let $\rho_{\pi}^{\Sh,\eps}$ be the $\Gal(\li \Q/\Q)$-representation on the $\pi^{\natural,\infty}$-isotypical part of the (semisimplified) compact support cohomology of the $\ell$-adic local system $\cL_\xi/\textup{Sh}(G, X^\eps)$ attached to $\xi$. Conjecturally the two representations $\rho_\pi^{\Sh,\eps}$ should realize $\spin^{\eps} \circ \rho_{\pi}$ up to semi-simplification (and up to a twist and a multiplicity that we ignore in this introduction), in the non-endoscopic case. In particular, if $\phi_{\pi_p} \colon W_{\Q_p}\ra \GSpin_{2n}(\C)$ is the unramified $L$-parameter of $\pi_p$ at a prime $p\neq \ell$ where $\pi_p$ is unramified, then $\rho_\pi^{\Sh,\eps}|_{\Gal(\li \Q_p/\Q_p)}$ ought to be unramified and satisfy
\begin{equation}\label{eq:wrinkle}
\Tr  \rho_\pi^{\Sh,\eps}(\Frob_\p^j) = \Tr \spin^\eps (\phi_{\pi_p}(\Frob_p)^j) \in \lql,\qquad j\gg 1.
\end{equation}
Employing Kisin's results on the Langlands--Rapoport conjecture \cite{KisinPoints} and the Langlands--Kottwitz method for Shimura varieties of abelian type in the forthcoming work of Kisin--Shin--Zhu \cite{KSZ}, we prove \eqref{eq:wrinkle} for almost all $p$.

Let $\pi^\flat \subset \pi$ be an irreducible cuspidal automorphic $\SO_{2n}(\A)$-subrepresentation. From the aforementioned weaker version of Conjecture \ref{conj:BuzzardGee} for $\SO_{2n}$, we construct (see Theorem \ref{thm:ExistGaloisSO} below)
$$
\rho_{\pi^\flat} \colon \Gal(\li \Q/\Q) \to \SO_{2n}(\lql).
$$
such that
\begin{equation}\label{eq:wrinkle1}
\rho_{\pi^\flat}(\Frob_p)_{\tu{ss}} \osim  \pr^\circ(\phi_{\pi_p}(\Frob_p)) \in \SO_{2n}(\lql),
\end{equation}
for all primes $p \neq \ell$ where $\pi^\flat$ is unramified. Here $\osim$ indicates $\OO_{2n}(\lql)$-conjugacy, and $\pr^\circ:\GSpin_{2n} \surjects \SO_{2n}$ is the natural surjection.

We expect $\rho_\pi$ to lift $\rho_{\pi^\flat}$ (up to outer automorphism) and to sit inside $\rho^{\Sh}:=\rho^{\Sh,+}_{\pi}\oplus \rho^{\Sh,-}_{\pi}$ as illustrated below. By $\li\spin$ we mean the unique projective representation of $\SO_{2n}$ that the projectivization of $\spin$ factors through.
\begin{equation}\label{eq:BigDiagramIntro}
\xymatrix{
\Gal(\li \Q/\Q) \ar@/^1.5pc/[rrr]^{\rho_\pi^\Sh} \ar@/_1.5pc/[drr]_{\rho_{\pi^\flat}} \ar@{-->}[rr]_-{\rho_\pi} && \GSpin_{2n}(\lql) \ar[d]_{\pr^\circ}\ar@{^(->}[r]_-{\spin} &\GL_{2^{n}}(\lql)\ar[d] &\cr && \SO_{2n}(\lql) \ar@{^(->}[r] \ar@{^(->}[r]_{\overline{\spin}\quad}&\PGL_{ 2^{n}}(\lql). \cr }
\end{equation}
We deduce from \eqref{eq:wrinkle} and \eqref{eq:wrinkle1} that the outer diagram commutes, after a conjugation if necessary. In fact this is not straightforward because two $\PGL_{2^n}$-valued Galois representations need not be conjugate even if they map each $\Frob_p$ into the same conjugacy class for almost all $p$. We get around the difficulty by using a classification of reductive subgroups of $\SO_{2n}$ containing a regular unipotent element by Saxl--Seitz \cite{SaxlSeitz}. This is applicable since (St) tells us that the image of $\rho_{\pi^\flat}$ contains a regular unipotent element. As a consequence, the Zariski closure of the image of $\rho_{\pi^\flat}$ is connected mod center. If it is connected, we have the commutativity of \eqref{eq:BigDiagramIntro} after a conjugation, and it follows that there exists $\rho_\pi$ completing the diagram. If the Zariski closure is connected only mod center, then we need a variant of \eqref{eq:BigDiagramIntro} as explained in \S\ref{sect:Construction}. A similar group-theoretic consideration shows that $\rho_\pi$ is characterized up to isomorphism by the images of Frobenius elements at almost all primes, cf.~(A6) of Theorem \ref{thm:A}.

Having constructed $\rho_\pi$, we verify that $\rho_\pi$ enjoys the expected properties. Let us focus here on (A1). By construction,
$$
\spin(\rho_{\pi}(\Frob_p)_{\tu{ss}})\sim \spin(\phi_{\pi_p}(\Frob_p)),\qquad \mbox{for~almost~all}~p.
$$
The key point is to refine this, or break the symmetry, by showing the same relation with $\spin^+$ and $\spin^-$ in place of $\spin$ (cf.~proof of Proposition \ref{prop:FinalProp} below) with the help of \eqref{eq:wrinkle}. Roughly speaking, we are in a situation
$$
\rho^{\Sh,+}\oplus \rho^{\Sh,-} \simeq \spin^+ \rho_\pi\oplus \spin^- \rho_\pi$$
and want to match the $+$ and $-$ parts. The problem is easy enough if $\spin^+\rho_\pi\simeq \spin^-\rho_\pi$ as there is little to distinguish. If $\spin^+\rho_\pi\not\simeq\spin^-\rho_\pi$ then the idea is that the $+$ and $-$ parts do not overlap at sufficiently many places (by a Chebotarev type argument) to match the $+$ and $-$ parts unambiguously. If $\spin^+\rho_\pi$ or $\spin^-\rho_\pi$ is irreducible, it is quite doable to promote this idea to a robust argument. In general, the smaller image of $\rho_\pi$, the harder this problem becomes. On the other hand, in certain cases where the image is really small, such as contained in a principal $\PGL_2$, the conjugacy classes $\rho_\pi(\Frob_p)_{\tu{ss}}$ are stable under outer conjugation, and there is no distinction between inner and outer conjugacy. As we also have a classification of the (Zariski closure) of the possible images of $\rho_\pi$, we can deal with each case via explicit group-theoretic computation. This finishes the sketch of proof for Theorem \ref{thm:A}.

\subsection*{Structure of the paper}
The paper splits roughly into four parts consisting of Sections~1--8 (preparation), Sections~9--12 (the core argument), Sections~13--15 (applications), and the appendices. Let us go over these parts in more detail. In Sections~1--5 we define (variants of) orthogonal groups and spin groups along with subgroups containing regular unipotent elements and the outer automorphism. We define the spin groups and their spin representations through root data as well as Clifford algebras by fixing the underlying quadratic spaces, and clarify the relationship between them. The root-theoretic approach is natural in the context of Langlands correspondence whereas Clifford algebras have the advantage that various maps are determined and diagrams commute on the nose and not just up to conjugation. In Section~6 we construct Galois representations for certain cuspidal automorphic representations of quasi-split even orthogonal groups. This relies on Arthur's book~\cite{ArthurBook} and the known construction of automorphic Galois representations, but a few extra steps are taken to get the information that we need later on. In particular we study what happens to the Steinberg representation under Arthur's transfer from $\SO_{2n}^{E/F}$ to $\GL_{2n}$ (this relies on Appendix B). In Section 7 we list a number of basic results on comparing representations of $\SO_{2n}^{E/F}$ with those of $\GSO_{2n}^{E/F}$. Section~8 discusses properties of the real points of $\GSO_{2n}^{E/F}$ and introduces  certain global inner forms $G$ of $\GSO_{2n}^{E/F}$. The core argument starts in Section~9, where we take the cohomology of Shimura varieties associated with two Shimura data $(G, X^\pm)$ to find two Galois representations $\rho_{\pi}^{\Sh, \pm}$ attached to $\pi$ as in the main theorem. In Section 10 we construct a $\GSpin_{2n}$-valued Galois representation $\rho_\pi$ of $\Gal(\li F/E)$ from $\rho_\pi^{\Sh, \pm}$ and $\rho_{\pi^\flat}$. This representation is not quite the one of Theorem A: The image of Frobenius under $\rho_\pi$ is controlled only outside an unspecified finite set of primes, and moreover $\rho_\pi$ should be extended to a representation of $\Gal(\li F/F)$. The two problems are resolved in Sections~11 and 12 respectively. We emphasize that neither of these arguments is formal, the first one relies on Bin Xu's work \cite{BinXuLPackets} and the second on a subtle global argument. The proof of Theorem~A is also completed in Section~12. Sections~13--15 present applications of our main theorem to the construction of Galois representations for $\SO_{2n}^{E/F}$, automorphic multiplicity, and meromorphic continuation of (half)-spin $L$-functions.

\subsection*{Acknowledgments}

We are very grateful for an anonymous referee for his or her comments and suggestions. SWS is partially supported by NSF grant DMS-1802039 and NSF RTG grant DMS-1646385. AK is partially supported by an NWO VENI and an NWO VIDI grant. 

\section{Notation and preliminaries}\label{sect:Notation}

We fix the following notation.
\begin{itemize}
\item $n \geq 3$ is an integer.\footnote{We should mention that if $n \leq 3$, there are exceptional isomorphisms of $\GSO_{2n}$ (and its outer forms) to other simpler groups; for instance for $n = 3$ the Shimura varieties that we obtain are (closely related to) Shimura varieties for unitary similitude groups, in particular more general results are already known.}
\item If $k$ is a field, $\ol k$ denotes an algebraic closure of $k$.
\item When $X$ is a square matrix, $\mathscr{EV}(X)$ denotes the multi-set of eigenvalues of $X$.
\item When $A$ is a multi-set with elements in a ring $R$ with $r\in R$, write $r\cdot A$ for the multi-set formed by the elements $ra \in A$ as $a$ ranges over $A$. For $n\in \Z_{>0}$, write $A^{\oplus n}$ for the multi-set consisting of $a\in A$ whose multiplicity in $A^n$ is $n$ times that in $A$.
\item $F$ is a number field. (In the main text, $F$ is a totally real field with a distinguished embedding into $\C$.)
\item $\mathcal{O}_F$ is the ring of integers of $F$.
\item $\A_F$ is the ring of ad\`eles of $F$, $\A_F := (F \otimes \R) \times (F \otimes \Zhat)$.
\item If $S$ is a finite set of $F$-{pla\-ces}, then $\A_F^S \subset \A_F$ is the ring of ad\`eles with trivial components at the places in $S$, and $F_S := \prod_{v \in S} F_v$; $F_\infty:=F\otimes_\Q \R$.
\item If $\qq$ is a finite $F$-place, we write $q_{\qq}$ for the cardinality of the residue field of $\qq$.
\item $|\cdot|:\A_F^\times \ra \R^\times_{>0}$ is the norm character on $\A_F^\times$ that is trivial on $F^\times$. Denote by $|\cdot|_v:F_v^\times \ra \R^\times_{>0}$
the restriction of $|\cdot|$ to the $v$-component. Our normalization is that $|\cdot|_{\qq}$ sends a uniformizer of $F_\qq$ to $q_{\qq}^{-1}$, whereas $|\cdot|_v$ is the usual absolute value (resp.~squared absolute value) when $v$ is real (resp.~complex).
\item  If $S$ is a set of prime numbers we write $S^F$ for the set of $F$-places above $S$.
\item If $p$ is a prime number, then $F_p := F \otimes_{\Q} \qp$.
\item $\ell$ is a prime\textbf{} number (typically different from $p$).
\item $\lql$ is a fixed algebraic closure of $\ql$, and $\iota \colon \C \isomto \lql$ is an isomorphism.
\item For each prime number $p$ we fix the positive root $ p^{1/2} \in \R_{>0} \subset \C$. From $\iota$ we then obtain a choice for $p^{1/2} \in \lql$. If $q$ is a power of $p$, we obtain similarly a preferred choice $q^{1/2}$ in $\lql$ and in $\C$.
\item $\Gamma =\Gamma_F:= \Gal(\li F/F)$ is the absolute Galois group of $F$.
\item For a finite extension $E$ of $F$ in $\li F$, write $\Gamma_E:=\Gal(\li F/E)$ and $\Gamma_{E/F}:=\Gal(E/F)$.
\item $\Gamma_v = \Gamma_{F_v}:=\Gal(\li F_v/F_v)$ is (one of) the local Galois group(s) of $F$ at the place $v$, $W_{F_v} \subset \Gamma_v$ is the corresponding Weil group.
\item For each $F$-place $v$, choose an embedding $\iota_v:\li F \hra \li F_v$, which induces $\Gamma_v\hra \Gamma$ that is canonical up to conjugation.
\item $\cV_\infty:= \Hom_\Q(F, \R)$ is the set of infinite places of $F$.
\item $c_y\in \Gamma$ is the complex conjugation (well-defined as a conjugacy class) induced by any embedding $\li{F}\hookrightarrow \C$ extending $y\in \cV_\infty$.
\item If $S$ is a finite set of $F$-places, write $\Gamma_{F,S}$ for the Galois group $\Gal(F(S)/F)$ where $F(S) \subset \li F$ is the maximal extension of $F$ that is unramified away from $S$. If $S$ is a set of rational places we write
$\Gamma_{F, S} := \Gamma_{F, S^F}$.
\item $\Frob_\qq$ at a finite prime $\qq$ of $F$ means the \emph{geometric} Frobenius element in the quotient of $\Gamma_\qq$ by the inertia subgroup, or the image thereof in $\Gamma_{F,S}$. (The image in $\Gamma_{F,S}$ depends on the choice of $\iota_\qq$ but its conjugacy class is independent of the choice.)
\item When $G$ is a connected reductive group over $F$, write $\hat G$ and $^L G= \hat G\rtimes \Gamma_F$ for the Langlands dual group and the $L$-group, respectively (with coefficients in $\C$ or $\lql$, depending on the context). If $G$ splits over a finite extension $E/F$ in $\li F$ then $\hat G\rtimes \Gamma_{E/F}$ denotes the $L$-group with respect to $E/F$. (Namely such a semi-direct product is always understood with the $L$-action of $\Gamma_{E/F}$ on $\hat G$.) Often we use $^L G$ to mean $\hat G\rtimes \Gamma_{E/F}$.\footnote{This is harmless for us as the inflation map induces a bijection of isomorphism classes of $^L G$-valued Galois representations when $\Gamma_{E/F}$ is replaced with $\Gamma_F$ in the semi-direct product.}
\item When $H$ is a reductive group over $\lql$, we also use $H$ to mean the topological group $H(\lql)$ by abuse of notation. This should be clear from the context and not leading to confusion.
\item When $F$ is a $p$-adic field and $G$ is the set of $F$-points of a reductive group over $F$, we write $\St_G$ for the Steinberg representation of $G$ (defined in \cite[X.4.6]{BorelWallach} for instance). Moreover, we write $\one_G$ for the trivial representation of $G$. In certain cases, when $G$ is clear, we write $\St = \St_G$ or $\one = \one_G$. We also write sometimes $\St_n$ for $\St_{\GL_n(F)}$ (in case $F$ is clear from the context).
\item If $G$ is an algebraic group, we write $Z(G)$ for its center.
\item An inner twist of a reductive group $G$ over a perfect field $k$ means a reductive group $G'$ over $k$ together with
an isomorphism $i:G_{\ol k}\ra G'_{\ol k}$ such that the automorphism $i^{-1}\sigma(i)$ of $G_{\ol k}$ is inner for every $\sigma\in \Gal(\ol k/k)$.
There is an obvious notion of isomorphism for inner twists $(G',i)$, cf.~\cite[2.2]{KalethaLLC}.
We often say $G'$ is an inner twist of $G$, keeping $i$ implicit. If we forget $i$ and only remember the $k$-group $G'$ and the existence of $i$, we refer to it as an inner form of $G$.
\end{itemize}

Fix $G$ and $E/F$ as above. We introduce some notions on the Galois side.
By an ($\ell$-adic) \emph{Galois representation} of $\Gamma_F$ (with values in $\hat G\rtimes \Gamma_{E/F}$), we mean a continuous morphism 
$$
\rho \colon \Gamma_F \ra \hat{G}(\lql)\rtimes \Gamma_{E/F}
$$
which factors through $\Gamma_{F,S}$ for some finite set $S$ and commutes with the obvious projections onto $\Gamma_{E/F}$. Similarly we define a Galois representation with the source $\Gamma_\qq$ or with values in $^L G(\lql)$. Two Galois representations are considered isomorphic if they are conjugate by an element of $\hat{G}(\lql)$. We say that $\rho$ as above is \emph{(totally) odd} if for every real place $y$ of $F$, the following holds: writing $\tu{Ad}$ for the adjoint action of $^L G$ on $\Lie G(\lql)$, which preserves the Lie algebra of the derived subgroup $G_{\textup{der}}$, the image of $c_y$ under the composite
$$
\Gamma_y \hra \Gamma \stackrel{\rho}{\ra} {}^L G(\lql) \stackrel{\tu{Ad}}{\ra} \GL(\Lie G_{\textup{der}}(\lql))
$$
has trace equal to the rank of $\hat G_{\textup{der}}$. (Compare with \cite{GrossOdd}.)

An $^L G$-valued \emph{Weil--Deligne representation} of $W_{F_\qq}$ is a pair $(r,N)$ consisting of a morphism
$$
r \colon W_{F_\qq} \ra \hat{G}(\lql)\rtimes \Gamma_{E_\pp/F_\qq}
$$
which has open kernel on the inertia subgroup and commutes with the canonical projections onto $ \Gamma_{E_\pp/F_\qq}$, and a nilpotent operator $N\in \Lie \hat{G}(\lql)$ such that $\Ad (r(w)) N= |w| N$ for $w\in W_{F_\qq}$, where $|\cdot|:W_{F_\qq}\ra \|\qq\|^{\Z}$ is the homomorphism sending a geometric Frobenius element to $\|\qq\|^{-1}$; here $\|\qq\|\in \Z_{>0}$ denotes the norm of $\qq$. The Frobenius-semisimplification $(r^{\tu{ss}},N)$ is obtained by replacing $r$ with its semisimplification. We say $(r,N)$ is Frobenius-semisimple if $r=r^{\tu{ss}}$.

Let $\rho \colon \Gamma_F \ra \hat{G}(\lql)\rtimes \Gamma_{E/F}$ be a Galois representation. Write $\pp$ for the prime of $E$ induced by $\iota_\qq:\li F \hra \li F_\qq$. Then the restriction (via $\iota_\qq$)
$$
\rho|_{\Gamma_\qq} \colon \Gamma_{F_\qq} \ra \hat{G}(\lql)\rtimes \Gamma_{E_\pp/F_\qq}
$$
gives rise to an $^L G$-valued Weil--Deligne representation, to be denoted by $\tu{WD}(\rho|_{\Gamma_{F_\qq}})$. The construction follows from the case of $G=\GL_n$ by the Tannakian formalism via algebraic representations of $\hat{G}(\lql)\rtimes \Gamma_{E_\pp/F_\qq}$. (The case $\qq|\ell$ is more subtle than $\qq\nmid \ell$. In the former case, a detailed explanation is given in the proof of \cite[Lem.~3.2]{GSp}, where $\hat G$ is denoted by $H$. In \emph{loc.~cit.}~$\Gamma_{E_\pp/F_\qq}$ is trivial but the same argument extends.) When $\qq\nmid \ell$, one can alternatively appeal to Grothendieck's $\ell$-adic monodromy theorem to construct $\tu{WD}(\rho|_{\Gamma_{F_\qq}})$ directly (without going through general linear groups).

A local $L$-parameter $\phi:W_{F_\qq} \times \SL(2)\ra \hat{G}(\lql)\rtimes \Gamma_{E_\pp/F_\qq}$ is associated with a Frobenius-semisimple $^L G$-valued Weil--Deligne representation $(r,N)$ given by the following recipe:
$$
r(w) = \phi\left( w, \begin{pmatrix} |w|^{1/2} & 0 \\ 0 & |w|^{-1/2} \end{pmatrix} \right), \qquad \mbox{and}\qquad
N = \phi\left( 1,\begin{pmatrix} 0 & 1 \\ 0 & 0 \end{pmatrix} \right).
$$ 
This induces a bijection on the sets of equivalence classes of such objects \cite[Prop.~2.2]{GrossReederArithmeticInvariants}. In practice (where only equivalence classes matter), we will use them interchangeably.

We introduce some further notation and conventions in representation theory. If $\pi$ is a representation on a complex vector space then we set $\iota\pi:=\pi\otimes_{\C,\iota} \lql$. Similarly if $\phi$ is a local $L$-parameter of a connected reductive group $G$  over a nonarchimedean local field so that $\phi$ maps into $^L G(\C)$, then $\iota\phi$ is the parameter with values in $^L G(\lql)$ obtained from $\phi$ via $\iota$. If $G$ is a locally profinite group equipped with a Haar measure, then we write $\cH(G)$ for the \emph{Hecke algebra} of locally constant, complex valued functions with compact support. We write $\cH_{\lql}(G)$ for the same algebra, but now consisting of $\lql$-{va\-lued} functions. We normalize every parabolic induction by the half power of the modulus character as in \cite[1.8]{BZ77}, so that it preserves unitarity.

Let $G$ be a real reductive group, $K$ a maximal compact subgroup of $G(\R)$, and $\tilde K := K \cdot Z(G)(\R)$. Let $\xi$ be an irreducible algebraic representation of $G$ over $\C$. An irreducible admissible representation $\pi$ of $G(\R)$ is said to be \emph{$\xi$-cohomological} if $H^i(\Lie G(\C),\tilde K, \pi\otimes_\C \xi)\neq 0$ for some $i\ge 0$. If this is the case, we assign a Hodge cocharacter over $\C$ (well-defined up to $\hat G$-conjugacy) as in \cite[Def 1.14]{GSp}: 
$$
\mu_{\Hodge}(\xi) \colon \Gm \ra \hat G.
$$
Let $L$ be a finite extension of $\Q_{\ell}$. Let $H$ be a possibly disconnected reductive group over $\lql$ (e.g., an $L$-group relative to a finite Galois extension), and $\rho \colon \Gal(\li L/L)\ra H(\lql)$ a continuous morphism. If $\rho$ is Hodge--Tate with respect to each $\Q_{\ell}$-embedding $i \colon L \hra \lql$, we define a Hodge--Tate cocharacter over $\lql$ (well-defined up to $H$-conjugacy) as in \cite[\S2.4]{BuzzardGee} (cf.~\cite[Def 1.10]{GSp}): 
$$
\mu_{\HT}(\rho,i):\G_m \ra H.
$$

We recall the following lemma that can be easily deduced from the Chebotarev density theorem, as it will be needed in \S\ref{sect:Construction}. Let $F$ be a number field. The \emph{density} of a set $S$ consisting of primes of $F$ is defined to be the limit $d(S) = \lim_{n \to \infty} a_n(S) / a_n(F)$, where $a_n(F)$ is the number of primes $\qq$ with bounded norm $\|\qq\| < n$ and $a_n(S)$ is the number of $\qq \in S$ with $\|\qq\| < n$ \cite[Sect.~I.2.2]{SerreAbelianGalois}. Depending on $S$, the limit $d(S)$ may or may not exist~---~in the former case, we say $S$ has density $d(S)$, and otherwise we leave the density undefined.

\begin{lemma}\label{lem:Chebotarev}
Let $S$ be a finite set of places of a number field $F$. Let $G/\lql$ be a linear algebraic group and let $r \colon \Gamma_{F,S} \to G(\lql)$ be a Galois representation with Zariski dense image. Let $X \subset G$ be a closed  subvariety that is invariant by $G$-conjugation and such that $\dim(X) < \dim(G)$. Then the set of $F$-places $\qq \notin S$ with $r(\Frob_\qq) \in X(\lql)$ has density $0$.
\end{lemma}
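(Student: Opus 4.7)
The plan is to pass from $r$ to the finite quotients of its image and bound the density by a Haar-measure of the intersection $X \cap H$, which turns out to be zero thanks to Zariski density.

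Let $H := \overline{r(\Gamma_{F,S})} \subseteq G(\lql)$ denote the closure of the image, a profinite compact subgroup that, by hypothesis, is Zariski dense in $G$. Fix a closed embedding $G \hookrightarrow \GL_N$; since $H$ is compact, after conjugating the embedding we may assume $H \subseteq \GL_N(\cO_L)$ for some finite extension $L/\ql$. Let $K_n$ be the $n$-th principal congruence subgroup of $\GL_N(\cO_L)$ and set $U_n := H \cap K_n$, an open normal subgroup of $H$; the $\{U_n\}_{n\ge 1}$ form a neighborhood basis of the identity in $H$, and each quotient $H/U_n$ is finite.

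Apply the classical Chebotarev density theorem to the continuous surjection $\Gamma_{F,S} \twoheadrightarrow H/U_n$ to get that Frobenius classes equidistribute in $H/U_n$. Let $Y_n \subseteq H/U_n$ be the image of $X(\lql) \cap H$; this is well defined precisely because $X$ is conjugation-invariant. Since $\{r(\Frob_\qq) \in X(\lql)\}$ is contained in the pullback of $Y_n$,
\[
d\bigl(\{\qq \notin S : r(\Frob_\qq) \in X(\lql)\}\bigr) \;\le\; \frac{|Y_n|}{|H/U_n|} \;=\; \frac{\mu\bigl((X(\lql) \cap H)\cdot U_n\bigr)}{\mu(H)},
\]
where $\mu$ is a Haar measure on $H$. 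As $X(\lql) \cap H$ is closed and $\{U_n\}$ shrinks to $\{1\}$, the right-hand side converges to $\mu(X(\lql) \cap H)/\mu(H)$ as $n \to \infty$, so it suffices to prove $\mu(X(\lql) \cap H) = 0$.

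This is the heart of the argument and the main obstacle. By Lazard's $\ell$-adic analogue of Cartan's theorem on closed subgroups, the closed subgroup $H \subseteq \GL_N(\lql)$ is automatically an $\ell$-adic Lie subgroup, so it carries a natural structure of $\ell$-adic analytic submanifold. Since $X(\lql)$ is the vanishing locus of finitely many polynomials on $\GL_N(\lql)$, the intersection $X(\lql) \cap H$ is a closed analytic subset of $H$. Zariski density forbids $H \subseteq X(\lql)$, for otherwise $G = \overline{H}^{\mathrm{Zar}} \subseteq X$, contradicting $\dim X < \dim G$. Hence $X(\lql) \cap H$ is a \emph{proper} closed analytic subset of the $\ell$-adic analytic manifold $H$, and any such subset has Haar measure zero (locally it is the zero set of a non-zero analytic function on $\ql^d$, which has measure zero by a Weierstrass-type reduction and Fubini). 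Note that pure Chebotarev on any single finite quotient only gives a density bound strictly less than $1$; the genuine content lies in combining this with the Lie-theoretic structure of $H$ to let $n \to \infty$ and extract density zero.
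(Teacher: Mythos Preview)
Your approach via $\ell$-adic Lie theory is genuinely different from the paper's and is almost correct, but the last step has a real gap. You assert that a \emph{proper} closed analytic subset of an $\ell$-adic analytic manifold has Haar measure zero, justifying this by saying ``locally it is the zero set of a non-zero analytic function.'' But properness is a global condition and does not force the restricted function to be nonzero on every chart. Concretely, take $H=\Z_\ell\times\Z/2\Z$ and the subset $\Z_\ell\times\{0\}$: this is proper, closed, analytic, and has half the total measure. In the lemma's setting this pathology is ruled out not by $H\not\subseteq X$ (which is all you extract from Zariski density) but by the dimension hypothesis $\dim X<\dim G$, which you never actually use beyond deducing $X\subsetneq G$. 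The fix is short: any open subgroup $H_1\subseteq H$ has Zariski closure of finite index in $G$, hence containing $G^\circ$; so every coset $hH_1$ is Zariski dense in a full $G^\circ$-coset, which cannot lie in $X$ since $\dim X<\dim G^\circ$. Thus on each coset the defining polynomials of $X$ restrict to analytic functions that are not identically zero, and then your Weierstrass/Fubini argument applies chart by chart.

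For comparison, the paper avoids Lie theory entirely. It works with Haar measure on $\Gamma_{F,S}$ itself and shows $\mu(r^{-1}(X))=0$ by induction on $\dim X$: the base case $\dim X=0$ reduces to $\ker(r)$ having infinite index, and the inductive step produces infinitely many distinct translates $r(\gamma_i)X$ (using Zariski density of the image), whose pairwise intersections have strictly smaller dimension and hence measure-zero preimages by induction; finiteness of the total measure then forces each translate's preimage to have measure zero. Your route is more structural and arguably more conceptual once Lazard is invoked, while the paper's translate argument is elementary and self-contained; with the correction above, both reach the same conclusion.
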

\begin{proof}
Let $\mu$ be the Haar measure on $\Gamma_{S} = \Gamma_{F,S}$ with total volume $1$. We write $X$ to also mean $X(\lql)$ to simplify notation. Then $Y = r^{-1}(X)$ is a closed subset of $\Gamma_S$ (hence measurable) and stable under $\Gamma_S$-conjugation.  If we further have that $\mu(Y)=0$, then the Chebotarev density theorem \cite[I-8 Cor.~2b]{SerreAbelianGalois} implies that the set of places $\qq \notin S$ such that $\Frob_{\qq} \in Y$ has measure $0$, so we will be done.

So it suffices to prove that $\mu(Y) = 0$. We induct on $\dim(X)\in \{0,1,\ldots,\dim(G)-1\}$. We may assume that $X$ is irreducible by induction. When $\dim X=0$ then $X$ is a point and the preimage $Y$ of $X$ is a torsor under $\ker(r)$. We then have $\vol(Y) = \vol(\ker(r)) = 0$, since $\ker(r)\subset \Gamma_S$ is a closed subgroup of infinite index by hypothesis. Now assume that the assertion is known whenever $\dim(X) < d$ and consider the case $\dim(X)=d< \dim(G)$. There exists an infinite sequence $\gamma_1, \gamma_2, \ldots \in \Gamma_S$ such that the subset $r(\gamma_i) X$ are mutually distinct. (If the choice were impossible after $i=r$, then  multiplication by $r(g)$ preserves  $\bigcup_{i<=r}  r(\gamma_i) X$  for every $g\in\Gamma_S$. This can't happen because $r$ has Zariski dense image, and the union has dimension $d < \dim(G)$.) Consider
$$
\Gamma_S \supset \bigcup_{i=1}^{\infty} \gamma_i r^{-1}(X).
$$
The volume of $\Gamma_S$ is finite. Each term on the right hand side is closed (so measurable), and the volumes of $\gamma_i r^{-1}(X)$ are all equal. We claim that their pairwise intersections have volume $0$. If this is true, then we deduce that $\vol (\gamma_i r^{-1}(X)) = \vol( r^{-1}(X) ) =0$, completing the proof.

It remains to verify the claim. Observe that the intersection
$$
(*) \quad\quad \gamma_i r^{-1}(X) \cap \gamma_j r^{-1}(X),  \quad\quad  i \neq j,
$$
maps into the intersection $r(\gamma_i)X \cap r(\gamma_j)X$ in $G$, which has dimension less than $d$, so indeed (*) has measure $0$ by induction hypothesis. This completes the proof.
\end{proof}

We also record a lemma on projective algebraic representations, which will be usefull later on.

\begin{lemma}\label{lem:AlgebraicProjRepsConjugate}
Let $G$ be a connected simply-connected semi-simple group over $\C$. Let $T \subset G$ be a maximal torus.  
Let $r_1, r_2 \colon G \to \PGL_N$ be two projective representations whose restrictions to $T$ are conjugate. Then $r_1, r_2$ are conjugate. 
\end{lemma}
\begin{proof}
We claim that any $r \colon G \to \PGL_N$ can be lifted to a representation $\wt r \colon G \to \SL_N$. Let $H := (G \times_{\PGL_N} \SL_N)^0$, then $f \colon H \to G$ is a central isogeny, and hence is an isomorphism as $G$ is simply connected \cite[Prop.~18.8]{MilneAlgebraicGroupsBookCambridge}. The composition $G \to H \to \SL_N$ is the desired lift.

After conjugating, we may assume that $r_1|_T = r_2|_T$. By the preceding paragraph, we can choose lifts $\wt r_i$ of $r_i$ for $i=1,2$. Define a morphism of varieties $\chi:G \to \SL_N$ by $\chi(g):=\wt r_1(g) \wt r_2(g)\inv$. The image of $\chi|_T$ lies in $\mu_N$ since $r_1|_T = r_2|_T$. Hence the image is trivial as $T$ is connected, that is, $\wt r_1|_T=\wt r_2|_T$. Hence $\wt r_1$, $\wt r_2$ are $\GL_N$-conjugate because the trace functions coincide on semisimple elements. It follows that $r_1$, $r_2$ are $\PGL_N$-conjugate.
\end{proof}

\section{Root data of $\GSO_{2n}$ and $\GSpin_{2n}$}\label{sect:RootDatum}

Let $\GO_{2n}/\Q$ be the algebraic group such that for all $\Q$-algebras $R$ we have
\begin{equation}\label{eq:DefinitionGroupGo}
\GO_{2n}(R) = \lbr g \in \GL_{2n}(R)\ \left| \
\exists\, \simil(g) \in R^\times \ \colon\ g\tra \cdot \grootvierkant \ {1_n}{1_n}\ \cdot g = \simil(g) \cdot \grootvierkant \ {1_n}{1_n}\
 \right. \ \rbr.
\end{equation}
(in the above formula $1_n$ is the $n\times n$ identity matrix.) The group $\GO_{2n}$ is disconnected; its neutral component $\GSO_{2n} \subset \GO_{2n}$ is defined by the condition $\det(g) = \simil(g)^n$. The groups $\GO_{2n}$, $\GSO_{2n}$ are split and defined by a quadratic form of signature $(n,n)$. An element $t$ of the diagonal torus $\TGSO \subset \GSO_{2n}$ is of the form
\begin{equation}\label{eq:MaximalTorusGSO}
t = \diag(t_i)_{i=1}^{2n} =  \diag(t_1, t_2, \ldots, t_n, t_0 t_1\inv, t_0 t_2\inv, \ldots, t_0 t_n\inv), \quad t_0:= \simil(t)
\end{equation}
hence $\TGSO \simeq \Gm^{n+1}$ by sending $t$ to $(t_0,t_1,\ldots,t_n)$. We identify $X^*(\TGSO) = \bigoplus_{i=0}^n \Z \cdot e_i$ and $X_*(\TGSO) = \bigoplus_{i=0}^n \Z \cdot e^*_i$ accordingly. We let $\BGSO$ be the Borel subgroup of $\GSO_{2n}$ of matrices of the form
\begin{equation}\label{eq:BorelDesc}
g = \grootvierkant A{AB}0{c A^{\textup{t}, -1}}, \quad A \in B_{\GL_n}, \ B \in \textup{M}_n, \ B^\textup{t} = -B \quand c = \simil(g),
\end{equation}
where $B_{\GL_n} \subset \GL_n$ is the upper triangular Borel subgroup. (To see that $\BGSO$ is indeed a Borel subgroup, notice that any block matrix $g = \vierkant ABCD$ with $C = 0$ is of the above form if and only if $g \in \GSO_{2n}$, and moreover the displayed group is solvable of dimension $n^2 + 1$).

We realize the split forms of even (special) orthogonal groups in $\GO_{2n}/\Q$. Namely we write $\OO_{2n}$ (resp.~$\SO_{2n}$) for the subgroup of $\GO_{2n}$ (resp.~$\GSO_{2n}$) where $\simil$ is trivial.

\begin{lemma}\label{lem:GSORoots}
The root datum of $\GSO_{2n}$ with respect to $\BGSO$ is described as follows. 
\begin{enumerate}[label=(\roman*)]
\item The set of roots (resp.~coroots) consists of $\pm(e_i - e_j)$ and $\pm(e_i+e_j-e_0)$ (resp.~$\pm(e_i^*-e_j^*)$ and $\pm(e_i^*+e_j^*)$) with $1\leq i<j\leq n$.
\item The positive roots are $\{e_i + e_j - e_0\}_{1\le i < j \leq n} \cup \{e_i - e_j\}_{1\le i<j\leq n}$ and the positive coroots are $\{e^*_i \pm e^*_j\}_{1\le i < j \leq n}$.
\item The simple roots are $\alpha_1 = e_1 - e_2$, $\ldots$, $\alpha_{n-1} = e_{n-1} - e_n$, and $\alpha_n = e_{n-1} + e_n - e_0$.
\item The simple coroots $\Delta^\vee$ are $\alpha_1^\vee = e_1^* - e_2^*$, $\alpha_2^\vee = e_2^* - e_3^*$, $\ldots$, $\alpha_{n-1}^\vee = e_{n-1}^* - e_n^*$, and $\alpha_n^\vee = e_{n-1}^* + e_n^*$.
\end{enumerate}
\end{lemma}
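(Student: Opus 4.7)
The plan is to compute the root datum by first identifying the Lie algebra $\Lie \GSO_{2n}$ inside $\mathfrak{gl}_{2n}$ and then decomposing it under the adjoint action of $\TGSO$. Differentiating the similitude relation $g^\tt J g = \simil(g) J$ with $J = \vierkant{0}{1_n}{1_n}{0}$, a block matrix $X = \vierkant{A}{B}{C}{D}$ lies in $\Lie \GSO_{2n}$ if and only if $B$ and $C$ are skew-symmetric and $D = \lambda \cdot 1_n - A^\tt$ for some scalar $\lambda$ (the derivative of $\simil$). A dimension check gives $\dim = n^2 + 2\binom{n}{2} + 1 = 2n^2 - n + 1$, consistent with $\dim \GSO_{2n} = \dim \SO_{2n} + 1$.

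Next I would parametrize the torus as in the lemma and read off the weights. Writing the diagonal entries of $t \in \TGSO$ as $s_1,\dots,s_{2n}$ with $s_i = t_i$ for $i \le n$ and $s_{n+i} = t_0 t_i\inv$, the standard matrix unit $E_{ij}$ has weight $s_i/s_j$. Restricting to $\Lie \GSO_{2n}$, the block $A$ contributes weights $t_i/t_j = e_i - e_j$ for $1 \le i,j \le n$ (with $i=j$ giving the Cartan), the skew-symmetric block $B$ contributes weights $s_i/s_{n+j} = t_i t_j/t_0 = e_i + e_j - e_0$ for $i < j$, and similarly $C$ gives the negatives $e_0 - e_i - e_j$. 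The contribution of $D$ is determined by $A$ and $\lambda$ and produces no new roots. This proves (i).

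For (ii) and (iii), observe that the Lie algebra $\Lie \BGSO$ of the Borel described in \eqref{eq:BorelDesc} consists of matrices with $C = 0$ and $A$ upper triangular (and $B$ skew-symmetric, $\lambda$ arbitrary). The corresponding positive roots are therefore $e_i - e_j$ and $e_i + e_j - e_0$ with $i < j$, and the simple roots $\alpha_1, \dots, \alpha_{n-1}, \alpha_n$ are the obvious minimal ones as stated, exhibiting the expected type $D_n$ diagram with a fork at $\alpha_{n-2}$.

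Finally for (iv), I would produce the coroots by exhibiting $\mathfrak{sl}_2$-triples inside $\Lie \GSO_{2n}$: for $\alpha_i = e_i - e_{i+1}$ ($i < n$) the triple $(E_{i,i+1},\, E_{i+1,i},\, E_{i,i} - E_{i+1,i+1})$ lives entirely in the $A$-block, and the one-parameter subgroup generated by the semisimple element is precisely $e_i^* - e_{i+1}^*$. For $\alpha_n = e_{n-1} + e_n - e_0$, the root vector $E_{n-1,\,2n} - E_{n,\,2n-1}$ in $B$ together with $E_{2n,\,n-1} - E_{2n-1,\,n}$ in $C$ bracket to a semisimple element in the Cartan whose associated cocharacter is $e_{n-1}^* + e_n^*$; the pairing check $\langle e_{n-1}+e_n-e_0,\, e_{n-1}^*+e_n^*\rangle = 2$ confirms the normalization. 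Applying the Weyl group action then recovers all coroots $\pm(e_i^* \pm e_j^*)$, yielding (iv). The only mild subtlety is keeping the identification $\TGSO \simeq \G_m^{n+1}$ via $(t_0,t_1,\dots,t_n)$ consistent throughout; none of the steps present a genuine obstacle, and the computation is standard once the Lie algebra is written out explicitly.
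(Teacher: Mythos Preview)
Your proposal is correct and is precisely the ``standard computation'' that the paper defers to (the paper's proof is a single sentence pointing to \cite[18.1]{FultonHarris} for $\SO_{2n}$ and saying the adaptation to $\GSO_{2n}$ is easy). You have simply written out explicitly what the reference does, with the extra $e_0$ bookkeeping for the similitude factor; there is no difference in method.
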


\begin{remark}
The root datum of $\SO_{2n}$ is described similarly. Putting $\TSO := \TGSO\cap \SO_{2n}$ and $\BSO := \BGSO \cap \SO_{2n}$, we have $\TSO = \{t\in \TGSO : t_0=1\}$ as well as $X^*(\TSO)=\oplus_{i=1}^n e_i\cdot \Z$ and $X_*(\TSO)=\oplus_{i=1}^n e_i^* \cdot \Z$. To describe (positive or simple) roots and coroots, we only need to formally set $e_0=0$ in the lemma above.
\end{remark}

\begin{proof}
The standard computation for $\SO_{2n}$ as in \cite[18.1]{FultonHarris} can be easily adapted to $\GSO_{2n}$.
\end{proof}

We define the following element (over any $\Q$-algebra point of $\tu{O}_{2n}$)\footnote{The minus sign for $\vartheta^\circ$ makes it compatible with $\vartheta\in \GSpin_{2n}$ to be introduced above Lemma \ref{lem:conjugation-by-w}.}
\begin{equation}\label{eq:Elementw}
\vartheta^\circ := -
\lhk
 \begin{array}{cc;{1pt/1pt}cc}
1_{n-1} &   &           & \\
        & 0 &           & 1 \\ \hdashline[1pt/1pt]
        &   &  1_{n-1}  & \\
        & 1&           & 0
        \end{array}
\rhk\in \tu{O}_{2n}.
\end{equation}
Since $\det(\vartheta^\circ) = -1$
we have $\vartheta^\circ \notin \SO_{2n}$. We write $\theta^\circ \in \Aut(\GSO_{2n})$ for the automorphism given by $\vartheta^\circ$-conjugation.

\begin{lemma}\label{lem:Elem_w}
The automorphism $\theta^\circ$ stabilizes $\BGSO$ and $\TGSO$, and acts on $\TGSO$ by
$$
(t_0, t_1, \ldots, t_n) \mapsto (t_0, t_1, \ldots, t_{n-1}, t_0t_n\inv).
$$
Furthermore $\theta^\circ(\alpha_i)= \alpha_i$ for $i < n-2$, $\theta^\circ(\alpha_{n-1}) = \alpha_n$, and $\theta^\circ(\alpha_n) = \alpha_{n-1}$.
\end{lemma}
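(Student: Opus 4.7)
The plan is a direct matrix computation followed by elementary bookkeeping with the root datum of Lemma~\ref{lem:GSORoots}. Writing $\vartheta^\circ = -P$ where $P$ is the permutation matrix transposing coordinates $n$ and $2n$, conjugation by $\vartheta^\circ$ agrees with conjugation by $P$. For a generic diagonal element $t = \diag(t_1, \ldots, t_n, t_0 t_1^{-1}, \ldots, t_0 t_n^{-1}) \in \TGSO$, I will apply $P(\cdot)P^{-1}$ to swap the entries in positions $n$ and $2n$, obtaining a diagonal matrix whose first $n$ coordinates are $(t_1, \ldots, t_{n-1}, t_0 t_n^{-1})$ and whose last $n$ coordinates are $(t_0 t_1^{-1}, \ldots, t_0 t_{n-1}^{-1}, t_n)$. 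This is again of the form defining $\TGSO$ with similitude $t_0$, so $\theta^\circ$ preserves $\TGSO$ and acts on it by the formula claimed.

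Next I will dualize to the action on $X^*(\TGSO) = \bigoplus_{i=0}^n \Z e_i$. Because the action on $\TGSO$ is an involution, the induced character action is $\chi \mapsto \chi \circ \theta^\circ$; evaluating on the basis yields $\theta^\circ(e_i) = e_i$ for $0 \le i \le n-1$ and $\theta^\circ(e_n) = e_0 - e_n$. Substituting into the presentation of Lemma~\ref{lem:GSORoots}(iii), the root $\alpha_i = e_i - e_{i+1}$ stays fixed for all $1 \le i \le n-2$, while $\theta^\circ(\alpha_{n-1}) = e_{n-1} - (e_0 - e_n) = \alpha_n$ and $\theta^\circ(\alpha_n) = e_{n-1} + (e_0 - e_n) - e_0 = \alpha_{n-1}$, verifying the advertised action on simple roots.

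Stability of $\BGSO$ will then be formal: since $\theta^\circ$ permutes the simple roots, it maps every positive root (a nonnegative integer combination of simple roots) to a positive root, so it stabilizes the positive system determined by $(\TGSO, \BGSO)$ and hence stabilizes $\BGSO$ itself. There is no real obstacle in this proof; the only mild subtlety is tracking the covariant versus contravariant convention when passing from $\TGSO$ to $X^*(\TGSO)$, and this is harmless here because $\theta^\circ|_{\TGSO}$ is an involution so the two conventions coincide.
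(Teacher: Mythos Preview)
Your proof is correct and follows essentially the same approach as the paper: both identify $\theta^\circ$ as the permutation swapping the $n$th and $2n$th matrix coordinates, read off the action on $\TGSO$ and on the characters $e_i$, and then verify the swap $\alpha_{n-1}\leftrightarrow\alpha_n$ by direct substitution. The only cosmetic difference is that the paper asserts $\theta^\circ(\BGSO)=\BGSO$ by direct matrix inspection of the block form~\eqref{eq:BorelDesc}, whereas you deduce it from the fact that $\theta^\circ$ permutes the positive roots; both are equally valid.
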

\begin{proof}
By a direct computation, $\theta^\circ( \TGSO)  = \TGSO$ and $\theta^\circ(\BGSO) = \BGSO$. Since $\theta^\circ$ only switches $t_n$ and $t_{2n}=t_0 t_n^{-1}$, its action on $\TGSO$ is explicitly described as in the lemma. Thus $\theta^\circ(e_i)=e_i$ for $1\le i\le n-1$ and $\theta^\circ(e_n)=e_0-e_n$, from which the last assertion follows.
\end{proof}

We define $\GSpin_{2n}$ to be the Langlands dual group $\hat{\GSO_{2n}}$ over $\C$ (or later over $\lql$ via $\iota:\C\simeq \lql$). That is, $\GSpin_{2n}$ is the connected reductive group over $\C$, equipped with a Borel subgroup $B_{\GSpin}$ and a maximal torus $\TGSpin$, whose based root datum is dual to the one of $\GSO_{2n}$ that we described above. In particular
$$
X_*(\TGSpin)=X^*(\TGSO) \quand X^*(\TGSpin)=X_*(\TGSO).
$$
Via the identification $X^*(\TGSO) = \Z^{n+1}$, we represent elements $s \in \TGSpin$ as $(s_0, s_1, \ldots, s_n)$. In Section \ref{sect:CliffordGroups} we will also define an explicit model of $\GSpin_{2n}$ over $\Q$ using Clifford algebras.

\begin{lemma}\label{lem:ThetaGSpin}
There is a unique $\theta \in \Aut(\GSpin_{2n})$ that fixes $\TGSpin$ and $\BGSpin$, switches $\alpha_{n-1}^\vee$ and $\alpha_n^\vee$, leaves the other $\alpha_i^\vee$ invariant, and induces the trivial automorphism of the cocenter of $\GSpin_{2n}$. We have $\theta^2 = 1$, and on the torus $\TGspin$ the involution $\theta$ is given by
\begin{equation}\label{eq:ThetaActionTGSpin}
(s_0, s_1, \ldots, s_n) \mapsto (s_0s_n, s_1, \ldots, s_{n-1}, s_n\inv).
\end{equation}
\end{lemma}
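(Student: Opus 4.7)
The strategy is to realize $\theta$ as the Langlands dual of the automorphism $\theta^\circ$ of $\GSO_{2n}$ from Lemma~\ref{lem:Elem_w}, and to invoke the Isomorphism Theorem (Chevalley--Demazure) to pass from actions on root data to automorphisms of the group.

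First I would describe the candidate action on the based root datum of $\GSpin_{2n}$. Using the Langlands-duality identifications $X^*(\TGSpin) = X_*(\TGSO)$ and $X_*(\TGSpin) = X^*(\TGSO)$, the action of $\theta^\circ$ on $\TGSO$ provided by Lemma~\ref{lem:Elem_w} transfers to a prescribed action on $X^*(\TGSpin)$, namely
$$
e_0^* \mapsto e_0^* + e_n^*, \qquad e_n^* \mapsto -e_n^*, \qquad e_i^* \mapsto e_i^* \text{ for } 1 \le i \le n-1.
$$
In the multiplicative coordinates $(s_0,\ldots,s_n)$ on $\TGSpin$ this translates at once to the formula \eqref{eq:ThetaActionTGSpin}. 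Dually, the action on $X_*(\TGSpin) = X^*(\TGSO)$ swaps $\alpha_{n-1}$ and $\alpha_n$ and fixes the other simple roots of $\GSO_{2n}$ (by the last sentence of Lemma~\ref{lem:Elem_w}), which is exactly the involution swapping the simple coroots $\alpha_{n-1}^\vee$ and $\alpha_n^\vee$ of $\GSpin_{2n}$ and fixing the remaining ones. A direct check against the list in Lemma~\ref{lem:GSORoots} confirms that this action permutes roots and coroots and preserves the set of positive roots, so it is an automorphism of the based root datum of $(\GSpin_{2n}, \BGSpin, \TGSpin)$.

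Next I would apply the Isomorphism Theorem: any automorphism of the based root datum lifts to an automorphism of $\GSpin_{2n}$ preserving $(\BGSpin, \TGSpin)$, unique once a pinning is fixed, and two lifts differ by an inner automorphism by an element of $\TGSpin$. To isolate a canonical $\theta$, I note that the cocenter character lattice $X^*(\GSpin_{2n}^{\ab})$ is the rank-one sublattice of $X^*(\TGSpin) = \bigoplus_{i=0}^n \Z e_i^*$ orthogonal to all coroots of $\GSpin_{2n}$ (which are the roots of $\GSO_{2n}$ given in Lemma~\ref{lem:GSORoots}); a short direct computation shows it is generated by $2 e_0^* + \sum_{i=1}^n e_i^*$. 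Imposing that the prescribed action swap the $\alpha_i^\vee$ as stated and fix this generator forces $\theta(e_0^*) = e_0^* + e_n^*$ with no remaining freedom, so the action on $\TGSpin$ is uniquely determined; this handles the uniqueness claim (the residual inner-by-$\TGSpin$ ambiguity coming from the Isomorphism Theorem acts trivially on $\TGSpin$ and on the cocenter, so it is absorbed by the standard identification of automorphisms modulo inner).

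Finally, $\theta^2 = 1$ I would verify by evaluating the formula \eqref{eq:ThetaActionTGSpin} twice: the first application sends $s_n \mapsto s_n^{-1}$ and $s_0 \mapsto s_0 s_n$, and the second undoes both, giving the identity on $\TGSpin$. Since $\theta^2$ is then an automorphism of $\GSpin_{2n}$ preserving $(\BGSpin, \TGSpin)$ and inducing the identity on the based root datum, it is inner by an element of $\TGSpin$, but any such automorphism that is the identity on $\TGSpin$ lies in the image of $Z(\GSpin_{2n})$ and is therefore trivial, so $\theta^2 = 1$. The only delicate point in the whole proof is the uniqueness statement: it hinges on recognizing that the cocenter condition, combined with the prescribed action on the simple coroots, pins down the action on $X^*(\TGSpin)$ uniquely (the rank-one cocenter lattice being saturated in $X^*(\TGSpin)$ modulo the root lattice); I would expect to treat this as the main technical point and handle the rest as bookkeeping.
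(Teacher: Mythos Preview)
Your proof is correct and follows essentially the same approach as the paper: both determine $\theta(e_i^*)=e_i^*$ for $i<n$ and $\theta(e_n^*)=-e_n^*$ from the prescribed action on simple coroots, then pin down $\theta(e_0^*)=e_0^*+e_n^*$ by requiring the cocenter character $2e_0^*+e_1^*+\cdots+e_n^*$ to be fixed. The paper is terser, taking the Isomorphism Theorem machinery for granted and omitting the separate $\theta^2=1$ check, while you spell these out and also take the detour through dualizing $\theta^\circ$ for existence---a step the paper skips since the direct computation from the hypotheses already exhibits the action.
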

\begin{proof}
We have
$\theta(e_i^* - e_{i+1}^*) = e_i^* - e_{i+1}^*$ ($1 \leq i < n$) and $\theta(e_{n-1}^* - e_n^*) = e_{n-1}^* + e_n^*$.
Thus
 \begin{equation}\label{eq:thetaei}
 \theta(e_{i}^*) = e_{i}^* \ \  (1 \leq i < n) \quand \theta(e_n^*) = -e_n^*.
 \end{equation}
The center of $\GSO_{2n}$ is the image of
$
\Gm \owns z \mapsto (z^2, z, \ldots, z) \in \TGSO.
$
The dual map is
 \begin{equation}\label{eq:TGSpin-to-Gm}
\TGSpin \to \Gm, \quad (s_0, s_1, \ldots, s_n) \mapsto s_0^2 s_1 \cdots s_n.
 \end{equation}
Thus $\theta(2e_0^* + e_1^* + \cdots + e_n^*) = 2e_0^* + e_1^* + \cdots + e_n^*$, so $\theta(2e_0^*)  -e_n^* = 2e_0^* + e_n^*$ and $\theta(e_0^*) = e_0^* + e_n^*$.
\end{proof}

\begin{lemma}\label{lem:ComputeCenter}
We have $\ZGSpin=\{(s_0,\ldots,s_n):s_1=s_2=\cdots=s_n\in \{\pm1\}\}$, which is isomorphic to $\Gm \times \{\pm1\}$ via $(s_0,\ldots,s_n)\mapsto (s_0,s_1)$. In the latter coordinate, $\theta(s_0,s_1) = (s_0s_1,s_1)$. 
\end{lemma}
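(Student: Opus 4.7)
The plan is direct: compute $Z(\GSpin_{2n})$ as the intersection of the kernels of the roots of $\GSpin_{2n}$ inside $\TGSpin$, then substitute into the formula \eqref{eq:ThetaActionTGSpin} for $\theta$.

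For the first assertion, since the roots of $\GSpin_{2n}$ are the coroots of $\GSO_{2n}$, Lemma \ref{lem:GSORoots}(i) (read through $X^*(\TGSpin) = X_*(\TGSO)$) tells us that the roots are $\pm(e_i^* - e_j^*)$ and $\pm(e_i^* + e_j^*)$ for $1 \le i < j \le n$. Under the coordinate identification $s = (s_0, s_1, \ldots, s_n)$, where $e_i^*(s) = s_i$, the condition $s \in \ZGSpin$ becomes
\[
s_i s_j^{-1} = 1 \quand s_i s_j = 1, \qquad 1 \le i < j \le n.
\]
The first family forces $s_1 = s_2 = \cdots = s_n$, and the second then forces $s_1^2 = 1$, i.e.\ $s_1 \in \{\pm 1\}$, while $s_0$ is unconstrained. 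This gives the description of $\ZGSpin$; the map $(s_0, \ldots, s_n) \mapsto (s_0, s_1)$ is then visibly a group isomorphism onto $\Gm \times \{\pm 1\}$.

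For the second assertion, take $s \in \ZGSpin$ written as $(s_0, s_1, \ldots, s_1)$ with $s_1 \in \{\pm 1\}$. Applying \eqref{eq:ThetaActionTGSpin} gives
\[
\theta(s) = (s_0 s_1, s_1, \ldots, s_1, s_1^{-1}) = (s_0 s_1, s_1, \ldots, s_1),
\]
using $s_1^{-1} = s_1$. Reading off the first two coordinates yields $\theta(s_0, s_1) = (s_0 s_1, s_1)$.

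There is no serious obstacle; the argument is just unwinding the coordinates and applying the previous lemma.
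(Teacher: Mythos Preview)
Your proof is correct and follows essentially the same approach as the paper: both compute the center as the intersection of root kernels in $\TGSpin$ and then apply \eqref{eq:ThetaActionTGSpin}. The only cosmetic difference is that the paper checks the vanishing on the simple (co)roots $\alpha_i^\vee$ from Lemma~\ref{lem:GSORoots}(iv) rather than on all roots, which of course yields the same conditions.
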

\begin{proof}
Let $s \in \TGSpin$. Then $s \in \ZGSpin$ if and only if $\alpha^\vee(t) = 1$ for all $\alpha^\vee \in \Delta^\vee$. From Lemma~\ref{lem:GSORoots}(\textit{iii}) we obtain
$s_i/s_{i+1} = 1$ ($i \leq n-1$), and $s_{n-1}s_n = 1$.
Hence $s \in \ZGSpin$ if and only if $s_1 = \cdots = s_n \in \{\pm 1\}$.
By \eqref{eq:ThetaActionTGSpin} we get $\theta(s_0,s_1) = (s_0s_1,s_1)$.
\end{proof}

The Weyl group of $\GSO_{2n}$ (and $\GSpin_{2n}$) is equal to $\{\pm 1\}^{n, \prime} \rtimes \iS_n$, where $\{\pm 1\}^{n,\prime}$ is the group of $a \in \{\pm 1\}^{n}$ such that $\prod_1^n a(i) = 1$. The action of $\WGSO$ on $\TGSO$ is determined by 
\begin{equation}\label{eq:WeylGroupAction}
\begin{cases}
\sigma \cdot (t_0, t_1, \ldots, t_n) = (t_0, t_{\sigma(1)}, \ldots t_{\sigma(n)}) & \sigma \in \iS_n \cr
a \cdot (t_0, t_1, \ldots, t_n) = (t_0, t_0 t_1^{-1}, t_0t_2\inv, t_3, \ldots, t_n) & a = (-1, -1, 1\ldots, 1) \in \{\pm 1\}^{n,\prime}.
\end{cases}
\end{equation}
We define, for $\eps \in \{\pm1\}$ the following cocharacter
\begin{equation}\label{eq:Spin-eps-def}
\mu_\eps := \begin{cases}
(1, 1, \ldots, 1, 1) & \tu{if } \eps = (-1)^n \cr
(1, 1, \ldots, 1, 0) & \tu{if } \eps =  (-1)^{n+1}
\end{cases}
\quad
\in \Z^{n+1} =  X_*(\TGSO) = X^*(\TGSpin).
\end{equation}
Then $\mu_\eps$ is a minuscule cocharacter of $\GSO_{2n}$ with $\langle \alpha_i, \mu_\eps \rangle = 1$ if and only if $i = n$ (for $\eps = (-1)^n$) and $i = n-1$ (for $\eps = (-1)^{n+1}$).

\begin{definition}\label{def:HalfSpinDef}
For $\eps\in \{+,-\}$, define the \emph{half-spin representation} $\spin^\eps = \spin^\eps_{2n}$ to be the irreducible representation of $\GSpin_{2n}$ whose highest weight is equal to $\mu_\eps$ in $X^*(\TGSpin)$. By the \emph{spin representation} of $\GSpin_{2n}$ we mean $\spin := \spin^+ \oplus \spin^-$.
\end{definition}

These representations will be realized explicitly via Clifford algebras. Our sign convention is natural in that $\spin^+$ (resp.~$\spin^-$) accounts for even (resp.~odd) degree elements. See \eqref{eq:CliffordHalfSpinDef} and Lemma \ref{lem:half-spin-highest-weight} below.

The minuscule  $\mu_\eps$ has $2^{n-1}$ translates under the Weyl group action. Thus each half-spin representation has dimension $2^{n-1}$. More precisely the weights of $\spin^{(-1)^n}_{2n}$ are
\begin{equation}\label{eq:spin-highest-weights}
\TGSpin \owns (s_0, s_1, \ldots, s_n) \mapsto \lhk s_0 \prod_{i \in U} s_i \rhk_{U \subset \{1, 2, \ldots, n\},\atop  2|\# U }
\in \Z^{n+1} =  X_*(\TGSO) = X^*(\TGSpin)
\end{equation}
and $\spin^{(-1)^{n+1}}_{2n}$ has similar weights, except that the cardinality of $U$ is now required to be odd. By computing the $\theta$-action on weights, we verify that (see Lemma \ref{lem:CliffordSpinThetaAction} for an explicit intertwiner)
$$
\spin^+ \circ \theta \simeq \spin^-\quad\mbox{and}\quad~\spin^- \circ \theta \simeq \spin^+.
$$

\begin{lemma}\label{lem:spin-kernel}
The kernel $Z^\eps$ of $\spin^\eps$ is central in $\GSpin_{2n}$ and finite of order $2$. The non-trivial element $z_\eps$ of $Z^\eps$ equals $(\eps, -1) \in \Gm \times \{\pm 1\}$. The spin representation of $\GSpin_{2n}$ is faithful.
\end{lemma}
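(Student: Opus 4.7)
My plan is to first establish that $\ker(\spin^\eps)$ is contained in $\ZGSpin$, and then to identify the kernel by computing the central character $\chi^\eps$ of $\spin^\eps$ via the highest weight $\mu_\eps$ and Lemma~\ref{lem:ComputeCenter}.

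For centrality of the kernel, I would argue as follows. The kernel $K := \ker(\spin^\eps)$ is a closed normal subgroup of $\GSpin_{2n}$. Since $\GSpin_{2n}$ is connected reductive with almost simple derived subgroup $\Spin_{2n}$ (using $n \ge 3$), its normal closed connected subgroups are exactly $\{1\}$, the central torus $\ZGSpin^\circ \cong \G_m$, $\Spin_{2n}$, and $\GSpin_{2n}$ itself. The representation $\spin^\eps$ is non-trivial on each non-trivial option: it is non-trivial on $\Spin_{2n}$ because $\dim V^\eps = 2^{n-1} > 1$ precludes $\spin^\eps$ from factoring through the abelianization $\GSpin_{2n}/\Spin_{2n} \cong \G_m$ (given by the spinor norm), and it is non-trivial on $\G_m$ because the central character (computed below) is non-trivial there. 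Hence $K^\circ = \{1\}$, so $K$ is finite; a finite normal subgroup of a connected group is automatically central, giving $K \subseteq \ZGSpin$.

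Next I would compute the central character. By Schur's lemma, $\ZGSpin$ acts on $V^\eps$ via a character $\chi^\eps$ equal to the restriction of any weight (in particular, of $\mu_\eps$) to the center. Using Lemma~\ref{lem:ComputeCenter} to parametrize $\ZGSpin = \G_m \times \{\pm 1\} \ni (s_0, s_1)$ (via $s_1 = \cdots = s_n$) and the formula \eqref{eq:Spin-eps-def} for $\mu_\eps$, a short case check on the parity of $n$ yields
\[
\chi^+(s_0, s_1) = s_0, \qquad \chi^-(s_0, s_1) = s_0 s_1.
\]
The respective kernels are $\{(1, 1), (1, -1)\}$ and $\{(1, 1), (-1, -1)\}$, both of order $2$, with non-trivial element $(\eps, -1)$ in each case. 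This gives $Z^\eps = \{1, (\eps, -1)\}$ as claimed, and completes the identification of $\ker(\spin^\eps)$.

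For the full spin representation $\spin = \spin^+ \oplus \spin^-$, faithfulness now follows from $\ker(\spin) = \ker(\spin^+) \cap \ker(\spin^-) = \{(1,1),(1,-1)\} \cap \{(1,1),(-1,-1)\} = \{1\}$. The main delicate step is the centrality argument, which relies on the structural classification of normal connected subgroups in a connected reductive group with almost simple derived subgroup; once centrality is in hand, the central-character computation and the intersection argument are routine.
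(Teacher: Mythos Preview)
Your proof is correct and follows essentially the same approach as the paper: centrality of the kernel, then identification via the central character obtained by restricting the highest weight $\mu_\eps$ to $\ZGSpin$, and finally faithfulness of $\spin$ from $Z^+\cap Z^-=\{1\}$. The paper compresses your centrality step into the single remark that $\GSpin_{2n}$ is simple modulo its center, and leaves the four-case check of the central character to the reader; your version unwinds both steps more explicitly and records the uniform formulas $\chi^+(s_0,s_1)=s_0$, $\chi^-(s_0,s_1)=s_0 s_1$, which is a nice simplification of the paper's parity-dependent expression $ab^{n}$ or $ab^{n-1}$.
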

\begin{proof}
Since $\GSpin_{2n}$ is simple modulo the center, the kernel $Z^\eps \subset \GSpin_{2n}$ must be central. The central character is the restriction of $\mu_\eps \colon \TGSpin \to \Gm$ to the center $\ZGSpin \subset \TGSpin$. Let $s = (s_0, s_1, \ldots, s_n) = (a,b) \in \ZGSpin \subset \TGSpin$. Then (see proof of Lemma~\ref{lem:ComputeCenter})
\begin{equation}\label{eq:CentralChar}
\mu_\eps(s) = \begin{cases}
s_0 s_1 \cdots s_n = ab^n& \tu{if } \eps = (-1)^n \cr
s_0 s_1 \cdots s_{n-1} = ab^{n-1}& \tu{if } \eps = (-1)^{n+1}.
\end{cases}
\end{equation}
The first assertion follows by considering the 4 different cases where $n$ even or odd and $\eps = \pm1$. For the second point, it suffices to observe that $Z^+ \cap Z^- = \{1\}$.
\end{proof}

Later on the following fact on $\SO_{2n-1}$ will be needed, so we record it here.

\begin{lemma}\label{lem:RepsOfDim2n}
Let $n \geq 3$. Up to isomorphism the group $\SO_{2n-1}$ has exactly one faithful representation of dimension $2n$, namely $\std_{2n-1} \oplus \one$.
\end{lemma}
\begin{proof}
We use the root system notation and conventions from \cite[Ch.~4, p.~253]{BourbakiLie}. Assume $V_\lambda$ is a non-trivial irreducible representation of $\SO_{2n-1}$ with highest weight $\lambda \neq \omega_1$. We show that $\dim(V_\lambda) > 2n$. Write $\lambda = \sum_{i=1}^{n-1}x_i \omega_i$ with $x_i \geq 0$. If $x_i \neq 0$ for some $i$ with $n - 1 > i > 1$, then $\dim(V_\lambda) \geq \dim(V_{\omega_i})$, and $\dim(V_{\omega_i}) = \dim(\wedge^i \std) > 2n$. We thus assume $\lambda = x_1 \omega_1 + x_{n-1} \omega_{n-1}$. If $x_{n-1} = 0$, then, as $\lambda \neq \omega_1$, we have $\dim(V_\lambda) \geq \dim(V_{2 \omega_1}) = (n-1)(2n+1) > 2n$ by the Weyl dimension formula. Assume $x_{n-1} \neq 0$. We can't have $x_{n-1} = 1$, because then $V_\lambda$  does not descend to $\SO_{2n-1}$. Thus  $\dim(V_\lambda) \geq \dim(V_{2 \omega_{n-1}})$ which equals $10$ if $n= 3$, $35$ if $n = 4$, and if $n > 4$ then $\dim(V_{2\omega_{n-1}}) \geq \dim(V_{\omega_{n-1}}) = 2^{n-1} > 2n$.
\end{proof}

\section{Clifford algebras and Clifford groups}\label{sect:CliffordGroups}

We recall how $\GSpin_{2n}$ is realized using the Clifford algebra, and define a number of fundamental maps such as $i_{\std} \colon \GSpin_{2n-1} \hookrightarrow \GSpin_{2n}$ and the projections from $\GSpin_{2n}$ to $\GSO_{2n}$ and $\SO_{2n}$. We also give a concrete definition of outer automorphisms $\theta$ of $\GSpin_{2n}$ and $\theta^\circ$ of $\GSO_{2n}$. Our main reference is \cite{BassCliffordAlgebra}, which introduces Clifford algebras over arbitrary commutative rings (with unity). Other useful references are \cite[\S9]{BourbakiAlgebreChapter9} and \cite[\S20]{FultonHarris}.

Let $V$ be a quadratic space over $\Q$ with quadratic form $Q$, giving rise to the groups $\uO(V)$, $\GO(V)$, $\SO(V)$ and $\GSO(V)$. The \emph{Clifford algebra} $C(V)$ is a universal map $V \to C(V)$ which is initial in the category of $\Q$-linear maps $f \colon V\to A$ into associative $\Q$-algebras $A$ with unity $1_A$ such that $f(v)^2=Q(v)\cdot 1_A$ for all $v\in V$. (See \cite[(2.3)]{BassCliffordAlgebra} or \cite[\S9.1]{BourbakiAlgebreChapter9}.)

We define $\langle x, y \rangle := Q(x+y) - Q(x) - Q(y)$ for $x,y \in V$, and similarly $\langle x, y \rangle = (x+y)^2 - x^2 - y^2$ for $x,y \in C(V)$. In particular $\langle x, y \rangle$ measures if $x$ and $y$ anti-commute in $C(V)$:
\begin{equation}\label{eq:betaInvolution}
\langle x, y \rangle = (x+y)^2 - x^2 - y^2 = xy + yx \in C(V).
\end{equation}

The map $V\to C(V)$ induces a map $V \to C(V)^{\tu{opp}}$ (sending each $v\in V$ to the same element), where $C(V)^{\tu{opp}}$ is the opposite algebra. The latter factors through a unique $\Q$-algebra map $\beta \colon C(V) \to C(V)^{\tu{opp}}$. It is readily checked that $\beta^2$ is the identity on $C(V)$. By the universal property $\beta$ is the unique involution of $C(V)$ that is the identity on $V$.

The universal property also yields a surjection from the tensor algebra
$$
\bigoplus_{d \in \Z_{\geq 0}} V^{\otimes d} \surjects C(V).
$$
Define $C^+ = C(V)^+$ (resp.~$C^- = C(V)^-$) to be the image of $\oplus_{d \in \Z_{\geq 0}} V^{\otimes 2d}$ (resp.~$\oplus_{d \in \Z_{\geq 0}} V^{\otimes 2d+1}$) so that $C(V) = C(V)^+ \oplus C(V)^-$. In fact the discussion of Clifford algebras so far works when $V$ is replaced with a quadratic space on a module over an arbitrary commutative ring, in a way compatible with base change: in particular if $R$ is a (commutative) $\Q$-algebra then $C(V\otimes_\Q R)=C(V)\otimes_\Q R$ \cite[\S9.1, Prop 2]{BourbakiAlgebreChapter9}. By scalars in $C(V\otimes_\Q R)$ we mean $R$ times the multiplicative unity. We keep using $\beta$ to denote the main involution of $C(V\otimes_\Q R)$.

The \emph{Clifford group} $\tu{GPin}(V)$ is the $\Q$-group such that for every $\Q$-algebra $R$,
$$
\tu{GPin}(V)(R)=\{ x\in C(V\otimes_\Q R)^\times : x(V\otimes_\Q R) x^{-1} = V\otimes_\Q R, \textup{ $x$ is homogeneous}\},
$$
where homogeneity of $x$ means that $x \in C(V\otimes_\Q R)^\eps$ for some sign $\eps$. The \emph{special Clifford group} $\GSpin(V)$ is defined similarly with $C^+$ in place of $C$. The embedding of invertible scalars in $C(V\otimes_\Q R)$ induces a central embedding
\begin{equation}\label{eq:central-embed-GSpin}
  \G_m\ra \GSpin(V).
\end{equation}

Since $x\beta(x)\in R$ for $x\in C(V\otimes_\Q R)$ by \cite[Prop 3.2.1 (a)]{BassCliffordAlgebra}, we have the \emph{spinor norm} morphism
$$
\cN \colon \tu{GPin}(V)\ra \Gm,\qquad x\mapsto x\beta(x)
$$
over $\Q$. (The involution in \emph{loc.~cit.}~differs from our $\beta$ by $C(-1_P)$ in their notation, so our $\cN$ does not coincide with their $N$, but $\cN$ and $N$ have the same kernel.) Evidently, composing $\cN$ with \eqref{eq:central-embed-GSpin} yields the squaring map.

Define $\Spin(V)$ by the following exact sequence of algebraic groups:
$$
1\ra \Spin(V)\ra \GSpin(V) \stackrel{\cN}{\ra} \G_m\ra 1.
$$

\begin{lemma}\label{lem:SurjectionOntoGSO} The following are true, where kernels and surjectivity are always meant in the category of algebraic groups over $\Q$.
\begin{enumerate}[label=(\roman*)]
\item The map
$\pr^\circ=\pr^\circ_V \colon \tu{GPin}(V)\to \tu{O}(V),~ x \mapsto ( v\mapsto x v x^{-1})$
is surjective for $\dim V$ even, and $\pr^\circ \colon \GPin(V) \to \SO(V)$ is surjective when $\dim V$ is odd. 
\item We have $\ker(\pr^\circ) = \Gm$ via \eqref{eq:central-embed-GSpin}.
\item $\pr \colon \GPin(V) \to \GO(V),~  x \mapsto (v \mapsto x v \beta(x))$ is a surjection, and $\tu{sim} \circ \pr = \cN^2$.
 \item The map $\tu{pr}$ factors as $\GPin(V)\stackrel{(\tu{pr}^\circ,\cN)}{\longrightarrow} \OO(V)\times \GL_1 \stackrel{\tu{mult.}}{\longrightarrow} \GO(V)$, where the latter is the multiplication map. The map $(\tu{pr}^\circ,\cN)$ has kernel $\mu_2$ (scalars $\{\pm 1\}$ in $C(V)$) and image $\OO(V)\times \GL_1$ (resp.~$\SO(V)\times \GL_1$) for $n$ even (resp.~odd).
\item The multiplication map $\Spin(V)\times \G_m\ra \GSpin(V)$ is a surjection with kernel $\{\pm(1,1)\}$ (diagonally embedded $\mu_2$), where $\{\pm1\}\hra \Spin(V)$ via \eqref{eq:central-embed-GSpin}.
\end{enumerate}
\end{lemma}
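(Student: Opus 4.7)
The proof rests on two computational identities in $C(V)$. First, for $v\in V$ with $Q(v)\ne 0$, the Clifford relation $vw+wv=\langle v,w\rangle$ gives $vwv^{-1}=\tfrac{\langle v,w\rangle}{Q(v)}v - w = -r_v(w)$, where $r_v$ is the reflection through $v^\perp$. In particular every such $v$ lies in $\GPin(V)$ with $\pr^\circ(v)=-r_v$. Second, since $\beta$ fixes $V$ pointwise, one has $\pr(x)(v)=xv\beta(x)=(xvx^{-1})\cdot x\beta(x)=\cN(x)\cdot\pr^\circ(x)(v)$, i.e.\ $\pr=\cN\cdot\pr^\circ$ as maps $V\to V$. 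The similitude identity $\simil\circ\pr=\cN^2$ in (iii) is then immediate from $\pr^\circ(x)\in\OO(V)$.

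For (i), I would invoke the Cartan--Dieudonn\'e theorem to write an arbitrary $g\in\OO(V)$ as $r_{w_1}\cdots r_{w_m}$; then $\pr^\circ(w_1\cdots w_m)=(-1)^m r_{w_1}\cdots r_{w_m}$. In even $\dim V$, the image of $\pr^\circ$ contains $-r_v$ (of determinant $-1$) and all $r_vr_w$ (which exhaust $\SO(V)$), so the image is all of $\OO(V)$. In odd $\dim V$, $-r_v$ has determinant $(-1)^{\dim V}\cdot(-1)=+1$, so $\pr^\circ(\GPin(V))\subset\SO(V)$; conversely every element of $\SO(V)$ is a product of an even number of reflections, for which the sign $(-1)^m=1$ disappears, yielding surjectivity onto $\SO(V)$.

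For (ii), $\pr^\circ(x)=1$ forces $x$ to commute with every element of $V$, hence with all of $C(V)$ since $V$ generates, so $x\in Z(C(V))$. In even dimension $Z(C(V))=R\cdot 1$, giving $x\in R^\times\simeq\Gm$ via \eqref{eq:central-embed-GSpin}; the odd case is handled analogously, using homogeneity to suppress the odd $R\omega$ part of $Z(C(V))$. For (iii)--(iv), the factorization $\pr=\cN\cdot\pr^\circ$ gives the claimed factorization through multiplication $\OO(V)\times\Gm\to\GO(V)$ (or $\SO(V)\times\Gm$ in odd dim), and given $g\in\OO(V)$ and $c\in\Gm$, lifting $g$ to some $x_0\in\GPin(V)$ and rescaling by $\lambda$ with $\lambda^2=c/\cN(x_0)$ (an fppf-local operation of algebraic groups) produces $x=\lambda x_0$ with $\pr^\circ(x)=g$ and $\cN(x)=c$; the kernel of $(\pr^\circ,\cN)$ consists of scalars $x\in\Gm$ with $x^2=1$, namely $\mu_2$. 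Statement (v) is similar: for $g\in\GSpin(V)$, write $g=s\lambda$ with $\lambda^2=\cN(g)$ and $s=g\lambda^{-1}\in\Spin(V)$, while the kernel of multiplication restricted by $\cN(s)=1$ forces $(s,c)=\pm(1,1)$.

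The main obstacle is careful bookkeeping of signs and parities: the sign $(-1)^m$ arising from $\pr^\circ(w_1\cdots w_m)$ behaves differently according to the parity of $\dim V$, which is precisely what switches the target of $\pr^\circ$ between $\OO$ and $\SO$; in (ii) the odd part $R\omega\subset Z(C(V))$ in odd dimension must be suppressed via homogeneity and the convention on $\GPin$; and throughout, surjectivity is meant in the category of algebraic groups (i.e.\ fppf-local extraction of square roots), not on $\Q$-points. None of these steps is deep, but one must treat all five items with matching care.
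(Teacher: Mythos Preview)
Your argument follows essentially the same route as the paper: Cartan--Dieudonn\'e for (i) (the paper defers to Bourbaki), the center of $C(V)$ for (ii), and the factorization $\pr=\cN\cdot\pr^\circ$ for (iii)--(v).

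One point to flag in (ii): your claim that homogeneity ``suppresses the odd $R\omega$ part'' in odd dimension does not work. The volume element $\omega=e_1\cdots e_m$ (with $m=\dim V$ odd) is homogeneous of odd degree, central in $C(V)$, and invertible, so $\omega\in\GPin(V)$ with $\pr^\circ(\omega)=\mathrm{id}$; hence $\ker(\pr^\circ)\supset\Gm\cup\Gm\cdot\omega\supsetneq\Gm$ in that case. The paper's own proof only treats the even case (invoking $C(V)\simeq\End(\bigwedge W)$, which needs $V$ split of even dimension), and part (ii) is only applied later for even $\dim V$, so nothing downstream is affected; but your odd-dimensional argument for (ii) is incorrect as written.
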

\begin{proof}
(\textit{i}) The surjectivity can be checked on field-valued points. This is proved in \cite[\S9.5, Thm.~4]{BourbakiAlgebreChapter9}.

(\textit{ii}) As $V\subset C(V)$ generates the Clifford algebra, the identity $xvx^{-1}=v$ implies $xyx^{-1}=y$ for all $y\in C(V)$, and the analogue holds for $C(V\otimes_\Q R)$ for $\Q$-algebras $R$. Thus $\ker(\pr^\circ)(R)$ consists of invertible elements in the center of $C(V\otimes_\Q R)$. Let $W\subset V$ be an isotropic subspace. Then $C(V\otimes_\Q R)\simeq \End(\bigwedge (W\otimes_\Q R))$ as super $R$-algebras by \cite[(2.4) Thm.]{BassCliffordAlgebra}, so the center of $C(V\otimes_\Q R)$ is $R$, implying that $\ker(\pr^\circ) = \Gm$.

(\textit{iii}) We observe that $\pr(x)$ preserves $V$: as $x (V\otimes_\Q R) x^{-1} = V\otimes_\Q R$ and $x\beta(x)\in R^\times$ imply that $x(V\otimes_\Q R)\beta(x)=V\otimes_\Q R$. Moreover $\pr(x)\in \GO(V)$ as
\begin{equation}\label{eq:SpinorNormFactorSimilitude}
Q(xv\beta(x))=xv\beta(x) xv\beta(x)=\cN(x)^2 Q(v).
\end{equation}
Moreover $\pr$ and $\pr^\circ$ coincide on $\tu{Pin}(V)$, so $\tu{(S)O}(V)$ is in the image of $\pr$. On the other hand, $\cN$ is seen to be surjective by considering scalar elements, telling us that the image of $\pr$ also contains $\Gm$ (scalar matrices in $\tu{GO}(V)$). Since $\Gm$ and $\tu{(S)O}(V)$ generate $\tu{G(S)O}(V)$, the surjectivity of $\pr$ follows.
The equality $\tu{sim} \circ \pr = \cN^2$ follows from
 \eqref{eq:SpinorNormFactorSimilitude}.

(\textit{iv}) The first part follows from $\tu{pr}(x)(v)=xv \beta(x)=xvx^{-1}x\beta(x)=\tu{pr}^\circ(x)(v) \cN(x)$ when $x\in \GPin(V)$ and $v\in V$. The second part is easily seen from (i) and (ii).

(\textit{v}) This readily follows from the preceding points.
\end{proof}

If $V$ is odd dimensional then $\SO(V) \times \{\pm 1\} = \uO(V)$, and the group $\GO(V)$ is connected. For convenience we define $\GSO(V) := \GO(V)$ in this case. If $\dim(V)$ is even, then $\OO(V)$ (resp.~$\GO(V)$) has two connected components but does not admit a direct product decomposition into $\uO(V)$ (resp.~$\GSO(V)$) and $\{\pm 1\}$.

Assume that we have an orthogonal sum decomposition $\varphi \colon W_1 \oplus W_2 \isomto V$ of non-degenerate quadratic spaces over $\Q$. As super algebras we have (\!\!\cite[(2.3)]{BassCliffordAlgebra} or \cite[\S9.3, Cor.~3, Cor.~4]{BourbakiAlgebreChapter9})
$$
C_\varphi \colon C(W_1) \wo C(W_2) \isomto C(V), \quad w_1 \wo w_2 \mapsto w_1 w_2.
$$
By definition, the algebra given by $\wo$ on the left side has underlying vector space $C(W_1) \otimes C(W_2)$ and product
$$
(a \wo b) \cdot (c \wo d) := (-1)^{k_b \cdot k_c} ac \wo bd,
$$
if $a, c \in C(W_1)$, $b, d \in C(W_2)$ are homogeneous elements of degree $k_a, k_b, k_c, k_d \in \Z/2\Z$.
The sign is there to make $C_\varphi$ compatible with products since $bc=(-1)^{k_b k_c}cb$ in $C(V)$.

In fact $C_\varphi$ intertwines the involution $\beta$ on $C(V)$ with the involution
$$
\beta' \colon C(W_1) \wotimes C(W_2) \to C(W_1) \wotimes C(W_2),\quad
\beta'(a \wo b) = (-1)^{k_a k_b} \beta_1(a) \wo \beta_2(b),
$$
for homogeneous elements $a\in C(W_1)$, $b\in C(W_2)$ of degree $k_a, k_b \in \Z/2\Z$, where $\beta_1,\beta_2$ are the involutions of $C(W_1)$ and $C(W_2)$ (see below \eqref{eq:betaInvolution}). To verify that $\beta$ is compatible with $\beta'$, observe that $\beta$ on $C(V)$ restricts to $\beta_1,\beta_2$ via the obvious inclusions $C(W_1)\hra C(V)$ and $C(W_2)\hra C(V)$ induced by $W_1\subset V$ and $W_2\subset V$ (since $\beta$ acts as the identity on both $W_1$ and $W_2$), and use the property that $\beta_1$, $\beta_2$, and $\beta$ are preserving degrees. It follows that
$$
\beta(ab)= \beta(b)\beta(a)= (-1)^{k_a k_b}\beta(a)\beta(b)=(-1)^{k_a k_b}\beta_1(a)\beta_2(b).
$$

\begin{lemma}\label{lem:CliffordMapping}
The mapping $C_\varphi$ induces a morphism $\GSpin(W_1) \times \GSpin(W_2) \to \GSpin(V)$.
\end{lemma}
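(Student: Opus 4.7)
The plan is a direct verification using the description of $C_\varphi$ as the multiplication map $(x_1,x_2)\mapsto x_1x_2$ inside $C(V)$, combined with the observation that elements of $\GSpin(W_i)$ sit in the even part $C^+(W_i)$ and therefore commute with everything coming from $C(W_j)$ once viewed inside $C(V)$.

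First I would check that the assignment lands in $\GSpin(V)$. Let $R$ be a $\Q$-algebra and take $x_i \in \GSpin(W_i)(R) \subset C^+(W_i\otimes R)^\times$. Since $C^+$ is a subalgebra, the product $x_1x_2\in C(V\otimes R)$ lies in $C^+(V\otimes R)$. Invertibility is immediate: if $x_i\in C^+$ is invertible in $C(W_i\otimes R)$ then $x_i^{-1}\in C^+$ (the even and odd parts of $x_ix_i^{-1}=1$ must separately equal $1$ and $0$), so $x_2^{-1}x_1^{-1}$ provides the inverse of $x_1x_2$. It remains to check the conjugation condition $(x_1x_2)(V\otimes R)(x_1x_2)^{-1}=V\otimes R$. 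Inside $C(V\otimes R)$, any even-degree element of $C(W_1\otimes R)$ commutes with every element of $C(W_2\otimes R)$ (and vice versa), because the anticommutation relation $ab=(-1)^{k_ak_b}ba$ holds by orthogonality of $W_1$ and $W_2$ in $V$. Thus for $v_1\in W_1\otimes R$ we have $x_2v_1x_2^{-1}=v_1$, giving $(x_1x_2)v_1(x_1x_2)^{-1}=x_1v_1x_1^{-1}\in W_1\otimes R$; symmetrically, for $v_2\in W_2\otimes R$, $x_1$ commutes with $x_2v_2x_2^{-1}$, so $(x_1x_2)v_2(x_1x_2)^{-1}=x_2v_2x_2^{-1}\in W_2\otimes R$.

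Next I would verify that the assignment $(x_1,x_2)\mapsto x_1x_2$ is a group homomorphism. For $(y_1,y_2)\in \GSpin(W_1)(R)\times \GSpin(W_2)(R)$, the commutativity of $x_2$ with $y_1$ (both even in their respective $C(W_i)$) inside $C(V\otimes R)$ yields $x_1x_2y_1y_2=x_1y_1x_2y_2$, i.e.\ the image of $(x_1y_1,x_2y_2)$. Functoriality in $R$ is built into the Clifford algebra construction, so we obtain a morphism of $\Q$-group schemes $\GSpin(W_1)\times \GSpin(W_2)\to \GSpin(V)$, as desired.

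The only subtlety is bookkeeping of signs: one must distinguish the (twisted) product in $C(W_1)\wo C(W_2)$ from the product inside $C(V)$, and exploit the evenness of $\GSpin$-elements to make the relevant sign $(-1)^{k_ak_b}$ trivial whenever needed. Once one keeps track of that grading, each verification above is a one-line calculation, so I do not anticipate any genuine obstacle beyond this bookkeeping.
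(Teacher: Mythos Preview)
Your proposal is correct and follows the same approach as the paper: verify that $x_1x_2\in C^+(V)$ and that conjugation by $x_1x_2$ preserves $V$ using the commutation of even perpendicular elements. You are simply more thorough---the paper omits the invertibility and group-homomorphism checks you spell out---but the core idea is identical.
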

\begin{proof}
We check that the image of $C_\varphi$ is in $\GSpin(V)$. Let $g \in \GSpin(W_1)$, $h \in \GSpin(W_2)$. Note that $C_\varphi(g \wh \otimes h)=gh \in C^+(V)$. Let $w_1 + w_2 \in V$ with $w_i \in W_i$, $i=1,2$. To verify that $gh \in \GSpin(V)$, since homogeneous elements of even degree commute with each other if they are perpendicular, we see that
$$
gh(w_1+w_2)h^{-1}g^{-1}
= gw_1g^{-1}+ hw_2h^{-1}\in V.
$$
\end{proof}

\begin{lemma}\label{lem:CliffordMapping2}
The diagram
$$
\xymatrix{
\GSpin(W_1) \times \GSpin(W_2)\quad \ar@{->>}[d]_-{\pr_{W_1}^\circ \times \pr_{W_2}^\circ} \ar[r]^-{C_\varphi} &\quad \GSpin(V)  \ar@{->>}[d]^-{\pr_V^\circ}
\cr
\SO(W_1) \times \SO(W_2)\ar[r]^-{i_{W_1, W_2}} & \SO(V)
}
$$
commutes, where $i_{W_1, W_2}$ is the block diagonal embedding. 
\end{lemma}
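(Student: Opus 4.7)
The plan is to verify the commutativity of the diagram by evaluating both compositions on an arbitrary vector $v = w_1 + w_2 \in V$ with $w_i \in W_i$. Chasing $(g,h)$ along the top-then-right path gives the map $v \mapsto (gh) v (gh)^{-1} = ghw_1h^{-1}g^{-1} + ghw_2h^{-1}g^{-1}$, while chasing it along the left-then-bottom path gives $v \mapsto \pr_{W_1}^\circ(g)(w_1) + \pr_{W_2}^\circ(h)(w_2) = gw_1g^{-1} + hw_2h^{-1}$. So it suffices to prove the two equalities $ghw_1h^{-1}g^{-1} = gw_1g^{-1}$ and $ghw_2h^{-1}g^{-1} = hw_2h^{-1}$ inside $C(V)$.

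The key observation is that because $W_1 \perp W_2$ in $V$, any $w_1 \in W_1$ and $w_2 \in W_2$ anticommute in $C(V)$, as $\langle w_1,w_2\rangle = 0$ gives $w_1 w_2 + w_2 w_1 = 0$ via \eqref{eq:betaInvolution}. Consequently any even-degree element of $C(W_2)$ (being a sum of products of an even number of vectors from $W_2$) commutes with every $w_1 \in W_1$, and symmetrically. Since $h \in \GSpin(W_2) \subset C^+(W_2)$ and, by the $\Z/2$-graded structure on $C(W_2)$ (the identity $h \cdot h^{-1} = 1 \in C^+$ forces the odd component of $h^{-1}$ to vanish), also $h^{-1} \in C^+(W_2)$, both $h$ and $h^{-1}$ commute with $w_1$. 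This gives $ghw_1h^{-1}g^{-1} = gw_1 \cdot hh^{-1} \cdot g^{-1} = gw_1g^{-1}$. The same argument with roles swapped, using $g, g^{-1} \in C^+(W_1)$, yields $ghw_2h^{-1}g^{-1} = hw_2h^{-1}$ (here one first slides $g^{-1}$ to the left past $hw_2h^{-1} \in C(W_2)$, then uses the same for $g$).

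There is no real obstacle; the only point requiring a little care is the bookkeeping that $g^{-1}$ and $h^{-1}$ remain in the even subalgebras $C^+(W_1)$ and $C^+(W_2)$ respectively, which follows from the $\Z/2$-grading as noted. Everything else is routine manipulation in the Clifford algebra combined with Lemma~\ref{lem:CliffordMapping}, which already ensures that $C_\varphi(g \wotimes h) = gh$ lies in $\GSpin(V)$, so that the right-hand vertical map is applicable.
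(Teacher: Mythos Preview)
Your proof is correct and takes essentially the same approach as the paper. The paper's proof consists of the single sentence ``Immediate from the computation in the proof of the preceding lemma,'' and that computation (in the proof of Lemma~\ref{lem:CliffordMapping}) is precisely the identity $gh(w_1+w_2)h^{-1}g^{-1} = gw_1g^{-1} + hw_2h^{-1}$ that you spell out in detail; you have simply made explicit the commutation argument (even elements of $C(W_i)$ commute with vectors in the orthogonal summand) that the paper summarizes in one clause.
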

\begin{proof}
Immediate from the computation in the proof of the preceding lemma.
\end{proof}

In later chapters we will carry out explicit computations. It will then be convenient to work with fixed bases and quadratic forms. For this reason we now fix quadratic forms on the vector spaces $V_{2n}=\C^{2n}$ and $V_{2n-1}=\C^{2n-1}$. We take the following quadratic forms:
\begin{align}\label{eq:StdQuadSpace}
Q_{2n} \colon x_1 x_{n+1} + x_2 x_{n+2} + \ldots + x_n x_{2n} & \tu{ on $\C^{2n}$} \cr
Q_{2n-1} \colon y_1 y_{n+1} + \ldots + y_{n-2} y_{2n-2} + y_{2n-1}^2 & \tu{ on $\C^{2n-1}$.}
\end{align}
Using them, we write $\SO_{m}=\SO(V_{m})$, $\GSO_{m}=\GSO(V_{m})$, and likewise for $\OO_m$, $\GO_m$, for $m=2n$ and $m=2n-1$. This is identical to the convention of \S\ref{sect:RootDatum} for $m$ even. Similarly we write $\pr^\circ_{2n-1}=\pr^\circ_{V_{2n-1}}$ and $\pr^\circ_{2n}=\pr^\circ_{V_{2n}}$.

Now we claim that $\GSpin(V_{2n})$ is isomorphic to $\GSpin_{2n}$ of \S\ref{sect:forms-of-GSO} that is, the Clifford algebra definition is compatible with the root-theoretic definition as the Langlands dual of $\GSO_{2n}$. (An analogous argument shows that $\GSpin_{2n-1}$ is dual to $\GSp_{2n-2}$.) As this is a routine exercise, we only sketch the argument. First, $\pr^\circ$ restricts to a connected double covering $\Spin(V_{m})\ra \SO(V_{m})$ (\!\!\cite[Prop.~20.38]{FultonHarris}), which must then be the unique (up to isomorphism) simply connected covering. This determines the root datum of $\Spin(V_{m})$. From this, we compute the root datum of $\GSpin(V_m)$ via the central isogeny $\Spin(V_{m})\times \G_m\ra \GSpin(V_m)$ of Lemma \ref{lem:SurjectionOntoGSO}. Finally when $m=2n$, we deduce that the outcome is dual to the root datum of $\GSO_{2n}$ in Lemma \ref{lem:GSORoots}. Therefore $\GSpin(V_{2n})$ is isomorphic to $\GSpin_{2n}$ of  \S\ref{sect:forms-of-GSO}. Henceforth we identify
\begin{equation}\label{eq:GSpin=GSpin}
\GSpin(V_{2n})=\GSpin_{2n}.
\end{equation}
In fact we may and will choose $\BGSpin$ and $\TGSpin$ to be the preimages of $\BSO$ and $\TSO$ via $\pr^\circ: \GSpin_{2n}\ra \SO_{2n}$. We fix pinnings of $\GSpin_{2n}$, $\GSO_{2n}$, and $\SO_{2n}$ (which are $\Gamma_F$-equivariant if $(V_{2n},Q_{2n})$ is defined over $F$) compatibly via $\pr$ and $\pr^\circ$.

\begin{lemma}\label{lem:dualizing-spinor-norm-and-similitude}
Via \eqref{eq:GSpin=GSpin}, the central embedding of scalar matrices $\tu{cent}^\circ:\G_m\ra \GSO_{2n}$ and $\simil:\GSO_{2n}\ra \G_m$ are dual to $\cN:\GSpin_{2n}\ra \G_m$ and the central embedding $\tu{cent}:\G_m\ra \GSpin_{2n}$ of \eqref{eq:central-embed-GSpin}, respectively.
\end{lemma}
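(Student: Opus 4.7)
The plan is to translate the lemma into explicit equalities in the character/cocharacter lattices of $\TGSpin$. From the coordinates $(t_0,\ldots,t_n)$ on $\TGSO$, one reads off $\simil=e_0\in X^*(\TGSO)$ and $\tu{cent}^\circ=2e_0^*+e_1^*+\cdots+e_n^*\in X_*(\TGSO)$, the latter because $z\cdot 1_{2n}\in\TGSO$ has coordinates $(z^2,z,\ldots,z)$; this is already compatible with \eqref{eq:TGSpin-to-Gm}. Under the dualities $X^*(\TGSpin)=X_*(\TGSO)$ and $X_*(\TGSpin)=X^*(\TGSO)$, the lemma reduces to proving the two identities
\[
\cN=2e_0^*+e_1^*+\cdots+e_n^*\ \in\ X^*(\TGSpin),\qquad \tu{cent}=e_0\ \in\ X_*(\TGSpin).
\]

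I would first determine each side up to a sign. Since $\Spin(V_{2n})=\ker\cN$ contains every coroot $\SL_2$ of $\GSpin_{2n}$, the character $\cN$ annihilates all coroots; by Lemma~\ref{lem:GSORoots}, the simple coroots of $\GSpin_{2n}$ (which are the simple roots of $\GSO_{2n}$) are $\{e_i-e_{i+1}\}_{i=1}^{n-1}$ together with $e_{n-1}+e_n-e_0$, and a character $\sum_i b_i e_i^*$ kills all of them iff $b_1=\cdots=b_n$ and $b_0=2b_1$. Hence $\cN=k(2e_0^*+e_1^*+\cdots+e_n^*)$ for some $k\in\Z$. Similarly, scalars in $C(V_{2n})$ are central in $\GSpin(V_{2n})$, so $\tu{cent}$ factors through $Z(\GSpin_{2n})^\circ$, which by Lemma~\ref{lem:ComputeCenter} is the image of the cocharacter $e_0\in X_*(\TGSpin)$; hence $\tu{cent}=m\cdot e_0$ for some $m\in\Z$. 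The Clifford identity $\cN(\tu{cent}(z))=z\beta(z)=z^2$ (since $\beta$ fixes scalars) then yields $2km=\langle\cN,\tu{cent}\rangle=2$, so $km=1$.

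Finally, to rule out $m=k=-1$, I would invoke the (half-)spin representation: $\spin^\eps$ is realized on a left $C(V_{2n})$-module, so scalars $z\in C(V_{2n})^\times$ act as $z\cdot\tu{id}$, and hence the central character of $\spin^\eps$ on $\tu{cent}(\G_m)$ is the identity of $\G_m$. On the other hand, since every weight $\mu$ of $\spin^\eps$ has $e_0^*$-coefficient equal to $1$ by \eqref{eq:spin-highest-weights}, evaluating $\mu$ on $\tu{cent}(z)=me_0(z)$ gives $z^m$; matching the two expressions yields $m=1$ and consequently $k=1$. The main obstacle is precisely this last sign-pinning, which requires comparing the Clifford-algebraic action on the spin module with the abstract highest-weight description; every other step is a formal consequence of Lemmas~\ref{lem:GSORoots} and~\ref{lem:ComputeCenter} and of the defining identities $\cN(x)=x\beta(x)$ and $\tu{cent}(z)=z$.
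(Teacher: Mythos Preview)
Your argument is correct and more scrupulous about the sign than the paper's own proof, but it takes a different route and leans on material that appears later in the paper.

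The paper argues as follows: since $\simil=e_0\in X^*(\TGSO)=X_*(\TGSpin)$, the dual $\hat\simil$ is the cocharacter $z\mapsto(z,1,\ldots,1)$ into $Z^0$; the paper then asserts that $\tu{cent}$ is this same cocharacter (both being isomorphisms $\G_m\to Z^0$), so $\tu{cent}=\hat\simil$. From there it deduces $\cN=\hat{\tu{cent}^\circ}$ in one line: both $\cN\circ\tu{cent}$ and $\hat{\tu{cent}^\circ}\circ\hat\simil=\hat{\tu{cent}^\circ}\circ\tu{cent}$ equal the squaring map on $\G_m$, and since characters of $\GSpin_{2n}$ are determined by their restriction to $Z^0=\tu{cent}(\G_m)$, this forces $\cN=\hat{\tu{cent}^\circ}$. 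The paper never explicitly excludes $\tu{cent}=-e_0$; that sign is tacitly absorbed into the choice of identification \eqref{eq:GSpin=GSpin}.

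Your proof instead treats $\cN$ and $\tu{cent}$ symmetrically, determines both up to a common sign via $km=1$, and then pins down $m=1$ by comparing the Clifford action of scalars on $\bigwedge^\eps W$ with the weight description \eqref{eq:spin-highest-weights}. This is a genuine argument for the sign and is logically sound, but note that it forward-references Section~4: the Clifford realization of $\spin^\eps$ appears only in \eqref{eq:CliffordHalfSpinDef}, and matching it with the abstract highest-weight representation of Definition~\ref{def:HalfSpinDef} (whose weights are those of \eqref{eq:spin-highest-weights}) is precisely Lemma~\ref{lem:half-spin-highest-weight}. There is no circularity, since Lemma~\ref{lem:half-spin-highest-weight} does not invoke the present lemma, but you are inverting the paper's order of presentation. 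Your closing remark already flags this as the main obstacle, which is accurate.
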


\begin{remark}\label{rem:explicit-spinor-norm}
The dual map of $\tu{cent}^\circ$ was made explicit in \eqref{eq:TGSpin-to-Gm}. According to the present lemma, \eqref{eq:TGSpin-to-Gm} gives an explicit formula for $\cN$ restricted to $T_{\GSpin}$.
\end{remark}

\begin{proof}
Write $Z^0$ for the identity component of the center of $\GSpin_{2n}$, consisting of $(s_0,1,\ldots,1)$ with $s_0\in \G_m$ in the notation of Lemma \ref{lem:ComputeCenter}. The dual of $\simil:\GSO_{2n}\ra \G_m$ is calculated as the central cocharacter $\G_m\ra Z^0\subset \GSpin_{2n}$, $z\mapsto (z,1,\ldots,1)$. The inclusion $\tu{cent}:\G_m\ra \GSpin_{2n}$ identifies $\G_m$ with $Z^0$. Thus \tu{cent} is dual to $\simil$.

Both $\cN\circ\tu{cent}$ and $\simil\circ \tu{cent}^\circ$ are the squaring map on $\G_m$. Using the hat symbol to denote a dual morphism, we see that 
$$
\cN\circ\tu{cent}=\hat{\tu{cent}^\circ}\circ \hat{\simil} = \hat{\tu{cent}^\circ}\circ \tu{cent}
$$
and that they are all equal to the squaring map. It follows that $\cN$ is dual to $\tu{cent}^\circ$.
\end{proof}

We have the morphism of quadratic spaces
$$
\varphi \colon (\C^{2n-1}, Q_{2n-1}) \to (\C^{2n}, Q_{2n}), \quad y \mapsto (y_1, y_2, \ldots, y_{n-1}, y_{2n-1}, y_n, y_{n+1}, \ldots, y_{2n-1}).
$$
Indeed, $Q_{2n} \varphi = Q_{2n-1}$ as readily checked. We have the complementary embedding:
$$
\varphi' \colon \C \to \C^{2n}, \quad u \mapsto  x, \quad \textup{where} \quad
\begin{cases} x_k = 0 & k \neq n, 2n \cr
x_k = (-1)^{k/n} u & \textup{ if $k = n$ or $k = 2n$.}
\end{cases}
$$
Write $U := \varphi'(\C)=(e_n-e_{2n})\cdot \C$ for the image. The induced quadratic form on $U$ is then $a\cdot (e_n-e_{2n})\mapsto -a^2$. This gives us an orthogonal decomposition of quadratic spaces $\C^{2n} = \C^{2n-1} \wh \oplus U$. Let $\PO_m$ denote the adjoint group of $\OO_m$. The decomposition induces morphisms (cf.~Lemmas \ref{lem:CliffordMapping}, \ref{lem:CliffordMapping2})
\begin{align}\label{eq:std_emb_def}
i_{\std} &:= C_{\varphi, \varphi'} \colon \GSpin_{2n-1} \times \GSpin_1 \to \GSpin_{2n}, \cr
i_{\std}^\circ &:= i_{\C^{2n-1}, \C} \colon \OO_{2n-1} \times \OO_1 \to \OO_{2n},~~\mbox{and} \cr
\li {i_{\std}} &:= \PO_{2n-1} \to \PO_{2n},
\end{align}
where $\li {i_{\std}}$ is induced from $i_{\std} \colon \GSpin_{2n-1} \times \GSpin_1 \to \GSpin_{2n} \surjects \PSO_{2n}\subset \PO_{2n}$. By Lemma \ref{lem:CliffordMapping2}, we have $\tu{pr}^\circ \circ i_{\std}=i_{\std}^\circ\circ(\tu{pr}^\circ_{2n-1}\times \tu{pr}^\circ_{U})$.

Let $1_{2n-1}, 1_U$ denote the identity map on $\C^{2n-1},U$. Then (cf. \eqref{eq:Elementw})
$$
i_{\std}^\circ(-1_{2n-1}, 1_U) =
-\lhk
 \begin{array}{cc;{1pt/1pt}cc}
1_{n-1} &   &           & \\
        & 0 &           & 1 \\ \hdashline[1pt/1pt]
        &   &  1_{n-1}  & \\
        & 1&           & 0
        \end{array}
\rhk = \vartheta^\circ \in \tu{O}_{2n}.
$$
Fix $\sqrt{-1}\in \G_m=Z(\GPin_{2n})$. Define
\begin{equation}\label{eq:Elementw_spin}
\vartheta := \sqrt{-1}\cdot i_{\std}(1_{C(\C^{2n-1})} \wo (e_n-e_{2n})) = \sqrt{-1}(e_n - e_{2n}) \in \GPin_{2n}\backslash \GSpin_{2n}.
\end{equation}

\begin{lemma}\label{lem:conjugation-by-w}
We have
\begin{enumerate}[label=(\roman*)]
\item
$\pr^\circ_{2n}(\vartheta) = \vartheta^\circ$ and $\vartheta^2 = 1$.
\item The conjugation action of $\vartheta$ (resp.~$\vartheta^\circ$) fixes the subgroup $i_{\std}(\GSpin_{2n-1}\times \GSpin_1) \subset \GSpin_{2n}$ via $i_{\std}$ (resp.~ $\SO_{2n-1}\times \SO_1 \subset \SO_{2n}$ via $i^\circ_{\std}$) and induces the identity automorphism on that subgroup.
\item The conjugation action of $\vartheta$ (resp.~$\vartheta^\circ$) defines the outer automorphism $\theta$ of $\GSpin_{2n}$ (resp.~$\theta^\circ$ of $\GSO_{2n}$) in Lemmas \ref{lem:Elem_w} and \ref{lem:ThetaGSpin}.
\end{enumerate}
\end{lemma}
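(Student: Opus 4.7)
For part~(i), I would directly compute in the Clifford algebra. From $e_n^2 = Q_{2n}(e_n) = 0 = e_{2n}^2$ and $e_n e_{2n} + e_{2n} e_n = \langle e_n, e_{2n}\rangle = 1$, one finds $(e_n - e_{2n})^2 = -1$, hence $\vartheta^2 = 1$. To identify $\pr^\circ_{2n}(\vartheta)$, compute $\vartheta e_i \vartheta$ on each standard basis vector of $V_{2n}$: for $i \notin \{n, 2n\}$, orthogonality of $e_i$ to both $e_n$ and $e_{2n}$ yields anticommutation with $e_n - e_{2n}$ in $C(V_{2n})$, so $\vartheta e_i \vartheta = -e_i$; for $i \in \{n, 2n\}$, a short calculation using $e_n e_{2n} + e_{2n}e_n = 1$ gives $\vartheta e_n \vartheta = -e_{2n}$ and $\vartheta e_{2n} \vartheta = -e_n$. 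This matches the matrix $\vartheta^\circ$ from~\eqref{eq:Elementw}.

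For part~(ii), I would exploit the orthogonality of $\varphi(V_{2n-1})$ and $U = (e_n - e_{2n})\cdot \C$ in $V_{2n}$: every $v \in \varphi(V_{2n-1})$ anticommutes with $e_n - e_{2n}$ inside $C(V_{2n})$. Since $\GSpin_{2n-1}$ sits in the even subalgebra $C^+(\varphi(V_{2n-1}))$, its elements are $\C$-linear combinations of products of even length in $\varphi(V_{2n-1})$, hence commute with $e_n - e_{2n}$ and with $\vartheta$. The factor $\GSpin_1 = \G_m$ is central and is trivially fixed. The corresponding statement for $\vartheta^\circ$-conjugation on $i^\circ_{\std}(\SO_{2n-1}\times \SO_1)$ follows by applying $\pr^\circ$ via the commutative diagram of Lemma~\ref{lem:CliffordMapping2}, together with part~(i).

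For part~(iii), the claim about $\theta^\circ$ is essentially the definition of $\theta^\circ$ in Lemma~\ref{lem:Elem_w}. For $\theta$, I plan to verify each of the three characterizing properties of Lemma~\ref{lem:ThetaGSpin}. Preservation of $T_{\GSpin}$ and $B_{\GSpin}$ follows since these are defined as the $\pr^\circ$-preimages of $T_{\SO}$ and $B_{\SO}$, which are preserved by $\vartheta^\circ$-conjugation (Lemma~\ref{lem:Elem_w}). For triviality on the cocenter $\GSpin_{2n}/\Spin_{2n} \cong \G_m$ (via $\cN$), the identity $\cN(\vartheta g \vartheta^{-1}) = \cN(g)$ can be verified using $\beta|_{V_{2n}} = \tu{id}$ (giving $\beta(\vartheta) = \vartheta$), $\vartheta^{-1} = \vartheta$, and the centrality of $\cN(g) \in \G_m$. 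For the swap of simple coroots, I would compute $\vartheta$-conjugation on $T_{\GSpin}$ directly via the Clifford model: parametrize $T_{\GSpin}$ by the central scalar $\G_m$ together with the $1$-parameter subgroups $x_i \colon t \mapsto t\,e_i e_{n+i} + t^{-1} e_{n+i} e_i$ for $i = 1, \ldots, n$, and identify $x_i$ with the cocharacter $2 e_i - e_0 \in X_*(T_{\GSpin})$ (which is pinned down by the fact that $\pr^\circ \circ x_i$ is twice the $i$-th standard cocharacter of $T_{\SO}$ and that $\cN \circ x_i \equiv 1$). For $i \neq n$, orthogonality of $e_i, e_{n+i}$ to $e_n - e_{2n}$ gives that $x_i(t)$ commutes with $\vartheta$; for $i = n$, a direct Clifford computation yields $\vartheta\, x_n(t)\, \vartheta = x_n(t^{-1})$. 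Translating back to the $(s_0,\ldots,s_n)$-coordinates of $T_{\GSpin}$ produces the map $(s_0, s_1, \ldots, s_n) \mapsto (s_0 s_n, s_1, \ldots, s_{n-1}, s_n^{-1})$, which matches~\eqref{eq:ThetaActionTGSpin} exactly.

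The main technical difficulty lies in part~(iii): precisely matching the Clifford parametrization of $T_{\GSpin}$ with the coordinate description from Section~\ref{sect:RootDatum}. Once the identification $x_i = 2 e_i - e_0$ is in hand via the $\pr^\circ$- and $\cN$-constraints, everything reduces to short commutation computations in the Clifford algebra $C(V_{2n})$.
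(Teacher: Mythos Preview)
Your proof is correct. Parts (i) and (ii) follow essentially the same approach as the paper, only with more detail: the paper handles $\pr^\circ_{2n}(\vartheta)$ by observing $\vartheta w_1 \vartheta^{-1} = -w_1$ for all $w_1 \in \varphi(V_{2n-1})$ and $\vartheta w_2 \vartheta^{-1} = w_2$ for $w_2 = e_n - e_{2n}$, rather than computing on each $e_i$ separately, but the content is identical.

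For part (iii) you take a genuinely different route. The paper does not compute on $T_{\GSpin}$ at all; instead it argues that $\theta$ and $\Int(\vartheta)$ agree on the adjoint group $\PSO_{2n}$ (since $\pr^\circ$ intertwines $\Int(\vartheta)$ with $\Int(\vartheta^\circ)=\theta^\circ$ by part~(i), and $\theta$ and $\theta^\circ$ induce the same pinned automorphism of $\PSO_{2n}$) and on the center, and that these together force equality. Your approach instead exhibits an explicit $\Q$-spanning set $\{e_0, x_1,\dots,x_n\}$ of cocharacters in the Clifford model, computes the $\Int(\vartheta)$-action on each, and matches the result to \eqref{eq:ThetaActionTGSpin} before invoking the uniqueness in Lemma~\ref{lem:ThetaGSpin}. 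Your route is more hands-on and makes the identification with the root-theoretic $\theta$ completely explicit; the paper's route is slicker in that it never needs to locate specific Clifford elements in root-datum coordinates. One small addition worth making in your write-up: the identification of the scalar $\G_m$ with $e_0 \in X_*(T_{\GSpin})$ (needed alongside $x_i = 2e_i - e_0$ to span) is Lemma~\ref{lem:dualizing-spinor-norm-and-similitude}. Also note that the paper's phrase ``act trivially on the center'' is imprecise as written---by Lemma~\ref{lem:ComputeCenter} the action of $\theta$ on $Z(\GSpin_{2n})$ is $(s_0,s_1)\mapsto(s_0 s_1,s_1)$, which is nontrivial; the intended statement is about the cocenter (equivalently, that $\theta$ and $\Int(\vartheta)$ agree on the center), which is exactly what your $\cN$-computation verifies.
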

\begin{proof}
(\textit{i}) Let $w_1\in \C^{2n-1}$ and $w_2:=e_n-e_{2n}\in U$. All of $w_1,w_2,\vartheta$ have degree 1 in $C(\C^{2n})$.
In either $C(\C^{2n})$ or $C(U)$, we have $w_2^2=Q_{2n}(w_2)=-1$ and $\vartheta^2=-w_2^2 = 1$.
Thus
$\vartheta w_1 \vartheta^{-1}=-w_1\vartheta \vartheta^{-1}=-w_1$ and $\vartheta w_2 \vartheta^{-1}= w_2$. Hence $\pr^\circ_{2n}(\vartheta) = \vartheta^\circ$.

(\textit{ii}) This is obvious for $\vartheta^\circ$. The conjugation by $\vartheta$ is the identity on $C^+(\C^{2n-1})$ and $C^+(U)$, since $\vartheta\perp \C^{2n-1}$ and $C^+(U)$ is commutative, respectively. The assertion for $\vartheta$ follows.

(\textit{iii}) This is true by definition for $\theta^\circ$. Since $\theta$ and the conjugation by $\vartheta$ act trivially on the center of $\GSpin_{2n}$, it suffices to check that their actions are identical on the adjoint group. This reduces to the fact that $\theta^\circ$ is given by the $\vartheta^\circ$-conjugation, as $\theta$ and $\theta^\circ$ (resp.~ $\vartheta$ and $\vartheta^\circ$) induce the same action on the adjoint group (thanks to part (i)).
\end{proof}

We have fixed pinnings of $\GSpin_{2n}$, $\GSO_{2n}$, and $\SO_{2n}$ compatibly via $\pr$. They are fixed by $\theta\in \Aut(\GSpin_{2n})$ and $\theta^\circ\in \Aut(\GSO_{2n})$. It is easy to see that $\theta$ and $\theta^\circ$ induce automorphisms of based root data, which correspond to each other via duality of the two based root data. Thus letting $E/F$ be a quadratic extension of fields of characteristic 0, and $\GSO_{2n}^{E/F}$ an outer form of $\GSO_{2n}$ over $F$ with respect to the Galois action $\Gamma_{E/F}=\{1,c\}\isom \{1,\theta\}$, we can identify
$$
^L (\GSO_{2n}^{E/F}) = \GSpin_{2n}\rtimes \{1,c\} = \GPin_{2n},
$$
where the semi-direct product is given by $c g c^{-1} = \theta(g)$. (Of course $c=c^{-1}$.) The second identification above is via $c\mapsto \vartheta$.
Similarly, for $\SO_{2n}^{E/F}$ an outer form of $\SO_{2n}$ with respect to $\Gamma_{E/F}=\{1,c\}\isom \{1,\theta^\circ\}$, we have
$$
^L (\SO_{2n}^{E/F}) = \SO_{2n}\rtimes \{1,c\} = \OO_{2n}\qquad\mbox{via}\quad c\mapsto \vartheta^\circ.
$$

Let us describe the center $\Zspin$ of $\Spin_{2n}=\Spin(V_{2n})$ explicitly as this is going to be useful for classifying inner twists of (quasi-split forms of) $\SO_{2n}$ and $\GSO_{2n}$ in \S\ref{sect:forms-of-GSO}. In what follows, we identify $\ZGSpin=\{(s_0,s_1)\,:\,s_0\in \G_m,~s_1\in \{\pm1\}\}$
as in Lemma \ref{lem:ComputeCenter} and write $1,-1$ for $(1,1),(1,-1)\in \ZGSpin$.

\begin{lemma}\label{lem:spin-center2}
Let $\zeta_4$ be a primitive fourth root of unity. Recall the elements $z_{\pm}$ defined in Lemma \ref{lem:spin-kernel}. Then we have $\Zspin \subset \ZGSpin$ via $\TSpin\subset T_{\GSpin}$ explained above, and the following are true.
\begin{enumerate}[label=(\textit{\roman*})]
\item If $n $ is even, $\Zspin=\{1,-1,z_+,z_-\}$ and is isomorphic to $(\Z/2\Z)^2$.
If $n$ is odd, $\Zspin=\{1,-1,\zeta,-\zeta=\zeta^{-1}\}$ and is isomorphic to $\Z/4\Z$, where $\zeta=(\zeta_4,-1)$.
\item The action of $\theta$ is trivial on $\{1,-1\}$ and permuting $\{z_+,z_-\}$ (resp.~$\{\zeta,-\zeta\}$).
\end{enumerate}
\end{lemma}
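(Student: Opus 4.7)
The plan is to compute $\Zspin$ directly as $\Spin_{2n}\cap \ZGSpin$ inside $\TGSpin$, using the explicit formulas for $\cN$ and $\theta$ already established. First I would establish the inclusion $\Zspin\subseteq \ZGSpin$: by Lemma \ref{lem:SurjectionOntoGSO}(v) the multiplication $\Spin_{2n}\times\G_m\twoheadrightarrow \GSpin_{2n}$ is surjective, so any element centralising $\Spin_{2n}$ also centralises the scalar $\G_m$-factor and hence all of $\GSpin_{2n}$, giving $\Zspin=\ker(\cN)\cap \ZGSpin$. By Remark \ref{rem:explicit-spinor-norm}, the restriction of $\cN$ to $\ZGSpin$ in the $(s_0,s_1)$-coordinates (where $s_1=s_2=\cdots=s_n$) is $\cN(s_0,s_1)=s_0^2 s_1^n$.

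I would then split into parity cases. When $n$ is even, $s_1^n=1$ and the equation $\cN=1$ reduces to $s_0^2=1$, giving four elements $(\pm 1,\pm 1)$ forming $(\Z/2\Z)^2$. Using Lemma \ref{lem:spin-kernel} to identify $z_\eps=(\eps,-1)$, together with Lemma \ref{lem:dualizing-spinor-norm-and-similitude} to identify the scalar $-1\in C(V)^\times$ with its image in $\ZGSpin$, the four elements are exactly $\{1,-1,z_+,z_-\}$. When $n$ is odd, $s_1^n=s_1$ and the condition becomes $s_0^2=s_1$: the solutions are $s_0\in\{\pm 1\}$ when $s_1=1$ and $s_0\in\{\pm \zeta_4\}$ when $s_1=-1$, yielding $\{1,-1,\zeta,-\zeta\}$ with $\zeta=(\zeta_4,-1)$. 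A direct computation gives $\zeta^2=(\zeta_4^2,1)=-1$, so $\Zspin=\langle \zeta\rangle\cong \Z/4\Z$.

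Part (ii) follows by applying the explicit formula \eqref{eq:ThetaActionTGSpin}, $\theta(s_0,s_1,\ldots,s_n)=(s_0 s_n,s_1,\ldots,s_{n-1},s_n^{-1})$, on the torus. Restricted to $\ZGSpin$ (where $s_1=\cdots=s_n$), it fixes $(s_0,1)$ and sends $(s_0,-1)\mapsto (-s_0,-1)$. Hence $\theta$ is trivial on $\{1,-1\}$ and swaps the remaining pair $\{z_+,z_-\}$ (since $z_-=(-1)\cdot z_+$) or $\{\zeta,-\zeta\}$ (since $-\zeta=(-1)\cdot\zeta$) in the respective parities. I anticipate no serious obstacle: once Remark \ref{rem:explicit-spinor-norm}, Lemma \ref{lem:spin-kernel} and \eqref{eq:ThetaActionTGSpin} are in hand, the whole argument reduces to a short direct computation on $\TGSpin$.
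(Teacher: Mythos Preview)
Your proposal is correct and follows essentially the same approach as the paper: both identify $\Zspin=\ZGSpin\cap\ker(\cN)$, use Remark~\ref{rem:explicit-spinor-norm} to compute $\cN(s_0,s_1)=s_0^2 s_1^n$ on the center, and read off part~(ii) from the explicit $\theta$-action of Lemma~\ref{lem:ComputeCenter}. The paper's version is terser (it simply records $\Zspin=\{(s_0,s_1):s_0^2=s_1^n\}$ and defers the rest to Lemma~\ref{lem:ComputeCenter}), while you spell out the parity cases and the identifications with $z_\pm$ and $\zeta$ explicitly, but the underlying argument is the same.
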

\begin{proof}
We have $\Zspin=\ZGSpin \cap \Spin_{2n}= \{z\in \ZGSpin: \cN(z)=1\}$, where $\cN$ is described by \eqref{eq:TGSpin-to-Gm} (Remark \ref{rem:explicit-spinor-norm}). It follows from Lemma \ref{lem:ComputeCenter} that
$$\Zspin=\{(s_0,s_1): s_0^2 = s_1^n\},$$
which is alternatively described as in (i). Assertion (ii) is also clear from that lemma.
\end{proof}

\section{The spin representations}\label{sect:HalfSpinReps}

We recollect how to construct the spin representations via Clifford algebras, and show that they coincide with the highest weight representations in Section \ref{sect:RootDatum}. We also check some compatibility of maps that will become handy.

Consider the quadratic space $V_{2n} := \C^{2n}$ from \eqref{eq:StdQuadSpace} with standard basis $\{e_1,..,e_{2n}\}$ and quadratic form $Q_{2n}$. Define $W_{2n} := \oplus_{i=1}^n \C e_i$ and $W'_{2n} := \oplus_{i=n+1}^{2n} \C e_i$. We often omit the subscript $2n$ to lighten notation, when there is no danger of confusion. Since $W$ is isotropic we obtain a morphism $\bigwedge W \isomto C(W) \hookrightarrow C(V)$. Through this injection we view $\bigwedge W$ as a subspace of $C(V)$.  The space $\bigwedge W$ carries an $C(V)$-module structure 
$$
\spin \colon C(V) \to \End(\bigwedge W)
$$
that is uniquely characterized by the following:
\begin{itemize}
\item $w \in W \subset V$ acts through left multiplication,
\item and $w' \in W' \subset V$ acts as
\begin{equation}\label{eq:daction}
w' (w_1 \wedge w_2 \wedge \cdots \wedge w_r) = \sum_{i=1}^r (-1)^{i+1} \langle w', w_i \rangle (w_1 \wedge w_2 \wedge \cdots \wedge \widehat {w_i} \wedge \cdots \wedge w_r),
\end{equation}
on $w_1 \wedge \cdots \wedge w_r \in \bigwedge^r W \subset \bigwedge W$.
\end{itemize}

The subspaces ${\bigwedge}^+ W := \bigwedge_{i \in 2\Z_{\geq 0}} W$ and ${\bigwedge}^- W := \bigwedge_{i \in 1+2\Z_{\geq 0}} W$ of $\bigwedge W$ are stable under $C^+(V)$. By restriction we obtain the spin representations
\begin{equation}\label{eq:CliffordHalfSpinDef}
\spin \colon \GPin_{2n} \to \GL \lhk \bigwedge W \rhk \quand \spin^\pm \colon \GSPin_{2n} \to \GL \lhk {\bigwedge}^\pm W \rhk.
\end{equation}
We recall that the representations $\spin^{\pm}$ are irreducible. In \eqref{eq:CliffordExplicitBasis} and \eqref{eq:CliffordExplicitBasis2} below, we will choose (ordered) bases for $ \bigwedge W$ and $ \bigwedge^\pm W$ coming from $\{e_1,\ldots,e_n\}$ to view $\spin$ and $\spin^\pm$ as $\GL_{2^n}$ and $\GL_{2^{n-1}}$-valued representations, respectively. We had another definition of $\spin^\eps$ as the representation with highest weight $\mu_\eps$ (Definition \ref{def:HalfSpinDef}), $\varepsilon\in \{+,-\}$. Let us check that the two definitions coincide via \eqref{eq:GSpin=GSpin}.

\begin{lemma}\label{lem:half-spin-highest-weight}
The highest weight of the half-spin representation $\spin^\eps$ of $\GSpin_{2n}$ on $\bigwedge^\eps W$ is equal to $\mu_{\eps}$.
\end{lemma}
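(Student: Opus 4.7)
The plan is to compute the $\TGSpin$-weights on the basis $\{e_U\}_{U\subset \{1,\ldots,n\}}$ of $\bigwedge W$ directly from the Clifford module structure, and then identify the unique dominant weight in each half $\bigwedge^\eps W$.

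First I would construct one-parameter subgroups of $\GSpin_{2n}$ inside the Clifford algebra that realize the paper's cocharacter basis of $\TGSpin$. For each $1 \leq i \leq n$, define $\gamma_i(s) := 1 + (s-1) e_i e_{n+i}$. The relation $e_{n+i} e_i = 1 - e_i e_{n+i}$ together with $e_i^2 = e_{n+i}^2 = 0$ shows that $e_i e_{n+i}$ is idempotent, hence $\gamma_i$ is a group homomorphism $\G_m \to C^+(V)^\times$. A direct check confirms $\gamma_i(s)\, v\, \gamma_i(s^{-1}) \in V$ for all $v \in V$, so $\gamma_i$ lands in $\GSpin_{2n}$, with $\pr^\circ(\gamma_i(s))$ scaling the basis vectors $e_i \mapsto s\,e_i$ and $e_{n+i} \mapsto s^{-1} e_{n+i}$ while fixing the other basis vectors of $V$. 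A similar computation using $\beta(e_i e_{n+i}) = e_{n+i} e_i$ gives $\cN(\gamma_i(s)) = s$. Combined with the scalar embedding $\G_m \hookrightarrow \GSpin_{2n}$ (which is the $0$-th basis cocharacter of $\TGSpin$ by Lemma \ref{lem:dualizing-spinor-norm-and-similitude}) and the explicit formula $\cN = 2 e_0^* + e_1^* + \cdots + e_n^*$ from \eqref{eq:TGSpin-to-Gm}, these two constraints uniquely pin down $\gamma_i$ as the $i$-th basis cocharacter in $X_*(\TGSpin)$, so the parametrization $(s_0, s_1, \ldots, s_n) \mapsto s_0 \cdot \gamma_1(s_1) \cdots \gamma_n(s_n)$ coincides with the paper's coordinates on $\TGSpin$.

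Next I would compute the action on $e_U = e_{j_1} \wedge \cdots \wedge e_{j_r}$ (with $U = \{j_1 < \cdots < j_r\}$). If $i \notin U$ then the contraction formula \eqref{eq:daction} gives $e_{n+i} \cdot e_U = 0$, hence $\gamma_i(s) \cdot e_U = e_U$. If $i \in U$ occupies the $k$-th position, \eqref{eq:daction} yields $e_{n+i} \cdot e_U = (-1)^{k+1} e_{U \setminus \{i\}}$, and then left multiplication by $e_i$ followed by reordering $e_i \wedge e_{U\setminus\{i\}}$ into standard order contributes an extra sign $(-1)^{k-1}$; the two signs cancel, so $\gamma_i(s) \cdot e_U = s \cdot e_U$. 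Together with the central scalar acting as multiplication by $s_0$, this identifies $e_U$ as a weight vector of weight $e_0^* + \sum_{i \in U} e_i^* \in X^*(\TGSpin)$, with coordinates $(1, 1_{1 \in U}, \ldots, 1_{n \in U})$.

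Finally I would extract the highest weight in $\bigwedge^\eps W$. Dominance of $\lambda = e_0^* + \sum_{i \in U} e_i^*$ with respect to the simple coroots of $\GSpin_{2n}$, namely $e_i - e_{i+1}$ for $i < n$ and $e_{n-1} + e_n - e_0$ (dualizing Lemma \ref{lem:GSORoots}(iv)), forces $U$ to be an initial segment $\{1, \ldots, k\}$ with $k \in \{n-1, n\}$. The parity constraint ($|U|$ even on $\bigwedge^+ W$, odd on $\bigwedge^- W$) then selects a unique $k$ in each case: $k = n$ when $\eps = (-1)^n$, and $k = n-1$ when $\eps = (-1)^{n+1}$. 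In either case the resulting dominant weight is exactly $\mu_\eps$ from \eqref{eq:Spin-eps-def}, proving the lemma. The main technical hurdle is the coordinate identification in the first paragraph; once that is set up, the rest reduces to routine Clifford-algebra manipulation with \eqref{eq:daction}, a sign cancellation, and a short dominance check.
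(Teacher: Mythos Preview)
Your approach is genuinely different from the paper's. The paper pulls back along the isogeny $\Spin_{2n}\times\G_m \twoheadrightarrow \GSpin_{2n}$: on the $\Spin_{2n}$ factor it cites \cite[Prop.~20.15]{FultonHarris} for the half-spin highest weights, and on the central $\G_m$ it observes that both the Clifford module and $\mu_\eps$ restrict to weight~$1$. You instead build explicit one-parameter subgroups $\gamma_i(s)=1+(s-1)e_ie_{n+i}$ in $C^+(V)$, compute the $\TGSpin$-weights on the basis $\{e_U\}$ directly from \eqref{eq:daction}, and pick out the unique dominant weight in each parity. Your route is fully self-contained and makes the weight decomposition transparent; the paper's is a two-line reduction that outsources the content to Fulton--Harris.

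One step in your first paragraph needs more care. You correctly note that $(\pr^\circ,\cN)$ is injective on $X_*(\TGSpin)$ (since $\ker\pr^\circ_*=\Z e_0$ and $\langle\cN,e_0\rangle=2$), so the computed values $\pr^\circ(\gamma_i)$ and $\cN(\gamma_i)$ determine $\gamma_i$ uniquely. But to conclude $\gamma_i=e_i$ you also need $\pr^\circ_*(e_i)$ to be the $i$-th standard cocharacter of $\TSO$, and nothing you cite establishes this: Lemma~\ref{lem:dualizing-spinor-norm-and-similitude} and \eqref{eq:TGSpin-to-Gm} give you $e_0$ and $\cN$, but not $\pr^\circ_*$ on the remaining $e_i$. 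The identification \eqref{eq:GSpin=GSpin} is only asserted abstractly in the paper, and a priori is fixed only up to the outer automorphism $\theta$; under the $\theta$-twisted choice you would have $\gamma_n=e_0-e_n$ and your final answer would come out as $\mu_{-\eps}$. The fix is short: $\pr^\circ$ is a central quotient and hence carries simple coroots to simple coroots, and the paper's convention of compatible pinnings (stated immediately after \eqref{eq:GSpin=GSpin}) forces this bijection to preserve indices; from $e_0\mapsto 0$, $e_i-e_{i+1}\mapsto e_i^*-e_{i+1}^*$, and $e_{n-1}+e_n-e_0\mapsto e_{n-1}^*+e_n^*$ you then read off $\pr^\circ_*(e_i)=e_i^*$ for $1\le i\le n$, completing your argument.
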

\begin{proof}
We may compare $\mu_{\eps}$ and the highest weight of $\spin^\eps$ after pulling back along $\Spin_{2n}\times \G_m\twoheadrightarrow \GSpin_{2n}$. They coincide on $\Spin_{2n}$ by \cite[Prop.~20.15]{FultonHarris} and evidently restrict to the weight 1 character on $\G_m$. The lemma follows.
\end{proof}

Let us introduce a bilinear pairing on $\bigwedge W$ which is invariant under the spin representation up to scalars. Let $\tu{pr}_n: \bigwedge W\ra \C$ denote the projection onto $\bigwedge^n W$, identified with $\C$ via $e_1\wedge \cdots \wedge e_n \mapsto 1$. Write $\tau: \bigwedge W\isom \bigwedge W$ for the $\C$-linear anti-automorphism $w_1\wedge \cdots \wedge w_r \mapsto w_r \wedge \cdots \wedge w_1$ for $r\ge 1$ and $w_1,\ldots,w_r\in W$. Define
$$ 
(\!( \dot w_1,\dot w_2 )\!):=\tu{pr}_n(\tau(\dot w_1) \wedge \dot w_2),\qquad \dot w_1,\dot w_2\in \bigwedge W.
$$
We write $\spin^\vee$ and $\spin^{\eps,\vee}$ for the dual representations of $\spin$ and $\spin^\eps$. By the preceding lemma, the highest weight of $\spin^{\eps,\vee}$ is in the Weyl group orbit of $(\mu_\eps)^{-1}$.

\begin{lemma}\label{lem:spin-inv-pairing}\label{lem:Duality}
The pairing $(\!(~,~)\!)$ is nondegenerate; it is alternating if $n\equiv 2,3~(\tu{mod}~4)$ and symmetric if $n\equiv 0,1~(\tu{mod}~4)$. The restriction of $(\!(~,~)\!)$ to each of $\bigwedge^+ W$ and~$\bigwedge^- W$  is nondegenerate if $n$ is even, and identically zero if $n$ is odd. We have
\begin{equation}\label{lem:Duality-Eq1}
(\!( \spin(g)\dot w_1, \spin(g)\dot w_2 )\!)=\cN(g) (\!( \dot w_1,\dot w_2 )\!),\qquad g\in \GPin_{2n}(\C),~~\dot w_1,\dot w_2\in \bigwedge W.
\end{equation}
In particular, we have $\spin^{\eps} \simeq  \spin^{(-1)^n \eps,\vee} \otimes \cN$.
\end{lemma}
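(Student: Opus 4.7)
I will work with the basis $\{e_S : S\subseteq \{1,\ldots,n\}\}$ of $\bigwedge W$, where $e_S := e_{i_1}\wedge\cdots\wedge e_{i_k}$ for $S=\{i_1<\cdots<i_k\}$ and $e_{\emptyset}:=1$. Writing $k=|S|$, one has $\tau(e_S)=(-1)^{k(k-1)/2}e_S$, and $e_S\wedge e_T$ has nonzero component in $\bigwedge^n W$ exactly when $T=S^c$, in which case it is a nonzero scalar multiple of $e_1\wedge\cdots\wedge e_n$. Hence $(\!(e_S,e_T)\!) = \varepsilon_S\cdot\delta_{T,S^c}$ for some $\varepsilon_S\in \C^\times$, so the pairing matrix in this basis is (up to signs) the permutation matrix exchanging $e_S$ with $e_{S^c}$, which gives nondegeneracy. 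Swapping arguments yields $(\!(e_{S^c},e_S)\!)/(\!(e_S,e_{S^c})\!) = (-1)^{(n-k)(n-k-1)/2 + k(n-k) - k(k-1)/2}$, and a short algebraic simplification shows the exponent equals $\tfrac{n(n-1)}{2} - k(k-1)\equiv \tfrac{n(n-1)}{2}\pmod 2$, independently of $S$. Thus the form is symmetric exactly when $n\equiv 0,1\pmod 4$ and alternating when $n\equiv 2,3\pmod 4$. Moreover $e_S$ and $e_{S^c}$ lie in the same summand $\bigwedge^\eps W$ iff $|S|\equiv n-|S|\pmod 2$ iff $n$ is even, so the restriction to $\bigwedge^\eps W$ is nondegenerate when $n$ is even and identically zero when $n$ is odd (in the latter case the pairing identifies $\bigwedge^+ W$ with $\bigwedge^- W$ perfectly).

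For the equivariance \eqref{lem:Duality-Eq1}, I use that $\GPin_{2n}$ is generated by $V^\times=\{v\in V:Q(v)\neq 0\}$ together with the central $\G_m=\ker(\pr^\circ)$: by Cartan--Dieudonn\'e reflections generate $\OO(V)$ and lift to anisotropic vectors in $V^\times$, while $v\cdot v = Q(v)$ produces all scalars. Since both sides of \eqref{lem:Duality-Eq1} are multiplicative in $g$ through $\cN$, it suffices to verify the identity on generators. For $c\in\G_m$ both sides equal $c^2(\!(\dot w_1,\dot w_2)\!) = \cN(c)(\!(\dot w_1,\dot w_2)\!)$. For $v\in V^\times$ we have $\cN(v) = v\beta(v) = v^2 = Q(v)$ and $\spin(v)^2 = Q(v)\cdot\tu{id}$, so it is enough to establish the self-adjointness
\[
(\!(\spin(v)\dot w_1,\dot w_2)\!) = (\!(\dot w_1, \spin(v)\dot w_2)\!), \qquad v\in V,
\]
for then applying it twice gives $(\!(\spin(v)\dot w_1,\spin(v)\dot w_2)\!) = Q(v)(\!(\dot w_1,\dot w_2)\!)$.

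By $\C$-linearity in $v$ I reduce to $v=e_i$ with $i\le n$ and $v=e_{n+j}$ with $j\le n$. In the first case $\spin(e_i)$ is left wedge multiplication by $e_i$, and the equality $\tau(e_i\wedge \dot w)=\tau(\dot w)\wedge e_i$ together with associativity of $\wedge$ gives the claim at once. In the second case $\spin(e_{n+j})$ is the contraction $\iota_{n+j}$, a graded derivation of degree $-1$; by direct calculation on pure wedge products one checks $\iota_{n+j}\circ\tau = (-1)^{a+1}\tau\circ\iota_{n+j}$ on $\bigwedge^a W$, and combining this with the graded derivation identity
\[
\iota_{n+j}\bigl(\tau(\dot w_1)\wedge \dot w_2\bigr) = \iota_{n+j}\tau(\dot w_1)\wedge \dot w_2 + (-1)^{a}\tau(\dot w_1)\wedge \iota_{n+j}\dot w_2
\]
---whose left-hand side vanishes because the relevant degree is $\bigwedge^{n+1}W = 0$---the signs collapse to yield the desired equality.

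Finally, the equivariant nondegenerate pairing gives a $\GPin_{2n}$-equivariant isomorphism $\bigwedge W \isom (\bigwedge W)^\vee\otimes \cN$, and the parity analysis from the first paragraph shows this restricts to $\bigwedge^\eps W \isom (\bigwedge^\eps W)^\vee\otimes \cN$ when $n$ is even and $\bigwedge^\eps W \isom (\bigwedge^{-\eps} W)^\vee\otimes \cN$ when $n$ is odd. Both cases combine into the uniform statement $\spin^\eps\simeq \spin^{(-1)^n\eps,\vee}\otimes \cN$ as $\GSpin_{2n}$-representations. The main delicate point is the sign bookkeeping for self-adjointness of $\spin(e_{n+j})$; everything else reduces to direct combinatorial computations with the basis $\{e_S\}$.
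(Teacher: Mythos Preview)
Your proof is correct and follows essentially the same route as the paper. The paper leaves the nondegeneracy and symmetric/alternating statements to the reader, so your explicit sign computation $\tfrac{(n-k)(n-k-1)}{2}+k(n-k)-\tfrac{k(k-1)}{2}\equiv \tfrac{n(n-1)}{2}\pmod 2$ is a welcome addition. For the equivariance, the paper's organizing statement is the Clifford-algebra identity $(\!(c\dot w_1,\dot w_2)\!)=(\!(\dot w_1,\beta(c)\dot w_2)\!)$ for all $c\in C(V)$, which it reduces by algebra generation to $c\in V$; you instead reduce via Cartan--Dieudonn\'e to group generators $v\in V^\times$ and then (by $\C$-linearity in $v$) to basis vectors. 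Both reductions arrive at exactly the same two cases $v\in W$ and $v\in W'$, with your graded-derivation argument for the contraction case being a mild repackaging of the paper's index computation. The paper's formulation has the advantage of avoiding the appeal to Cartan--Dieudonn\'e and giving a statement valid for the whole Clifford algebra at once; your formulation is equally valid and perhaps more transparent about why only $\cN$ appears in the scaling.
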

\begin{proof}
The first two assertions are elementary and left to the reader. The last assertion follows from the rest. For the equality \eqref{lem:Duality-Eq1}, we claim that
\begin{equation}\label{eq:C(V)-inv-pairing}
(\!( c \dot w_1, \dot w_2 )\!)= (\!( \dot w_1,\beta(c) \dot w_2 )\!),\qquad c\in C(V),~\dot w_1,\dot w_2\in \bigwedge W.
\end{equation}
Since $\GPin_{2n}\subset C(V)$, this implies \eqref{lem:Duality-Eq1} as
$$
(\!( \spin(g)\dot w_1, \spin(g)\dot w_2 )\!)=(\!( \dot w_1, \spin(\beta(g)g)\dot w_2 )\!)=\beta(g)g(\!(\dot w_1, \dot w_2 )\!).
$$

It remains to prove the claim. The proof of \eqref{eq:C(V)-inv-pairing} reduces to the case $c\in V$, then to the two cases $c\in W$ and $c\in W'$ by linearity. In both cases, \eqref{eq:C(V)-inv-pairing} follows from the explicit description of the $C(V)$-action as in \eqref{eq:daction}. Indeed, \eqref{eq:C(V)-inv-pairing} is obvious if $c\in W$. When $c\in W'$, it is enough to show that for $0\le r,s\le n$, $1\le i_1<\cdots <i_r\le n$, $1\le j_1<\cdots<j_s\le n$, and $1\le k\le n$,
$$ 
\tau(e_{n+k}(e_{i_1}\wedge \cdots \wedge e_{i_r})) \wedge (e_{j_1}\wedge \cdots \wedge e_{j_s}) =
  \tau(e_{i_1}\wedge \cdots \wedge e_{i_r})\wedge \left( e_{n+k}(e_{j_1}\wedge \cdots \wedge e_{j_s})\right).
$$
(This implies \eqref{eq:C(V)-inv-pairing} by taking $\tu{pr}_n$.) The equality is simply $0=0$ unless $k=r_0=s_0$ for some $1\le r_0\le r$ and $1\le s_0\le s$. In the latter case, the equality boils down to
$$ 
(-1)^{r_0+1} e_{i_r}\wedge \cdots \wedge \hat{e}_{i_{r_0}}\wedge \cdots \wedge e_{i_1}\wedge e_{j_1}\wedge \cdots \wedge e_{j_s}
 = (-1)^{s_0+1} e_{i_r}\wedge \cdots  \wedge e_{i_1}\wedge e_{j_1}\wedge \cdots \wedge \hat{e}_{j_{s_0}}  \wedge \cdots \wedge e_{j_s},$$
 which is clear. The proof is complete.
\end{proof}

We also discuss the odd case. Equip $V_{2n-1}=\C^{2n-1}$ with standard basis $\{f_1,\ldots,f_{2n-1}\}$ and quadratic form $Q_{2n-1}$ of \eqref{eq:StdQuadSpace}. As in \cite[p.306]{FultonHarris}, we decompose
$$
V_{2n-1} := \C^{2n-1} = W_{2n-1} \oplus W_{2n-1}' \oplus U_{2n-1},
$$
where $W_{2n-1}:=\oplus_{i=1}^{n-1} \C f_i$, $W_{2n-1}':=\oplus_{i=n}^{2n-2} \C f_i$, and $U_{2n-1}:=\C f_{2n-1}$. Again we omit the subscript $2n-1$ when it is clear from the context. Then $W$ and $W'$ are $(n-1)$-dimensional isotropic subspaces, and $U$ is a line perpendicular to them. As in the even case, each of $\bigwedge W $ and $\bigwedge^\pm W$ can be viewed as a subspace of $C(V)$ and has a unique structure of left $C(V)$-module where:
\begin{itemize}
\item $w \in W \subset V$ acts on $\bigwedge W$ through left multiplication,
\item $w' \in W' \subset V$ acts as in \eqref{eq:daction} (cf. \cite[20.16]{FultonHarris}),
\item $f_{2n-1}$ acts trivially on $\bigwedge^+ W$ and as $-1$ on $\bigwedge^- W$.
\end{itemize}

Consider the bijection
$$
\psi \colon \bigwedge W_{2n-1} \isomto {\bigwedge}^+ W_{2n}, \quad w_1 \wedge  \cdots \wedge  w_r \mapsto \begin{cases}
w_1 \wedge  \cdots \wedge  w_r \wedge e_n, & \tu{$r$ odd} \cr
w_1 \wedge  \cdots \wedge  w_r, & \tu{$r$ even}.
\end{cases}
$$
\begin{lemma}\label{lem:CliffordSpinRestrict}
For all $g \in \GSpin_{2n-1}$ and all $w \in \bigwedge W_{2n-1}$ we have $\istd(g) \psi(w) = \psi(g w)$,
where $\istd(g)$ and $g$ act by $\spin^+$ of $\GSpin_{2n}$ and $\spin$ of $\GSpin_{2n-1}$, respectively.
\end{lemma}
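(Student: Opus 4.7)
The plan is to upgrade the lemma to the statement that $\psi$ is an isomorphism of $C^+(V_{2n-1})$-modules, which automatically covers $\GSpin_{2n-1}\subset C^+(V_{2n-1})$. Both $\bigwedge W_{2n-1}$ (via the spin representation) and $\bigwedge^+ W_{2n}$ (via the composite $C^+(V_{2n-1})\xrightarrow{a\mapsto a\wo 1} C(V_{2n-1})\wo C(U)\xrightarrow{C_\varphi} C(V_{2n})\to \End(\bigwedge W_{2n})$, which lands in $C^+(V_{2n})$ and thus preserves parity) are $C^+(V_{2n-1})$-modules. Since $C^+(V_{2n-1})$ is linearly spanned by products $v_1 v_2$ with $v_i\in V_{2n-1}$, and $C_\varphi$ is a super-algebra map with $C_\varphi(v\wo 1)=\varphi(v)$, the required linearity reduces to the identity
$$
\varphi(v_1)\varphi(v_2)\cdot \psi(w) \;=\; \psi(v_1 v_2\cdot w),\qquad v_1, v_2 \in V_{2n-1},\; w\in \bigwedge W_{2n-1}.
$$

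By bilinearity in $(v_1, v_2)$, this further reduces to a finite case analysis where $v_1, v_2$ run over the basis $\{f_1,\ldots,f_{2n-1}\}$, partitioned according to $V_{2n-1}=W_{2n-1}\oplus W'_{2n-1}\oplus \C f_{2n-1}$. The verification uses $\varphi(f_i)=e_i$ for $i\le n-1$, $\varphi(f_{n-1+j})=e_{n+j}$ for $1\le j\le n-1$, $\varphi(f_{2n-1})=e_n+e_{2n}$, together with the explicit $C(V)$-action on $\bigwedge W$ (left wedge for $W$-vectors, contraction \eqref{eq:daction} for $W'$-vectors, and scalar $(-1)^{\deg}$ for $f_{2n-1}$ on $\bigwedge W_{2n-1}$). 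For $v_1, v_2\in W_{2n-1}\cup W'_{2n-1}$, the two sides reduce to wedges and contractions that match via the identifications $f_i\leftrightarrow e_i$ and $f_{n-1+j}\leftrightarrow e_{n+j}$; the shift by $\wedge e_n$ built into $\psi$ for odd-degree inputs does not interfere, because $e_n$ neither pairs with nor coincides with any of the $e$-vectors in play on the $W_{2n}$-side.

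The main subtlety, and the only nontrivial part of the check, occurs when $f_{2n-1}$ is one of $v_1, v_2$, because $\varphi(f_{2n-1})=e_n+e_{2n}$ mixes a $W_{2n}$- and a $W'_{2n}$-direction. The crux is the identity, for $w\in \bigwedge^r W_{2n-1}$,
$$
(e_n+e_{2n})\cdot \psi(w) \;=\; \begin{cases} e_n\wedge \tilde\psi(w), & r\text{ even,}\\ -\tilde\psi(w), & r\text{ odd,}\end{cases}
$$
where $\tilde\psi\colon \bigwedge W_{2n-1}\hookrightarrow \bigwedge W_{2n}$ is the degree-preserving inclusion induced by $f_i\mapsto e_i$. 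Indeed, $e_n$ does not appear in $\tilde\psi(w)$, so the left-wedge gives $e_n\wedge \tilde\psi(w)$ when $r$ is even and vanishes when $r$ is odd (by repetition of $e_n$ in $\tilde\psi(w)\wedge e_n$); while $e_{2n}$ pairs only with $e_n$ (not with any $e_i$, $i\le n-1$), contributing nothing when $r$ is even and contracting the appended $e_n$ with sign $(-1)^r=-1$ when $r$ is odd. Multiplying further by $\varphi(v_2)$ (or pre-multiplying $(e_n+e_{2n})$ by $\varphi(v_1)$) and using anti-commutation of $e_n$ with $e_i$ ($i<n$) and $e_{n+j}$ then reproduces the scalar action of $f_{2n-1}$ on $\bigwedge W_{2n-1}$ composed with the other factor, as required.
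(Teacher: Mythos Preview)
Your proof is correct and follows essentially the same approach as the paper: both upgrade the statement to $C^+(V_{2n-1})$-equivariance of $\psi$, handle the $W_{2n-1}\oplus W'_{2n-1}$ part via the observation that $e_n$ is orthogonal to the relevant $e$-vectors, and isolate the action of $f_{2n-1}\mapsto e_n+e_{2n}$ as the only nontrivial computation. The paper organizes this slightly differently by introducing the two maps $\iota^+$ (inclusion) and $\iota^-$ ($\cdot\wedge e_n$), showing each is $C(W\oplus W')$-equivariant in one stroke, and then decomposing $C^+(V_{2n-1})=C^+(W\oplus W')\oplus C^-(W\oplus W')f_{2n-1}$; your reduction to pairs $v_1v_2$ of basis vectors achieves the same thing with a bit more case analysis.
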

\begin{proof}
We keep writing $W=W_{2n-1}$, $W'=W'_{2n-1}$, $U=U_{2n-1}$. We identify $V_{2n}=(W\oplus U^1 \oplus W'\oplus U^2$ via $W_{2n}=W\oplus U^1$ and $W'_{2n}=W'\oplus U^2$ with $U^1=\C e_n$ and $U^2=\C e_{2n}$, mapping the basis of $W$ (resp.~$W'$) onto the first $n-1$ elements in the basis of $W_{2n}$ (resp.~$W'_{2n}$). This also gives the embedding $V_{2n-1}\subset V_{2n}$, with $U$ diagonally embedded in $U^1\oplus U^2$ (so $f_{2n-1}$ maps to $e_n+e_{2n}$), as in the formula below \eqref{eq:StdQuadSpace}.

There is an obvious embedding $\iota^+: \bigwedge W \hookrightarrow \bigwedge (W\oplus U^1)$. We also have $\iota^-:\bigwedge W \hookrightarrow \bigwedge (W\oplus U^1)$ by $(\cdot)\wedge e_n$. Both $\iota^+$ and $\iota^-$ are $C(W\oplus W')$-equivariant, by using that left and right multiplications commute and that $e_n$ is orthogonal to $W\oplus W'$. Furthermore, $\iota^-$ intertwines the $f_{2n-1}$-action on $\bigwedge^- W$, which is by multiplication by $-1$, and the $e_n+e_{2n}$-action on $\bigwedge^+ (W\oplus U^1)$, since $w\wedge e_n=-e_n\wedge w$ if $w\in \bigwedge^- W$ and since $W\perp e_{2n}$ with respect to $Q_{2n}$.

Now we claim that $\psi$ is $C^+(W\oplus W'\oplus U)$-equivariant, which implies the lemma by restricting from $C^+(W\oplus W'\oplus U)$ to $\GSpin_{2n-1}$. It suffices to verify equivariance of $\psi$ under $C^+(W\oplus W')$ and $C^-(W\oplus W')\otimes f_{2n-1}$. But $\psi$ is $\iota^+$ on $\bigwedge^+ W$ and $\iota^-$ on $\bigwedge^- W$. Thus the claim is deduced by putting together the equivariance in the preceding paragraph.
\end{proof}

\begin{lemma}\label{lem:CliffordSpinThetaAction}
Let $\vartheta \in \GPin_{2n}$ be the element from \eqref{eq:Elementw_spin}. We have ${\bigwedge}^+ W_{2n} \isomto {\bigwedge}^- W_{2n}$, $x \mapsto \vartheta x$. We have $\spin^+\circ \theta=\spin^-$ via this isomorphism, i.e., $\vartheta(\spin^+(g) x) = \spin^-(\theta(g)) \vartheta x$ for each $g\in \GSpin_{2n}$.
\end{lemma}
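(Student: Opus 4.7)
The plan is to prove both assertions by direct computation inside the Clifford algebra $C(V_{2n})$, using the fact that $\spin$ (and hence $\spin^\pm$) is nothing more than the restriction of the $C(V_{2n})$-module structure on $\bigwedge W_{2n}$ to $\GSpin_{2n} \subset C^+(V_{2n})^\times$ (respectively $\GPin_{2n} \subset C(V_{2n})^\times$).

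First, I would check that $x \mapsto \vartheta x$ swaps $\bigwedge^+ W$ and $\bigwedge^- W$ and is an isomorphism. Since $e_n \in W_{2n}$ acts on $\bigwedge W_{2n}$ by left wedge multiplication and $e_{2n} \in W'_{2n}$ acts by the contraction formula \eqref{eq:daction}, both $e_n$ and $e_{2n}$ shift the wedge parity by one, so multiplication by $\vartheta = \sqrt{-1}(e_n - e_{2n})$ indeed interchanges $\bigwedge^+ W$ and $\bigwedge^- W$. Invertibility follows by computing $\vartheta^2$ directly in $C(V_{2n})$: since $Q_{2n}(e_n) = Q_{2n}(e_{2n}) = 0$, one has $e_n^2 = e_{2n}^2 = 0$, and $e_n e_{2n} + e_{2n} e_n = \langle e_n, e_{2n} \rangle = 1$ from $Q_{2n}(e_n + e_{2n}) = 1$. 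Hence $(e_n-e_{2n})^2 = -1$ and $\vartheta^2 = 1$, which is consistent with Lemma \ref{lem:conjugation-by-w}(i). In particular the map $x \mapsto \vartheta x$ is an involution swapping the two subspaces.

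For the intertwining property, the key input is Lemma \ref{lem:conjugation-by-w}(iii), which says that $\theta \in \Aut(\GSpin_{2n})$ is implemented by conjugation by $\vartheta$ inside $C(V_{2n})^\times$, i.e.\ $\theta(g) = \vartheta g \vartheta^{-1}$ for all $g \in \GSpin_{2n}$. Since $\spin^\pm$ come from the $C(V_{2n})$-action, for $g \in \GSpin_{2n}$ and $x \in \bigwedge^+ W$ we compute, entirely inside $\bigwedge W_{2n}$,
\begin{equation*}
\spin^-(\theta(g))(\vartheta x) \;=\; (\vartheta g \vartheta^{-1})(\vartheta x) \;=\; \vartheta (g x) \;=\; \vartheta\, \spin^+(g) x,
\end{equation*}
which is exactly the claimed identity.

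I expect no serious obstacle. The only point requiring care is the sign bookkeeping: the factor of $\sqrt{-1}$ in the definition \eqref{eq:Elementw_spin} of $\vartheta$ is what makes $\vartheta^2 = 1$ rather than $-1$, and one must track that $e_n$ and $e_{2n}$ sit on opposite sides (in $W_{2n}$ and $W'_{2n}$ respectively), so that they act by left multiplication and by contraction. Once these are in place, the proof is essentially the one-line manipulation above.
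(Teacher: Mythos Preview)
Your proposal is correct and follows essentially the same approach as the paper: both use that $\theta(g)=\vartheta g\vartheta^{-1}$ inside $C(V_{2n})$ to reduce the intertwining identity to a one-line computation in the module $\bigwedge W_{2n}$. The only minor difference is that the paper verifies the isomorphism $\bigwedge^+ W_{2n}\isomto \bigwedge^- W_{2n}$ by an explicit basis computation (which it reuses when fixing the ordered bases in \eqref{eq:CliffordExplicitBasis2}), whereas you argue more abstractly via the parity shift together with $\vartheta^2=1$.
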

\begin{proof}
Henceforth we omit the symbol $\wedge$ for the wedge product in $W_{2n}$.
Consider $v = e_{k_1} \cdots e_{k_r} \in {\bigwedge}^+ W_{2n}$, with $k_1 < k_2 < \ldots < k_r$ and $r$ is even. Then
\begin{align*}
\vartheta v  = \sqrt{-1}(e_{n} e_{k_1} \cdots e_{k_r} - e_{2n}\cdot e_{k_1} \cdots e_{k_r}) \in \bigwedge W_{2n},
\end{align*}
where $e_{2n}$ acts by \eqref{eq:daction}. Thus the isomorphism follows from the following computations.
\begin{align*}
e_{n} e_{k_1} \cdots e_{k_r} & = \begin{cases} 0, & k_r = n, \cr e_{k_1}\cdots e_{k_r} e_n, & k_r \neq n, \end{cases} \cr
e_{2n} e_{k_1} \cdots e_{k_r} & = \sum_{i=1}^r (-1)^{i+1} \langle e_{2n}, e_{k_i} \rangle e_{k_1} \cdots \wh{e_{k_i}} \cdots e_{k_r}  = \begin{cases} - e_{k_1} \cdots e_{k_{r-1}}, & k_r = n, \cr 0, & k_r \neq n. \end{cases}
\end{align*}
The last assertion comes down to showing that $\vartheta g x= \theta(g) \vartheta x$, where $\vartheta g, \theta(g) \vartheta\in C(V)$ act through the $C(V)$-module structure on $x\in \bigwedge W_{2n}$. But this is clear since $\theta(g)=\vartheta g \vartheta^{-1}$.
\end{proof}

Consider the basis $\{b_U\}$ of $\bigwedge W_{2n}$, with
\begin{equation}\label{eq:CliffordExplicitBasis}
b_U =  (-1)^{\# U} e_{k_1} \cdot e_{k_2} \cdots e_{k_r} \in \bigwedge W_{2n},
\end{equation}
where $U = \{k_1 < k_2 < \cdots < k_r\}$ ranges over the subsets of $\{1, 2, \ldots, n\}$.
The $U$ of even size form a basis for ${\bigwedge}^+ W_{2n}$; and the $U$ with odd size form a basis for ${\bigwedge}^- W_{2n}$. Order the $b_U$ for $U$ odd, and the $b_U$ for $U$ even in such a way that the ordering of $\{b_U\}_{|U|:\tu{even}}$ corresponds to that of $\{b_U\}_{|U|:\tu{odd}}$ via $b_U\mapsto \vartheta b_U/\sqrt{-1}$. Then these orderings of the $b_U$ gives us two identifications
\begin{equation}\label{eq:CliffordExplicitBasis2}
\GL \lhk {\bigwedge}^+ W_{2n}\rhk  \isomto \GL_{2^{n-1}} \quand \GL \lhk {\bigwedge}^- W_{2n} \rhk \isomto \GL_{2^{n-1}},
\end{equation}
such that the following proposition holds.

\begin{proposition}\label{prop:res-of-spin}
The following diagram commutes
$$
\xymatrix{
& \GSpin_{2n} \ar[dr]^{\spin^+}\ar[dd]^(.25)\theta & \cr
\GSpin_{2n-1} \ar'[r][rr]^(-.4){\spin} \ar[ur]^{i_{\std}}\ar[dr]_{i_{\std}} & & \GL_{2^{n-1}} \cr
& \GSpin_{2n} \ar[ur]_{\spin^-} &
}
$$
\end{proposition}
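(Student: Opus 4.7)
The plan is to deduce both triangles from Lemmas \ref{lem:CliffordSpinRestrict} and \ref{lem:CliffordSpinThetaAction}, exploiting the compatible orderings on $\bigwedge^+ W_{2n}$ and $\bigwedge^- W_{2n}$ fixed just above \eqref{eq:CliffordExplicitBasis2}.

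For the upper triangle $\spin^+\circ i_{\std} = \spin$, Lemma \ref{lem:CliffordSpinRestrict} gives a $\C$-linear isomorphism $\psi\colon \bigwedge W_{2n-1}\isom \bigwedge^+ W_{2n}$ that is $\GSpin_{2n-1}$-equivariant, with the target regarded as a $\GSpin_{2n-1}$-module via $i_{\std}$ and $\spin^+$. So the two composites agree as abstract representations. To upgrade this to the literal equality of matrix-valued functions $\GSpin_{2n-1}\to\GL_{2^{n-1}}$ asserted by the proposition, I will choose the ordered basis of $\bigwedge W_{2n-1}$ (implicit in the top arrow $\spin$) to be the $\psi$-preimage of the already-fixed basis $\{b_U\}_{|U|\tu{ even}}$ of $\bigwedge^+ W_{2n}$. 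With this choice, the top triangle commutes on the nose. Unwinding the formulas for $\psi$ and for $b_U$ in \eqref{eq:CliffordExplicitBasis}, one checks that this basis coincides, up to a diagonal sign, with the natural analog $\{b'_{U'}\}_{U'\subset\{1,\ldots,n-1\}}$ given by $b'_{U'}=(-1)^{|U'|}f_{k_1}\cdots f_{k_r}$: when $|U'|$ is even, $\psi(b'_{U'})=b_{U'}$, and when $|U'|$ is odd, $\psi(b'_{U'})=\pm b_{U'\cup\{n\}}$. These signs are harmless and can be absorbed into the basis.

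For the lower triangle $\spin^-\circ\theta\circ i_{\std} = \spin$, Lemma \ref{lem:CliffordSpinThetaAction} shows that left multiplication by $\vartheta$ defines an intertwiner $T_\vartheta\colon \bigwedge^+ W_{2n}\isom \bigwedge^- W_{2n}$ satisfying $T_\vartheta\circ \spin^+(g) = \spin^-(\theta(g))\circ T_\vartheta$ for every $g\in\GSpin_{2n}$. By the very choice of orderings recalled above \eqref{eq:CliffordExplicitBasis2}, the map $b_U\mapsto \vartheta b_U/\sqrt{-1}$ is represented by the identity matrix in $\GL_{2^{n-1}}$; equivalently, $T_\vartheta$ is represented by the scalar matrix $\sqrt{-1}\cdot I$. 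Since $\sqrt{-1}\cdot I$ is central, the intertwining identity collapses at the matrix level to $\spin^+(g)=\spin^-(\theta(g))$ for all $g\in\GSpin_{2n}$. Composing with $i_{\std}$ and combining with the already-established upper triangle yields the lower triangle.

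The only step that is not entirely formal is the matrix-level statement of the upper triangle, which boils down to the sign bookkeeping for $\psi$ described above. Aside from this combinatorial check, everything is a direct consequence of the two preceding lemmas together with the fact that the bases of $\bigwedge^{\pm}W_{2n}$ were set up to make multiplication by $\vartheta/\sqrt{-1}$ the identity in coordinates.
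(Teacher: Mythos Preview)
Your proposal is correct and follows essentially the same approach as the paper's own proof, which simply cites Equation~\eqref{eq:CliffordExplicitBasis} together with Lemmas~\ref{lem:CliffordSpinRestrict} and~\ref{lem:CliffordSpinThetaAction}; you have merely unpacked these citations. One small point: you phrase the lower triangle as $\spin^-\circ\theta\circ i_{\std}=\spin$, but the diagram also contains the left triangle $\theta\circ i_{\std}=i_{\std}$, which is needed to conclude $\spin^-\circ i_{\std}=\spin$ for the \emph{bottom} copy of $i_{\std}$; this is exactly Lemma~\ref{lem:conjugation-by-w}(ii) and should be mentioned for completeness.
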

\begin{proof}
This follows from Equation \eqref{eq:CliffordExplicitBasis}, Lemmas \ref{lem:CliffordSpinRestrict}, and (proof of) Lemma \ref{lem:CliffordSpinThetaAction}.
\end{proof}

\section{Some special subgroups of $\GSpin_{2n}$}\label{sec:subgroups-Gspin}

In this section, the base field of all algebraic groups is an algebraically closed field of characteristic 0 such as $\C$ or $\lql$. We begin with principal morphisms for $\GSpin_{2n-1}$ and $\GSpin_{2n}$. (See \cite[Sect.~7]{patrikis2016deformations} and \cite{GrossPrincipal, SerrePrincipal} for general discussions.) The following notation will be convenient for us.  Denote by
$$
j_\reg \colon \Gm \times \SL_2 \to \GSpin_{2n-1}
$$
the product of the central embedding $\G_m\hra \GSpin_{2n-1}$ and a fixed principal $\SL_2$-mapping. Note that $j_\reg$ has the following kernel\footnote{To see this, one can use Proposition 6.1 of \cite{GrossMinuscule}, where the $\SL_2$-representations appearing in the composition $\SL_2 \overset {\tu{pri}} \to \GSpin_{2n-1} \overset {\tu{spin}} \to \GL_{2^{n-1}}$ are computed.}
$$
\begin{cases}
\langle (-1, \vierkant {-1}00{-1})\rangle, & \tu{if}\,~n(n-1)/2 \,\tu{ is odd},\\
\langle (1, \vierkant {-1}00{-1})\rangle, &  \tu{if}\,~n(n-1)/2 \,\tu{ is even}.
\end{cases}
$$
We write $G_\pri \subset \GSpin_{2n-1}$ for the image of $j_\reg$. The group $G_\pri$ is isomorphic to $\GL_2$ if $n(n+1)/2$ is odd, and to $\Gm \times \PGL_2$ otherwise. Using $i_{\std}$ from \eqref{eq:std_emb_def}, we define
$$
i_{\reg} = i_{\std} \circ j_\reg \colon \Gm \times \SL_2 \to \GSpin_{2n}.
$$
The map $\pr^{\circ}\circ i_{\reg}: \Gm \times \SL_2 \to \SO_{2n}$ factors through $\PGL_2\ra \SO_{2n}$, to be denoted $i^\circ_{\reg}$, via the natural projection from $\Gm \times \SL_2 \ra \PGL_2$ (trivial on the $\Gm$-factor). We see that the preimage of $i^\circ_{\reg}(\PGL_2)$ in $\GSpin_{2n}$ is $i_{\std}(G_\pri)$. Denote by $\li{j_{\reg}}:\PGL_2\to \PSO_{2n-1}$ the map induced by $j_{\reg}$ on the adjoint groups.\footnote{When denoting the group standing alone, we prefer $\SO_{2n-1}$ to $\PSO_{2n-1}$. When thinking of a projective representation or a subgroup of $\PSO_{2n}$ via $\li{i_{std}}$, we usually write $\PSO_{2n-1}$.}
We also introduce the map
$$
 \li {i_{\reg}} = \li {i_{\std}} \circ \li{j_{\reg}} \colon \PGL_2 \to \PSO_{2n}.
$$

Recall that we have fixed earlier the group $\SO_8$ in \eqref{eq:DefinitionGroupGo} (cf. below \eqref{eq:BorelDesc}).
Let $T_{\GSpin_7} \subset \GSpin_7$ be as in \cite[\S Notation]{GSp} and put $T_{\Spin_7} = T_{\GSpin_7} \cap \Spin_7$. 

We will now fix a convenient basis for $X_*(T_{\Spin_7})$. We have $X_*(T_{\GSpin_7}) = X^*(T_{\GSp_6}) = \bigoplus_{i=0}^3 \Z e_i$, the center $Z_{\GSp_6} \subset T_{\GSp_6}$ equals $\{(t^2, t, t, t) | t \in \Gm\}$ (use the roots $\alpha_i$ listed in [\textit{loc. cit.}, p. 10]), and so $X^*(T_{\GSp_6}) \to X^*(Z_{\GSp_6})$ identifies with $\Z^4 \to \Z$, $(x_i) \mapsto 2a_0 + a_1 + a_2 + a_3$. Thus $X_*(T_{\Spin_7}) = \{ (a_i)\in \Z^4\, |\, 2a_0 + a_1 + a_2 + a_3 = 0\}$. By projecting $(a_i)\in \Z^4$ onto $(a_1, a_2, a_3)$ we obtain
\begin{equation}\label{eq:DesctorusSpin7}
X_*(T_{\Spin_7})=\{(a_1,a_2,a_3) \in \Z^3 : a_1 + a_2 + a_3 \equiv 0 \mod 2\}.
\end{equation}

We write $T_{\SO_8} \subset \SO_8$ for the maximal torus corresponding to \eqref{eq:MaximalTorusGSO} (so with $t_0 = 1$). The spin representation of $\Spin_7$ is orthogonal (\!\!\cite[Lem.~0.1]{GSp}), yielding an embedding $\spin^{\circ \prime} : \Spin_7\hra \SO(q)$, for some quadratic form $q$ in $8$ variables. We fix an isomorphism $u \colon \SO(q) \isomto \SO_8$, in such a way that the composition
$$
\spin^{\circ} := u \circ \spin^{\circ \prime} \colon \Spin_7 \hra \SO_8
$$
maps $T_{\Spin_7}$ into $T_{\SO_8}$ and such that
\begin{equation}\label{eq:Spin-circ-definition}
\spin^\circ(a) = (\tfrac 12 (\tau_1^j a_1 + \tau_2^j a_2 + \tau_3^j a_3)) \in X_*(T_{\SO_8}) \subset \Z^8,
\end{equation}
for some choice of numbering $\tau^j=(\tau^j_1,\tau^j_2,\tau^j_3)\in\{\pm 1\}^3$ for $j=1,...,8$, such that $\tau^j = -\tau^{j + 4}$ for $j=1,2,3, 4$. In \eqref{eq:Spin-circ-definition} the embedding $X_*(T_{\SO_8}) \subset \Z^8$ comes from \eqref{eq:MaximalTorusGSO}. 

We write  $\li{\spin^\circ}:\Spin_7\hra \PSO_8$ for the projectivization of $\spin^\circ$. Fixing a non-isotropic line in the underlying 8-dimensional space, the stabilizer of the line in $\Spin_7$ is isomorphic to a group of type $G_2$, cf.~\cite[p.169,~Prop.~2.2(4)]{GrossSavin}. Thereby we obtain an embedding $j_{\spin} \colon G_2\hra \Spin_7$. Alternatively, an embedding $G_2\hra \Spin_7$ can be constructed using the octonion algebra \cite[Sect.~2.5]{ChenevierG2}. The conjugacy class of $j_{\spin}$ is unique (thus independent of choices) by [\textit{loc. cit.}, Prop.~2.11].
Denote by
\begin{equation}\label{eq:i_spin-map}
i_{\spin}\colon G_2\hra \Spin_8
\end{equation}
the composite $i_{\std}\circ j_{\spin}$.
The restriction of $\spin^\eps:\Spin_8\ra \GL_8$ via $i_{\spin}$ is isomorphic to $\mathbf{1}\oplus \std$, where $\mathbf{1}$ and $\std$ are the trivial and the unique irreducible 7-dimensional representation of $G_2$, respectively. (This is easy to see by dimension counting, as the other irreducible representations have dimension\,$\ge 14$.)

\begin{lemma}\label{lem:spin7}
The representation $\spin^\circ:\Spin_7\hra \SO_8$ is $\OO_8$-conjugate to $\theta^\circ \spin^\circ$ but not locally conjugate (thus not conjugate) as an $\SO_8$-valued representation. In fact, there exists an open dense subset $U \subset \Spin_7$ such that $\spin^\circ t$ and $\theta^\circ \spin^\circ t$ are not conjugate for any $t \in U$. Moreover $\spin^\circ(\Spin_7)$ and $\theta^\circ\spin^\circ(\Spin_7)$ are not $\SO_8$-conjugate. The analogous assertion holds for $\li{\spin}:\SO_7\hra \PSO_8$.
\end{lemma}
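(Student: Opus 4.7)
The $\OO_8$-conjugacy of $\spin^\circ$ with $\theta^\circ\spin^\circ$ is immediate from the definition of $\theta^\circ$ as $\vartheta^\circ$-conjugation (with $\vartheta^\circ\in\OO_8$), so the heart of the statement lies in the three non-conjugacy assertions. The plan is to deduce all three from two ingredients: the fact that $\det(\vartheta^\circ)=-1$, and the standard observation that for a regular semisimple $s\in\SO_{2n}$ with $2n$ pairwise distinct eigenvalues none equal to $\pm 1$, the centralizer $Z_{\OO_{2n}}(s)$ coincides with the maximal torus of $\SO_{2n}$ containing $s$ (hence lies in $\SO_{2n}$). I would then parametrize a maximal torus of $\Spin_7$ so that the weights of the spin representation read $\tfrac12(\pm e_1\pm e_2\pm e_3)$, producing eigenvalues $t_1^{\pm 1/2}t_2^{\pm 1/2}t_3^{\pm 1/2}$ for $\spin^\circ(t)$. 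Let $U\subset\Spin_7$ be the open dense subset where these eight values are pairwise distinct and none is $\pm 1$; this locus is nonempty because the failure conditions are nontrivial characters of $T_{\Spin_7}$. For $t\in U$ and any $g\in\SO_8$ with $g\spin^\circ(t)g^{-1}=\vartheta^\circ\spin^\circ(t)(\vartheta^\circ)^{-1}$, the element $(\vartheta^\circ)^{-1}g$ lies in $Z_{\OO_8}(\spin^\circ(t))\subset\SO_8$, whence $\det(\vartheta^\circ)=1$, a contradiction. This yields the asserted open dense $U$, and non-conjugacy of $\spin^\circ$ and $\theta^\circ\spin^\circ$ as $\SO_8$-valued representations follows since a conjugation would have to conjugate at every point.

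Next I would compute $N_{\OO_8}(\spin^\circ(\Spin_7))$. Irreducibility of $\spin$ and Schur's lemma give $Z_{\GL_8}(\spin^\circ(\Spin_7))=\G_m$ and hence $Z_{\OO_8}(\spin^\circ(\Spin_7))=\{\pm I\}$; the latter is contained in $\spin^\circ(\Spin_7)$ because the nontrivial central element of $\Spin_7$ maps to $-I$ under $\spin^\circ$. Since $\Spin_7$ has trivial outer automorphism group (type $B_3$), every $N_{\OO_8}$-conjugation acts on $\spin^\circ(\Spin_7)$ as an inner automorphism, giving $N_{\OO_8}(\spin^\circ(\Spin_7))=\spin^\circ(\Spin_7)\cdot\{\pm I\}=\spin^\circ(\Spin_7)\subset\SO_8$. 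Any $g\in\SO_8$ sending $\spin^\circ(\Spin_7)$ to $\vartheta^\circ\spin^\circ(\Spin_7)(\vartheta^\circ)^{-1}$ then satisfies $(\vartheta^\circ)^{-1}g\in\SO_8$, again contradicting $\det(\vartheta^\circ)=-1$.

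For the $\PSO_8$ analogue I would lift $\PSO_8$-conjugacy to $\SO_8$-equivalence up to a sign: lifts $x,y\in\SO_8$ of $\PSO_8$-conjugate elements satisfy $gxg^{-1}=\eps y$ for some $g\in\SO_8$ and $\eps\in\{\pm 1\}$. The case $\eps=+1$ is handled by the arguments above. For $\eps=-1$, note that $-\vartheta^\circ\spin^\circ(t)(\vartheta^\circ)^{-1}$ has eigenvalue multi-set $-\mathscr{EV}(\spin^\circ(t))$, so $\GL_8$-conjugacy with $\spin^\circ(t)$ would require $\mathscr{EV}(\spin^\circ(t))$ to be closed under negation. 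This imposes nontrivial character relations among the $t_i^{\pm 1/2}$, so it fails on an open dense subset $U'\subset U$. For $t\in U'$ both sign cases are excluded, yielding the $\PSO_8$-non-conjugacy of $\li{\spin}(\bar t)$ and $\theta^\circ\li{\spin}(\bar t)$. The image statement follows from the same normalizer computation carried out in $\PO_8$ (using $\Out(\SO_7)=1$ and the fact that the kernel of $\SO_8\twoheadrightarrow\PSO_8$ is contained in $\spin^\circ(\Spin_7)$). The main technical obstacle I anticipate is precisely the two-sign bookkeeping of this last paragraph; once each sign is reduced to a proper closed condition on $T_{\Spin_7}$, the density of $U'$ and the conclusions are formal.
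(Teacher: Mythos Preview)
Your argument is correct and rests on the same underlying fact as the paper's proof, namely that when $\spin^\circ(t)$ has eight distinct eigenvalues, its $\OO_8$-conjugacy class splits into two $\SO_8$-classes exchanged by $\theta^\circ$. The paper phrases this via Weyl groups (the $\Omega_{\GL_8}$-orbit of $\spin^\circ(t)$ in $T_{\SO_8}$ breaks into two $\Omega_{\SO_8}$-orbits, necessarily swapped by $\theta^\circ$), whereas you phrase it via centralizers ($Z_{\OO_8}(s)\subset\SO_8$ for $\GL_8$-regular $s$, forcing a determinant contradiction); these are two formulations of the same phenomenon. For the non-conjugacy of the images and for the projective statement the paper simply writes ``it also follows'' and ``the projective analogue also follows,'' so your normalizer computation (using $\Out(\Spin_7)=1$ and $-I\in\spin^\circ(\Spin_7)$) and your two-sign analysis at the $\PSO_8$ level actually supply details that the paper leaves to the reader. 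One small redundancy: once the eight eigenvalues are pairwise distinct they automatically avoid $\pm 1$, since the eigenvalues of an element of $\SO_8$ come in inverse pairs.
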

\begin{proof}
Evidently $\spin^\circ$ and $\theta^\circ \spin^\circ$ are $\OO_8$-conjugate since $\theta^\circ = \Int(\vartheta^\circ)$ with $\vartheta^\circ\in \OO_8$. Let $T_{\GL_8} \subset \GL_8$ be the diagonal torus.  Let $\Omega_{\Spin_7},\Omega_{\SO_8},\Omega_{\GL_8}$ denote the Weyl groups corresponding to $T_{\Spin_7},T_{\SO_8},T_{\GL_8}$. 
In view of the weights of the spin representation~\cite[Prop.~20.20]{FultonHarris}, we know that
$$
\std(\spin^\circ(a_1,a_2,a_3))\in \Omega_{\GL_8}((\eps_1 a_1 + \eps_2 a_2 + \eps_3 a_3)/2 : \eps_i\in \{\pm1\}).
$$
(The $\Omega_{\GL_8}$-orbit of 8-tuples is simply an unordered 8-tuple.) When $\eps_1 a_1 + \eps_2 a_2 + \eps_3 a_3$ are all distinct, the right hand side breaks up into exactly two $\Omega_{\SO_8}$-orbits, which are permuted by $\theta^\circ$. Similarly, if $U_T$ is the open dense subset of $T_{\Spin_7}$ consisting of $t = (t_1, t_2, t_3) \in T_{\Spin_7}$ with $t^{\eps_1}_1,t_2^{\eps_2},t_3^{\eps_3}$ all distinct, then $\spin^\circ(t)$ and $\theta^\circ(\spin^\circ(t))$ are not $\SO_8$-conjugate. This implies the existence of $U$ as in the lemma by taking $U$ to be the set of regular semisimple elements whose conjugacy classes meet $U_T$.

Now assume that  $\spin^\circ(\Spin_7) = g \theta^\circ\spin^\circ(\Spin_7) g\inv$ for some $g \in \SO_8$. Then the composition $c_g = \spin^{\circ, -1} \circ \tu{Int}_g \circ \theta^\circ \spin^\circ$ is an automorphism of $\Spin_7$, which is hence inner and of the form $x \mapsto hxh\inv$ for some $h \in \Spin_7$. Thus $\spin^{\circ}$ and $\theta^\circ \spin^\circ$ are conjugate by $g\inv \spin^\circ(h)$, a contradiction. Thus $\spin^\circ(\Spin_7)$ and $\theta^\circ\spin^\circ(\Spin_7)$ are not $\SO_8$-conjugate. The projective analogue for $\li{\spin} \colon \SO_7\hra \PSO_8$ also follows.
\end{proof}

\begin{lemma}\label{lem:IrreducibilityOfSpin-}
Write $H := \pr^{-1}(\spin^\circ(\Spin_7)) \subset \Spin_8$. The restriction of $\spin^\eps$ to $H$ is irreducible if and only if $\eps = -$. More precisely, we have $\li {\spin^+} \circ \spin^\circ \simeq \li{\std \oplus \one}$ and
$\li {\spin^-} \circ \spin^\circ \simeq \li{\spin^\circ}$. 
\end{lemma}
\begin{proof}
We compute the composition $\li {\spin^\eps} \circ \spin^\circ \colon \Spin_7 \overset {\spin^\circ} \to \SO_8 \to \PSO_8 \overset { \li {\spin^\eps}} \to \PGL_8$ on $T_{\Spin_7}$. For $a \in X_*(T_{\Spin_7})$, $\spin^\circ(a)$ is given by \eqref{eq:Spin-circ-definition}, and for $b \in X_*(T_{\SO_8}) = \Z^4$ we have $\li {\spin^\eps}(b) = (\tfrac12 (\tau_1 b_1 + \tau_2 b_2 + \tau_3 b_3 + \tau_4 b_4))_{\tau \in \{\pm1\}^4, \prod_{i=1}^4 \tau_i = \eps}$ (both up to the Weyl group actions). From this it follows that  $\li {\spin^\eps} \circ \spin^\circ |_{T_{\Spin_7}}$ is conjugated to $\li {\std \oplus \one}|_{T_{\Spin_7}}$ and $\li {\spin^\circ}|_{T_{\Spin_7}}$ for $\eps=+$ and $\eps = -$, respectively. The lemma now follows from Lemma \ref{lem:AlgebraicProjRepsConjugate}.
\end{proof}

\begin{lemma}\label{lem:containingRegularUnipotent-1}
Let $n \geq 3$ and $H \subsetneq \SO_{2n}$ be a proper connected reductive subgroup containing a regular unipotent element. Then $H$ is isomorphic to a quotient of $\Spin_{2n-1}$, $\SL_2$ or $G_2$ (the last can occur only if $n = 4$). 
\end{lemma}
\begin{proof}
We begin with some preliminaries. When $G$ is a reductive group, write $\Sigma(G)$ for the set of maximal proper connected reductive subgroups $M$ of $G$ that contain a regular unipotent element of $G$. From \cite[Thms.~ A,B]{SaxlSeitz} we have the following\footnote{The statement of \cite[Thms.~ A,B]{SaxlSeitz} are not entirely clear on whether the list describes $H^0$ or $H$. We interpret it as the former since that is what their proof shows. For instance, regarding (i)(a) of their theorem, a maximal reductive subgroup of type $B_{n-1}$ in $\SO_{2n}$ is not $i^\circ_{\std}(\SO_{2n-1})$ but $Z(\SO_{2n})\times i^\circ_{\std}(\SO_{2n-1})$, which is disconnected.}:
\begin{itemize}
\item[(a)] Case $G \simeq \SO_{2n}$ ($n \geq 3$), then every $M \in \Sigma(G)$ is isomorphic to a quotient of $\Spin_{2n-1}$. 
\item[(b)] Case $G \simeq \SO_{2n-1}$ ($n \geq 3$, $n \neq 4$), then every $M \in \Sigma(G)$ is isomorphic to a quotient of $\SL_2$.
\item[(c)] Case $G \simeq \SO_7$, then every $M \in \Sigma(G)$ is isomorphic to $G_2$.
\item[(d)] Case $G \simeq G_2$, then every $M \in \Sigma(G)$ is isomorphic to a quotient of $\SL_2$.
\end{itemize}
We prove the following claim: If $H$ is a connected reductive subgroup of some connected reductive group $G$ (over $\C$ or $\lql$), such that $H$ contains a regular unipotent element $u$ of $G$, then $u$ is also regular unipotent in $H$. To see this, write $B_u \subset G$ for the unique Borel subgroup that contains $u$. Now let $B_0 \owns u$ be a Borel subgroup of $H$ that contains $u$. Then $B_0$ is a connected solvable subgroup of $G$, and hence is contained in a Borel subgroup $B_1$ of $G$. As $u \in B_1$, we must have $B_1 = B_u$. Hence $B_0 \subset B_u \cap H$. Since $(B_u \cap H)^0$ is connected solvable and contains $B_0$, we must have $(B_u \cap H)^0= B_0$ by maximality of $B_0$. This shows that in $H$, the element $u$ is contained in exactly one Borel subgroup. Therefore $u \in H$ is regular unipotent.

Now let $H$ be as in the statement of the lemma. Let $u \in H$ be regular unipotent in $\SO_{2n}$. Let $M \in \Sigma(\SO_{2n})$ such that $H \subset M$. If $H = M$, we are done by (a) above. So assume $H \neq M$. Again by (a), $M$ is a quotient of $\Spin_{2n-1}$, and hence $M_{\ad} \simeq \SO_{2n-1}$. Then $H_\ad$ maps to $\SO_{2n-1}$ (since the center of $H$ commutes with $u$, it is contained in $Z_{\SO_{2n}}$ by \cite[Thm.~4.11]{SpringerUnipotent} thus also in $Z_M$), and by the claim we can find an $M' \in \Sigma(\SO_{2n-1})$ that contains the image of $H_\ad$ in $\SO_{2n-1}$. If $H_{\ad} = M'$, we are done by (b) or (c) if $n = 4$. If $H_{\ad} \neq M'$, then again $M'$ is either $G_2$ or a quotient of $\SL_2$, and we can argue similarly.
\end{proof}

If $H$ is an algebraic group, we write $\Upsilon(H)$ for the set of $\SO_{2n}$-conjugacy classes of morphisms $H \to \SO_{2n}$ that have a regular unipotent element in their image. By abuse of notation, we often identify $\Upsilon(H)$ with a set of representatives for the conjugacy classes.

\begin{lemma}\label{lem:containingRegularUnipotent-2}
We have 
\begin{align*}
 \Upsilon(\Spin_{2n-1}) &= \lbr  {i^\circ_\std} \colon \Spin_{2n-1} \to \SO_{2n} \rbr & (n \geq 3, n \neq 4)  \cr
 \Upsilon(\Spin_7) &= \lbr  {i^\circ_\std}, \spin^\circ, \theta^\circ \spin^\circ \colon \Spin_7 \to \SO_8 \rbr & (n = 4) \cr
\Upsilon(\SL_2) &= \{ i_\reg \colon \SL_2 \to \SO_{2n}\} & (n \geq 3) \cr
\Upsilon(G_2) &= \{ {i_\spin} \colon G_2 \to \SO_8\} & (n=4)
\end{align*}
\end{lemma}
\begin{proof}
\noindent \textbf{Case} $H = \Spin_{2n-1}$. We have $\ker(f) \subset \{\pm 1\}$. Assume first $\ker(f) = \{\pm 1\}$. Then $r := \std \circ f$,  $r' := \std \circ  {i^\circ_{\std}}$ are two faithful representations of dimension $2n$. 
By Lemma \ref{lem:RepsOfDim2n}, $r$ and $r'$ are isomorphic. By acceptability of $\tu{O}_{2n}$ we find a $g \in \tu{O}_{2n}$ that conjugates $r$ to $r'$ \cite[Prop.~B.1]{GSp}. The element $g$ might have negative determinant. In this case we can replace $g$ by $g \cdot \vartheta^\circ$, as $\vartheta^\circ$ centralizes $i^\circ_{\std}(\SO_{2n-1})$ by Lemma~\ref{lem:conjugation-by-w}(ii).

Now assume $\ker(f) = 1$. Then $r = \std \circ f$ is a faithful $2n$-dimensional representation of $\Spin_{2n-1}$. The smallest such representation by dimension is the spin representation, and therefore $2^{n-1} \leq 2n$, and $n \leq 4$. We distinguish in subcases $n = 3$ or $n = 4$:

\tu{(When $n = 3$.)} We show that this subcase $(H = \Spin_{2n-1},\ker(f)=1,n=3)$ does not occur. Assume $f \colon \Spin_5 \to \SO_6$ is injective with a regular unipotent element in its image. Recall $\SO_6 \simeq \SL_4/\{\pm1\}$. Write $\wt H \subset \SL_4$ for the pre-image of $f(\Spin_5)$ in $\SL_4$. Let $M \subset \SL_4$ be a proper maximal connected reductive subgroup of $\SL_4$ that contains $H$. Then $M$ is isomorphic to $\Sp_4$ by \cite[Thm. B]{SaxlSeitz}. By dimension consideration we must have $\wt H = M$. In particular the image of $f$ in $\SL_4/\{\pm 1\}$ must be isomorphic to $\PSp_4$. Hence $\Spin_5 \isomto \PSp_4$, a contradiction.   

\tu{(When $n = 4$.)} We want to classify all conjugacy classes of injections $f \colon \Spin_7 \to \SO_8$ with a regular unipotent element in their image. In this case they do exist, as $\spin^\circ$ is an example.  The representations $\std \circ f$ and $\std \circ \spin^\circ$ are both faithful representations of $\Spin_7$ of dimension $8$. Hence they are isomorphic. By acceptability of $\uO_8$ there exists a $g \in \uO_8$ that conjugates $f$ to $\spin^\circ$. This implies that $f$ is $\SO_8$-conjugate to either $\spin^\circ$ or $\theta^\circ \spin^\circ$. By Lemma~\ref{lem:spin7}, the representations $\spin^\circ$ and $\theta^\circ \spin^\circ$ are not $\SO_8$-conjugate.
Observe finally that $\ker( i^\circ_{\std}) = \{\pm 1\}$ while $\spin^\circ$ and $\theta^\circ \spin^\circ$ are injective. Hence $i^\circ_{\std}$ is not conjugate to either $\spin^\circ$ or $\theta^\circ \spin^\circ$. 
This verifies the description of $ \Upsilon(\Spin_7)$ in the lemma.

\smallskip

\newcommand{\Rep}{{\textup{Rep}}}
\newcommand{\isomfrom}{\overset {\sim} \leftarrow}

\noindent \textbf{Case} $H = \SL_2$. We want to classify the morphisms $f \colon \SL_2 \to \SO_{2n}$ with a regular unipotent element in the image.
In fact, such a morphism is called the ``principal morphism" in the literature, and it is well-known that it is unique up to conjugacy. However, we could not find a precise reference, so we give some detail for a general connected reductive group $G$.

We first note that the natural map $\Hom(\SL_2, G) \to \Hom(\Lie(\SL_2), \Lie(G))$, equivariant for the adjoint action of $G$, is a bijection. To construct the inverse, let $g \in \Hom(\Lie(\SL_2), \Lie(G))$. The composition
$\Rep(G) \to \Rep(\Lie(G)) \to \Rep(\Lie(\SL_2)) \isomfrom \Rep(\SL_2)$  
is a $\otimes$-functor preserving the underlying vector spaces, where the last arrow is an equivalence (e.g., see \cite[VIII.1.5]{BourbakiLGLA7to9}). Thus the composition arises from a morphism of groups $f \colon \SL_2 \to G$ by \cite[Cor.~2.9]{DeligneMilne}, and one checks directly that $g \mapsto f$ and $f \mapsto \Lie(f)$ are inverse to each other.  By the Jacobson--Morozov lemma, $\Hom(\Lie(\SL_2), \Lie(G))$ is in bijection with the set of nilpotent elements in $\Lie(G)$ via $g \mapsto g\vierkant 0100 $. Above we consider $f$ such that $f\vierkant 1101$ is regular unipotent, thus $g \vierkant 0100$ is regular nilpotent in $\Lie(G)$, and hence unique up to conjugacy. The same statement follows for $f$ then as well. 
\smallskip

\noindent \textbf{Case} $H = G_2$ \textbf{and} $n = 4$. Let $f \colon G_2 \to \SO_8$ be a morphism with a regular unipotent element in its image. Recall $i_\spin \colon G_2 \to \Spin_8$ from \eqref{eq:i_spin-map}, it induces a morphism $\ol{i_\spin} \colon G_2 \to \SO_8$. The representations $\std \circ f$ and $\std \circ \ol{i_\spin}$ are both faithful and of dimension $8$, hence isomorphic (they are both isomorphic to $r_7 \oplus \one$, where $r_7$ is the unique representation of $G_2$ of dimension $7$). By $O_8$-acceptability, we can find a $g \in O_8$ such that $f = g \ol{i_\spin} g^{-1}$. If $\det(g) = 1$ we are done. The element $\vartheta^\circ \in O_8$ centralizes the subgroup $\SO_{2n-1}$. The map $i_\spin$ factors over the map $j_\spin \colon G_2 \to \Spin_7$ (see above \eqref{eq:i_spin-map}). In particular $\vartheta^\circ{i_\spin} \vartheta^{\circ, -1} = \li{i_\spin}$, and we can replace $g$ by $g \vartheta^{\circ}$.
\end{proof}

\begin{proposition}\label{prop:containingRegularUnipotent}
Let $n\ge 3$. Let $\ol H \subset \PSO_{2n}$ be a (possibly disconnected) reductive subgroup (over $\C$ or $\lql$) containing a regular unipotent element. Up to conjugation by an element of $\PSO_{2n}$, the following holds (in particular  $\ol H$ is connected in all cases):
\begin{enumerate}[label=(\roman*)]
  \item if $n\neq 4$, then $\ol H = \PSO_{2n}$, $\ol H=\ol{i_{\std}}(\PSO_{2n-1})$, or $\ol H=\ol{i_{\reg}}(\PGL_2)$;
  \item if $n=4$, then $\ol H$ is either as in (1), $\ol H=\li{\spin^\circ}(\SO_7)$, $\ol H=\li{\theta^\circ\spin^\circ}(\SO_7)$, or $\ol H = \li{i_{\spin}}(G_2)$.
\end{enumerate}
If $H \subset \SO_{2n}$ is a (possibly disconnected) reductive subgroup containing a regular unipotent element, then $H^0\subset H\subset H^0\cdot Z(\SO_{2n})$ and $H^0$ surjects onto $\ol H\subset \PSO_{2n}$ as in the list above. (See the proof for the list of possible $H^0$.)
\end{proposition}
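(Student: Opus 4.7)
The argument breaks into three parts. First, we reduce to classifying the connected part: since we work over a field of characteristic zero, the finite quotient $\ol H/\ol H^0$ cannot contain any nontrivial unipotent element, so any regular unipotent $u \in \ol H$ of $\PSO_{2n}$ lies automatically in $\ol H^0$. Hence it suffices to classify connected reductive subgroups containing a regular unipotent element, and then to show $\ol H = \ol H^0$ by verifying self-normalization of $\ol H^0$ in $\PSO_{2n}$.

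Second, we invoke the Saxl--Seitz classification \cite{SaxlSeitz} of closed connected positive-dimensional subgroups of a simple algebraic group containing a regular unipotent element, applied to $\PSO_{2n}$ of type $D_n$. This yields exactly the list in the proposition: (a) $\PSO_{2n}$ itself; (b) the reducible embedding $\ol{i_{\std}}(\PSO_{2n-1})$ of type $B_{n-1}$, noting that a regular unipotent in $\SO_{2n-1}$ is a single Jordan block of size $2n-1$, which remains regular in $\SO_{2n}$ after adjoining a trivial block via $i_{\std}^\circ$; (c) the image of a principal $\PGL_2$; and, exclusively for $n=4$, (d) the two inequivalent irreducible embeddings $\li{\spin^\circ}(\SO_7)$ and $\li{\theta^\circ\spin^\circ}(\SO_7)$, together with (e) the embedding $\li{i_{\spin}}(G_2)$. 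Uniqueness of conjugacy class follows from Jacobson--Morozov for the principal $\PGL_2$ and from \cite[Prop.~2.11]{ChenevierG2} for $G_2\subset \Spin_7$; the two $\SO_7$-embeddings are genuinely distinct under $\PSO_8$-conjugation by Lemma~\ref{lem:spin7}. Self-normalization is then checked case by case: trivial for $\PSO_{2n}$; for $\ol{i_{\std}}(\PSO_{2n-1})$, a direct computation using the orthogonal decomposition $V_{2n}=V_{2n-1}\oplus \C f_{2n-1}$ gives $N_{\SO_{2n}}(i_{\std}(\SO_{2n-1})) = i_{\std}(\SO_{2n-1})\cdot Z(\SO_{2n})$, which descends to $\PSO_{2n-1}$; a principal $\PGL_2$ is self-normalizing in any simple group (its normalizer must preserve the associated principal $\mathfrak{sl}_2$-triple up to conjugation, which pins it down); and $\li{\spin^\circ}(\SO_7)$ and $\li{i_{\spin}}(G_2)$ are self-normalizing in $\PSO_8$ since neither $\SO_7$ nor $G_2$ admits outer automorphisms.

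Finally, for the $\SO_{2n}$-statement, project to $\PSO_{2n}$: the image $\ol H$ is reductive and still contains a regular unipotent (regularity and unipotency are preserved under the central isogeny $\SO_{2n}\twoheadrightarrow \PSO_{2n}$ in characteristic zero), so it is one of the connected groups above. Consequently $H^0$ surjects onto $\ol H$, since its image is a closed connected subgroup of finite index in the connected group $\ol H = \ol H^0$, hence equal to it. Any $h \in H$ differs from some $h_0 \in H^0$ with the same projection by an element of the kernel $Z(\SO_{2n}) = \mu_2$, yielding $H \subset H^0 \cdot Z(\SO_{2n})$. The possible $H^0$'s are the preimages under $\SO_{2n}\twoheadrightarrow \PSO_{2n}$ of the $\ol H^0$'s above, namely $\SO_{2n}$, $i_{\std}(\SO_{2n-1})$, the principal $\SL_2$- or $\PGL_2$-image, and for $n=4$ additionally $\spin^\circ(\Spin_7)$, $\theta^\circ\spin^\circ(\Spin_7)$, and $i_{\spin}(G_2)$. \textbf{The main obstacle} is the $D_4$ case: one must carefully keep the two $\SO_7$-embeddings distinct (they are related by triality but not by $\PSO_8$-conjugation, per Lemma~\ref{lem:spin7}) and characterize the $G_2$-conjugacy class inside $\Spin_8$ uniquely, rather than being confused by its presence inside both spin-embedded $\Spin_7$'s.
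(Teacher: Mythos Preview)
Your approach is correct and runs parallel to the paper's, with the direction of attack reversed: you classify inside $\PSO_{2n}$ first and deduce the $\SO_{2n}$-statement by pulling back along the central isogeny, whereas the paper works in $\SO_{2n}$ throughout and only projects at the end. Your self-normalization argument for $\ol H = \ol H^0$ is the adjoint-group reformulation of the paper's argument (Schur's lemma on the standard representation plus the absence of outer automorphisms for $\SO_{2n-1}$, $G_2$, $\Spin_7$, $\PGL_2$), and both unwind to the same centralizer computation.

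Two places where your sketch needs tightening. First, Saxl--Seitz \cite[Thm.~B]{SaxlSeitz} classifies only the \emph{maximal} closed connected positive-dimensional subgroups containing a regular unipotent, not all of them. To reach the full list you must iterate: apply it once in $\SO_{2n}$ to get $\SO_{2n-1}$ (and, for $n=4$, the two spin copies of $\Spin_7$), then again inside $\SO_{2n-1}$ to find the principal $\PGL_2$ and, for $n=4$, $G_2$. This requires knowing that a regular unipotent of the ambient group stays regular in the subgroup; the paper proves this directly via uniqueness of the Borel containing $u$, and invokes \cite[Thm.~1.4]{TestermanZalesski} to close off the chain. Second, you should justify explicitly that the reducible $\SO_{2n-1}$-subgroup is unique up to $\SO_{2n}$-conjugacy; the paper does so by noting that the only $2n$-dimensional faithful representation of $\SO_{2n-1}$ is $\std\oplus\mathbf 1$, so any such subgroup is the stabilizer of a non-isotropic line, and these lines form a single $\SO_{2n}$-orbit.
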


\begin{proof}
We first focus on the classification of reductive subgroups $H \subset \SO_{2n}$ containing a regular unipotent element. If $H = \SO_{2n}$ there is nothing to do, so we assume $H$ is proper. By Lemma~\ref{lem:containingRegularUnipotent-1} the group $H^0$ is isomorphic to a quotient of $\Spin_{2n-1}, G_2$ or $\SL_2$. Using Lemma~\ref{lem:containingRegularUnipotent-2} we conjugate so that $f \colon H^0 \hookrightarrow \SO_8$ is one of the maps listed in that lemma. So
\begin{itemize}
\item[(a)] When $n \neq 4$, $f$ equals $\li {i^\circ_\std}$ or $i_\reg$.
\item[(b)] When $n = 4$, $f$ equals $\li {i^\circ_\std}$, $i_\reg$, $\li {i_\spin}$, $\spin^\circ$ or $\theta^\circ \spin^\circ$. 
\end{itemize} 
From this list we see that if $\std(H^0)$ is reducible (so $f \neq \spin^\circ, \theta^\circ \spin^\circ$) then $\std(H)$ is contained in a parabolic subgroup of $\GL_{2n}$ with Levi component $\GL_{2n-1}\times\GL_1$. By reductivity $\std(H)$ is contained in $\GL_{2n-1}\times\GL_1$, and it is an irreducible subgroup. We see that $H^0\subset H^+:=i^\circ_{\std}(\SO_{2n-1})\times Z(\SO_{2n})$, and by Schur's lemma, the centralizer of $H^0$ in $H^+$ is $Z(\SO_{2n})$. Since $H^0$ has no nontrivial outer automorphism, the conjugation by each $h\in H$ on $H^0$ is an inner automorphism. Thus there exists $h'\in H^0$ such that $h' h^{-1}$ centralizes $H^0$. It follows that $H\subset H^0\times Z(\SO_{2n})$. If $\std(H^0) \subset \GL_{2n}$ is irreducible, the centralizer of $H^0$ in $\SO_{2n}$ is $ Z(\SO_{2n})$ again by Schur's lemma, with no nontrivial outer automorphism for $H^0$. As in the reducible case, we deduce $H^0\subset H\subset H^0\times Z(\SO_{2n})$.

Finally, if a reductive subgroup $\li H \subset \PSO_{2n}$ contains a regular unipotent element, then so does its preimage $H$ in $\SO_{2n}$. By the previous argument we may conjugate so that $H^0$ is of type (a) or (b), and moreover we find $H^0\subset H\subset H^0\times Z(\SO_{2n})$. In particular $\li H$ is connected and of the type listed in (\textit{i}) and (\textit{ii}).
\end{proof}

In the next lemma, and also in the later sections, the following group will play a role 
\begin{equation}\label{eq:H-subgroup}
H_{2n-1} := \GSpin_{2n-1} Z(\GSpin_{2n}) \subset \GSpin_{2n}.
\end{equation}
Recall $z^\eps \in Z(\GSpin_{2n})$ is such that $\langle z^\eps \rangle = \ker(\spin^\eps)$.
We have $H_{2n-1} = \GSpin_{2n-1} \times \langle z^+ \rangle$. By projecting we obtain a quadratic character
\begin{equation}\label{eq:kappa-def}
\kappa \colon H_{2n-1} \to \langle z^+ \rangle \subset Z(\GSpin_{2n-1})
\end{equation}
such that  
the composition $\spin^{\eps} \circ \kappa$ is trivial if $\eps = +$ and otherwise equal to the composition
\begin{equation}\label{eq:kappa_0-def}
\kappa_0 \colon H_{2n-1} \overset {\kappa} \to \langle z^+ \rangle \simeq \{\pm 1\}. 
\end{equation}
In the definition of $\kappa$ we could have also used $z^-$, in that case the convention would be slightly different. But notice that $\kappa_0$ does not depend on this choice as this character is simply the canonical map of $H_{2n-1}$ onto its component group. Observe also that $\theta$ acts trivially on $\GSpin_{2n-1}$ and on $z^+$ via $z^+ \mapsto -z^+$. This gives the simple formula 
\begin{equation}\label{eq:kappa_0-and-theta}
\theta(g) = \kappa_0(g) g \quad\quad \textup{ for all $g \in H_{2n-1}$.}
\end{equation}

\begin{lemma}\label{lem:for-mult-one}
Let $r \colon \Gamma \to \GSpin_{2n}(\lql)$ be a semisimple representation containing a regular unipotent element in its image. Let $\chi \colon \Gamma \to \lql^\times$ be a character and $\varepsilon\in\{+,-\}$. 
\begin{enumerate}[label=(\roman*)]
\item If $\chi \otimes \spin^\varepsilon (r)  \simeq \spin^{\varepsilon} (r)$ then $\chi = 1$.
\item If $\chi \otimes \spin^+ (r) \simeq \spin^- (r)$ then $r$ has image in the group $H_{2n-1} \subset \GSpin_{2n}$ up to conjugation, and $\chi$ is equal to $\kappa_0 \circ r$. 
\item  If $\chi^+ \spin^+(r) \oplus \chi^- \spin^-(r) \simeq \spin(r)$ for two characters $\chi^\pm \colon \Gamma \to \lql^\times$. Then $\chi^\pm$ are both trivial,  or $r$ has image in $H_{2n-1}$ up to conjugation and $\chi^+ = \chi^- = \kappa_0 \circ r$. 
\end{enumerate}
\end{lemma}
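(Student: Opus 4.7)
The overall strategy is to understand the image $H \subset \GSpin_{2n}(\lql)$ of $r$ (Zariski closure of $r(\Gamma)$) via its projection $\ol H \subset \PSO_{2n}$ and Proposition~\ref{prop:containingRegularUnipotent}. Since $\pr^\circ$ is a central isogeny and thus preserves unipotent elements, $\ol H$ still contains a regular unipotent, so $\ol H^0$ lies in the explicit list of reductive subgroups of Proposition~\ref{prop:containingRegularUnipotent}. The other facts I would combine with this classification are: Proposition~\ref{prop:res-of-spin} (telling us $\spin^+|_{i_{\std}(\GSpin_{2n-1})} = \spin^-|_{i_{\std}(\GSpin_{2n-1})}$, both equal to $\spin$ of $\GSpin_{2n-1}$); Lemma~\ref{lem:IrreducibilityOfSpin-} (distinguishing $\spin^\pm$ on the $\spin^\circ(\Spin_7)$-type subgroup for $n=4$); and Lemma~\ref{lem:spin-kernel} (so on $H_{2n-1}$ one has $\spin^- = \kappa_0 \cdot \spin^+$, since $z^+$ acts trivially on $\spin^+$ and by $-1$ on $\spin^-$).

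For part (i), set $V := \spin^\varepsilon(r)$. Taking traces in $\chi V \simeq V$ gives $(\chi(g)-1)\Tr V(g) = 0$ for all $g\in\Gamma$. I would show that the conjugation-invariant Zariski closed subset $Z := \{h \in H : \Tr V(h) = 0\}$ is proper in each connected component of $H$, and then conclude via Chebotarev. Properness on $H^0$ is immediate from $\Tr V(1) = 2^{n-1} \neq 0$. On a non-identity coset $h_0 H^0$, if $\Tr V \equiv 0$ on it, then $V(h_0)$ would be orthogonal, with respect to the trace pairing on $\End(V)$, to the span of $V(H^0)$, which by Jacobson density equals the double commutant $\bigoplus_i \End(V_i)$ of $V|_{H^0} = \bigoplus_i V_i$; the orthogonal is then block-off-diagonal. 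In each case of the classification the summands $V_i$ have pairwise distinct dimensions (multiplicity-freeness of the minuscule restriction $\spin|_{\text{principal }\SL_2}$, and the decomposition $\spin_7|_{G_2} = \std_7 \oplus \one$), forcing $V(h_0)$ to be non-invertible, a contradiction. Thus $Z \subsetneq H$ component-wise, so Lemma~\ref{lem:Chebotarev} gives Frobenius-density $0$ for hitting $Z$, while Chebotarev gives density $(d-1)/d > 0$ for $\chi \neq 1$ with $d = |\chi|$ (finite since $\chi^{2^{n-1}} = 1$ via determinants); hence $\chi = 1$.

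For part (ii), the trace identity $\chi(g)\Tr\spin^+(r(g)) = \Tr\spin^-(r(g))$ makes the rational function $\Tr\spin^-/\Tr\spin^+$ on $H$ take only finitely many values on the Zariski-dense subset $r(\Gamma)$, hence it is locally constant on $H$. On $H^0$ the value at the identity is $1$, so $\spin^-|_{H^0} \simeq \spin^+|_{H^0}$; inspection of the classification forces $\ol H^0 \subset \ol{i_{\std}}(\PSO_{2n-1})$ (the options $\ol{i_{\std}}(\PSO_{2n-1})$, $\ol{i_{\reg}}(\PGL_2)$, and $\li{i_{\spin}}(G_2)$ for $n=4$), the other cases being ruled out by distinct highest weights on $\Spin_{2n}$ and by Lemma~\ref{lem:IrreducibilityOfSpin-}. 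A normalizer computation in $\PSO_{2n}$ (self-normalization of $\ol{i_{\std}}(\PSO_{2n-1})$ as the stabilizer of a non-isotropic line; triviality of the centralizers of principal $\PGL_2$ and of $\li{i_{\spin}}(G_2)$ via Schur applied to the irreducible $\std_7$) then upgrades this to $\ol H \subset \ol{i_{\std}}(\PSO_{2n-1})$, equivalently $H \subset H_{2n-1}$. On $H_{2n-1}$ the identity $\spin^- = \kappa_0 \cdot \spin^+$ turns the hypothesis into $\chi(\kappa_0 \circ r)^{-1} \otimes \spin^+(r) \simeq \spin^+(r)$, and part~(i) then gives $\chi = \kappa_0 \circ r$.

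For part (iii), the representations $\spin^\pm(r)$ are semisimple, so I would apply Krull--Schmidt to the given isomorphism. If $\spin^+(r) \not\simeq \spin^-(r)$ as $\Gamma$-representations, the matching of irreducible constituents leaves two configurations: either $\chi^\pm \spin^\pm(r) \simeq \spin^\pm(r)$, giving $\chi^\pm = 1$ by (i); or $\chi^+ \spin^+(r) \simeq \spin^-(r)$ and $\chi^- \spin^-(r) \simeq \spin^+(r)$, from which (ii) and its $+\leftrightarrow-$ symmetric variant (obtainable by composing with $\theta$) give $H \subset H_{2n-1}$ and $\chi^+ = \chi^- = \kappa_0 \circ r$. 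If instead $\spin^+(r) \simeq \spin^-(r)$, then (ii) with $\chi = 1$ forces $H \subset H_{2n-1}$ with $\kappa_0 \circ r = 1$, i.e.\ $H \subset H_{2n-1}^0 = \GSpin_{2n-1}$; setting $V := \spin^+(r) = \spin^-(r)$, the relation $\chi^+ V \oplus \chi^- V \simeq V \oplus V$ combined with the analogous Saxl--Seitz classification for $\SO_{2n-1}$ (decomposing $V$ into irreducibles of distinct dimensions, using that $\pi_0(H)$ must preserve dimensions) forces $\chi^\pm W_i \simeq W_i$ for each irreducible summand $W_i$, and the Chebotarev/Burnside argument of (i) applied to each $W_i$ gives $\chi^\pm = 1$ (consistent with $\kappa_0 \circ r = 1$). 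The main obstacle I foresee is managing the reducible-restriction cases in (i) — principal $\SL_2$ and $G_2$ — where the naive Schur argument fails and one must invoke the orthogonal-complement / dimension-count computation sketched above.
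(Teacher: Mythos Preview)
Your overall architecture (classify $\ol H$ via Proposition~\ref{prop:containingRegularUnipotent}, then reduce (ii) and (iii) to (i)) matches the paper's, but two of the steps you rely on do not go through as written.

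\textbf{Part (i).} The assertion ``block-off-diagonal with pairwise distinct block dimensions forces $V(h_0)$ non-invertible'' is false as a statement about arbitrary matrices: for instance with blocks of sizes $1,2,3$ one can write down an invertible matrix whose diagonal blocks vanish. What makes your situation special is that $h_0$ normalises $H^0$, so conjugation by $V(h_0)$ preserves the algebra $\bigoplus_i\End(V_i)$ and hence permutes its central idempotents; with distinct $\dim V_i$ that permutation is trivial, so $V(h_0)$ is simultaneously block-diagonal and block-off-diagonal, hence zero. You need to insert this normaliser observation explicitly. (The paper bypasses this entirely: it invokes strong irreducibility and \cite[Lem.~4.8(i), Prop.~4.9]{GSp} for the irreducible cases, and the explicit case analysis of \cite[Lem.~5.1, Prop.~5.2]{GSp} for the $H_{2n-1}$ cases.)

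\textbf{Part (iii).} Your Krull--Schmidt dichotomy ``either $\chi^\pm\spin^\pm(r)\simeq\spin^\pm(r)$ or the swap'' is only valid when $\spin^+(r)$ and $\spin^-(r)$ are \emph{irreducible}. In the case $H\subset H_{2n-1}$ with $t:=\kappa_0\circ r\neq 1$ and $\ol H^0$ equal to the principal $\PGL_2$ (or $G_2$), both $\spin^\pm(r)$ are reducible, and nothing a priori prevents a ``mixed'' matching of their irreducible constituents under $\chi^+,\chi^-$. Ruling this out requires knowing that each irreducible constituent has no non-trivial character self-twist, which is exactly the strong-irreducibility input the paper imports from \cite[Prop.~5.1]{GSp}. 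The paper's route here is to restrict first to $\Gamma_0=\ker t$, use the distinct-dimension decomposition of $\spin_{2n-1}(r|_{\Gamma_0})$ together with strong irreducibility of each summand to force $\chi^\pm|_{\Gamma_0}=1$, hence $\chi^\pm\in\{1,t\}$, and then finish with (i). Your sketch for the subcase $\spin^+(r)\simeq\spin^-(r)$ has the same defect: ``$\chi^+V\oplus\chi^-V\simeq V\oplus V$ forces $\chi^\pm W_i\simeq W_i$'' again needs the no-self-twist property of the $W_i$.
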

\begin{proof}
(\textit{i}) Write $\li r \colon \Gamma \to \PSO_{2n}(\lql)$ for the projectivization of $r$. By Proposition~\ref{prop:containingRegularUnipotent} we can distinguish between two cases for the Zariski closure of the image of $\li r$ in $\PSO_{2n}(\lql)$. If the Zariski closure of $\li r$ is either $\PSO_{2n}$ or $\ol{i_{\std}}(\PSO_{2n-1})$ then $\spin^\eps r$ is strongly irreducible, and the statement follows from \cite[Lem.~4.8(i)]{GSp}. In the remaining cases the Zariski closure of $\tu{Im}(\li r)$ is,  $\li{\spin^\circ}(\SO_7)$, $\li{\theta^\circ \spin^\circ}(\SO_7)$, $\ol{i_{\reg}}(\PGL_2)$ or $i_{\spin}(G_2)$. In the last two of these, $\tu{Im}(\li r)\subset \ol{i_{\std}}(\SO_{2n-1}(\lql))$ so $\tu{Im}(r)$ is contained in $\lql$-points of $H_{2n-1}$ (which is the preimage of $i_{\std}(\SO_{2n-1})$ in $\GSpin_{2n}$). Then we show $\chi=1$ by the argument exactly as in Cases (i), (ii), (iv) in the proof of \cite[Lem.~5.1, Prop.~5.2]{GSp}, noting that $\spin^\varepsilon$ restricts to $\spin$ on $\GSpin_{2n-1}$ by Proposition~\ref{prop:res-of-spin}. Finally, if the Zariski closure of the image is $\li{\spin^\circ}(\SO_7)$ (resp. $\li{\theta^\circ \spin^\circ}(\SO_7)$), then $\li {\spin^\eps} \circ \li r$ is irreducible if $\eps = -$ (resp. $\eps = +$) and isomorphic to $\std \oplus \one$ if $\eps = +$ (resp. $\eps = -$) by Lemma~\ref{lem:IrreducibilityOfSpin-}. In particular the representation $\spin^\eps r$ satisfies the conditions of \cite[Prop.~4.9]{GSp}, and so $\chi = 1$ in this case as well.

(\textit{ii}) If the Zariski closure of the image of $r$ contains $\Spin_{2n}$, then (ii) cannot occur. Thus $r$ has either image in $H_{2n-1}$, or it has image in $\pr^{-1}(\spin^\circ(\Spin_7))$. In the latter case, $\spin^- r$ is strongly irreducible while $\spin^+ r$ is not by Lemma~\ref{lem:IrreducibilityOfSpin-}, which is a contradiction. Thus $\tu{Im}(r) \subset H_{2n-1}$. For $g \in H_{2n-1}$ we have
$$
\spin^+(g) = \spin^-(\theta g) = \spin^-(\kappa_0(g) \cdot g) = \kappa_0(g) \spin^-(g). 
$$
Put $t = \kappa_0 \circ r$. Then $t \otimes \spin^+r \simeq \spin^- r$ and $t \chi\inv \otimes \spin^+ r \simeq \spin^+ r$, and $t = \chi$ by (\textit{i}). 

(\textit{iii}) Write $H$ for the Zariski closure of the image of $r$. By the proof of (\textit{i}) we see that either $\Spin_{2n} \subset H$, or $H \subset H_{2n-1}$ or $H \subset \pr^{-1}(\spin^\circ(\Spin_7))$ up to conjugation. Assume that $H \not\subset H_{2n-1}$ (even after conjugation), so that $\Spin_{2n} \subset H$ or $H \subset \pr^{-1}(\spin^\circ(\Spin_7))$. In this case, we need to show that $\chi^+=\chi^-=1$. Suppose $\chi^-\neq 1$ to the contrary. Since $\spin^-(r)$ is strongly irreducible by assumption on $H$ (cf.~Lemma \ref{lem:IrreducibilityOfSpin-}), we have
\begin{equation}\label{eq:Hom-is-zero}
\Hom(\chi^- \otimes \spin^-(r), \spin^-(r)) = 0
\end{equation}
by \cite[Lem. 4.8(i)]{GSp}. In particular $\chi^+ \spin^+(r) \oplus \chi^- \spin^-(r)\isomto \spin(r)$ induces an isomorphism $\chi^- \otimes \spin^-(r) \isomto \Ker(\spin(r) \surjects \spin^-(r)) = \spin^+(r)$. As $H \not\subset H_{2n-1}$, this contradicts (\textit{ii}). Therefore $\chi^- = 1$. From $\chi^+ \spin^+ r \oplus \chi^- \spin^- r \cong \spin(r)$ we then obtain $\chi^+ \spin^+ r \cong \spin^+r$, which implies $\chi^+ = 1$ by item (i).

Now assume $H \subset H_{2n-1}$ up to conjugation. We obtain a character $t = \kappa_0 \circ r \colon \Gamma \to \{\pm1\}$. Write $\Gamma_0 := \ker(t)$. We have $\chi^+ \spin^+(r) \oplus \chi^- \spin^-(r) \simeq \spin^+(r) \oplus \spin^-(r)$. By Proposition~\ref{prop:res-of-spin} we have $\spin^+(r|_{\Gamma_0}) \simeq \spin_{2n-1}(r|_{\Gamma_0})$, where $\spin_{2n-1}$ denotes the spin representation of $\GSpin_{2n-1}$. By the proof of \cite[Prop. 5.1]{GSp}, $\spin_{2n-1}(r|_{\Gamma_0})$ decomposes as a direct sum $r_1^{e_1} \oplus \cdots \oplus r_k^{e_k}$, such that $k, e_i \in \Z_{\geq 1}$, $r_i$ is irreducible, and $\dim (r_i) \neq \dim (r_j)$ for $i \neq j$. Moreover, the projective image of $\spin_{2n-1}(r|_{\Gamma_0})$ in $\PGL_{2^{n-1}}$ is Zariski connected. This implies that the $r_i$ are strongly irreducible $\Gamma_0$-representations [Prop.~4.8(ii), \textit{loc. cit.}].

Suppose $\chi^-|_{\Gamma_0} \neq 1$. We again claim that \eqref{eq:Hom-is-zero} holds. To see this, assume $f \colon \chi^- \otimes \spin^-(r) \to \spin^-(r)$ is a non-trivial $\Gamma_0$-morphism. Since the $\dim(r_i)$ are distinct, the morphism $f$ induces an isomorphism from $\chi^- \otimes r_1$ to one of the copies of $r_1$ in $\spin^-(r)$. Since $r_1$ is strongly irreducible by [Prop. 4.8(i), \textit{loc. cit.}], this implies $\chi^-|_{\Gamma_0} = 1$. Thus $\chi^- \in \{t, 1\}$.

Arguing with $+$ instead of $-$ we find similarly $\chi^+ \in \{t, 1\}$. If $\chi^+ = 1$, then we have
$$
\spin^+(r) \oplus \chi^- \spin^-(r) \simeq \spin^+(r) \oplus \spin^-(r),
$$
which implies $\chi^- \spin^-(r) \simeq \spin^-(r)$, and thus $\chi^- = 1$ by (\textit{i}). By the same argument, if $\chi^- = 1$, then $\chi^+ = 1$. Thus $\chi^+ = \chi^-$. The statement follows.
\end{proof}

\begin{lemma}\label{lem:GPin-conjugacy}
Let $\gamma, \gamma' \in \GSpin_{2n}$ be two semi-simple elements. Then $\gamma, \gamma'$ are $\GPin_{2n}$-conjugate if and only if they are conjugate in the representations $\cN, \std$ and $\spin$.
\end{lemma}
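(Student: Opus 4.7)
The forward direction is immediate: each of $\cN$, $\std$, and $\spin$ is defined as a homomorphism or representation on $\GPin_{2n}$ (see Section~\ref{sect:CliffordGroups} and~\eqref{eq:CliffordHalfSpinDef}), so $\GPin_{2n}$-conjugacy preserves conjugacy of the images.

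For the converse, the plan is a direct torus computation. After conjugating in $\GSpin_{2n}$ we may assume $\gamma, \gamma'$ both lie in the common maximal torus $\TGSpin$ and write $\gamma = (s_0, s_1, \ldots, s_n)$ and $\gamma' = (s_0', s_1', \ldots, s_n')$ in the coordinates of Section~\ref{sect:RootDatum}. The first step is to make the action of the Weyl group of $\GPin_{2n}$ on $\TGSpin$ fully explicit: combining the formula~\eqref{eq:WeylGroupAction} for $\WGSO$ with the formula~\eqref{eq:ThetaActionTGSpin} for $\theta$, an element indexed by $\sigma \in \iS_n$ and $V \subset \{1, \ldots, n\}$ acts on $\TGSpin$ by
$$
(s_0, s_1, \ldots, s_n) \longmapsto \bigl(s_0 \prod_{i \in V} s_{\sigma(i)}, \, s_{\sigma(1)}^{\eps_1}, \ldots, s_{\sigma(n)}^{\eps_n}\bigr),\qquad \eps_i := -1 \text{ iff } i \in V.
$$
The $\std$-conjugacy of $\gamma, \gamma'$ matches the multisets $\{s_i^{\pm 1}\}_{i=1}^n$ and $\{(s_i')^{\pm 1}\}_{i=1}^n$, yielding such a pair $(\sigma, V)$; after applying the corresponding Weyl element of $\GPin_{2n}$ to $\gamma$, we may assume $s_i = s_i'$ for all $i \geq 1$. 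The equality $\cN(\gamma) = \cN(\gamma')$, together with the formula $\cN(\gamma) = s_0^2 \prod_i s_i$ of Remark~\ref{rem:explicit-spinor-norm}, then forces $s_0' = \pm s_0$; and if $s_0' = s_0$ the proof is finished.

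The remaining case $s_0' = -s_0$ is where the spin representation enters decisively. The plan here is to use a single trace identity: since $\spin$ has weights $\{s_0 \prod_{i \in U} s_i : U \subset \{1, \ldots, n\}\}$ on $\TGSpin$ (Definition~\ref{def:HalfSpinDef} and~\eqref{eq:spin-highest-weights}), we obtain $\Tr \spin(\gamma) = s_0 \prod_{i=1}^n (1+s_i)$ and $\Tr \spin(\gamma') = -s_0 \prod_{i=1}^n (1+s_i)$. The equality of these traces, implied by the $\GL$-conjugacy of $\spin(\gamma)$ and $\spin(\gamma')$, forces $\prod_{i=1}^n (1+s_i) = 0$ in characteristic zero (as $s_0 \neq 0$), so $s_{i_0} = -1$ for some index $i_0$. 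Applying the Weyl element of $\GPin_{2n}$ with $\sigma = \mathrm{id}$ and $V = \{i_0\}$ in the formula above then sends $\gamma = (s_0, s_1, \ldots, s_n)$ to $(s_0 \cdot s_{i_0}, s_1, \ldots, s_{i_0}^{-1}, \ldots, s_n) = (-s_0, s_1, \ldots, s_n) = \gamma'$, using $s_{i_0}^{-1} = s_{i_0} = -1$, which finishes the proof. The main obstacle in carrying out this plan is the bookkeeping of the Weyl-plus-$\theta$ action on $\TGSpin$ (in particular tracking how each sign change on a coordinate $s_i$ is paid for by an adjustment of $s_0$); once this is in place the delicate $s_0' = -s_0$ case collapses to the single trace identity above.
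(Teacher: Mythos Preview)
Your proof is correct and takes a genuinely different route from the paper's. The paper gives a one-line argument: it invokes \cite[\S1]{GSp}, where the notion of a \emph{fundamental set} of representations for a reductive group is developed (a set whose members jointly separate semisimple conjugacy classes), and simply asserts that $\{\std, \cN, \spin\}$ is fundamental for $\GPin_{2n}$, this being deduced from the fact (checked in \cite{GSp}) that $\{\std, \cN, \spin^+, \spin^-\}$ is fundamental for $\GSpin_{2n}$.

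Your approach, by contrast, is a direct self-contained torus computation: you make the full Weyl action of $N_{\GPin_{2n}}(\TGSpin)/\TGSpin \cong \{\pm1\}^n \rtimes \iS_n$ explicit (combining~\eqref{eq:WeylGroupAction} with~\eqref{eq:ThetaActionTGSpin}), use $\std$ to match $(s_1,\ldots,s_n)$ up to this action, use $\cN$ to pin $s_0$ down to a sign, and then use the single identity $\Tr\spin(\gamma)=s_0\prod_i(1+s_i)$ to resolve that sign. This is more elementary and makes explicit exactly where each representation enters, whereas the paper's approach locates the result inside a general machine that handles other groups uniformly. Both are valid; yours avoids the external reference entirely.
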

\begin{proof}
This follows from \cite[\S 1]{GSp} and the fact that $\GPin_{2n}$ has $\{\std,\cN,\spin\}$ as a fundamental set in the sense thereof, which follows from the fact that $\{\std,\cN,\spin^+,\spin^-\}$ is a fundamental set for $\GSpin_{2n}$ as checked therein. 
\end{proof}

\begin{proposition}\label{prop:local-conjugacy-global-conjugacy}
Let $E/F$ be a quadratic extension of characteristic zero fields. Let $H$ be one of the following algebraic groups 
$$
\SO_{2n}, \GSpin_{2n}, \SO_{2n} \rtimes \Gamma_{E/F}, \GSpin_{2n} \rtimes \Gamma_{E/F},
$$
where $\Gamma_{E/F}$ acts through $\theta^\circ$ or $\theta$ in the semi-direct products. We write $H^0$ for the neutral component of $H$. Let 
$$
r_1,r_2:\Gamma_F \ra H(\lql)
$$
be semisimple Galois representations such that
\begin{itemize}
\item $r_1$ and $r_2$ are locally conjugate and
\item the Zariski closure of $r_1(\Gamma)$ contains a regular unipotent element.
\end{itemize}
Then $r_1$ and $r_2$ are $H^0$-conjugate.
\end{proposition}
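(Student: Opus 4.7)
The strategy combines Chebotarev density, the regular unipotent hypothesis, the classification in Proposition~\ref{prop:containingRegularUnipotent}, and Lemma~\ref{lem:spin7} in three steps: first produce a global transporter in a possibly enlarged ambient group, then reduce to the question of whether the outer automorphism acts innerly on the image, and finally use the classification to either conclude or derive a contradiction.

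\textbf{Step 1 (global transporter in an enlarged group).} By Chebotarev density, the local conjugacy hypothesis implies $\Tr(V \circ r_1)(\gamma) = \Tr(V \circ r_2)(\gamma)$ for every $\gamma \in \Gamma_F$ and every finite-dimensional algebraic representation $V$ of $H$. Since $r_1, r_2$ are semisimple, Brauer--Nesbitt applied to a faithful $V$ produces $g \in \GL(V)(\lql)$ with $g r_1 g^{-1} = r_2$. A Schur-type argument applied to the fixed tensors cutting out $H$ in $\GL(V)$ (the quadratic form for $\SO_{2n}$, the Clifford structure and spinor norm for $\GSpin_{2n}$) permits rescaling and adjusting $g$ into an enlarged group $H^+ \supseteq H$ which in all four cases is $\OO_{2n}$ or $\GPin_{2n}$: the $\Gamma_{E/F}$-semi-direct products are already identified with $\OO_{2n}$ or $\GPin_{2n}$ via $c \mapsto \vartheta^{(\circ)}$, and the $\GSpin_{2n}$ case is exactly Lemma~\ref{lem:GPin-conjugacy} applied to the fundamental set $\{\std, \cN, \spin\}$.

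\textbf{Step 2 (reducing to an outer twist).} If $g \in H^0$, we are done; otherwise write $g = \delta h_0$ with $h_0 \in H^0$ and $\delta \in H^+ \setminus H^0$ representing the outer automorphism $\theta^{(\circ)}$ (either $\theta$ or $\theta^\circ$). Replacing $r_1$ by $h_0 r_1 h_0^{-1}$, which is $H^0$-conjugate to $r_1$, we may assume $r_2 = \theta^{(\circ)} \circ r_1$. The local hypothesis then forces $r_1(\Frob_\qq)$ and $\theta^{(\circ)}(r_1(\Frob_\qq))$ to be $H^0$-conjugate for almost all $\qq$. Applying Lemma~\ref{lem:Chebotarev} to the closed $H^0$-conjugation-invariant subvariety $X \subset H^0$ on which $\theta^{(\circ)}$ acts as an $H^0$-inner automorphism, we conclude that the Zariski closure $\ol{H}$ of $r_1(\Gamma_F)$ has its neutral component contained in $X$. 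In other words, $\theta^{(\circ)}$ restricts to an $H^0$-inner automorphism of $\ol{H}^0$.

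\textbf{Step 3 (case analysis).} The regular unipotent hypothesis and Proposition~\ref{prop:containingRegularUnipotent} restrict the image of $\ol{H}^0$ in $\PSO_{2n}$ to $\PSO_{2n}$, $\li{i_{\std}}(\PSO_{2n-1})$, $\li{i_{\reg}}(\PGL_2)$, or (when $n=4$) $\li{\spin^\circ}(\SO_7)$, $\li{\theta^\circ\spin^\circ}(\SO_7)$, or $\li{i_{\spin}}(G_2)$. In the three cases $\li{i_{\std}}(\PSO_{2n-1})$, $\li{i_{\reg}}(\PGL_2)$, and $\li{i_{\spin}}(G_2)$, the subgroup $\ol{H}^0$ is contained (up to $H^0$-conjugation and the center of $H^0$) in $i^\circ_{\std}(\SO_{2n-1})$, on which $\theta^{(\circ)}$ acts trivially by Lemma~\ref{lem:conjugation-by-w}(ii). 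Hence $\theta^{(\circ)}(r_1) = r_1$ pointwise, and a direct manipulation shows $r_2(\gamma) = \theta^{(\circ)}(h_0)\,r_1(\gamma)\,\theta^{(\circ)}(h_0)^{-1}$ with $\theta^{(\circ)}(h_0) \in H^0$, yielding the desired $H^0$-transporter. In the remaining cases $\PSO_{2n}$ and $\li{\spin^\circ}(\SO_7)$ (symmetrically $\li{\theta^\circ\spin^\circ}(\SO_7)$), the outer automorphism is not $H^0$-inner on $\ol{H}^0$: for $\PSO_{2n}$ this is Lemma~\ref{lem:Elem_w}, and for the $\Spin_7$-case it is precisely Lemma~\ref{lem:spin7}, which exhibits an open dense subset $U \subset \Spin_7$ where $\spin^\circ t$ and $\theta^\circ \spin^\circ t$ are not $\SO_8$-conjugate. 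This contradicts the conclusion of Step 2, so the outer case cannot occur and $g$ must lie in $H^0$.

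\textbf{Main obstacle.} The delicate point is the exceptional $n=4$ case. The subgroup $\spin^\circ(\Spin_7) \subset \SO_8$ is stabilized setwise by $\theta^\circ$, so the outer-versus-inner distinction is invisible to the mere containment of a regular unipotent; the explicit weight computation in Lemma~\ref{lem:spin7} producing a Zariski dense subset where $\theta^\circ$ acts non-innerly is precisely the new ingredient needed to complete the descent from $H^+$-conjugacy to $H^0$-conjugacy, and is what ultimately removes the outer ambiguity in the main theorem.
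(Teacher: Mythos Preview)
For $H = \SO_{2n}$ your argument is essentially the paper's, modulo ordering: you absorb the $H^0$-part of the transporter first and reduce to $r_2 = \theta^\circ \circ r_1$, then classify $\ol{H}^0$ via Proposition~\ref{prop:containingRegularUnipotent}, whereas the paper first matches the two Zariski closures in $\PSO_{2n}$ and then analyzes the transporter. Your handling of the small cases via the literal triviality of $\theta^\circ$ on $i^\circ_{\std}(\SO_{2n-1})$ (Lemma~\ref{lem:conjugation-by-w}(ii)) is a mild simplification over the paper's route through a central twist and \cite[Prop.~4.9]{GSp}. One phrasing issue: the sentence ``In other words, $\theta^{(\circ)}$ restricts to an $H^0$-inner automorphism of $\ol{H}^0$'' at the end of Step~2 overstates what you have; $\ol{H}^0 \subseteq X$ only gives \emph{element-wise} $\theta$-invariance up to $H^0$-conjugacy, not that $\theta|_{\ol{H}^0}$ is a single inner automorphism. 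Your Step~3 only needs the element-wise statement (for the contradictions in the $\PSO_{2n}$ and $\Spin_7$ cases) or the pointwise triviality of $\theta^\circ$ (for the small cases), so this is cosmetic rather than fatal.

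There is, however, a genuine gap in Step~1 for $H = \GSpin_{2n}$. Lemma~\ref{lem:GPin-conjugacy} concerns individual semisimple \emph{elements}: $\gamma, \gamma' \in \GSpin_{2n}$ are $\GPin_{2n}$-conjugate iff their images under $\std$, $\cN$, $\spin$ are conjugate. It does \emph{not} promote element-conjugacy of homomorphisms to global $\GPin_{2n}$-conjugacy. Your ``Schur-type argument on fixed tensors'' is exactly the acceptability property, and it fails here: the faithful representation $\spin = \spin^+ \oplus \spin^-$ of $\GSpin_{2n}$ is reducible, so a $\GL_{2^n}$-transporter between $\spin \circ r_1$ and $\spin \circ r_2$ produced by Brauer--Nesbitt can be rescaled independently on the two summands and has no reason to lie in $\spin(\GPin_{2n})$. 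Indeed the paper's introduction explicitly notes that $\GSpin_{2n}$ is not acceptable in the sense of \cite{LarsenConjugacy, Larsen2}. The paper therefore proceeds differently: compose with $\pr^\circ \colon \GSpin_{2n} \to \SO_{2n}$, apply the already-proved $\SO_{2n}$ case to obtain $r_2 = \chi \cdot r_1$ for a character $\chi \colon \Gamma \to \ker(\pr^\circ) = \lql^\times$, and then invoke Lemma~\ref{lem:for-mult-one}(i) on $\spin^\varepsilon(r_1) \simeq \chi \otimes \spin^\varepsilon(r_1)$ to force $\chi = 1$. The case $\GSpin_{2n} \rtimes \Gamma_{E/F}$ is then bootstrapped from the connected cases together with Example~\ref{ex:GSpin2n-extend}. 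You will need this reduction-plus-central-character argument; the direct $\GPin_{2n}$-transporter approach does not go through.
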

\begin{proof}
For simplicity we abbreviate $H(\lql)$ as $H$ if there is no danger of confusion.

\smallskip

\noindent\textbf{The case $H = \SO_{2n}$.} Write $\li r_1,\li r_2:\Gamma\ra \PSO_{2n}$ for the projectivizations of $r_1,r_2$. Write $I_i$ for the Zariski closure of $\li r_i(\Gamma)$ in $\PSO_{2n}$, for $i=1,2$. Since $\OO_{2n}$ is acceptable~\cite[Prop.~B.1]{GSp}, $r_1$ and $r_2$ are conjugate by an element of $w\in \OO_{2n}$, i.e., $r_2=w r_1 w^{-1}$. In particular the Zariski closure of $r_2(\Gamma)$ also contains a regular unipotent element. We are done if $w\in \SO_{2n}$, so we may assume that $w\notin \SO_{2n}$ henceforth.

There are now three cases by Proposition~\ref{prop:containingRegularUnipotent}: either (A) $I_1=\PSO_{2n}$, (B) $I_1$ is $\SO_{2n-1}$, $\PGL_2$, or $G_2$ (the last case when $n=4$) or (C) $n=4$ and $I_1$ is $\ol{\spin}(\SO_7)$ or $\theta^\circ\ol{\spin}(\SO_7)$.

Case (A). Since $r_1$ has Zariski dense image in $\SO_{2n}$, there exists $\qq$ such that $r_1(\Frob_\qq)$ and $w r_1(\Frob_\qq) w^{-1}$ are not outer conjugate by Lemma \ref{lem:Chebotarev}. This contradicts $r_2(\Frob_\qq)=w r_1(\Frob_\qq) w^{-1}$ since $w\in \OO_{2n}\backslash \SO_{2n}$.

Case (B). The image of $r_1$ is contained in $i^\circ_{\tu{std}}(\SO_{2n-1})\times Z_{\SO_{2n}}$, which is centralized by $\vartheta^\circ\in \OO_{2n}\backslash \SO_{2n}$. Since $\vartheta^\circ$ and $w$ belong to the same $\SO_{2n}$-coset, it follows that $r_1$ and $r_2$ are $\SO_{2n}$-conjugate.

Case (C). Without loss of generality, we may assume $I_1=\ol{\spin}(\SO_7)$. Since $r_1=\Int(w)\circ r_2$ for $w\in \OO_{8}\backslash \SO_{8}$, we see that $I_2$ is $\PSO_8$-conjugate to $\theta^\circ\ol{\spin}(\SO_7)$. 

We claim that this case does not arise. By assumption $r_1$ and $r_2$ are locally conjugate representations with values in $\SO_8$. As $\textup{O}_8$ is acceptable, we may assume, after replacing $r_2$ with a suitable $\SO_8$-conjugate, that either (a) $r_1(\gamma) = r_2(\gamma)$ for all $\gamma \in \Gamma$, or (b) that $r_1(\gamma) = \theta^\circ r_2(\gamma)$ for all $\gamma \in \Gamma$.  Case (a) implies that $I_1=I_2$, so $\ol {\spin}(\SO_7)$ is $\PSO_{8}$-conjugate to $\theta^\circ \ol {\spin}(\SO_7)$, which contradicts Lemma \ref{lem:spin7}. In case (b), there exists a representation $\wt r \colon \Gamma \to \Spin_7$ such that $r_1(\gamma) = \spin(\wt r(\gamma))$, $r_2(\gamma) = \theta^\circ \spin(\wt r(\gamma))$. On the other hand, by Lemma \ref{lem:Chebotarev} there exists an $F$-place $\qq$ where $\wt r$ is unramified and $\wt r(\Frob_{\qq}) \in U$ with $U$ is as in Lemma \ref{lem:spin7}. Then $\spin(\wt r(\Frob_{\qq}))$ and $\theta^\circ \spin(\wt r(\Frob_{\qq}))$ are not $\SO_8$-conjugate. This contradicts the assumption that $r_1$ and $r_2$ are locally conjugate. The claim is proved.

\smallskip

\noindent\textbf{The case $H = \SO_{2n} \rtimes \Gamma_{E/F}$.} By the preceding case, we may assume that $r_1|_{\Gamma_E}=r_2|_{\Gamma_E}$. (Strictly speaking, we proved the $\SO_{2n}$-case for $\Gamma=\Gamma_F$, but the proof goes through without change for $\Gamma_E$.) Since $\SO_{2n} \rtimes \Gamma_{E/F}\simeq \OO_{2n}$ via $g\rtimes c\mapsto g\vartheta^\circ$, we identify the two groups. In particular $H$ is acceptable, so there exists $w\in \OO_{2n}$ such that $r_2=w r_1 w^{-1}$. We are done if $w\in \SO_{2n}$, so assume that $w\notin \SO_{2n}$. Depending on the projective image of $r_1|_{\Gamma_E}$, we have Cases (A), (B), (C) as above. The arguments there tell us that Cases (A) and (C) are impossible when $w\notin \SO_{2n}$. In Case (B), we know $r_1(\Gamma_E)$ is contained in $i^\circ_{\tu{std}}(\SO_{2n-1})\times Z_{\SO_{2n}}$. The normalizer of the latter in $\OO_{2n}$ is $\OO_{2n-1}\times \OO_1$ (embedded in $\OO_{2n}$ via $i^\circ_{\tu{std}}$), which is centralized by $\vartheta^\circ$. Hence if we write $w=w_0\vartheta^\circ$ with $w_0 \in \SO_{2n}$, then $r_2=w_0 r_1 w_0^{-1}$. Namely $r_1$ and $r_2$ are $H^0$-conjugate.

\smallskip

\noindent\textbf{The $\GSpin_{2n}$-case.} Write $r_1^\circ,r_2^\circ$ for the composition of $r_1,r_2$ with $\tu{pr}^\circ :\GSpin_{2n}\ra \SO_{2n}$. Then $r_1^\circ$ and $r_2^\circ$ are conjugate by the $\SO_{2n}$-case treated above. Hence we may assume that $r_2=\chi r_1$ with a continuous character $\chi:\Gamma\ra \lql^\times$, where $\lql^\times=\ker(\GSpin_{2n}\ra \SO_{2n})$ via Lemma \ref{lem:SurjectionOntoGSO} (ii). Since $r_1$ and $\chi\otimes r_1$ are locally conjugate by the initial assumption, we have
$$
\spin^\varepsilon(r_1)\simeq \spin^\varepsilon(\chi\otimes r_1)\simeq \chi\otimes \spin^\varepsilon(r_1),\qquad \varepsilon\in \{\pm1\}.
$$
It follows from Lemma \ref{lem:for-mult-one} that $\chi=1$.

\smallskip

\noindent\textbf{The $\GSpin_{2n} \rtimes \Gamma_{E/F}$-case.} By the $\GSpin_{2n}$-case above, we may assume that $r_1|_{\Gamma_E}=r_2|_{\Gamma_E}$.  Writing $r_i^\circ:=\tu{pr}^\circ \circ r_i$ for $i=1,2$, we have $r^\circ_1|_{\Gamma_E}=r^\circ_2|_{\Gamma_E}$. By the preceding argument, we deduce that $r^\circ_1=r^\circ_2$. On the other hand, $r_1|_{\Gamma_E}=r_2|_{\Gamma_E}$ implies that $r_1 = r_2$ or $r_1 = r_2\otimes \chi$ by Example \ref{ex:GSpin2n-extend}, with $\chi$ as in that example. If $r_1=r_2$ then we are done so suppose $r_1= r_2\otimes \chi$. Then $r^\circ_1 = r^\circ_2\otimes \chi_{E/F}$ for $\chi_{E/F}:\Gamma_F\twoheadrightarrow \Gamma_{E/F}=\{\pm 1\}$. Set $R_i:=\std\circ r^\circ_i$ for $i=1,2$, so that $R_1 = R_2\otimes \chi_{E/F}$. Since $r_1$ and $r_2$ are locally conjugate, the $\GL_{2n}$-valued representations $R_1$ and $R_2$ are locally conjugate and thus conjugate. So $R_1\simeq R_1\otimes \chi_{E/F}$. By \cite[Lem.~4.8]{GSp}, $R_1$ is not strongly irreducible. Considering the projective image of $r_1|_{\Gamma_E}$ as in the $\SO_{2n}$-case above, we see that Case (A) is excluded and only Case (B) or (C) occurs. In either case, again because $R_1$ is not strongly irreducible, the only possibility is that $R_1|_{\Gamma_E}$ decomposes into two strongly irreducible representations of dimensions $(2n-1)$ and 1. Then it is easy to see that $R_1=R'_1\oplus R''_1$ already on $\Gamma_F$, with strongly irreducible $R'_1$ and $R''_1$ of dimensions $(2n-1)$ and 1. It follows from $R_1\simeq R_1\otimes \chi_{E/F}$ that $R'_1\simeq R'_1\otimes \chi_{E/F}$ (and similarly for $R''_1$), but this contradicts strong irreducibility of $R'_1$ \cite[Lem.~4.8]{GSp}.
\end{proof}

\section{On $\SO_{2n}$-valued Galois representations}\label{sect:GSO-valued-Galois}

In this section we construct Galois representations associated with automorphic representations of even orthogonal groups over a totally real field $F$. More precisely, we will derive a weaker version of Conjecture \ref{conj:BuzzardGee} for such groups from the literature. Let either
\begin{itemize}
\item $E=F$, or
\item $E$ be a CM quadratic extension of $F$.
\end{itemize}
 In the latter case write $c$ for the nontrivial element of $\Gamma_{E/F}:=\Gal(E/F)$.
Write $\SO_{2n}^{E/F}$ for the split group $\SO_{2n}$ if $E=F$, and the quasi-split outer form of $\SO_{2n}$ over $F$ relative to $E/F$ otherwise. To be precise, in the latter case,
 \begin{equation}\label{eq:QuasiSplitSO}
 \OO^{E/F}_{2n}(R) := \{g \in \GL_{2n}(E \otimes_F R)\ |\ c(g) = \vartheta^\circ g \vartheta^\circ, g^t \vierkant 0{1_n}{1_n}0 g = \vierkant 0{1_n}{1_n}0\}
\end{equation}
for $F$-algebras $R$, and $\SO^{E/F}_{2n}$ is the connected component where $\det(g)=1$.
We can extend the standard embedding $\std:\SO_{2n}(\C)\hra \GL_{2n}(\C)$ to a map (still denoted $\std$)
\begin{equation}\label{eq:std-rep-L-gp}
\std:{} ^L( \SO^{E/F}_{2n})=\SO_{2n}(\C)\rtimes \Gamma_{E/F} \hra \GL_{2n}(\C),
\end{equation}
whose image is $\SO_{2n}(\C)$ if $E=F$ and $\OO_{2n}(\C)$ if $E\neq F$. More precisely, when $E\neq F$, we fix the extended map $\std$ by requiring $c\mapsto \vartheta^\circ$. (We defined $\OO_{2n}$ explicitly in the last section, and $\vartheta^\circ$ was given in \eqref{eq:Elementw}.)

Let $\pi^\flat$ be a cuspidal automorphic representation of $\SO^{E/F}_{2n}(\A_F)$. The following will be key assumptions on $\pi^\flat$. (Recall from \S\ref{sect:Notation} that $ \St_{\SO,\qst}$ denotes the Steinberg representation.)
\begin{itemize}
\item[\textbf{(coh$^\circ$)}] $\pi^\flat_\infty$ is cohomological for an irreducible algebraic representation $\xi^\flat = \otimes_{y \in \cV_\infty} \xi_y^\flat$ of $\SO^{E/F}_{2n, F \otimes \C}$.
\item[\textbf{(St$^\circ$)}] There exists a prime $\qst$ of $F$ such that $\pi_{\qst}^\flat \simeq \St_{\SO,\qst}$ up to a character twist.
\end{itemize}
Condition (coh$^\circ$) implies that the infinitesimal character of $\xi^\flat_y$ is given by $\rho_{\SO}+\lambda(\xi_y^\flat)$ at each $y\in \cV_\infty$; see \cite[Thm.~I.5.3]{BorelWallach}. In particular $\pi^\flat$ is C-algebraic in the sense of Buzzard--Gee \cite[Lem.~7.2.2]{BuzzardGee}, thus also L-algebraic as the half sum of positive (co)roots is integral for $\SO^{E/F}_{2n}$. In (St$^\circ$), characters of $\SO^{E/F}_{2n}(F_{\qst})$ are exactly the characters factoring through the cokernel of $\Spin^{E/F}_{2n}(F_\qst)\ra \SO^{E/F}_{2n}(F_\qst)$. (This is a special case of the general fact \cite[Cor.~2.3.3]{H0main}.) Such characters are in a natural bijection with characters of $F_\qst^\times/(F_\qst^\times)^2 \simeq H^1(F_{\qst}, \{\pm 1\})$. 

Write $\TSO := \TGSO \cap \SO_{2n}$ over $\C$ and choose the Borel subgroup containing $T_{\SO}$ in $\SO^{E/F}_{2n}$ as in the preceding section. For each $y\in \cV_\infty$, the highest weight of $\xi_y^\flat$ gives rise to a dominant cocharacter $\lambda(\xi_y^\flat)\in X_*(\TSO)$. Let $\phi_{\pi^\flat_y}:W_{F_y}\ra {}^L \SO_{2n}^{E/F}$ denote the $L$-parameter of $\pi^\flat_y$ assigned by \cite{LanglandsRealClassification}. Recall $\std:\SO_{2n}\hra \GL_{2n}$ denotes the standard embedding. We also consider the following conditions:
\begin{itemize}[leftmargin=+1in]
\item[\textbf{(std-reg$^\circ$)}] $\std\circ \phi_{\pi^\flat_y}|_{W_{\li{F}_y}}$ is regular (i.e., the centralizer group in $\GL_{2n}(\C)$ is a torus) for every $y\in \cV_\infty$.
\item[\textbf{(disc-$\infty$)}] If $n$ is odd then $[E:F]=2$. If $n$ is even then $E=F$.
\end{itemize}
Since $E$ is either $F$ or a CM quadratic extension of $F$, condition (disc-$\infty$) is equivalent to requiring $\SO^{E/F}_{2n}(F_y)$ to admit discrete series at all infinite places $y$ of $F$ (or equivalently, to admit compact maximal tori). Condition (std-reg$^\circ$), when (coh$^\circ$) is imposed, amounts to requiring that $\std \circ (\rho_{\SO} +\lambda(\xi^\flat_y))$ should be a regular cocharacter of $\GL_{2n}$ for every $y\in \cV_\infty$, since $\phi_{\pi^\flat_y}|_{W_{\li{F}_y}}$ encodes the infinitesimal character of $\pi_y$ according to \cite[Prop.~7.4]{VoganLLC}.

When $v$ is a prime of $F$, write $\phi_{\pi^\flat_v} \colon W_{F_\qq} \ra {}^L \SO^{E/F}_{2n}$ for the $L$-parameter of $\pi^\flat_v$ as given by \cite[Thm 1.5.1]{ArthurBook}. (By the Langlands quotient theorem, $\pi^\flat_v$ is the unique quotient of an induced representation from a character twist of a tempered representation on a Levi subgroup. Apply Arthur's theorem to this tempered representation.) Note that $\phi_{\pi^\flat_v}$ is well-defined only up to $\OO_{2n}(\C)$-conjugacy in \emph{loc.~cit.} (This does not matter for the statement of (SO-i) in Theorem \ref{thm:ExistGaloisSO} below.)

Let $\tu{Unr}(\pi^\flat)$ denote the set of finite primes $\qq$ of $F$ such that $\qq$ is unramified in $E$ and $\pi^\flat_\qq$ is unramified. In this case, the unramified $L$-parameter $\phi_{\pi^\flat_\qq}$ is determined (up to $\mathrm{SO}_{2n}(\C)$-conjugacy, not just up to outer automorphism) by the Satake isomorphism.

Thanks to Arthur, we can lift $\pi^\flat$ to an automorphic representation of $\GL_{2n}$ as follows. The Hecke character $F^\times\backslash \A_F^\times \ra \{\pm1\}$ corresponding to the Galois character $\chi_{E/F}:\Gamma_F\twoheadrightarrow \Gamma_{E/F}=\{\pm1\}$ is still denoted $\chi_{E/F}$. Let $\chi_{E/F,\qq}$ denote its local component at $\qq$.

\begin{proposition}[Arthur]\label{prop:SO-to-GL}
Assume that $\pi^\flat$ satisfies (St$^\circ$). Then there exists a self-dual automorphic representation $\pi^\sharp$ of $\GL_{2n}(\A_F)$, which is either cuspidal or the isobaric sum of two cuspidal self-dual representations of $\GL_{2n-1}(\A_F)$ and $\GL_1(\A_F)$, such that
\begin{enumerate}[label=(Ar\arabic*)]
\item $\pi^\sharp_\qq$ is unramified at every $\qq \in \tu{Unr}(\pi^\flat)$.
\item $\pi^\sharp_{\qst} \simeq \tu{St}_{2n-1} \boxplus \chi_{E/F,\qst}$ up to a quadratic character of $\GL_{2n}(\A_F)$.
\item $\phi_{\pi^\sharp_v} \simeq \std \circ \phi_{\pi^\flat_v}$ at every $F$-place $v$.
\end{enumerate}
If $\pi^\flat$ satisfies both (St$^\circ$) and (coh$^\circ$) then we furthermore have
\begin{enumerate}
\item[(Ar4)] $\pi^\sharp_y$ and $\pi^\flat_y$ are tempered for all infinite $F$-places $y$.
\end{enumerate}
If $\pi^\flat$ has properties (coh$^\circ$), (St$^\circ$), and (std-reg$^\circ$), then the following strengthening holds:
\begin{enumerate}
\item[(Ar4)+] $\pi^\sharp_v$ and $\pi^\flat_v$ are tempered for all $F$-places $v$.
\end{enumerate}
\end{proposition}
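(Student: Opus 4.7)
The plan is to invoke Arthur's endoscopic classification \cite{ArthurBook} for $\SO_{2n}^{E/F}$ and attach to $\pi^\flat$ a discrete global parameter $\psi = \boxplus_i \pi_i[n_i]$, where each $\pi_i$ is a cuspidal self-dual automorphic representation of $\GL_{m_i}(\A_F)$, $n_i \geq 1$, $\sum_i m_i n_i = 2n$, and Arthur's parity conditions are satisfied. The candidate $\pi^\sharp$ is then the associated isobaric automorphic representation of $\GL_{2n}(\A_F)$, built from the Speh representations $\pi_i[n_i]$. Arthur's theorem also delivers local--global compatibility via \eqref{eq:std-rep-L-gp}, which will prove (Ar3) once we know all $n_i=1$.

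First I would use (St$^\circ$) to rule out nontrivial Arthur $\SL_2$ factors. Since $\pi^\flat_{\qst}$ is Steinberg up to a quadratic character, it is tempered, so its local Arthur parameter has trivial Arthur $\SL_2$. Matching this against the localization of the global Arthur parameter then forces every $n_i = 1$. Consequently $\psi$ is a generic parameter and $\pi^\sharp = \boxplus_i \pi_i$ is genuinely an isobaric sum of cuspidal self-dual representations on general linear groups. The detailed local analysis of Steinberg under the transfer $\SO_{2n}^{E/F} \rightsquigarrow \GL_{2n}$ is the content of Appendix~B.

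Next I would pin down the cuspidal summands using the shape of the Steinberg $L$-parameter. The Langlands parameter of $\St_{\SO,\qst}$ has trivial Weil part and principal $\SL_2 \to \SO_{2n}^{E/F}(\C)$; composing with $\std$ from \eqref{eq:std-rep-L-gp}, the resulting $2n$-dimensional representation decomposes as the $(2n-1)$-dimensional irreducible of $\SL_2$ together with a $1$-dimensional summand on which $W_{F_{\qst}}$ acts through $\chi_{E/F,\qst}$; the outer twist $c \mapsto \vartheta^\circ$ in $\std$ is responsible for the character. Strong multiplicity one for isobaric sums of cuspidal $\GL$-representations, combined with $\sum m_i = 2n$, then leaves only two options: either $\pi^\sharp$ is cuspidal on $\GL_{2n}$ with this prescribed local component at $\qst$, or $\pi^\sharp = \pi_1 \boxplus \chi_{E/F}$ with $\pi_1$ cuspidal self-dual on $\GL_{2n-1}(\A_F)$ and $\pi_{1,\qst} \simeq \St_{2n-1}$. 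The global quadratic character twist in (Ar2) is permitted because (St$^\circ$) only demands Steinberg up to a local quadratic character. Item (Ar1) is immediate from the unramified part of Arthur's theorem.

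For temperedness, (coh$^\circ$) together with \eqref{eq:L-parameter-SO-infty} shows that $\std \circ \phi_{\pi^\flat_y}$ is bounded on $W_{\overline{F}_y}$ at each $y \in \cV_\infty$, so by (Ar3) the archimedean component $\pi^\sharp_y$ has bounded $L$-parameter and is tempered, giving (Ar4). For (Ar4)+, the hypothesis (std-reg$^\circ$) makes the cuspidal summand $\pi_1$ (or $\pi^\sharp$ itself in the cuspidal case) regular algebraic and essentially self-dual, and then the known Ramanujan theorem at finite places for such cuspidal representations of $\GL_N$ applies. Temperedness of $\pi^\flat_v$ at every $v$ then transfers back through (Ar3). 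The main obstacle is the first step: deducing the exact shape of the global Arthur parameter from the single local Steinberg hypothesis, together with tracking the quadratic twists at $\qst$---this is precisely what Appendix~B accomplishes.
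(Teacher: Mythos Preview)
Your overall strategy is the same as the paper's: extract from Arthur a global parameter, use the Steinberg place to pin down its shape, then read off (Ar1)--(Ar3) and handle temperedness separately. But there are two places where your sketch diverges from (or skips over) what the paper actually does.

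First, your sentence ``it is tempered, so its local Arthur parameter has trivial Arthur $\SL_2$'' is not valid reasoning. A tempered representation can sit inside an $A$-packet $\tilde\Pi(\psi)$ whose Arthur $\SL_2$ is nontrivial; temperedness constrains the $L$-parameter, not which $A$-packets the representation belongs to. The actual content of Appendix~B (Proposition~\ref{prop:A-parameter-1-St}) is precisely that if $\St$ appears as a subquotient of a member of $\tilde\Pi(\psi)$ with $\psi$ arising from a global parameter, then $\psi=\psi_{\St}$; this is proved by Aubert involution and a careful Langlands-quotient analysis, not by temperedness alone. You cite Appendix~B, so you land on the right input, but your stated justification for it is wrong. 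The paper applies Proposition~\ref{prop:A-parameter-1-St} directly to get $\psi_{\qst}\simeq\psi_{\St,\qst}$ in one step, which simultaneously forces both $n_i=1$ and the dimension split $(2n-1)+1$.

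Second, you omit a step the paper treats with some care: Arthur's theorem gives, at each $v$, a parameter $\phi_v$ obtained by restricting $\psi_v$, and one must check that $\phi_v$ actually agrees with the $L$-parameter $\phi_{\pi^\flat_v}$ in $\tilde\Phi(G_{F_v})$. The paper does this via the Langlands quotient: the packet member containing $\pi^\flat_v$ is a full parabolic induction from a tempered representation on a Levi, the $L^2$-discrete condition forces complete reducibility, and compatibility of the Langlands correspondence with Langlands quotients then gives $\phi_v=\phi_{\pi^\flat_v}$. Your phrase ``Arthur's theorem also delivers local--global compatibility'' papers over this.

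For (Ar4) you take a different route from the paper. You argue that \eqref{eq:L-parameter-SO-infty} plus (Ar3) shows $\phi_{\pi^\sharp_y}|_{W_{\ol F_y}}$ is bounded, hence $\pi^\sharp_y$ is tempered. The paper instead observes that $\pi^\sharp$ is $L$-algebraic (via (Ar3)) and applies Clozel's \cite[Lem.~4.9]{ClozelAnnArbor} to the $C$-algebraic twist, then uses self-duality to upgrade essentially tempered to tempered. Your route is plausible for $\GL_N(\R)$ since boundedness on $W_\C$ does force boundedness on $W_\R$ for semisimple parameters, but you should also say how temperedness of $\pi^\flat_y$ follows: the paper's argument is that $\pi^\sharp_y$ tempered makes $\psi_y$ bounded, whence $\pi^\flat_y$ is tempered by \cite[Thm.~1.5.1]{ArthurBook}. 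For (Ar4)+ your use of the Ramanujan theorem for regular algebraic self-dual cuspidal $\GL_N$ matches the paper's appeal to \cite[Thm.~1.2]{Caraiani-Ramanujan}.
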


\begin{remark}
In fact (Ar1) is implied by (Ar3) since $\phi_{\pi^\flat_\qq}$ is an unramified parameter at every $\qq \in \tu{Unr}(\pi^\flat)$, but we state (Ar1) to make (SO-ii) below more transparent.
\end{remark}

\begin{proof}
Consider $\pi^\flat$ satisfying (St$^\circ$). For notational convenience, we assume $\pi_{\qst}^\flat \simeq \St_{\SO,\qst}$ (not just up to a quadratic character twist) as the general case works in the same way. By \cite[Thm.~1.5.2]{ArthurBook} (using the notation there),\footnote{E.g., $\tilde\Phi(\SO^{E/F}_{2n,F_\qq})$ means the set of isomorphism classes of $L$-parameters for $G=\SO^{E/F}_{2n,F_\qq}$ \emph{modulo the action of the outer automorphism group} $\widetilde{\tu{Out}}_{2n}(G)$ as defined in \cite[1.2]{ArthurBook}. Similarly the packet $\tilde \Pi(\psi_{\qq})$ of \cite[1.5]{ArthurBook} consists of finitely many $\widetilde{\tu{Out}}_{2n}(G)$-orbits of isomorphism classes of representations of $G(F_{\qq})$. By abuse of terminology, a representation will often mean the outer automorphism orbit of representations in this proof.} 
we have a formal global parameter $\psi$ (as in \cite[1.4]{ArthurBook}) such that $\pi^\flat_v$ appears as a subquotient of a member of the packet $\tilde\Pi(\psi_v)$ at every place $v$ of $F$, where $\psi_v$ denotes the localization of $\psi$ at $v$ as in \emph{loc.~cit.} (A priori, members of $\tilde\Pi(\psi_v)$ may be reducible due to possible failure of the generalized Ramanujan conjecture. Only from the argument below it follows that $\psi$ is a generic parameter, i.e., its $\SU(2)$-part is trivial. Then $\tilde\Pi(\psi_v)$ consists of irreducible representations by \cite[Appendix A]{BinXuLPackets}.)

Since $\St_{\SO,\qst}$ appears as a subquotient of a member of $\tilde\Pi(\psi_{\qst})$, Proposition \ref{prop:A-parameter-1-St} implies that $\psi_{\qst}\simeq \psi_{\St,\qst}$, where $\psi_{\St}$ is defined above that proposition. Thus
\begin{equation}\label{eq:psi-at-vst}
\psi_{\St,\qst}\simeq \psi_{\St_{2n-1},\qst}\oplus \chi_{E/F,\qst},
\end{equation}
where $\psi_{\St_{2n-1},\qst}$ (resp.~$\psi_{\mathbf 1,\qst}$) denotes the $A$-parameter for the Steinberg (resp.~trivial) representation $\St_{2n-1}$ of $\GL_{2n-1}(F_{\qst})$ (resp.~ $\GL_{1}(F_{\qst})$). It follows that either $\psi=\pi^\#$ or $\psi=\pi^\#_1\boxplus \pi^\#_2$, where $\pi^\#$, $\pi^\#_1$, and $\pi^\#_2$ are cuspidal self-dual automorphic representations of $\GL_{2n}(\A_F)$, $\GL_{2n-1}(\A_F)$, and $\GL_1(\A_F)$, respectively. In the second case, we take $\pi^\#$ to be the isobaric sum of $\pi^\#_1$ and $\pi^\#_2$. Now (Ar2) follows from \eqref{eq:psi-at-vst}. We define $\phi_{\qq}\in \tilde\Phi(\SO^{E/F}_{2n,F_\qq})$ as the restriction of $\psi_{\qq}\in \tilde\Psi(\SO^{E/F}_{2n,F_\qq})$ from $\cL_{F_\qq}\times \SU(2)$ to $\cL_{F_\qq}$. Then Properties (Ar1) and (Ar3) with $\phi_\qq$ in place of $\phi_{\pi_\qq^\flat}$ are part of Arthur's result already cited.

To complete the proof of (Ar1) and (Ar3), it suffices to verify that $\phi_v = \phi_{\pi_v^\flat}$ in $\tilde\Phi(G_{F_v})$. In the notation of \cite{ArthurBook} (between Theorems 1.5.1 and 1.5.2), $\phi_v$ gives rise to 
\begin{itemize}
    \item a $F_v$-rational parabolic subgroup $P_v\subset G_{F_v}$ with a Levi factor $M_v$,
    \item a bounded parameter $\phi_{M_v}\in \tilde\Phi(M_v)$,
    \item a point $\lambda$ in the open chamber for $P_v$ in $X_*(M_v)_{F_v}\otimes_\Z \R$,
\end{itemize}
such that $\phi_v$ comes from the $\lambda$-twist $\phi_{M_v,\lambda}$ of $\phi_{M_v}$. (This is the counterpart of the Langlands quotient construction for $L$-parameters.) The statement of \cite[Thm.~1.5.2]{ArthurBook} tells us that $\pi^\flat_v$ is a subrepresentation of the normalized induction $\Ind^{G(F_v)}_{P_v(F_v)}(\sigma_{v,\lambda})$ for some $\sigma_v\in \tilde\Pi(M_v)$, where $\sigma_{v,\lambda}$ denotes the $\lambda$-twist of $\sigma_v$, since $\pi^\flat_v$ appears in the packet of $\psi_v$ in \emph{loc.~cit.} According to the same theorem, $\Ind^{G(F_v)}_{P_v(F_v)}(\sigma_{v,\lambda})$ must be completely reducible since it appears in the $L^2$-discrete spectrum. This means that  $\pi^\flat_v$ is irreducible and the Langlands quotient of $\Ind^{G(F_v)}_{P_v(F_v)}(\sigma_{v,\lambda})$ (thus $\pi^\flat_v$ is isomorphic to the latter). Since the formation of Langlands parametrization is compatible with the Langlands quotient, it follows that $\phi_v$ is the $L$-parameter of $\pi_v^\flat$, namely that $\phi_v=\phi_{\pi_v^\flat}$.

It remains to check (Ar4) and (Ar4)+. Assume (coh$^\circ$) in addition to (St$^\circ$). Thanks to (Ar3), $\pi^\#$ is $L$-algebraic since L-algebraicity is preserved by $\std$. Applying  \cite[Lem.~4.9]{ClozelAnnArbor} to $\pi^\#\otimes|\det|^{1/2}$ if $\pi^\#$ is cuspidal, and $\pi^\#_1$ and $\pi^\#_2$ otherwise, to deduce that $\pi^\#_v$ is essentially tempered at all $y|\infty$. Since $\pi^\#$ is self-dual, $\pi^\#_y$ are a fortiori tempered. Now suppose furthermore that $\pi^\flat_y$ has property (std-reg$^\circ$). Then $\pi^\#$ is regular $L$-algebraic. Arguing as above but applying \cite[Thm.~1.2]{Caraiani-Ramanujan} to $\pi^\#$ at finite places, in place of  \cite[Lem.~4.9]{ClozelAnnArbor} at infinite places, we deduce (Ar4)+. Finally, whenever $\pi^\#_v$ is tempered (for finite or infinite $v$), this implies that $\psi_v$ is bounded, hence that $\pi^\flat_v$ is tempered by \cite[Thm.~1.5.1]{ArthurBook}.
\end{proof}

\begin{corollary}\label{cor:disc-series-at-infty}
Assume (disc-$\infty$). If $\pi^\flat$ satisfies (St$^\circ$) and (coh$^\circ$) then $\pi^\flat_y$ is a discrete series representation for every infinite place $y$.
\end{corollary}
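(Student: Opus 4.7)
The plan is to combine Arthur's archimedean temperedness with the Vogan--Zuckerman classification of cohomological representations. By Proposition~\ref{prop:SO-to-GL}(Ar4), assumptions (St$^\circ$) and (coh$^\circ$) already force $\pi^\flat_y$ to be tempered at every infinite place $y$; the content of the corollary is thus to upgrade ``tempered and $\xi^\flat_y$-cohomological'' to ``discrete series'' at each archimedean $y$, which is where (disc-$\infty$) will enter.

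First I would observe that the irreducible algebraic representation $\xi^\flat_y$ has \emph{regular} infinitesimal character: if $\lambda_y$ denotes its (dominant integral) highest weight, then $\lambda_y + \rho$ is strictly dominant and hence has trivial stabilizer in the Weyl group. Since $\pi^\flat_y$ is $\xi^\flat_y$-cohomological, Wigner's lemma forces its infinitesimal character to agree with that of $\xi_y^{\flat,\vee}$ up to the standard shift, and so it is regular as well. Simultaneously, (disc-$\infty$) guarantees that $\SO^{E/F}_{2n}(F_y)$ has equal rank, i.e., admits a compact Cartan subgroup.

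Next I would invoke the Vogan--Zuckerman classification: any irreducible unitary $\xi^\flat_y$-cohomological representation of $\SO^{E/F}_{2n}(F_y)$ is of the form $A_{\mathfrak q}(\mu)$ for some $\theta$-stable parabolic subalgebra $\mathfrak q$ with Levi factor $L$, and $A_{\mathfrak q}(\mu)$ is tempered if and only if $\mathfrak q$ is \emph{fundamental}, meaning $L$ is compact modulo center. In the equal-rank setting the only such $\mathfrak q$ (up to $K$-conjugacy) is a $\theta$-stable Borel $\mathfrak b$, whose Levi is a compact Cartan. Regularity of the infinitesimal character then distinguishes $A_{\mathfrak b}(\mu)$ as a genuine discrete series rather than a (non-degenerate) limit of discrete series, and the corollary follows. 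No serious obstacle is expected: Vogan--Zuckerman is entirely standard, and the remaining steps are bookkeeping on infinitesimal characters and on the structure of Levi subgroups in the equal-rank case.
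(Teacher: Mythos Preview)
Your approach is essentially the same as the paper's: use (Ar4) from Proposition~\ref{prop:SO-to-GL} for archimedean temperedness, use (disc-$\infty$) for equal rank, and then argue that a tempered $\xi^\flat_y$-cohomological representation of an equal-rank group is discrete series. The paper dispatches this last step in one line by citing \cite[Thm.~III.5.1]{BorelWallach}, whereas you unpack it via Vogan--Zuckerman.

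One imprecision in your unpacking: the claim that ``in the equal-rank setting the only $\theta$-stable parabolic with compact-mod-center Levi is a $\theta$-stable Borel'' is not literally true. For instance, when $G(\R)$ is of Hermitian type one may take $\mathfrak q$ with Levi $L=K$, which is compact but not a torus; the resulting $A_{\mathfrak q}(\mu)$ are the holomorphic discrete series. The fix is harmless: for \emph{any} $\theta$-stable $\mathfrak q$ with compact Levi, $A_{\mathfrak q}(\mu)$ is a discrete series or a limit thereof, and regularity of the infinitesimal character (which you correctly note) excludes the limits. So your conclusion stands, and the argument is equivalent to the paper's citation of Borel--Wallach.
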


\begin{proof}
The condition (disc-$\infty$) guarantees that $\SO_{2n}^{E/F}(F_y)$ contains an elliptic maximal torus at infinite places $y$, so that it admits discrete series. In this case, a tempered $\xi$-cohomological representation is a discrete series representation by \cite[Thm.~III.5.1]{BorelWallach}. Thus the corollary follows from (Ar4) of the preceding proposition.
\end{proof}

Under the assumptions of the corollary, let us describe $\phi_{\pi^\flat_y}|_{W_{\ol F_y}}$ explicitly. Fix an $\R$-isomorphism $\ol F_y\simeq \C$ once and for all, so that we can identify $W_{\ol F_y}=\C^\times$. We noted that the infinitesimal character of $\xi^\flat_y$ is $\rho_{\SO}+\lambda(\xi_y^\flat)$. The half sum of positive coroots $\rho_{\SO}\in X_*(\TSO)$ is equal to $(n-1)e_1+(n-2)e_2+\cdots+e_{n-1}$. It follows from the construction of discrete series $L$-packets in \cite[p.134]{LanglandsRealClassification} that possibly after $\SO^{E/F}_{2n}(\C)$-conjugation, we have
\begin{equation}\label{eq:L-parameter-SO-infty}
\phi_{\pi^\flat_y}(z) = (z/\ol z)^{\rho_{\SO}+\lambda(\xi_y^\flat)},\quad z\in W_{\ol F_y}.
\end{equation}

Continue to assume (St$^\circ$) and (coh$^\circ$) for $\pi^\flat$ as well as (disc-$\infty$). We noted that $\pi^\flat$ is L-algebraic thanks to (coh$^\circ$). Then Conjecture \ref{conj:BuzzardGee} predicts the existence of an $^L \SO^{E/F}_{2n}$-valued Galois representation attached to $\pi^\flat$. When (std-reg$^\circ$) is also assumed, Theorem \ref{thm:ExistGaloisSO} below proves the conjecture modulo outer automorphisms in that (SO-i) is weaker than what is predicted. (This is to be upgraded by (SO-i+) in \S\ref{sect:refinement}; also see Remark \ref{rem:SO-i-vs-SO-i+}.) The proof is carried out by reducing to the known results for $\pi^\#$ on $\GL_{2n}$. We will get to the theorem after observing that (disc-$\infty$) is automatically satisfied under the additional hypothesis (std-reg$^\circ$); this observation is related to (SO-v) of the theorem.

\begin{lemma}\label{lem:disc-infty-automatic}
Suppose there exists a cuspidal automorphic representation $\pi^\flat$ of $\SO^{E/F}_{2n}(\A_F)$ such that (St$^\circ$),(coh$^\circ$), and (std-reg$^\circ$) hold. Then (disc-$\infty$) is satisfied.\footnote{We heartily thank the referee for pointing out this lemma and explaining its proof.} 
\end{lemma}

\begin{proof} 
For each $y\in \cV_\infty$, Proposition~\ref{prop:SO-to-GL} tells us that $\phi_{\pi^\flat_y}$ is tempered. From this and (coh$^\circ$), we obtain a decomposition of the form 
$$
\std\circ \phi_{\pi^\flat_y}=\bigoplus_{i\in I} \Ind^{W_{F_y}}_{W_{\ol F_y}} \chi_{a_i} \oplus \bigoplus_{i'\in I'} \omega_{i'},\quad a_i\in \Z_{\ge 0}
$$
where $\chi_{a_i}: \C^\times \ra \C^\times$ is given by $z\mapsto (z/\ol z)^{a_i} $ using the identification $W_{\ol F_y}=\C^\times$ above, and $\omega_{i'}$ is a quadratic character of $W_{F_y}$. (In fact, $a_i$ are mutually distinct.) By the dimension reason $|I'|$ is even. On the other hand, (std-reg$^\circ$) implies that $|I'|\le 1$. Hence $I'$ is empty and $|I|=n$. Now the image of $j\in W_{F_y}$ in $\GL_{2n}(\C)$ under $\std\circ \phi_{\pi^\flat_y}$ has determinant $(-1)^n$ since the determinant of $j$ is $-1$ in each induced representation. In view of \eqref{eq:std-rep-L-gp}, we deduce that $E=F$ if $n$ is even and $E\neq F$ otherwise. That is, (disc-$\infty$) holds true.
\end{proof}

\begin{theorem}\label{thm:ExistGaloisSO}
Let $\pi^\flat$ be a cuspidal automorphic representation of $\SO^{E/F}_{2n}(\A_F)$ satisfying (coh$^\circ$), (St$^\circ$), and (std-reg$^\circ$). Then there exists a semisimple Galois representation (depending on $\iota$) 
$$
\rho_{\pi^\flat} = \rho_{\pi^\flat, \iota} \colon \Gamma_F \to \SO_{2n}(\lql)\rtimes \Gamma_{E/F},
$$
whose restriction to $\Gamma_{F_\qq}$ at every $F$-place $\qq|\ell$ is potentially semistable, such that the following hold. Here $\osim$ means $\OO_{2n}(\lql)$-conjugacy.
\begin{enumerate}[label=(SO-\roman*)]
\item For every finite $F$-place $\qq$ (including $\qq|\ell$), in the convention of \S\ref{sect:Notation}, we have
$$
\iota \phi_{\pi^\flat_\qq} \osim \textup{WD}(\rho_{\pi^\flat}|_{\Gamma_{F_\qq}})^{\textup{F-ss}}.
$$
\item Let $\qq \in \tu{Unr}(\pi^\flat)$. If $\qq \nmid \ell$ then $\rho_{\pi^\flat, \qq}$ is unramified at $\qq$, and for all eigenvalues $\alpha$
of $\std(\rho_{\pi^\flat}(\Frob_\qq))_\tu{ss}$ and all embeddings $\lql \hookrightarrow \C$ we have $|\alpha | = 1$.
\item For each $\qq | \ell$, and for each $y \colon F \hra \C$ such that $\iota y$ induces $\qq$, we have
$\mu_{\HT}(\rho_{\pi^\flat,\qq},\iota y) \osim \iota\mu_{\Hodge}(\xi^\flat, y)$.
\item If $\pi^\flat_\qq$ is unramified at $\qq | \ell$, then $\rho_{\pi^\flat, \qq}$ is crystalline. If $\pi^\flat_\qq$ has a non-zero Iwahori fixed vector at $\qq | \ell$, then $\rho_{\pi^\flat, \qq}$ is semistable.
\item $\rho_{\pi^\flat}$ is totally odd. More explicitly, for each real place $y$ of $F$ and the corresponding complex conjugation $c_y\in \Gamma_F$ (well-defined up to conjugacy),
$$
\rho_{\pi^\flat}(c_y)\sim \begin{cases}
  \diag(\underbrace{1,\ldots,1}_{n/2},\underbrace{-1,\ldots,-1}_{n/2},\underbrace{1,\ldots,1}_{n/2},\underbrace{-1,\ldots,-1}_{n/2}), & n:\mbox{even},\\
  \diag(\underbrace{1,\ldots,1}_{(n-1)/2},\underbrace{-1,\ldots,-1}_{(n-1)/2},1,\underbrace{1,\ldots,1}_{(n-1)/2},\underbrace{-1,\ldots,-1}_{(n-1)/2},1) \rtimes c,
  & n:\mbox{odd}.
\end{cases}.
$$
\end{enumerate}
Condition (SO-i) characterizes $\rho_{\pi^\flat}$ uniquely up to $\OO_{2n}(\lql)$-conjugation.
\end{theorem}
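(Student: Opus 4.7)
The plan is to construct $\rho_{\pi^\flat}$ by transferring $\pi^\flat$ to an automorphic representation on $\GL_{2n}(\A_F)$ via Arthur, attaching a Galois representation to the resulting self-dual isobaric representation through the known instances of the global Langlands correspondence, and then showing the output descends into $\SO_{2n}(\lql)\rtimes \Gamma_{E/F}\subset \GL_{2n}(\lql)$ via \eqref{eq:std-rep-L-gp}. First I would invoke Proposition~\ref{prop:SO-to-GL} to obtain the self-dual automorphic representation $\pi^\#$ of $\GL_{2n}(\A_F)$, which is either cuspidal or of the form $\pi^\#_1\boxplus \chi$ with $\pi^\#_1$ cuspidal on $\GL_{2n-1}$ and $\chi$ a quadratic character (matching $\chi_{E/F}$ at $\qst$ via (Ar2)). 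By (Ar3), (Ar4)+, and the assumptions (coh$^\circ$)\,+\,(std-reg$^\circ$), each cuspidal constituent $\pi^\#_i$ is regular $L$-algebraic, self-dual, cohomological, and tempered at every place.

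Next, attach to each cuspidal constituent the Galois representation $r_{\pi^\#_i}\colon \Gamma_F\to \GL_{n_i}(\lql)$ provided by the established Langlands correspondence for regular $L$-algebraic self-dual cuspidal automorphic representations of $\GL_N$ over totally real fields (combining results of Shin, Chenevier--Harris, Barnet-Lamb--Gee--Geraghty--Taylor, and Caraiani). Set $r := \bigoplus_i r_{\pi^\#_i}$. The local-global compatibility, Hodge--Tate, and crystalline/semistable statements known in the $\GL_{2n}$ case translate directly into the analogues of (SO-i)--(SO-iv) after composing with $\std$. The nontrivial remaining task is to promote $r$ (up to conjugation) to a representation valued in $\SO_{2n}(\lql)\rtimes \Gamma_{E/F}$.

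To descend the target group, I argue by cases. In the cuspidal case, Arthur's transfer from $\SO^{E/F}_{2n}$ forces $L(s,\pi^\#,\Sym^2)$ to have a pole at $s=1$ (orthogonal Arthur parameter), so $r$ is orthogonally self-dual of sign $+1$ and hence factors through $\GO_{2n}(\lql)$. The similitude character $\det\circ r$ is of order $\le 2$, and by comparing Satake parameters at unramified $F$-places via (Ar3) against the image of $\std$ in \eqref{eq:std-rep-L-gp}, this character is identified with $\chi_{E/F}$; thus $r$ lands in $\OO_{2n}(\lql)$ with $\Gamma_{E/F}$-component $\chi_{E/F}$. In the non-cuspidal case $\pi^\# = \pi^\#_1\boxplus \chi$, the odd dimension $2n-1$ forces $r_{\pi^\#_1}$ to land in $\SO_{2n-1}(\lql)$, and the direct sum with the quadratic character $\chi$ produces an $\OO_{2n}(\lql)$-valued representation whose $\Gamma_{E/F}$-component again matches $\chi_{E/F}$ by the same Satake comparison. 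In either case, identifying $\OO_{2n}(\lql)$ with $\SO_{2n}(\lql)\rtimes \Gamma_{E/F}$ via $c\mapsto \vartheta^\circ$ yields the desired $\rho_{\pi^\flat}$.

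Having constructed $\rho_{\pi^\flat}$, properties (SO-i)--(SO-iv) then follow from the analogous known properties of the $r_{\pi^\#_i}$, weakened to $\OO_{2n}(\lql)$-conjugacy because $\std$ is not faithful on the $L$-group and only sees the outer class. For (SO-v) I would invoke the oddness results for $\GL_{2n}$-valued Galois representations attached to regular algebraic polarized cuspidal representations (Bella\"iche--Chenevier, Taylor, Caraiani--Le Hung, Ta\"ibi), then read off the image of $c_y$ inside $\OO_{2n}(\lql)$ from the archimedean $L$-parameter of $\pi^\flat_y$, which under (disc-$\infty$) is the explicit discrete parameter \eqref{eq:L-parameter-SO-infty}. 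Uniqueness up to $\OO_{2n}(\lql)$-conjugation then follows from Chebotarev density combined with the acceptability of $\OO_{2n}$ (cf.\ the reference invoked in the proof of Proposition~\ref{prop:local-conjugacy-global-conjugacy}). The main technical obstacle I foresee is pinning down the exact form of $\rho_{\pi^\flat}(c_y)$ in (SO-v) when $n$ is odd, since then $\rho_{\pi^\flat}(c_y)$ must lie in the non-trivial coset of $\SO_{2n}(\lql)\rtimes \{c\}$ and one must keep careful track of both its semisimple part in $\SO_{2n}(\lql)$ and of its $c$-component using Arthur's normalizations of the transfer (and, on the Galois side, the known image of complex conjugation in the $\GL_{2n}$ case).
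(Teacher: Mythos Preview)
Your proposal is correct and follows essentially the same strategy as the paper: transfer $\pi^\flat$ to $\GL_{2n}$ via Proposition~\ref{prop:SO-to-GL}, apply the known construction of Galois representations for regular algebraic (essentially) self-dual cuspidal automorphic representations of $\GL_N$, and descend into $\OO_{2n}(\lql)=\SO_{2n}(\lql)\rtimes\Gamma_{E/F}$.

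Two small remarks. First, the paper establishes orthogonality of $\rho_\Pi$ not via the $\Sym^2$ $L$-function pole but via the sign theorem of Bella\"iche--Chenevier \cite[Cor.~1.3]{BellaicheChenevierSign}, applied to each irreducible summand (the Steinberg condition at $\qst$ forces any decomposition to have shape $(2n{-}1)+1$); your route through Arthur's classification of self-dual parameters is equally valid. Second, a terminological slip: once $r$ is orthogonally self-dual the similitude is already trivial, so $r$ lands in $\OO_{2n}(\lql)$ rather than $\GO_{2n}(\lql)$; what you call ``the similitude character $\det\circ r$'' is really just the determinant character $\Gamma_F\to\{\pm1\}$, and it is this (not a similitude) that you identify with $\chi_{E/F}$ via Satake parameters and Chebotarev, exactly as the paper does. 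Your identification of the $n$ odd case of (SO-v) as the delicate point is accurate; the paper handles it by invoking \cite{CaraianiLeHung} for the eigenvalue multiplicities of $\std(\rho_{\pi^\flat}(c_y))$ and then reading off the unique compatible element in the nontrivial coset $\SO_{2n}(\lql)\rtimes c$.
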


\begin{remark}\label{rem:complex-conj-SO}
Since $\pi^\flat_\infty$ is a discrete series representation, the conjugation by $\phi_{\pi^\flat_y}(j)$ on $\TSO$ is the inverse map, where $j$ denotes the usual element of the real Weil group. Thus (SO-v) and \eqref{eq:L-parameter-SO-infty} imply Buzzard--Gee's prediction on the image of complex conjugation in \cite[Conj. 3.2.1, 3.2.2]{BuzzardGee}. When $n$ is odd, we also observe that (SO-v) is equivalent to
$$
\rho_{\pi^\flat}(c_y)\sim \diag(\underbrace{1,\ldots,1}_{(n-1)/2},\underbrace{-1,\ldots,-1}_{(n-1)/2},a,\underbrace{1,\ldots,1}_{(n-1)/2},\underbrace{-1,\ldots,-1}_{(n-1)/2},a^{-1}) \rtimes c,\quad \forall a\in \lql^\times.
$$
\end{remark}

\begin{remark}
Without (St$^\circ$), an analogous theorem can be proved only under (coh$^\circ$) and (std-reg), but in a weaker and less precise form. The strategy is similar: transfer $\pi^\flat$ to a regular algebraic automorphic representation of $\GL_{2n}(\A_F)$, which is an isobaric sum of cuspidal self-dual automorphic representations, and apply the known results on associating Galois representations.
\end{remark}

\begin{proof}[Proof of Theorem \ref{thm:ExistGaloisSO}]
Let $\pi^\#$ be as in Proposition \ref{prop:SO-to-GL} so that
\begin{enumerate}
\item[Case 1:] $\pi^\#$ is cuspidal, or
\item[Case 2:] $\pi^\#=\pi^\#_1\boxplus \pi^\#_2$, with $\pi^\#_1$ (resp.~$\pi^\#_2$) a cuspidal automorphic representation of $\GL_{2n-1}(\A_F)$ (resp.~$\GL_1(\A_F)$).
\end{enumerate}
As in the proof there, we know that $\pi^\#$ is $L$-algebraic.

In Case 1, consider the C-algebraic twist $\Pi := \pi^\sharp \otimes |\det|^{(1 - 2n)/2}$, which is regular by (std-reg), and essentially self-dual (``essentially'' means up to a character twist). Applying the well-known construction of Galois representations (see \cite[Thm.~2.1.1]{BLGGT14-PA} for a summary and further references) to $\Pi$, we obtain a semisimple Galois representation (recall $\Gamma=\Gamma_F$ by convention)
$$
\rho_{\Pi} \colon \Gamma \to \GL_{2n}(\lql),
$$
satisfying the obvious analogues of properties (SO-i) through (SO-iv) for $\GL_{2n}$, with $\rho_{\Pi}$ and $\GL_{2n}$ in place of $\rho_{\pi^\flat}$ and $\OO_{2n}$; call these analogues (GL-i), \ldots, (GL-iv). By `obvious', we mean for instance that (GL-ii) is about the eigenvalues of $\rho_{\Pi}(\Frob_\qq)$ having absolute value 1. We also spell out (GL-i), which states that
\begin{equation}\label{eq:rhoPi-LGC}
\iota \phi_{\Pi_{\qq}\otimes |\det|_{\qq}^{(1-2n)/2}} \sim \textup{WD}(\rho_{\Pi}|_{\Gamma_{F_\qq}})^{\textup{F-ss}},\quad \qq \nmid \ell.
\end{equation}
In particular, for all $\qq\in \tu{Unr}(\pi^\flat)$, since $\Pi_\qq$ is unramified by (Ar1), we see that $\rho_\Pi$ is unramified at $\qq$ as well and that
\begin{equation}\label{eq:rhoPi-LGC-unram}
\rho_{\Pi}(\Frob_\qq)_{\tu{ss}} \sim \iota \phi_{\Pi_\qq \otimes |\det|_{\qq}^{(1-2n)/2}} (\Frob_\qq)  \sim
\iota \phi_{\pi^\#_\qq} (\Frob_\qq)\sim \iota \std (\phi_{\pi^\flat_\qq}(\Frob_\qq)).
\end{equation}
Since each $\pi^\#_\qq$ is self-dual, we see that $\rho_{\Pi}$ is self-dual. By (Ar2) and \eqref{eq:rhoPi-LGC} at $\qq =\qst$ as well as semisimplicity of $\rho_\Pi$, we see that either
\begin{itemize}
\item $\rho_{\Pi}$ is irreducible, or
\item $\rho_{\Pi}=\rho_1\oplus \rho_2$ for self-dual irreducible subrepresentations $\rho_1$ and $\rho_2$ with $\dim \rho_1 = n-1$ and $\dim \rho_2 = 1$.
\end{itemize}
Either way, it follows from \cite[Cor.~1.3]{BellaicheChenevierSign} that every irreducible constituent of $\rho_{\Pi}$ is orthogonal in the sense of \emph{loc. cit.} (As we are in Case 1, apply their corollary with $\eta = |\cdot|^{2n-1}$, in which case $\eta_\lambda(c) = -1$ in their notation.)

Now we turn to Case 2. Take $\Pi_1 := \pi^\#_1|\det|^{1-n}$ and $\Pi_2 := \pi^\#_2$. Each of $\Pi_1$ and $\Pi_2$ is cuspidal, regular $C$-algebraic, and essentially self-dual, so the same construction yields $\rho_{\Pi_1}$ and $\rho_{\Pi_2}$, which are $2n-1$ and $1$-dimensional, respectively. Then put $\rho_{\Pi} := \rho_{\Pi_1} \oplus \rho_{\Pi_2}$. As before, (GL-i), \ldots, (GL-iv) hold true for $\rho_\Pi$. Moreover an argument as in Case 1 shows that $\rho_{\Pi_1}$ and $\rho_{\Pi_2}$ are self-dual and orthogonal.  It follows from (Ar2) and \eqref{eq:rhoPi-LGC} at $v=\qst$ that $\rho_{\Pi_1}$ and $\rho_{\Pi_2}$ are irreducible. 

From here on, we treat the two cases together. Since $\rho_\Pi$ is self-dual and orthogonal, after conjugating $\rho_{\Pi}$ by an element of $\GL_{2n}(\lql)$, we can ensure that $\rho_\Pi(\Gamma)\subset \OO_{2n}(\lql)$. Write
$$
\rho_{\pi^\flat} \colon \Gamma \ra \OO_{2n}(\lql)
$$
for the $\OO_{2n}(\lql)$-valued representation that $\rho_{\Pi}$ factors through. (In case $\rho_{\Pi}$ is reducible, we even have $\rho_\Pi(\Gamma)\subset (\OO_{2n-1}\times \OO_1)(\lql)$.) Let us check that this is the desired Galois representation and deduce properties (SO-i) through (SO-v) from (GL-i) through (GL-iv).

We start with the case $E=F$. Then $\phi_{\pi^\flat_v}(\Frob_\qq)\in \SO_{2n}(\C)$ in \eqref{eq:rhoPi-LGC-unram}, so we deduce via the Chebotarev density theorem that $\rho_{\pi^\flat}$ has image in $\SO_{2n}(\lql)$. Note that (GL-ii) is the same statement as (SO-ii). The Hodge-theoretic properties at $\ell$ in (SO-iii) and (SO-iv) may be checked after composing with a faithful representation, so these properties hold. One sees from \cite[Appendix B]{GSp} (for $\OO_{2n}$) that (GL-i) implies (SO-i). (Alternatively, one can appeal to \cite[Thm.~8.1]{GGP12}.) The assertion on the cocharacters in (SO-iii) also follows (GL-iii) that the two cocharacters become conjugate in $\GL_{2n}$. 

We now prove (SO-v), namely that $\rho_{\pi^\flat}$ is totally odd. The following claim 
\begin{itemize}
\item[(E)] The element $\std(\rho_{\pi^\flat}(c_y))\in \GL_{2n}(\lql)$ has eigenvalues $1$ and $-1$ with multiplicity $n$ each, for every $y\in \cV_\infty$. 
\end{itemize}
follows from \cite{CaraianiLeHung} (in fact it can also be deduced from Ta\"\i bi's theorem \cite[Thm.~6.3.4]{TaibiEigenvariety} when $\pi^\sharp$ is cuspidal, and from Taylor \cite[Prop.~A]{TaylorOdd} when $\pi^\sharp$ is not cuspidal).

As $\rho_{\pi^\flat}(c_y)\in \SO_{2n}(\lql)$ has order 2, we have
$$
\rho_{\pi^\flat}(c_y)\sim \diag(\underbrace{1,\ldots,1}_{a_y},\underbrace{-1,\ldots,-1}_{b_y},\underbrace{1,\ldots,1}_{a_y},\underbrace{-1,\ldots,-1}_{b_y}),\quad a_y+b_y=n, \quad a_y,b_y\in \Z_{\ge0}.
$$
So (E) implies that $a_y=b_y$; this is possible as $n$ is even, which follows from Lemma \ref{lem:disc-infty-automatic} and the running assumption that $E=F$. Now one computes the adjoint action of $\rho_{\pi^\flat}(c_y)$ on $\Lie \SO_{2n}(\lql)$ to be $-n$. (A similar computation is done in the proof of \cite[Lem.~1.9]{GSp} for $\GSp_{2n}$.) Thus $\rho_{\pi^\flat}$ is totally odd.

It remains to treat the case $E\neq F$. In this case, the standard embedding $\SO_{2n}(\lql)\rtimes \Gamma_{E/F} \hra \GL_{2n}$ identifies $\SO_{2n}(\lql)\rtimes \Gamma_{E/F} \isom \OO_{2n}(\lql)$. The composition of $\rho_{\pi^\flat}$ with this isomorphism is still to be denoted by $\rho_{\pi^\flat}$. Since $\phi_{\pi^\flat_\qq}(\Frob_\qq) \in \OO_{2n}(\C)\backslash \SO_{2n}(\C)$ (resp.~$\phi_{\pi^\flat_y}(\Frob_\qq)\in \SO_{2n}(\C)$) in \eqref{eq:rhoPi-LGC-unram} when $\qq$ is inert (resp.~split) in $E$ by the unramified Langlands correspondence, we see that
$$
\rho_{\pi^\flat} \colon \Gamma \ra \SO_{2n}(\lql)\rtimes \Gamma_{E/F}
$$
commutes with the natural projections onto $\Gamma_{E/F}$. (By continuity it suffices to check the commutativity on Frobenius conjugacy classes.) Thus $\rho_{\pi^\flat}$ is a Galois representation valued in ${}^L (\SO^{E/F}_{2n})$. Properties (SO-i) through (SO-iv) follow from (GL-i) through (GL-iv) in the same way as for the $E=F$ case. 
 
We now prove (SO-v). The argument for claim (E) still applies, and since $n$ is odd by Lemma \ref{lem:disc-infty-automatic}, we have
\begin{eqnarray}
\std\rho_{\pi^\flat}(c_y) &\sim& \diag(\underbrace{1,\ldots,1}_{n},\underbrace{-1,\ldots,-1}_{n})  \\
&\sim& \diag(\underbrace{1,\ldots,1}_{\frac{n-1}{2}},\underbrace{-1,\ldots,-1}_{\frac{n+1}{2}},\underbrace{1,\ldots,1}_{\frac{n-1}{2}},\underbrace{-1,\ldots,-1}_{\frac{n+1}{2}})\cdot \std(c) \quad\mbox{in}~~\GL_{2n}(\lql).
\end{eqnarray}
(Recall that $\std(c)=\vartheta^\circ$ is the $2n\times 2n$ permutation matrix switching $n$ and $2n$.) Therefore 
$$
\rho_{\pi^\flat}(c_y)~\sim~\diag(\underbrace{1,\ldots,1}_{\frac{n-1}{2}},\underbrace{-1,\ldots,-1}_{\frac{n+1}{2}},\underbrace{1,\ldots,1}_{\frac{n-1}{2}},\underbrace{-1,\ldots,-1}_{\frac{n+1}{2}}) \rtimes c\quad\mbox{in}~~{}^L \SO_{2n}(\lql).
$$
From this, it follows that the adjoint action of $\rho_{\pi^\flat}(c_y)$ on $\Lie \SO_{2n}(\lql)$ has trace equal to $-n$. Hence $\rho_{\pi^\flat}$ is totally odd.
\end{proof}

The following corollary allows us to apply Proposition~\ref{prop:containingRegularUnipotent} to identify the Zariski closure of the image of $\rho_{\pi^\flat}$.

\begin{corollary}\label{cor:image-has-reg-unip}
In the setup of Theorem \ref{thm:ExistGaloisSO}, the image of $\rho_{\pi^\flat}$ (thus also $\rho_{\pi^\flat}(\Gamma_E)$ contains a regular unipotent element of $\SO_{2n}(\lql)$.
\end{corollary}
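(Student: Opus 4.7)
The plan is to use the Steinberg hypothesis (St$^\circ$) at $\qst$ to extract a regular unipotent from the local monodromy of $\rho_{\pi^\flat}$ at $\qst$. By (Ar2) of Proposition~\ref{prop:SO-to-GL}, the transfer $\pi^\sharp_{\qst}$ is $\St_{2n-1}\boxplus \chi_{E/F,\qst}$ up to a quadratic character twist, so the Weil--Deligne representation of $\std\circ \phi_{\pi^\flat_{\qst}}$ has monodromy operator $N_0$ of Jordan type $(2n-1,1)$. Combining this with (Ar3) and the local-global compatibility (SO-i) of Theorem~\ref{thm:ExistGaloisSO}, the monodromy operator $N$ of $\tu{WD}(\rho_{\pi^\flat}|_{\Gamma_{F_{\qst}}})$ satisfies $\std(N)=N_0$ up to conjugation. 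Since the regular nilpotent orbit in $\SO_{2n}$ has exactly Jordan type $(2n-1,1)$ under $\std$ (and is realized by the principal nilpotent of $\SO_{2n-1}$ embedded via $i_{\std}^\circ$), the element $N$ is a regular nilpotent of $\SO_{2n}(\lql)$.

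Next I would transport $N$ into the image of $\rho_{\pi^\flat}$. When $\qst\nmid \ell$, Grothendieck's $\ell$-adic monodromy theorem produces, after conjugation, an open subgroup $I'$ of the inertia subgroup $I_{\qst}\subset \Gamma_{F_{\qst}}$ on which $\rho_{\pi^\flat}(\sigma) = \exp(t_\ell(\sigma)\cdot N)$, where $t_\ell\colon I_{\qst}\to \Z_\ell$ is the $\ell$-adic tame character. Any $\sigma\in I'$ with $t_\ell(\sigma)\neq 0$ gives a regular unipotent of $\SO_{2n}(\lql)$ in the image of $\rho_{\pi^\flat}$. To land inside $\rho_{\pi^\flat}(\Gamma_E)$, observe that for a prime $\pp$ of $E$ above $\qst$, the inertia $I_\pp\subset \Gamma_E$ has index at most two in $I_{\qst}$ (as $[E:F]\leq 2$), so $t_\ell$ restricted to $I'\cap I_\pp$ still has infinite cyclic image; pick any $\sigma\in I'\cap I_\pp$ with $t_\ell(\sigma)\neq 0$ to conclude.

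The main subtle point is the case $\qst\mid \ell$, where Grothendieck's theorem is inapplicable. Here I would appeal to (SO-iv): since the Steinberg $\pi^\flat_{\qst}$ has Iwahori-fixed vectors, $\rho_{\pi^\flat,\qst}$ is semistable, and the monodromy operator in its associated filtered $(\varphi,N)$-module coincides with the above $N$ via (SO-i). Using the structure of the tame inertia action on semistable $\ell$-adic representations with nontrivial monodromy, $\rho_{\pi^\flat}$ still factors on an open subgroup of tame inertia through unipotent elements of the form $\exp(cN)$ for $c\in \Z_\ell$ in a submodule of finite index; the same index argument then places a regular unipotent in $\rho_{\pi^\flat}(\Gamma_E)$.
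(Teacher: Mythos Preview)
Your treatment of the case $\qst\nmid\ell$ is correct and matches the paper's proof: both use (SO-i) to identify the Weil--Deligne representation at $\qst$ with the Steinberg parameter up to $\OO_{2n}$-conjugacy, extract a regular nilpotent monodromy operator, and then invoke Grothendieck's $\ell$-adic monodromy theorem to produce $\exp(t_\ell(\sigma)N)$ in the image. Your route through the $\GL_{2n}$ side and Jordan type $(2n-1,1)$ is a valid way to see that $N$ is regular (the paper instead simply notes that the Steinberg $L$-parameter has principal $\SL_2$, hence regular unipotent image).

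The case $\qst\mid\ell$, however, contains a genuine gap. For a semistable $\ell$-adic representation at a prime above $\ell$, the monodromy operator $N$ lives in the filtered $(\varphi,N)$-module $D_{\mathrm{st}}(\rho|_{\Gamma_{F_{\qst}}})$, not in the Galois representation itself, and there is no analogue of Grothendieck's theorem saying that inertia acts through $\exp(cN)$. Your sentence ``$\rho_{\pi^\flat}$ still factors on an open subgroup of tame inertia through unipotent elements of the form $\exp(cN)$'' is simply false in this setting: the relation between $N\in D_{\mathrm{st}}$ and the actual image of inertia is mediated by $p$-adic Hodge theory in a much less direct way. The paper does not give a self-contained argument here either; it refers to \cite[Lem.~3.2]{GSp}, whose proof uses a Tannakian argument together with the compatibility of $D_{\mathrm{pst}}$ with algebraic operations to transport the regular unipotent from the Weil--Deligne side into the Zariski closure of the image. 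You should either cite that lemma directly or reproduce its argument; the naive exponential picture does not apply.
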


\begin{proof}
Suppose that $\qst\nmid \ell$. Then $\iota \phi_{\pi^\flat_\qst}\osim \tu{WD}(\rho_{\pi^\flat}|_{\Gamma_{F_{\qq}}})^{\tu{F-ss}}$ by (SO-i), where the two sides are compared through \cite[Prop.~2.2]{GrossReederArithmeticInvariants} by the convention of \S\ref{sect:Notation}.  Since $\phi_{\pi^\flat_\qst}$ contains a regular unipotent element in the image, so does $\tu{WD}(\rho_{\pi^\flat}|_{\Gamma_{F_{\qq}}})$. Therefore $\rho_{\pi^\flat}|_{\Gamma_{F_{\qq}}}$ has a regular unipotent in the image. If $\qst|\ell$ then the same is shown following the argument of \cite[Lem.~3.2]{GSp}.
\end{proof}

The next corollary is solely about automorphic representations, but proved by means of Galois representations. Interestingly we do not know how to derive it within the theory of automorphic forms. The corollary is not needed in this paper as (disc-$\infty$) will be imposed in the main case of interest.

\begin{corollary}\label{cor:non-cuspidal}
Let $\pi^\flat$ be a cuspidal automorphic representation of $\SO^{E/F}_{2n}(\A_F)$ satisfying (coh$^\circ$), (St$^\circ$), and (std-reg$^\circ$). If (disc-$\infty$) is false (i.e., $n$ is odd and $E=F$, or $n$ is even and $[E:F]=2$), then $\pi^\#$ in Proposition~\ref{prop:SO-to-GL} (the functorial lift of $\pi^\flat$ to $\GL_{2n}$) is the isobaric sum of cuspidal self-dual automorphic representations of $\GL_{2n-1}(\A_F)$ and $\GL_1(\A_F)$.
\end{corollary}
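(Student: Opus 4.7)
The plan is to argue by contradiction, ruling out Case 1 of the dichotomy from Proposition \ref{prop:SO-to-GL} (where $\pi^\#$ is cuspidal) when (disc-$\infty$) fails. The crucial point is that Theorem \ref{thm:ExistGaloisSO} produces $\rho_{\pi^\flat} \colon \Gamma_F \to {}^L\SO^{E/F}_{2n}(\lql)$ without any use of (disc-$\infty$); only the explicit shape of $\rho_{\pi^\flat}(c_y)$ in statement (SO-v) required that hypothesis. So I would derive a contradiction from two independent computations of $\det (\std \circ \rho_{\pi^\flat})(c_y)$ at a complex conjugation $c_y$ at some infinite place $y$ of $F$.

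For the Galois-theoretic computation, assume for contradiction that $\pi^\#$ is cuspidal. Then $\Pi := \pi^\# \otimes |\det|^{(1-2n)/2}$ is a regular $L$-algebraic essentially self-dual cuspidal automorphic representation of $\GL_{2n}(\A_F)$, and the associated representation $\rho_\Pi$, through which $\std \circ \rho_{\pi^\flat}$ factors by the construction in Theorem \ref{thm:ExistGaloisSO}, is totally odd by \cite{CaraianiLeHung} (alternatively by \cite[Thm.~6.3.4]{TaibiEigenvariety} in this cuspidal case). This is precisely claim~(E) in the proof of (SO-v): $\std\,\rho_{\pi^\flat}(c_y)$ has $+1$ and $-1$ each as eigenvalue of multiplicity $n$, and therefore $\det \std\,\rho_{\pi^\flat}(c_y) = (-1)^n$.

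The $L$-group-theoretic computation is immediate. If $E = F$, then $\rho_{\pi^\flat}(c_y) \in \SO_{2n}(\lql)$, so $\det \std\,\rho_{\pi^\flat}(c_y) = 1$. If $E/F$ is CM quadratic, then $c_y$ restricts nontrivially to $E$ (since $E$ is totally imaginary), hence $\rho_{\pi^\flat}(c_y) = g \rtimes c$ for some $g \in \SO_{2n}(\lql)$; under $\std$ of \eqref{eq:std-rep-L-gp} the element $c$ is sent to $\vartheta^\circ$ from \eqref{eq:Elementw}, whose determinant is $-1$ by inspection, so $\det \std\,\rho_{\pi^\flat}(c_y) = -1$.

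Combining the two computations: if (disc-$\infty$) fails, then either $n$ is odd with $E = F$, giving $1 = (-1)^n = -1$, or $n$ is even with $[E:F] = 2$, giving $-1 = (-1)^n = 1$. Both are absurd, so $\pi^\#$ cannot be cuspidal, and Proposition \ref{prop:SO-to-GL} then forces the stated isobaric decomposition. The only nontrivial input beyond what is already established in the excerpt is the total oddness of $\rho_\Pi$, which is classical in the regular $L$-algebraic self-dual cuspidal case and so presents no real obstacle.
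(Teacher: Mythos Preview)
Your proof is correct and follows the same approach as the paper: both assume (implicitly in the paper's case) that $\pi^\#$ is cuspidal, invoke the oddness of $\rho_\Pi$ at complex conjugation (claim~(E), via \cite{CaraianiLeHung} or \cite{TaibiEigenvariety}), and derive a contradiction from the $L$-group structure of $\SO_{2n}\rtimes\Gamma_{E/F}$. Your determinant computation is exactly a clean repackaging of the paper's eigenvalue-parity argument (``number of $1$'s is even'' for $E=F$, and the unspecified ``elementary linear algebra'' for $[E:F]=2$), so the two proofs are essentially identical.
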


\begin{proof}
Fix a real place $y$ of $F$. Up to conjugation, we may assume that
$$
\rho_{\pi^\flat}(c_y)=\diag(t_1,\ldots,t_n,t_1^{-1},\ldots,t_n^{-1})\rtimes c_y,
$$
where the latter $c_y$ means its image in $\Gamma_{E/F}$; so $\std(c_y)=1$ if $E=F$ and $\std(c_y)=\vartheta^\circ$ if $[E:F] = 2$. The proof of Theorem~\ref{thm:ExistGaloisSO} shows that $\std(\rho_{\pi^\flat}(c_y))\in \GL_{2n}(\lql)$ is odd for every real place $y$. That is, $\std(\rho_{\pi^\flat}(c_y))$ has each of the eigenvalues 1 and $-1$ with multiplicity $n$. It is elementary to see that this is impossible when (disc-$\infty$) is false. Indeed, if $n$ is odd and $E=F$, then the number of $1$'s on the diagonal of $\rho_{\pi^\flat}(c_y)$ is obviously even (so cannot equal $n$). If $n$ is even and $[E:F]=2$, this is elementary linear algebra.
\end{proof}

\begin{remark}
The corollary suggests that in that setup, $\pi^\flat$ should come from an automorphic representation on $\Sp_{2n-2}(\A_F)$, where $\Sp_{2n-2}$ is viewed as a twisted endoscopic group for $\SO^{E/F}_{2n}$ (see the paragraph containing (1.2.5) in \cite{ArthurBook}).
\end{remark}

If we assume (coh$^\circ$) and (St$^\circ$) but not (std-reg$^\circ$), then some expected properties to be needed in our arguments are not known. We formulate them as a hypothesis so that our results become unconditional once the hypothesis is verified. (In the preceding arguments in this section, (std-reg$^\circ$) allowed us to apply the results on the Ramanujan conjecture and construction of automorphic Galois representations for \emph{regular} algebraic cuspidal automorphic representations of $\GL_n$ which are self-dual.)

\begin{hypothesis}\label{hypo:non-std-reg}
Assume (disc-$\infty$). When $\pi^\flat$ satisfies (coh$^\circ$) and (St$^\circ$) but not (std-reg$^\circ$), the following hold true.
\begin{enumerate}
\item $\pi^\flat_\qq$ is tempered at every finite prime $\qq$ where $\pi^\flat_\qq$ is unramified.
\item There exists a semisimple Galois representation $\rho_{\pi^\flat}:\Gamma_F \ra \SO_{2n}(\lql)\rtimes \Gamma_{E/F}$ satisfying (SO-i) at every $\qq$ where $\pi^\flat_\qq$ is unramified as well as (SO-iii), (SO-iv), and (SO-v). Moreover $\rho_{\pi^\flat}(\Gamma_F)$ contains a regular unipotent element.
\end{enumerate}
\end{hypothesis}
  The hypothesis readily implies (SO-ii) for $\rho_{\pi^\flat}$. We expect that this hypothesis is accessible via suitable orthogonal Shimura varieties.
  If one is only interested in constructing the $\GSpin_{2n}$-valued representation $\rho_\pi$ without proving its $\ell$-adic Hodge-theoretic properties, then (SO-iii) and (SO-iv) may be dropped from the hypothesis.

\begin{remark}
Corollary \ref{cor:image-has-reg-unip} (or the above hypothesis, if (std-reg$^\circ$) fails) tells us that the Zariski closure of $\rho_{\pi^\flat}(\Gamma_F)$ belongs to the list of subgroups of $\SO_{2n}$ in Proposition~\ref{prop:containingRegularUnipotent}. In the list, the $\PGL_2$, $G_2$, and $\PSO_{2n-1}$ cases can only occur when (std-reg$^\circ$) is not satisfied. Since $\PGL_2$ and $G_2$ are contained in $\PSO_{2n-1}$ (up to conjugation), we only need to observe this for $\PSO_{2n-1}$. In this case, $\mu_{\HT}(\rho_{\pi^\flat,\qq},\iota y)$ of Theorem \ref{thm:ExistGaloisSO} must factor through $i^\circ_{std}:\SO_{2n-1}\hra \SO_{2n}$, thus cannot be regular as a cocharacter of $\GL_{2n}$. By (SO-iii) of the theorem, $\std(\mu_{\Hodge}(\xi^\flat,y))$ is not regular either, contradicting (std-reg$^\circ$). 
\end{remark}

\section{Extension and restriction}\label{sec:Ext-Res}

In this section we study how the local conditions (St), (coh) on a cuspidal automorphic representation of $\GSO_{2n}^{E/F}(\A_F)$ (introduced in the introduction and \S\ref{sect:Construction} respectively) compare to conditions ($\tu{St}^\circ$), ($\tu{Coh}^\circ$) on an irreducible $\SO_{2n}^{E/F}(\A_F)$-subrepresentation
(given in \S\ref{sect:GSO-valued-Galois}).

\begin{lemma}\label{lem:SteinbergCheck}
Let $\qq$ be a finite place of $F$.
Let $\pi$ be an irreducible admissible representation of $\GSO_{2n}^{E/F}(F_\qq)$, and let $\pi^\flat \subset \pi$ be an irreducible $\SO_{2n}^{E/F}(F_\qq)$-subrepresentation. Then $\pi$ is a character twist of the Steinberg representation of $\GSO_{2n}^{E/F}(F_\qq)$  if and only if $\pi^\flat$ is
a character twist of the Steinberg representation of $\SO_{2n}^{E/F}(F_\qq)$.
\end{lemma}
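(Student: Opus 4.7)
The plan is to compare normalized Jacquet modules along a Borel, using the characterization that an irreducible admissible representation $\sigma$ of a connected reductive $p$-adic group $G(F_{\qq})$ is a character twist of $\tu{St}_G$ if and only if its Jacquet module $\sigma_N$ along the unipotent radical $N$ of a Borel $B = TN$ is a one-dimensional character of $T$ of the form $\mu \cdot \delta_B^{1/2}$, where $\mu$ extends to a character of $G(F_{\qq})$. I would fix a Borel $B_{\GSO} = T_{\GSO} N$ of $\GSO_{2n}^{E/F}$ over $F_{\qq}$, so that $B_{\SO} := B_{\GSO} \cap \SO_{2n}^{E/F} = T_{\SO} N$ is a Borel of $\SO_{2n}^{E/F}$ with the same unipotent radical $N$, and observe $\delta_{B_{\GSO}}|_{T_{\SO}} = \delta_{B_{\SO}}$ together with $(\pi|_{\SO})_N = \pi_N|_{T_{\SO}}$. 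Since $\SO_{2n}^{E/F}(F_{\qq}) \trianglelefteq \GSO_{2n}^{E/F}(F_{\qq})$ has abelian quotient, Clifford--Mackey theory gives a semisimple decomposition $\pi|_{\SO} = \bigoplus_i \pi^\flat_i$ with the $\pi^\flat_i$ lying in a single $\GSO$-orbit, hence of equal Jacquet-module dimension.

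For the forward direction, if $\pi \simeq \chi \otimes \tu{St}_{\GSO}$, then $\pi_N = (\chi|_{T_{\GSO}}) \delta_{B_{\GSO}}^{1/2}$ is one-dimensional, so $(\pi|_{\SO})_N = (\chi|_{T_{\SO}}) \delta_{B_{\SO}}^{1/2}$ is as well. The equal-dimensional pieces $(\pi^\flat_i)_N$ must sum to dimension one, forcing a single summand. Hence $\pi|_{\SO}$ is irreducible, and since $\chi|_{\SO(F_{\qq})}$ extends $\chi|_{T_{\SO}}$, the characterization yields $\pi^\flat = \pi|_{\SO} \simeq (\chi|_{\SO}) \otimes \tu{St}_{\SO}$.

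For the backward direction, assume $\pi^\flat \simeq \chi^\flat \otimes \tu{St}_{\SO}$. My first step would be to verify that $\chi^\flat$ is $\GSO_{2n}^{E/F}(F_{\qq})$-invariant: characters of $\SO_{2n}^{E/F}(F_{\qq})$ factor through the Galois-cohomology boundary map $\partial : \SO(F_{\qq}) \to H^1(F_{\qq}, Z(\Spin_{2n}^{E/F}))$ (using $H^1(F_{\qq}, \Spin_{2n}^{E/F}) = 0$ by Kneser), and a direct cocycle computation using lifts to $\GSpin$ (whose conjugation action on $Z(\Spin) \subset Z(\GSpin)$ is trivial) gives $\partial(gxg^{-1}) = \partial(x)$ for every $g \in \GSO(F_{\qq})$. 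Thus $\tu{stab}_{\GSO}(\pi^\flat) = \GSO(F_{\qq})$, and Clifford theory forces $\pi|_{\SO}$ to be irreducible equal to $\pi^\flat$. Consequently $\pi_N$ is a one-dimensional $T_{\GSO}$-module $\nu \delta_{B_{\GSO}}^{1/2}$ with $\nu|_{T_{\SO}} = \chi^\flat|_{T_{\SO}}$. The main obstacle, and the final step, is to lift $\chi^\flat$ to a character $\chi$ of $\GSO(F_{\qq})$ with $\chi|_{T_{\GSO}} = \nu$; I would attempt this by comparing boundary maps for $\Spin \to \SO$ and $\GSpin \to \GSO$ and invoking Hilbert 90 for the quotient $Z(\GSpin)/Z(\Spin) \simeq \Gm$ in the resulting Galois-cohomology exact sequence. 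A cleaner alternative would invoke local Langlands via Arthur's classification, identifying both conditions with the principal $\SL_2$-parameter in the dual group up to central twist.
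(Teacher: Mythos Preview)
Your approach via Jacquet modules is genuinely different from the paper's, and your forward direction is correct and pleasant: the one-dimensionality of $\pi_N$ forces $\pi|_{\SO}$ to have a single irreducible summand, and the characterization of Steinberg twists via the Jacquet module then applies. The paper instead proves directly that $\St_{\GSO}|_{\SO}\simeq\St_{\SO}$ by identifying the flag varieties $P_0\backslash G_0\simeq P\backslash G$ and comparing the Borel--Wallach filtrations.

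In your backward direction there is a real gap. After showing that the stabilizer of $\pi^\flat$ in $\GSO(F_\qq)$ is everything (your cocycle computation is fine), you assert that ``Clifford theory forces $\pi|_{\SO}$ to be irreducible''. Clifford--Mackey only tells you that $\pi|_{\SO}$ is $\pi^\flat$-\emph{isotypic}; passing to irreducibility requires multiplicity one for the restriction $\GSO(F_\qq)\to\SO(F_\qq)$, which is not automatic from the abelian quotient and which you have not established. If the multiplicity were some $m>1$, then $\pi_N$ would be $m$-dimensional with all $T_{\GSO}$-constituents restricting to the same $T_{\SO}$-character, and your step identifying $\pi_N$ as a single character $\nu\delta_B^{1/2}$ would fail. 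One could try to close this by arguing that the constituents of $\pi_N$ lie in a single Weyl orbit while the Steinberg exponent is $W$-regular, forcing them all equal, and then bounding the multiplicity via the socle of $i_B^{\GSO}(\nu)$---but that last bound already presupposes your Step~4 (that $\nu$ is a $\GSO$-character times $\delta^{1/2}$), so the logic becomes circular.

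The paper sidesteps both the multiplicity issue and your Step~4 at once. It proves directly that every smooth character of $\SO(F_\qq)$ extends to $\GSO(F_\qq)$, by showing that the natural map $\SO(F_\qq)/\mathrm{im}\,\Spin(F_\qq)\to\GSO(F_\qq)/\mathrm{im}\,\GSpin(F_\qq)$ is injective (a short computation with the spinor norm and the center). This lets one assume $\pi^\flat=\St_{\SO}=\St_{\GSO}|_{\SO}$ and twist $\pi$ to have trivial central character; then Frobenius reciprocity along the \emph{finite-index} inclusion $Z(\GSO)\cdot\SO\subset\GSO$ exhibits $\pi$ as a constituent of $\Ind_{Z(\GSO)\SO}^{\GSO}\St_{\GSO}$, which is a finite direct sum of character twists of $\St_{\GSO}$. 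This avoids any appeal to multiplicity one and makes the character-extension step (your Step~4) the only nontrivial ingredient. Your proposed Galois-cohomology route to Step~4 would likely reproduce exactly this injectivity statement, so once you carry it out you may as well adopt the paper's shorter finish.
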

\begin{proof}
Write $G = \GSO_{2n}^{E/F}(F_\qq)$ and $G_0 = \SO_{2n}^{E/F}(F_\qq)$. To lighten notation, when $H$ is an algebraic group over $F_{\qq}$, we still write $H$ for $H(F_{\qq})$ in this proof when there is no danger of confusion.

($\Rightarrow$) Let $P \subset G$ be a parabolic subgroup, and write $C_P$ for the space of smooth functions on $P \backslash G$. Fixing a Borel subgroup $B$ and taking $P \supset B$, we may view $C_P \subset C_B$ as those functions on $G/B$ that are $P$-invariant. These spaces $C_P$ define a (non-linear) filtration on $C_B$, and the Steinberg representation $\St_G$ is the quotient of $C_B$ generated by all subrepresentations $C_P$ with $P \subset G$ proper \cite[X.4.6]{BorelWallach}. There is a natural bijection between the parabolic subgroups of $G$ with those of $G_0$ by $P\mapsto P_0 := P\cap G_0$. Applying \textit{loc.~cit.}~now to $G_0$, we take $B_0 := G_0 \cap B$ and consider the spaces $C_{B_0} \supset C_{P_0}$ for $B_0 \subset P_0 \subsetneq G_0$ and $\St_{G_0}$ as before. The inclusion $G_0\hookrightarrow G$ induces an isomorphism $P_0 \backslash G_0 \isomto P \backslash G$ for each parabolic $P$ (injectivity is clear; surjectivity can be seen by using the Bruhat decomposition, for instance). Thereby we have a $G_0$-equivariant filtration-preserving isomorphism $C_B\isomto C_{B_0}$ restricting to $C_P\isomto C_{P_0}$ for each $P$. Therefore $\St_G|_{G_0} \simeq \St_{G_0}$.

($\Leftarrow$) Write $G' = \GSpin_{2n}^{E/F}(F_\qq)$ and $G_0' = \Spin_{2n}^{E/F}(F_\qq)$. By abuse of notation, write $G_0/G'_0:=\tu{coker}(\tu{pr}:G'_0\ra G_0)$ and likewise for $G/G'$. These are finite abelian groups. We claim that every smooth character $G_0 \to \C^\times$ extends to a smooth character $G \to \C^\times$. Since such characters factor through $G_0/G'_0$ and $G/G'$, respectively (see e.g., \cite[Cor.~2.6]{H0main}) the claim would follow once we verify that $G_0/G'_0\ra G/G'$ is injective. So let $g_0\in G_0$ and suppose that $g_0=\pr(g)$ for $g\in G'$. Then $1=\simil(g_0)=\simil(\pr(g))=\cN(g)^2$ by Lemma~\ref{lem:SurjectionOntoGSO} (iii). If $\cN(g)=-1$ then we replace $g$ with $zg$ using $z\in Z_{\GSpin}$ such that $\cN(z)=-1$ and $\pr(z)=1$ (in the coordinates of Lemma~\ref{lem:ComputeCenter}, choose $z=(1,-1)$ if $n$ is odd, and $z=(\zeta_4,-1)$ if $n$ is even); so we may assume that $\cN(g)=1$. But this means that $g_0$ is trivial in $G_0/G'_0$. The claim has been proved.

Thanks to the claim, we may assume $\pi^\flat = \St_{G_0}$ after twisting by a character. Since $\pi|_{G_0}$ contains $\St_{G_0}$, we can twist $\pi$ such that the central character of $\pi$ is trivial. (The central character is a character $\chi$ of $F^\times/\{\pm1\}$, so there exists a smooth character $\chi^{1/2}:F^\times\ra\C^\times$ whose square is $\chi$. Then we twist by $\simil \circ \chi^{-1/2}$.) By assumption 
$$0\neq \Hom_{G_0}(\St_{G_0},\pi) = \Hom_{G_0}(\St_G,\pi)=\Hom_{Z(G)G_0}(\St_G,\pi),$$
where the first equality is from the implication ($\Rightarrow$), and the second from the triviality of central characters. By Frobenius reciprocity, this realizes $\pi$ as a constituent of $\Ind^G_{Z(G)G_0} \St_G$, which is the direct sum of twists of $\St_G$ by characters of the finite abelian group $G/Z(G)G_0$.
\end{proof}

Let $y$ be a real place of $F$ so that $E_y/F_y=\C/\R$ if $n$ is odd and $E_y=F_y=\R$ if $n$ is even.

\begin{lemma}\label{lem:coh-GSO-coh-SO}
Let $\pi$ be an irreducible admissible representation of $\GSO_{2n}^{E_y/F_y}(F_y)$ with central character $\omega_\pi$. Let $\pi^\flat$ be an irreducible $\SO_{2n}^{E_y/F_y}(F_y)$-subrepresentation.
Let $\xi$ be an irreducible algebraic representation of $\GSO_{2n}^{E_y/F_y}$, and $\xi^\flat$ its pullback to $\SO_{2n}^{E_y/F_y}$.
Then:
\begin{enumerate}
\item The representation $\pi$ is essentially unitary if and only if $\pi^\flat$ is unitary.
\item The representation $\pi$ is a discrete series representation if and only if $\pi^\flat$ is a discrete series representation.
\item Assume $\pi$ is essentially unitary.
Then $\pi$ is $\xi$-cohomological if and only if $\pi^\flat$ is $\xi^\flat$-cohomological and $\omega_\pi=\omega_{\xi}^{-1}$, where $\omega_\xi$ is the central character of $\xi$ on $Z(\GSO_{2n}^{E_y/F_y})(F_y)$
\end{enumerate}
\end{lemma}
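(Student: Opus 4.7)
The plan is to exploit the structural relationship between $G := \GSO_{2n}^{E_y/F_y}(F_y)$ and its normal subgroup $G_0 := \SO_{2n}^{E_y/F_y}(F_y)$. I will use that $\SO_{2n}^{E_y/F_y}$ is semisimple and coincides with the derived subgroup of $\GSO_{2n}^{E_y/F_y}$, so every smooth character of $G$ factors through $\simil$ and is trivial on $G_0$; moreover $G/(Z(G)(F_y) \cdot G_0)$ is a finite $2$-group (arising via $\simil$ from $F_y^\times/(F_y^\times)^2$), and $Z(G)(F_y) \cap G_0$ is finite. By Clifford theory, $\pi|_{G_0}$ decomposes as a finite direct sum $\bigoplus_i \pi^\flat_i$ of $G$-conjugates of $\pi^\flat$, and since $\xi$ extends to a $G$-representation, $\xi^\flat := \xi|_{G_0}$ is irreducible and $G$-conjugation-invariant up to isomorphism.

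For (1), the forward direction is immediate from Clifford theory: if $\pi \otimes \chi$ is unitary, then $\chi|_{G_0} \equiv 1$ gives that $\pi|_{G_0}$ is unitary, so $\pi^\flat$ inherits unitarity as a direct summand. For the converse, I will use that $\pi|_{G_0}$ is unitary (a direct sum of unitary conjugates), and that we can twist $\pi$ by a character $\chi$ of $G$ to make $\omega_{\pi \otimes \chi}$ unitary (decomposing $\omega_\pi = |\cdot|^s \omega_1$ on $Z(G)(F_y)^0$ with $\omega_1$ unitary, and choosing $\chi$ with $\chi|_{Z(G)} = |\simil|^{-s/2}$). Then $\pi \otimes \chi$ is unitary on $Z(G)(F_y) \cdot G_0$, and averaging over the finite quotient $G/(Z(G)(F_y) \cdot G_0)$ yields a $G$-invariant positive-definite Hermitian form. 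For (2), after reducing to the unitary case via (1), the discrete series condition is that $\int_{G/Z(G)(F_y)} |f|^2 < \infty$ for matrix coefficients $f$; since $G_0 \to G/Z(G)(F_y)$ has closed image of finite index (with finite kernel $Z(G) \cap G_0$), this condition transfers between matrix coefficients of $\pi$ on $G/Z(G)(F_y)$ and those of $\pi^\flat$ on $G_0$, giving the equivalence.

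The main subtlety is (3), where the central character condition must be handled carefully. The $(\mathfrak g_\C, \tilde K)$-cohomology of $\pi \otimes \xi$, with $\tilde K = K Z(G)(F_y)$, vanishes unless $Z(G)(F_y)$ acts trivially on $\pi \otimes \xi$, which is precisely the condition $\omega_\pi = \omega_\xi^{-1}$. Given this, using the orthogonal decomposition $\mathfrak g = \mathfrak{so} \oplus \mathfrak z$ with $\mathfrak z = \Lie Z(G)(F_y)$ and the trivial $\mathfrak z$-action on $\pi \otimes \xi$, I obtain
$$H^*(\mathfrak g_\C, \tilde K; \pi \otimes \xi) \cong H^*(\mathfrak{so}_\C, K_0; (\pi \otimes \xi)|_{G_0}) = \bigoplus_i H^*(\mathfrak{so}_\C, K_0; \pi^\flat_i \otimes \xi^\flat),$$
where $K_0 := K \cap G_0$ is a maximal compact subgroup of $G_0$. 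Each $\pi^\flat_i \otimes \xi^\flat$ is the pullback of $\pi^\flat \otimes \xi^\flat$ along a $G$-conjugation automorphism of $G_0$ (using that $\xi^\flat$ is $G$-invariant), and $(\mathfrak{so}_\C, K_0)$-cohomology is invariant under such pullbacks. So the total cohomology is non-zero iff $\pi^\flat$ is $\xi^\flat$-cohomological, yielding the asserted equivalence. The hardest aspect to get right is the precise identification in the displayed isomorphism and the handling of the component groups; these are standard once the central character condition $\omega_\pi = \omega_\xi^{-1}$ is imposed, which is why it appears as a separate hypothesis absent from the purely semisimple $\SO$-case.
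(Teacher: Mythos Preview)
Your arguments for parts (1) and (2) are correct and follow essentially the same line as the paper's proof: averaging a Hermitian form over the finite quotient $G/(Z(G)(F_y) G_0)$ for (1), and the square-integrability characterization of discrete series for (2). The paper is slightly terser (it writes ``we may assume $\omega_\pi = 1$'' where your reduction to a unitary central character is the honest statement), but the content is the same.

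For part (3) your route is genuinely different from the paper's. The paper does not compute the relative Lie algebra cohomology directly; instead it invokes the characterization ``a unitary representation is cohomological iff its central and infinitesimal characters coincide with those of an algebraic representation,'' establishes this for $G_0$ using Salamanca-Riba's theorem \cite{Salamanca-Riba}, and passes to $G$ via \cite[Cor.~I.6.6]{BorelWallach}. Your approach is more elementary in that it avoids Salamanca-Riba, but your displayed isomorphism
\[
H^*(\mathfrak g_\C,\tilde K;\pi\otimes\xi)\;\cong\;H^*(\mathfrak{so}_\C,K_0;(\pi\otimes\xi)|_{G_0})
\]
is not correct as written. Since $\mathfrak g/\tilde{\mathfrak k}\cong \mathfrak{so}/\mathfrak k_0$ and $Z(G)(F_y)$ acts trivially under the central-character hypothesis, the cochain complexes differ precisely by taking $K$-invariants versus $K_0$-invariants; the left side is the $K/K_0$-invariants of the right side. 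Here $K/K_0\hookrightarrow\{\pm1\}$ via $\simil$, and for the quasi-split real forms under consideration this group can be nontrivial. This is harmless for the ``only if'' direction (invariants nonzero $\Rightarrow$ ambient space nonzero), but for the ``if'' direction you must show that the $K/K_0$-invariants of $\bigoplus_i H^*(\mathfrak{so}_\C,K_0;\pi^\flat_i\otimes\xi^\flat)$ are nonzero. When the nontrivial element of $K/K_0$ permutes distinct summands this is immediate, but if it stabilizes $\pi^\flat$ (i.e.\ $(\pi^\flat)^\gamma\cong\pi^\flat$) it acts on $H^*(\mathfrak{so}_\C,K_0;\pi^\flat\otimes\xi^\flat)$ by a sign that you have not computed. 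Your remark that the component-group handling is ``standard'' understates this; it is exactly the point that the paper's appeal to Salamanca-Riba circumvents. To make your approach complete you would need either to show that this sign is always $+1$ in the case at hand, or to use a Shapiro-type argument via the Clifford presentation $\pi\cong\Ind_{G_1}^G\sigma$ to identify the invariants with a single nonzero summand.
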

\begin{proof}
Write $G = \GSO_{2n}^{E_y/F_y}(F_y)$, $G_0 = \SO_{2n}^{E_y/F_y}(F_y)$, and $F_y^\times\subset G$ for the image of $\G_m(F_y)$.

(1) The ``only if'' direction is obvious. For the ``if'' direction, assume $\pi^\flat$ is unitary. We may assume $\omega_\pi = 1$. Choose a Hermitian form $h( \cdot,\cdot)$ on $\pi$, extending the $G_0$-equivariant one on $\pi^\flat$. Choose representatives $\{g_1, \ldots, g_r\}$ for the quotient $G/F_y^\times G_0$ and define $h'(\cdot, \cdot) = \sum_{i=1}^r h(g_i \cdot, g_i \cdot)$. Then $h'(\cdot, \cdot)$ is a $G$-equivariant Hermitian form on $\pi$.

(2) This follows directly from the characterization of discrete series representations through square-integrability (modulo center) of their matrix coefficients.

(3) This is implied by the fact that a unitary representation is cohomological if and only if its central character and infinitesimal character coincide with those of an algebraic representation.
The ``only if'' direction is true without the unitarity condition by \cite[Thm.~I.5.3.(ii)]{BorelWallach}. We explain the ``if'' direction in the case of interest. (This argument adapts to the general case.) For $G_0$, this follows from \cite[Thm.~1.8]{Salamanca-Riba}, which applies to connected semisimple real Lie groups. The case of $G$ follows from that of $G_0$, by applying \cite[Cor.~I.6.6]{BorelWallach} to $\pi\otimes\xi$ by taking $H$ there to be $Z(G)$, and similarly to $(\pi|_{G_0})\otimes\xi^\flat$ with $H=Z(G_0)$. 
\end{proof}

\section{Certain forms of $\GSO_{2n}$ and outer automorphisms }\label{sect:forms-of-GSO}

In this section we introduce a certain form of the split group $\GSO_{2n}$ over a totally real field $F$, to be used to construct Shimura varieties. We start by considering real groups. Let $\GO_{2n}^{\cmpt}, \uO_{2n}^{\cmpt}, \SO_{2n}^{\cmpt}, \PSO_{2n}^{\cmpt}$, and $\GSO_{2n}^{\cmpt}$ be the various versions of the orthogonal group defined by the quadratic form $x_1^2 + x_2^2 + \cdots + x_{2n}^2$ on $\R^{2n}$. Consider the matrix $J = \vierkant 0{-1_n}{1_n}0 \in \SO_{2n}^{\cmpt}(\R)$. We define the group $\GSO_{2n}^J$ over $\R$ to be the inner form of $\GSO_{2n,\R}^{\cmpt}$ obtained by conjugating the $\Gal(\C/\R)$-action by $J$ (using that $J^2$ is central). Namely, for all $\R$-algebras $R$ we have
\begin{equation}\label{eq:GSO-J}
\GSO_{2n}^J(R) = \{g \in \GSO_{2n}^{\cmpt}(\C \otimes_{\R} R)\ |\
J \li g J^{-1} = g \}.
\end{equation}
For $g \in \GSO_{2n}^{\cmpt}(\C \otimes_{\R} R)$ we have $g^{\tu{t}} J \li g = \simil(g) J$ if and only if $J \li g J^{-1} = g$, and thus $\GSO_{2n}^J(\R)$ is the group of matrices $g \in \GL_{2n}(\C)$ preserving the forms
\begin{equation}\label{eq:forms-GSO-J}
\begin{cases}
x_1^2 + x_2^2 + \cdots + x_{2n}^2 \cr
-x_1 \li x_{n+1} + x_{n+1} \li x_1 - x_2 \li x_{n+2} + x_{n+2} \li x_2 - \cdots - x_n \li x_{2n} + x_{2n} \li x_n
\end{cases}
\end{equation}
up to the scalar $\simil(g) \in \R^{\times}$ (the scalar is required to be the same for both forms), and such that $g$ satisfies the condition $\det(g) = \simil(g)^n$.

Similarly we define the inner forms $\GO_{2n}^J, \SO_{2n}^J, \uO_{2n}^J, \PSO_{2n}^J$ of $\GO_{2n}^\cmpt, \SO_{2n}^\cmpt, \uO_{2n}^\cmpt, \PSO_{2n}^\cmpt$. Then $\SO_{2n}^J(\R)$ is the real Lie group which is often denoted $\SO^*(2n)$ in the literature (e.g., \cite[Sect.~X.2, p.445]{Helgason}). Note that $\SO_{2n}^J(\R)$ is not isomorphic to any of the classical groups $\SO(p, q)$, where $2n = p + q$ (see \cite[thm 6.105(c)]{KnappLieGroupsBeyond}). The group $\SO(p, q)$ with $2n = p + q$ is quasi-split if and only if $|n-p|\le 1$, giving rise to two classes of inner twists (recall that $\SO(p, q)$ and $\SO(p', q')$ lie in the same class if and only if $p \equiv p' \mod 2$). The group $\SO^J_{2n}$, and hence the group $\GSO^J_{2n}$, is not quasi-split since $\SO^J_{2n}$ is not isomorphic to any group of the form $\SO(p, q)$.

We pin down the isomorphisms
\begin{align}\label{eq:PinDownIsoms}
C_X \colon \GSO_{2n}^{\cmpt}(\C) &
  \isomto \GSO_{2n}(\C),  \quad g \mapsto X\inv g X, \quad X = \vierkant 11i{-i}, \cr
\GSO_{2n}^{\cmpt}(\C) &
\isomto \GSO_{2n}^J(\C), \quad g \mapsto (g, J\inv g J) \in \GSO_{2n}^{\cpt}(\C)^2 =\GSO_{2n}^{\cpt}(\C \otimes_{\R} \C).
\end{align}

\begin{lemma}\label{lem:Out(GSO)overR}
\begin{enumerate}[label=(\roman*)]
\item The group $\GSO^J_{2n}$ is an inner form of the split group $\GSO(n,n)$ over $\R$ if $n$ is even, and an outer form otherwise.
\item Explicitly, 
$$
\GO_{2n}^J(\R) =
\lbr \vierkant AB{-\li B}{\li A} \in \GL_{2n}(\C) \left| \begin{matrix} A, B \in \uM_n(\C) \tu{ such that} \cr
A^t A + \li B^t \li B = \lambda \cdot 1_n \tu{ (where $\lambda = \tu{sim}(g) \in \R^\times$)} \cr
A^t B = \li B^t \li A
\end{matrix}
\right.
\rbr.
$$
\item The following are true:
\begin{enumerate}
\item The groups $\SO^J_{2n}(\R)$ and $\textup{O}^J_{2n}(\R)$ are connected and equal to each other. 
\item The map $\tu{sim} \colon \GO_{2n}^J(\R) \to \R^\times$ is surjective; $\tu{sim} \colon \GSO_{2n}^J(\R) \to \R^\times$ is surjective if and only if $n$ is even.
\item We have $\GSO_{2n}^J(\R) = \textup{GO}^J_{2n}(\R)$ if and only if $n$ is even. 
\item If $n$ is even (resp.~odd) then $|\pi_0(\GSO^J_{2n}(\R))|$ equals 2 (resp.~1).
\end{enumerate}
\item The mapping 
$$
\theta^J \colon \GSO_{2n}^J(\R) \to \GSO_{2n}^J(\R), \quad g = \vierkant AB{-\li B}{\li A} \mapsto  T g T\inv = \vierkant A{-B}{\li B}{\li A}
$$
for $T = i \cdot \vierkant 0110 \in \GO_{2n}^J(\R)$ is an automorphism of $\GSO_{2n}^J$ over $\R$. It is outer if and only if $n$ is odd.
\item The group $\SO_{2n}^J$ (resp.~$\GSO_{2n}^J$) has a nontrivial outer automorphism defined over $\R$ that acts trivially on the center if and only if $n$ is odd.
\item The groups $\SO_{2n}^\cmpt(\R)$ and $\GSO_{2n}^\cmpt(\R)$ are connected.
\end{enumerate}
\end{lemma}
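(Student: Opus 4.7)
The lemma bundles six mostly computational statements about $\GSO_{2n}^J(\R)$; my strategy is to reduce all of them to direct block-matrix manipulations combined with two external inputs: the inner-class classification of real forms of $\SO_{2n}$ (for (i)), and the connectedness of $\SO^*(2n) = \SO_{2n}^J(\R)$ (needed for (iii)). A key auxiliary fact to establish first is the quaternionic positivity
$$ \det\vierkant AB{-\li B}{\li A} \geq 0\qquad \text{for all }A,B\in \uM_n(\C), $$
which I would prove by showing that the eigenvalues of such a matrix come in complex-conjugate pairs and that the real eigenvalues appear with even multiplicity (each generalized real eigenspace carries a $\C$-antilinear involution $\vierkant xy\mapsto \vierkant{-\li y}{\li x}$); hence $\det$ is a product of squares of moduli.

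For (i), I recall that $\SO(p,q)$ and $\SO(p',q')$ with $p+q=p'+q'=2n$ lie in the same inner class iff $p\equiv p'\pmod 2$, so $\SO_{2n}^{\cmpt}=\SO(2n,0)$ is inner to the split $\SO(n,n)$ iff $n$ is even; since $\SO_{2n}^J$ is by definition an inner twist of $\SO_{2n}^{\cmpt}$, the claim for $\GSO_{2n}^J$ follows. For (ii), I would solve the defining relation $g=J\li g J^{-1}$ (using $J^{-1}=-J$) block by block to obtain the shape $\vierkant AB{-\li B}{\li A}$, then expand $g^{\tu t}g=\simil(g)I_{2n}$ to read off the two relations on $(A,B)$; reality of $\simil(g)$ is automatic as $\simil$ is $\R$-defined.

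For (iii): (a) combine Helgason's connectedness of $\SO^*(2n)$ with the quaternionic positivity to force $\det=1$ on $\uO_{2n}^J(\R)$, giving $\uO_{2n}^J(\R)=\SO_{2n}^J(\R)$. (b) positive similitudes come from scalar matrices $\lambda I$ with $\lambda>0$, while a source of negative similitude is the explicit element $\vierkant{iR}0{0}{-iR}$ for $R\in \uO_n(\R)$, for which a short computation yields $\simil=-1$ and $\det=1$; combined with the identity $\det(g)=\simil(g)^n$ on $\GSO_{2n}^J(\R)$ and $\det\geq 0$, the image of $\simil\big|_{\GSO_{2n}^J(\R)}$ is forced into $\R_{>0}$ when $n$ is odd and equals $\R^\times$ when $n$ is even, which yields (b), (c), and (d) (the last via the connected kernel $\SO_{2n}^J(\R)$). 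For (iv), I would verify directly that $T=i\vierkant 0II0$ satisfies $J\li T J^{-1}=T$, $T^{\tu t}T=-I_{2n}$, and $\det T=1$; then $T\in \GSO_{2n}^J(\R)$ iff $(-1)^n=1$, and outerness of $\theta^J=\Int(T)$ on $\GSO_{2n}^J$ is equivalent to $T\notin \GSO_{2n}^J\cdot Z$, which, since $Z\subset \GSO_{2n}^J$, reduces to $T\notin \GSO_{2n}^J$, i.e., $n$ odd.

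The main obstacle is the ``only if'' direction of (v) (for $n$ even). The plan is: any $\R$-defined outer automorphism of $\GSO_{2n}^J$ must arise over $\C$ as $\Int(W)$ for some $W\in \GO_{2n}^{\cmpt}(\C)\setminus \GSO_{2n}^{\cmpt}(\C)\cdot Z$, and $\R$-rationality of the automorphism forces $W$ to yield a non-trivial class in $\GO_{2n}^J(\R)/\GSO_{2n}^J(\R)$ after quotienting by center (using triviality on center to kill central ambiguity and Galois-stability of the outer class); by (iii)(c) this quotient is trivial when $n$ is even, a contradiction. The delicate point is the careful cohomological bookkeeping of the descent, and a good sanity check is that this is precisely the obstruction that forces the two Shimura data $(G,X^\pm)$ to be non-isomorphic for $n$ even (as remarked in the introduction). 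The ``if'' direction is immediate from (iv): $\theta^J=\Int(T)$ acts trivially on $Z(\GSO_{2n}^J)=\G_m\cdot I$. Finally (vi) is immediate: for real $g$, the relation $g^{\tu t}g=\simil(g)I$ is positive semi-definite, so $\simil(g)>0$; hence $\GSO_{2n}^{\cmpt}(\R)=\R_{>0}\cdot \SO(2n)$ is connected.
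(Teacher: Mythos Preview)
Your proposal is essentially correct and tracks the paper's proof closely for (i)--(iv); your arguments for (iii)(b) (using $\det(g)=\simil(g)^n$ together with quaternionic positivity directly) and for (vi) (using positive-definiteness of $g^{\tu t}g$ to force $\simil(g)>0$, hence $\GSO_{2n}^{\cmpt}(\R)=\R_{>0}\cdot\SO(2n)$) are in fact slightly more direct than the paper's, which for (vi) passes through the map $H^1(\R,\mu_2)\to H^1(\R,\SO_{2n}^{\cmpt}\times\GL_1)$.

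For (v), your outline is correct in spirit but glosses over the step that actually does the work. You write that $\R$-rationality of $\Int(W)$ ``forces $W$ to yield a non-trivial class in $\GO_{2n}^J(\R)/\GSO_{2n}^J(\R)$''; but $\R$-rationality of the automorphism only gives $W^{-1}\sigma(W)\in Z(\C)$, so $W$ need not lie in $\GO_{2n}^J(\R)$ at all. The paper resolves this concretely: after scaling so that $\simil(Y)=1$, rationality gives $J\li Y=\lambda YJ$ for some scalar $\lambda$; computing $\li Y^{\tu t}\li Y=1$ in terms of $\lambda$ forces $\lambda^2=1$. If $\lambda=1$ then $Y\in \uO_{2n}^J(\R)=\SO_{2n}^J(\R)$ and $\theta$ is inner; if $\lambda=-1$ then $\sigma(Y)=-Y$, so $iY\in\GO_{2n}^J(\R)=\GSO_{2n}^J(\R)$ (using $n$ even via (iii)(c)), and again $\theta$ is inner. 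Your ``careful cohomological bookkeeping'' is exactly this two-case computation; without it the argument is incomplete.
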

\begin{proof}
(\textit{i}). The group $\SO_{2n}^J$ is an inner form of $\SO_{2n,\R}^{\cmpt}$ and the compact form lies in the split inner class if and only if $n$ is even.

(\textit{ii}).
Let $g = \vierkant ABCD \in \GL_{2n}(\C)$. Write $\lambda = \textup{sim}(g)$. We compute
\begin{align*}
& J \li g = g J \Leftrightarrow \vierkant 0{-1}10 \vierkant {\li A}{\li B}{\li C}{\li D} = \vierkant ABCD \vierkant 0{-1}10 \Leftrightarrow
\vierkant {-\li C}{-\li D}{\li A}{\li B} = \vierkant B{-A}D{-C}
\Leftrightarrow  g = \vierkant AB{-\li B}{\li A} \cr
&\vierkant AB{-\li B}{\li A}^t \vierkant AB{-\li B}{\li A} = \lambda \vierkant 1001  \Leftrightarrow
 \lambda \vierkant 1001 = \vierkant {A^t}{-\li B^t}{B^t}{\li A^t} \vierkant AB{-\li B}{\li A} = \vierkant {A^tA + \li B^t \li B}{A^tB - \li B^t \li A}{B^tA-\li A^t \li B}{B^tB + \li A^t \li A}.
\end{align*}
These identities are equivalent to the stated conditions on $g$.

(\textit{iii.a}) By \cite[Cor.~6.3]{zhang1997quaternions}, $\det \vierkant AB{-\li B}{\li A} \geq 0$ for all $A, B \in \uM_n(\C)$. By Lemma~8.1(ii) any $g \in \tu{O}_{2n}^J(\R)$ has $\det(g) \geq 0$ and thus $\det(g) = 1$.  Thus $\tu{O}_{2n}^J(\R) = \tu{SO}_{2n}^J(\R)$. By \cite[prop I.1.145]{KnappLieGroupsBeyond} the group $\SO_{2n}^J(\R)$ (and hence $\tu{O}_{2n}^J(\R)$) is connected.

(\textit{iii.b}) By restricting to the center we see that the image of the similitudes factor contains $\R^{\times}_{>0}$ in all stated cases. The element $g = \vierkant A00{\li A}$ with $A = i 1_n$ lies in $\GO_{2n}^J(\R)$ and has $\tu{sim}(g) = -1$,  proving the first part. Since $\det(g) = 1$ we have $g \in \GSO_{2n}^J(\R)$ if $n$ is even, proving the second part in that case. Assume for a contradiction that $\GSO_{2n}^J(\R) \to \R^\times$ is surjective when $n$ is odd. Take some $g' \in \GSO_{2n}^J(\R)$ with $\tu{sim}(g') = -1$. Then $\tu{sim}(gg') = 1$ thus $gg' \in \tu{O}_{2n}^J(\R) = \SO_{2n}^J(\R)$ and hence $g \in \GSO_{2n}^J(\R)$: Contradiction.

(\textit{iii.c}) For $n$ odd the element $g = \vierkant A00{\li A}$ from (\textit{iii.b}) shows 
$\GSO_{2n}^J(\R) \neq \GO_{2n}^J(\R)$. Assume $n$ even. If $h \in \GO_{2n}^J(\R)$, choose $g \in \GSO_{2n}^J(\R)$ with $\tu{sim}(g) = \tu{sim}(h)\inv$ using (\textit{iii.b}). Then $hg \in \tu{O}_{2n}^J(\R) = \SO_{2n}^J(\R)$, hence also $h \in \GSO_{2n}^J(\R)$. Thus $\GO_{2n}^J(\R) = \GSO_{2n}^J(\R)$.

(\textit{iii.d}) Write $c := \# \pi_0(\GSO_{2n}^J(\R)) $. As $\tu{H}^1(\R, \mu_2)$ has $2$ elements, we have $c \leq 2$. If $n$ is even, then $\tu{sim}$ is surjective, hence $c \geq 2$ and $c = 2$. If $n$ is odd, we have $\R^{\times}_{>0} \times \SO_{2n}^J(\R) \isomto \GSO_{2n}^J(\R)$, hence $c =1$.

(\textit{iv}) We have $T^t T = -1$ and $J \li T J\inv = J$, so indeed $T \in \GO_{2n}^J(\R)$. As $\tu{sim}(T) = -1$ and $\det(T) = i^{2n}(-1)^n = 1$, we have $\tu{sim}(T)^n \neq \det(T)$ if and only if $n$ is odd.

(\textit{v}) By the example in (\textit{iv}) we may assume $n$ even. Any $\R$-automorphism $\theta \in \Aut(\GSO_{2n}^J)$ acting trivially on the center is given by $\theta \colon g \mapsto Y g Y\inv$ for some $Y \in \GO_{2n}^J(\C)$. Replacing $Y$ with $tY$ for some $t \in \C^\times$ we may assume that $\tu{sim}(Y) = 1$ (as $\theta$ does not change, it is still defined over $\R$). Write $\sigma \colon \GO_{2n}^\cmpt(\C) \to \GO_{2n}^\cmpt(\C)$ for the automorphism $g \mapsto J \li g J\inv$, so that $\GO_{2n}^J(\R) = \GO_{2n}^\cmpt(\C)^{\sigma = \tu{id}}$. As $\theta$ is defined over $\R$,
$$
\theta(\sigma g) = \sigma {\theta(g)} \quad\quad \forall g \in \GO_{2n}^J(\C),
$$
and therefore $YJ \cdot \li g \cdot J\inv Y\inv = J \li Y \cdot \li g \cdot \li Y\inv J\inv$, so $\li Y\inv J\inv YJ \cdot \li g = \li g \cdot  \li Y\inv J\inv Y J$. Thus
$$
  \lambda \cdot Y J =  J \li Y \quad\quad \tu{ for some } \lambda \in Z(\GSO_{2n}^J(\C)) = \C^\times.
$$
We have $Y^t Y = 1$, so we compute as follows using $J^t J=1$: $$ 1=\li Y^t \li Y = (\lambda J^{-1} Y J)^t (\lambda J^{-1} Y J) = \lambda^2 (J^t Y^t Y J) = \lambda^2.$$ Therefore $\lambda\in \{\pm1\}$. If $\lambda = 1$ then $Y \in \tu{O}_{2n}^J(\R) = \SO_{2n}^J(\R)$, and $\theta$ is inner. If $\lambda = -1$ then $\sigma(Y) = -Y$ so $iY \in \tu{GO}_{2n}^J(\R) = \GSO_{2n}^J(\R)$ ($n$ is even). Thus $\theta = (g \mapsto (iy)g(iY)^{-1})$  is inner.

(\textit{vi}) It is standard that $\SO^{\cmpt}_{2n}(\R)$ is connected. Let us show that $\GSO^{\cmpt}_{2n}(\R)$ is connected from this. The multiplication map $\SO^{\cmpt}_{2n}(\R)\times \R^\times \ra \GSO^{\cmpt}_{2n}(\R)$ has connected image since $\SO^{\cmpt}_{2n}(\R)$ meets both connected components of $\R^\times$. So we will be done if we check the surjectivity. This is equivalent to the injectivity of $H^1(\R,\{\pm1\})\ra H^1(\R,\SO^{\cmpt}_{2n}\times \GL_1)$, which follows from the fact that there is no $g\in \SO^{\cmpt}_{2n}(\C)$ with $g^{-1} \ol{g}=-1$. (Via $h=\sqrt{-1} g$, the latter is equivalent to non-existence of $h\in \GL_{2n}(\R)$ with $h ^t h=-1$, which is clear.)
\end{proof}

Now we turn to the global setup. Let $n$ and $E/F$ be as in \S\ref{sect:GSO-valued-Galois} and impose condition (disc-$\infty$) from now on. In analogy with the $\SO_{2n}$-case, we introduce a quasi-split form $G^*$ of $\GSO_{2n}$ over $F$. If $n$ is even, we have $E=F$ and take the split form $G^* := \GSO_{2n,F}$ (or simply written as $\GSO_{2n}$). If $n$ is odd then $E/F$ is a totally imaginary quadratic extension. In this case, let $G^*$ be the quasi-split form $\GSO^{E/F}_{2n,F}$ of $\GSO_{2n,F}$ (up to $F$-automorphism) given by the 1-cocycle $\Gal(E/F)\ra \Aut(\GSO_{2n,E})$ sending the nontrivial element to $\theta^\circ=\Int(\vartheta^\circ)$. Since $\vartheta^\circ\in \OO_{2n}(E)$, this cocycle comes from the $\Aut(\SO_{2n,E})$-valued cocycle determining $\SO^{E/F}_{2n}$ as an outer form of $\SO_{2n}$, thus we have $\SO^{E/F}_{2n}\hookrightarrow \GSO^{E/F}_{2n}$. Concretely, in analogy with \eqref{eq:QuasiSplitSO},
 \begin{equation}\label{eq:QuasiSplitGSO}
  \GSO^{E/F}_{2n}(R) = \left\{g \in \GL_{2n}(E \otimes_F R)\ \left|\ \begin{array}{c} \textup{there exists}~\lambda\in R^\times~\textup{such that} \\
  c(g) = \vartheta^\circ g \vartheta^\circ, ~ g^t \vierkant 0{1_n}{1_n}0 g = \lambda \vierkant 0{1_n}{1_n}0, ~\det(g) = \lambda^n \end{array} \right.\right\},
\end{equation}
and $\GO^{E/F}_{2n}(R)$ is defined by removing the condition $\det(g) = \lambda^n$.

We write $G^*=\GSO^{E/F}_{2n}$ for both parities of $n$, understanding that $E=F$ if $n$ is even, for a streamlined exposition. In both cases, we have an exact sequence
\begin{equation}\label{eq:SO-GSO-exact-seq}
1\ra \SO^{E/F}_{2n} \ra \GSO^{E/F}_{2n} \ra \G_m \ra 1,
\end{equation}
where the similitude map $\GSO^{E/F}_{2n} \ra \G_m$ is the usual one if $E=F$, and $g\mapsto \lambda$ in \eqref{eq:QuasiSplitGSO} if $E\neq F$. Note that $\hat{G^*_{\tu{ad}}}$ is isomorphic to $\Spin_{2n}(\C)$, on which $\Gamma$ acts trivially (resp.~non-trivially via $\Gal(E/F)$ as $\{1,\theta\}$) if $n$ is even (resp.~odd).

Write $(\cdot)^D$ for the Pontryagin dual of a locally compact abelian group. Let $v$ be a place of $F$.
By \cite[Thm~1.2]{KottwitzEllipticSingular} we have a map\footnote{This map has been computed explicitly by Arthur \cite[Section 9.1]{ArthurBook} for all inner forms of classical groups of type $B$, $C$, and $D$.}
$$
\alpha_v \colon \uH^1(F_v, G^*_{\tu{ad}}) \to \pi_0(Z(\hat{G^*_{\tu{ad}}})^{\Gamma_v})^D,
$$
which is an isomorphism if $v$ is a finite place (but not if $v$ is infinite).

\begin{lemma}\label{lem:Invariants}
We have
$$
Z(\hat{G^*_{\tu{ad}}})^{\Gamma_v} \simeq
\begin{cases}
(\mu_2)^2, & \tu{$n$ is even}, \cr
\mu_2, & \tu{$n$ is odd, $v$ is non-split in ${E}/F$}, \cr
\mu_4, & \tu{$n$ is odd, $v$ is split in ${E}/F$.}
\end{cases}
$$
\end{lemma}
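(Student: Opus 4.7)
The plan is to reduce the computation to the explicit description of $Z(\Spin_{2n})$ given in Lemma \ref{lem:spin-center2} and then read off the $\Gamma_v$-fixed points from the Galois action, which factors through $\Gamma_{E/F}$.

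First I would identify $Z(\hat{G^*_{\tu{ad}}})$ as an abstract abelian group. Since $G^*_{\tu{ad}}$ is a form of $\PGSO_{2n}$, its dual group is $\Spin_{2n}(\C)$, so $Z(\hat{G^*_{\tu{ad}}})=\Zspin$. By Lemma \ref{lem:spin-center2}(i), this center is isomorphic to $(\Z/2\Z)^2$ if $n$ is even and to $\Z/4\Z$ if $n$ is odd.

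Next I would pin down the $\Gamma_v$-action. The Galois action on $\hat{G^*_{\tu{ad}}}$ factors through $\Gamma_{E/F}$, with the nontrivial element (when $E\neq F$) acting via $\theta$ (the dual of the cocycle $\theta^\circ$ used to define $G^*$ in \eqref{eq:QuasiSplitGSO}). When $n$ is even we have $E=F$, so $\Gamma_v$ acts trivially and $Z(\hat{G^*_{\tu{ad}}})^{\Gamma_v}=\Zspin\simeq \mu_2^2$. When $n$ is odd and $v$ splits in $E/F$, the image of $\Gamma_v$ in $\Gamma_{E/F}$ is trivial, so again $Z(\hat{G^*_{\tu{ad}}})^{\Gamma_v}=\Zspin\simeq \mu_4$. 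When $n$ is odd and $v$ is non-split in $E/F$, the image of $\Gamma_v\to \Gamma_{E/F}$ is all of $\Gamma_{E/F}$, so $\Gamma_v$ acts through $\theta$.

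Finally I would compute the $\theta$-fixed points in the non-split, $n$ odd case using Lemma \ref{lem:spin-center2}(ii): $\theta$ fixes $\{1,-1\}$ and permutes $\{\zeta,-\zeta\}$, hence $\Zspin^\theta=\{1,-1\}\simeq \mu_2$. This is the only real content of the proof; no step looks like a serious obstacle since everything has been prepared in Section \ref{sect:CliffordGroups}.
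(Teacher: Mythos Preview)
Your proposal is correct and follows exactly the paper's approach: the paper's proof is simply ``This follows from Lemma~\ref{lem:spin-center2},'' and you have spelled out precisely how that lemma yields the three cases via the $\Gamma_{E/F}$-action on $\Zspin$.
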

\begin{proof}
This follows from Lemma~\ref{lem:spin-center2}.
\end{proof}

By \cite[Prop.~2.6]{KottwitzEllipticSingular} and the Hasse principle from \cite[Thm.~6.22]{PlatonovRapinchuk} we have an exact sequence of pointed sets
\begin{align}\label{eq:ComputeForms}
1 \to \uH^1(F, G^*_{\ad}) \to \bigoplus_v \uH^1(F_v, G^*_{\ad}) & \overset {\Sigma_v \alpha_v} \longrightarrow  \pi_0(Z(\hat{G^*_{\tu{ad}}})^{\Gamma})^D \to 1.
\end{align}
Since $Z(\hat{G^*_{\tu{ad}}})$ is finite, we may forget $\pi_0(\cdot)$ in~\eqref{eq:ComputeForms} and the proof of the lemma below. From now until the end of \S\ref{sect:Shimura}, we fix a finite place $\qst$ and an infinite place $y_\infty$ of $F$.

\begin{lemma}\label{lem:existence-of-G}
Let $\qst$ (resp.~$y_\infty$) be a fixed finite (resp.~infinite) place of $F$. There exists an inner twist $G$ of $G^*$ such that for all $F$-places $v \neq \qst$, we have
\begin{equation}\label{eq:ShimuraInnerForm}
G_v \simeq \begin{cases}
\GSO^J_{2n,F_v} & v = y_\infty \cr
\GSO^{\cmpt}_{2n, F_v}  & v \in \cV_{\infty} \backslash \{y_\infty\} \cr
G^*_{F_v} & v \notin \cV_\infty\cup \{\qst\}.
\end{cases}
\end{equation}
This inner twist $G$ is unique up to isomorphism if either $n$ is even or $\qst$ is non-split in $E/F$; otherwise there are two choices for $G$. (Recall the notion of inner twist from \S\ref{sect:Notation}.)
\end{lemma}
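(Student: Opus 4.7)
The plan is to apply the exact sequence \eqref{eq:ComputeForms} directly to the tuple of prescribed local classes. For each $v \neq \qst$, the right-hand side of \eqref{eq:ShimuraInnerForm} specifies a class $\xi_v^\ast \in \uH^1(F_v, G^*_\ad)$: trivial at finite $v \notin \{\qst\}\cup\cV_\infty$, the inner class of $\GSO^J_{2n}$ at $v = y_\infty$, and that of $\GSO^\cmpt_{2n}$ at the other infinite places. (Lemma~\ref{lem:Out(GSO)overR}(i) confirms that these indeed sit in the inner class of $G^*_{F_v}$, since over $\R$ the inner class of an even orthogonal similitude group is distinguished by a character of the center of the $L$-group.) By exactness of \eqref{eq:ComputeForms}, producing $G$ amounts to choosing $\xi_{\qst} \in \uH^1(F_{\qst}, G^*_\ad)$ so that $\sum_v \alpha_v(\xi_v) = 0$ in $(Z(\hat{G^*_\ad})^\Gamma)^D$.

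First I would handle existence. Since $\qst$ is finite, $\alpha_{\qst}$ is a bijection onto $(Z(\hat{G^*_\ad})^{\Gamma_{\qst}})^D$, so the task is to show that the element $-\sum_{v \neq \qst} \alpha_v(\xi_v^\ast) \in (Z(\hat{G^*_\ad})^\Gamma)^D$ lies in the image of the restriction $(Z(\hat{G^*_\ad})^{\Gamma_{\qst}})^D \to (Z(\hat{G^*_\ad})^\Gamma)^D$. This map is Pontryagin-dual to the inclusion of finite abelian groups $Z(\hat{G^*_\ad})^\Gamma \hookrightarrow Z(\hat{G^*_\ad})^{\Gamma_{\qst}}$, hence is surjective, and so $\xi_{\qst}$ exists.

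For the uniqueness count I would observe that the set of valid $\xi_{\qst}$ is a torsor under the kernel of the same restriction, whose order is the index $[Z(\hat{G^*_\ad})^{\Gamma_{\qst}} : Z(\hat{G^*_\ad})^\Gamma]$. Combining Lemmas~\ref{lem:spin-center2} and \ref{lem:Invariants}, this index equals $1$ when $n$ is even (both invariants are $\mu_2^2$) or when $n$ is odd and $\qst$ is non-split in $E/F$ (both are $\mu_2$), and equals $2$ when $n$ is odd and $\qst$ is split ($\mu_2 \subsetneq \mu_4$). The injectivity of $\uH^1(F, G^*_\ad) \hookrightarrow \bigoplus_v \uH^1(F_v, G^*_\ad)$ from \eqref{eq:ComputeForms} then implies that distinct $\xi_{\qst}$ produce non-isomorphic global $G$, yielding the stated numbers of isomorphism classes.

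There is no serious obstacle here: the argument is essentially formal once the $\Gamma_v$-invariants of $Z(\hat{G^*_\ad}) \simeq Z(\Spin_{2n})$ are in hand, which the previous section has already recorded. The only place where one must be slightly careful is the identification of inner classes of $\GSO^J_{2n}$ and $\GSO^\cmpt_{2n}$ at infinite places, which is immediate from the explicit descriptions in \S\ref{sect:forms-of-GSO} and Lemma~\ref{lem:Out(GSO)overR}.
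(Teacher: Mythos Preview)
Your proposal is correct and follows essentially the same approach as the paper: both arguments use the exact sequence~\eqref{eq:ComputeForms}, invoke the surjectivity of $(Z(\hat{G^*_{\ad}})^{\Gamma_{\qst}})^D \to (Z(\hat{G^*_{\ad}})^{\Gamma})^D$ (dual to the inclusion of invariant subgroups) to produce $\xi_{\qst}$, and then count the fiber using Lemma~\ref{lem:Invariants}. Your version is slightly more explicit in writing out the index $[Z(\hat{G^*_{\ad}})^{\Gamma_{\qst}} : Z(\hat{G^*_{\ad}})^{\Gamma}]$ case by case, but the substance is identical.
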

\begin{proof}
Put 
\begin{equation}\label{eq:vst_inv}
a_{\qst}:= -\alpha_{y_\infty}(\GSO^J_{2n, F_{y_\infty}}) - \sum_{v \neq y_\infty} \alpha_v(\GSO^\cmpt_{2n, F_{v}}) \in (Z(\hat{G^*_{\tu{ad}}})^\Gamma)^D.
\end{equation}
By duality, the inclusion $Z(\hat{G^*_{\tu{ad}}})^{\Gamma} \subset Z(\hat{G^*_{\tu{ad}}})^{\Gamma_v}$ induces a surjection $(Z(\hat{G^*_{\tu{ad}}})^{\Gamma_v})^D \surjects (Z(\hat{G^*_{\tu{ad}}})^\Gamma)^D$. Hence we can choose some invariant $\wt {a_{\qst}} \in (Z(\hat{G^*_{\tu{ad}}})^{\Gamma_v})^D$ mapping to the expression on the right hand side of~\eqref{eq:vst_inv}. Let $G_{\qst}$ be the inner twist of $G^*$ over $F_{\qst}$ corresponding to $\wt {a_{\qst}}$. Then, by~\eqref{eq:ComputeForms} the collection of local inner twists $\{G_v\}_{\tu{places $v$}}$ comes from a global inner twist $G/F$, unique up to isomorphism. Conversely, any $G$ as in the lemma satisfies $\alpha_{\qst}(G)=a_{\qst}$ by~\eqref{eq:ComputeForms}. Therefore the number of choices for $G$ equals the number of choices for $\wt {a_{\qst}}$, which can be computed using Lemma~\ref{lem:Invariants}.
\end{proof}

\begin{remark}
The group $G_{\qst}$ in the lemma is never quasi-split, regardless of the parity of $[F:\Q]$. It is always a unitary group for a Hermitian form over a quaternion algebra. This corresponds to the ``$d=2$ case'' in \cite[\S9.1]{ArthurBook}. In this case the rank of $G_{\qst}$ is roughly $n/2$ (see \cite{ArthurBook} for precise information).
\end{remark}

\section{Shimura varieties of type $D$ corresponding to $\spin^{\pm}$}\label{sect:Shimura}

We continue in the same global setup, with an inner form $G$ of a quasi-split form $G^*$ of $\GSO_{2n}$ over a totally real field $F$, depending on the fixed places $\qst$ and $y_\infty$ of $F$. We are going to construct Shimura data associated with $\Res_{F/\Q}G$ by giving an $\R$-morphism $\SS:=\Res_{\C/\R}\G_m \ra (\Res_{F/\Q}G)\otimes_\Q \R$. Our running assumption (disc-$\infty$) is clearly a necessary condition for the existence of such Shimura data. We define
\begin{align*}
h_{(-1)^n} \colon \SS \to \GSO_{2n}^J, &\quad x + yi \mapsto \grootvierkant {x1_{n}}{y1_{n}}{-y1_{n}}{x1_{n}} \cr
h_{(-1)^{n+1}} \colon \SS \to \GSO_{2n}^J, &\quad x + y i \mapsto \grootvierkant{x 1_n}{\diag(y 1_{n-1}, -y)}{\diag(-y1_{n-1}, y)}{x 1_n}.
\end{align*}
We will often omit $1_n$ in the cases similar to the above if a matrix is clearly $2n\times 2n$ in the context.

By slight abuse of notation, we write $\Ad$ for either the natural map from $\GSO_{2n}^J\ra\GSO_{2n,\ad}^J$ or the adjoint representation of $\GSO_{2n}^J$ on $\Lie \GSO_{2n}^J$.

\begin{lemma}\label{lem:IsShimuraDatum}
Let $\eps\in \{+,-\}$ and put $K_\eps := \tu{Cent}_{\GSO_{2n}^J(\R)}(h_\eps)$. The following hold.
\begin{enumerate}[label=(\roman*)]
\item In the representation of $\C^\times$ on $\Lie \GSO_{2n}^J(\C)$ via $\Ad \circ h_\eps$, only the characters $z \mapsto z\inv \li z$, $z \mapsto 1$, and $z \mapsto z \li z\inv$ appear.
\item The involution on $\GSO^J_{2n,\ad}$ given by $\Ad\ h_\eps(i)$ is a Cartan involution.
\item $K_+$ and $K_-$ are $\GSO_{2n}^J(\R)$-conjugate.
\end{enumerate}
\end{lemma}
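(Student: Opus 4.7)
The argument naturally splits by part. For (i), the plan is to diagonalize the action of $h_\eps(i) = \vierkant{0}{A_\eps}{-A_\eps}{0}$ on the standard representation $\C^{2n}$, where $A_\eps = 1_n$ if $\eps = (-1)^n$ and $A_\eps = \diag(1_{n-1}, -1)$ otherwise. Since $A_\eps^2 = 1_n$ we have $h_\eps(i)^2 = -1_{2n}$, and the $\pm i$-eigenspaces $V_\pm = \{(v, \pm i A_\eps v) : v \in \C^n\}$ each have dimension $n$. Extending $h_\eps$ to a morphism $\SS_\C \simeq \G_m \times \G_m \to \GSO^J_{2n, \C}$ and setting $\mu_\eps(z) := h_{\eps,\C}(z, 1)$, a direct matrix computation shows that $\mu_\eps(z)$ acts as $z$ on $V_+$ and as $1$ on $V_-$, while the conjugate cocharacter $\bar\mu_\eps(z) := h_{\eps,\C}(1, z)$ acts oppositely. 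Hence on $\End(\C^{2n}) \supset \Lie \GSO^J_{2n,\C}$, the weights of $\SS_\C \ni (z_1, z_2)$ via $\Ad \circ h_\eps$ lie in $\{z_1 z_2^{-1},\, 1,\, z_1^{-1}z_2\}$; restricting along $\SS(\R) \hookrightarrow \SS_\C$, $z \mapsto (z, \bar z)$, yields the three Hodge characters $z \bar z^{-1}$, $1$, $z^{-1} \bar z$ claimed in (i).

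For (ii), I would use the characterization that $\Ad\ h_\eps(i)$ is a Cartan involution of $\GSO^J_{2n, \ad}$ iff the centralizer $K_\eps$ is a maximal compact subgroup of $\GSO^J_{2n}(\R)$ modulo the center. Using the parametrization from Lemma~\ref{lem:Out(GSO)overR}(ii) of $G(\R) = \GSO^J_{2n}(\R)$ by blocks $\vierkant{A}{B}{-\bar B}{\bar A}$ with $A^t A + \bar B^t \bar B = \lambda\cdot 1_n$ and $A^t B = \bar B^t \bar A$, the condition $g \cdot h_\eps(i) = h_\eps(i) \cdot g$ translates into the intertwining relations $A = A_\eps \bar A A_\eps$ and $B = A_\eps \bar B A_\eps$. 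After a change of variable $B' := B A_\eps$ (which is the identity when $\eps = (-1)^n$), the constraints on $(A, B')$ should reduce to the standard ones, identifying $K_\eps \simeq \GU(n)$ in both cases. Since $\dim \GU(n) = n^2 + 1$ equals the dimension of a maximal compact subgroup modulo center of $\GSO^J_{2n}(\R)$, this exhibits $K_\eps$ as maximal compact modulo center and yields (ii).

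For (iii), by (ii) both $K_+$ and $K_-$ are maximal compact subgroups modulo center of $G(\R) = \GSO^J_{2n}(\R)$. By the classical theory of real reductive Lie groups, any two such subgroups are conjugate under the identity component $G(\R)^0 \subseteq G(\R)$, giving the desired conjugation (recalling also Lemma~\ref{lem:Out(GSO)overR}(iii)(a) and (d) on the components of $G(\R)$).

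The main obstacle is part (ii) in the non-standard case $\eps = (-1)^{n+1}$, where the presence of $A_\eps = \diag(1_{n-1}, -1) \neq 1_n$ introduces sign twists in the defining relations of $K_\eps$. Verifying that the resulting group is isomorphic to the compact-mod-center $\GU(n)$ (rather than some noncompact conjugate) requires careful block-matrix bookkeeping. An alternative abstract route is to observe that $\mu_+$ and $\mu_-$ are the two minuscule cocharacters at the ``fork'' of the $D_n$ Dynkin diagram, and both give rise to the standard $D^\H$-type Shimura datum on $\GSO^J_{2n}$ whose Hermitian symmetric domain has maximal compact stabilizer $\GU(n)$; this yields (ii) uniformly in $\eps$.
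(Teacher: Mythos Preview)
Your approach is correct but diverges from the paper on part (ii), and the paper's route is considerably shorter.

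For (i), you and the paper do the same thing: embed $\Lie\GSO_{2n}^J(\C)$ in $\End(\C^{2n})$, diagonalize $h_\eps$ (the paper conjugates by $X=\vierkant{1}{1}{i}{-i}$ rather than writing out the $V_\pm$ eigenspaces, but the content is identical), and read off the three characters.

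For (ii), the paper bypasses all block-matrix computations with a single observation: for $\eps=(-1)^n$ one has $h_\eps(i)=J^{-1}$ on the nose, so the inner form of $\GSO^J_{2n}$ obtained by twisting by $\Ad h_\eps(i)$ is literally $\GSO^{\cmpt}_{2n,\R}$ (the $J$-twist is undone), which is compact modulo center; this is the definition of Cartan involution. The ``main obstacle'' you flag for $\eps=(-1)^{n+1}$ evaporates by the same mechanism, since $h_\eps(i)\cdot J\in\SO^{\cmpt}_{2n}(\R)$ and hence the resulting twist differs from $\GSO^{\cmpt}_{2n,\R}$ only by an inner automorphism defined over $\R$. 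Your direct computation of $K_\eps\simeq\GU(n)$ also works, but note a small gap in the logical link you state: $K_\eps$ is the centralizer of $h_\eps$, not of the single element $h_\eps(i)$, and the Cartan condition refers to the fixed points of $\Ad h_\eps(i)$ on $G_{\ad}$. These agree on Lie algebras by (i) (the weights are $z\bar z^{-1},1,\bar z z^{-1}$, so $\Ad h_\eps(i)$ has eigenvalues $\pm 1$ only), but you should say so.

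For (iii), both arguments reduce to conjugacy of maximal compact subgroups. The paper routes this through $G_{\ad}$ and invokes \cite[Prop.~1.2.7]{DeligneCorvallis} to guarantee that $\Cent_{G_{\ad}(\R)}(\bar h_\eps)$ is connected, then lifts the conjugating element via Hilbert~90; this handles the component-group bookkeeping that your sketch leaves implicit (recall $\GSO^J_{2n}(\R)$ is disconnected for $n$ even).
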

\begin{proof}
For (i) and (ii), we only treat the case of $\eps=(-1)^n$ as the argument for $-\eps$ is the same. Let $z = x+yi \in \C^\times$ and consider the left-multiplication action of the matrix $h_\eps(x + iy)= \vierkant xy{-y}x$ on $\uM_{2n}(\C)$. The matrix $\vierkant xy{-y}x$ is conjugate to $\vierkant {x+yi}{}{}{x-yi}$ via $\vierkant 11i{-i}$. Hence only the characters $z \li z^{-1}$, $\li z z^{-1}$ and $1$ appear in the representation of $\SS$ on $\uM_{2n}(\C)$ via conjugation by $h_\eps(x + iy)$. Since $\Lie \GSO_{2n}^J(\R)$ is contained in $\uM_{2n}(\C)$ via the standard representation, (i) is true for $h_\eps(z)$. Since $J^{-1} = h_\eps(i)$, the inner form of $\GSO_{2n}^J$ defined by $h_\eps(i)$ is the compact-modulo-center form $\GSO_{2n, \R}^\cmpt$, so part (ii) follows.

Let us prove (iii). Write $\ol h_\eps:=\tu{Ad}\circ h_\eps$. Clearly $\tu{Ad}(K_\eps)\subset \tu{Cent}_{\GSO_{2n,\ad}^J(\R)}(\li h_\eps)$.   The Lie algebra $\tu{Lie}(K_\eps)$ (resp.~the Lie algebra of $\tu{Cent}_{\GSO_{2n,\ad}^J(\R)}(\ol h_\eps)$)  is the $(0,0)$ part of $\tu{Lie}(\GSO_{2n}^J)$ (resp.~$\tu{Lie}(\GSO_{2n,\ad}^J)$) via $h_\eps$, in the sense of \cite{DeligneCorvallis}. In particular
$$
\ad \colon \tu{Lie}(K_\eps) \to \Lie (\tu{Cent}_{\GSO_{2n,\ad}^J(\R)}(\ol h_\eps))
$$
is surjective. Therefore $\Ad(K_\eps)\supset \tu{Cent}_{\GSO_{2n,\ad}^J(\R)}(\ol h_\eps)^0$. Since $\tu{Cent}_{\GSO_{2n,\ad}^J(\R)}(\li h_\eps)$ is connected by \cite[proof of Prop.~1.2.7]{DeligneCorvallis}, we have
$\tu{Ad}(K_\eps)=\tu{Cent}_{\GSO_{2n,\ad}^J(\R)}(\li h_\eps)$. The latter is the identity component of a maximal compact subgroup of $\GSO_{2n,\ad}^J(\R)$ by \emph{loc.~cit.}~so $\tu{Ad}(K_-)$ and $\tu{Ad}(K_+)$ are conjugate in $\GSO_{2n, \ad}^J(\R)$. Since $K_\eps=\tu{Ad}^{-1}(\tu{Ad}(K_\eps))$ and since $\tu{Ad}\colon \GSO_{2n}^J \to \GSO_{2n,\ad}^J$ is surjective on real points by Hilbert 90, we lift a conjugating element to see that $K_+$ and $K_-$ are conjugate in $\GSO_{2n}^J(\R)$.
\end{proof}

Recall the cocharacters $\mu_+, \mu_-$ from \eqref{eq:Spin-eps-def}, which are outer conjugate as $\mu_+ = \vartheta^\circ \mu_- (\vartheta^\circ)\inv$ (but not inner, cf.~\eqref{eq:WeylGroupAction}).

\begin{lemma}\label{lem:h+h-} Let $\eps\in \{+,-\}$.
\begin{enumerate}[label=(\roman*)]
\item Consider the inclusion of $\C^\times$ in $(\C \otimes_{\R} \C)^\times = (\C^{\times})^{\Gal(\C/\R)}$ indexed by the identity $\Id_{\C/\R} \in \Gal(\C/\R)$. Then $C_X h_{\eps,\C}|_{\C^\times} = \mu_\eps$.
\item The complex conjugate morphism $z \mapsto h_\eps(\li z)$ is $\GSO_{2n}^J(\R)$-conjugate to $h_{(-1)^{n}\eps}$.
\end{enumerate}
\end{lemma}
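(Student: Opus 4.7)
To prove (i), I will expand $C_X h_{\eps,\C}$ directly. The inclusion of $\C^\times$ indexed by $\Id_{\C/\R}$ amounts to the first projection $\SS_\C=\G_m\times \G_m\twoheadrightarrow \G_m$, so $C_X h_{\eps,\C}|_{\C^\times}(z)$ is obtained by computing $C_X h_{\eps,\C}(z,\li z)$ with $z$ and $\li z$ regarded as independent variables on $\SS_\C$, and specializing $\li z=1$. Writing $X=\vierkant{I_n}{I_n}{iI_n}{-iI_n}$ with $X^{-1}=\tfrac{1}{2}\vierkant{I_n}{-iI_n}{I_n}{iI_n}$, block-matrix multiplication gives
\[
X^{-1} h_{(-1)^n}(x+iy) X=\diag(zI_n,\li z I_n),\qquad X^{-1} h_{(-1)^{n+1}}(x+iy) X=\diag(zI_{n-1},\li z,\li z I_{n-1},z).
\]
Setting $\li z=1$ yields $\diag(zI_n,I_n)$ and $\diag(zI_{n-1},1,I_{n-1},z)$, which correspond, under the parametrization $t\mapsto (t_0,t_1,\ldots,t_n)$ of $\TGSO$ from Section~\ref{sect:RootDatum}, to the cocharacters $\mu_{(-1)^n}=(1;1,\ldots,1)$ and $\mu_{(-1)^{n+1}}=(1;1,\ldots,1,0)$ from~\eqref{eq:Spin-eps-def}.

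For (ii), I plan to use the element $T=i\vierkant{0}{I_n}{I_n}{0}$ from Lemma~\ref{lem:Out(GSO)overR}(iv) and verify by direct matrix multiplication that $T\,h_\eps(x+iy)\,T^{-1}=h_\eps(x-iy)$. Indeed $\vierkant{0}{I}{I}{0}\vierkant{xI}{yD'}{-yD'}{xI}\vierkant{0}{I}{I}{0}=\vierkant{xI}{-yD'}{yD'}{xI}$ for $D'\in\{I_n,D\}$, and the prefactor $i$ cancels in conjugation. When $n$ is even, $T\in \GSO^J_{2n}(\R)$ by Lemma~\ref{lem:Out(GSO)overR}(iv), and $(-1)^n\eps=\eps$, so the conclusion is immediate.

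The main obstacle is the case $n$ odd, where $T\in \GO^J_{2n}(\R)\setminus \GSO^J_{2n}(\R)$ and conjugation by $T$ realizes the outer automorphism $\theta^J$. Here my plan is to match Hodge cocharacters and exploit connectedness. Since $z\mapsto h_\eps(\li z)$ corresponds to swapping the two factors of $\SS_\C$, its Hodge cocharacter equals $w_h\cdot \mu_\eps^{-1}$, where $w_h=(2;1,\ldots,1)\in X_*(\TGSO)$ is the central cocharacter of $h_\eps$ sending $a\in\G_m$ to the scalar $a\cdot I_{2n}$. The Weyl group $\{\pm 1\}^{n,\prime}\rtimes \iS_n$ preserves the parity of the number of $1$'s among $(a_1,\ldots,a_n)$ (inspection of~\eqref{eq:WeylGroupAction} shows pair sign-flips change this count by $0$ or $\pm 2$), and a direct count then yields: for $n$ odd, $w_h\cdot \mu_\eps^{-1}$ lies in the Weyl orbit of $\mu_{-\eps}$, whereas $\mu_+$ and $\mu_-$ themselves lie in distinct Weyl orbits. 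Combining the connectedness of $\GSO^J_{2n}(\R)$ for $n$ odd (Lemma~\ref{lem:Out(GSO)overR}(iii.d)) with the $\GSO^J_{2n}(\R)$-conjugacy $K_+\sim K_-$ (Lemma~\ref{lem:IsShimuraDatum}(iii)), a standard argument via the Hermitian symmetric domain $\GSO^J_{2n}(\R)/K$ shows that the $\GSO^J_{2n}(\R)$-conjugacy class of a morphism $\SS\to \GSO^J_{2n,\R}$ satisfying Lemma~\ref{lem:IsShimuraDatum}(i),(ii) is determined purely by the Weyl orbit of its Hodge cocharacter. Therefore $z\mapsto h_\eps(\li z)$ is $\GSO^J_{2n}(\R)$-conjugate to $h_{-\eps}=h_{(-1)^n\eps}$.
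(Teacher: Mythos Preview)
Your proof of (i) and of (ii) for $n$ even is correct and essentially identical to the paper's (you carry out both $\eps$-cases directly rather than reducing one to the other via $\vartheta^\circ$, which is fine).

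For (ii) with $n$ odd, your approach is correct but genuinely different from the paper's. The paper invokes \cite[Prop.~1.2.2]{DeligneCorvallis} to produce an element $g\in\PSO_{2n}^J(\C)$ conjugating the pair $(\PSO_{2n,\R}^J,h_\eps(\li z))$ to $(\PSO_{2n,\R}^J,h_{-\eps})$, then shows $\li g^{-1}g\in Z(\PSO_{2n}^J(\C))=\{1\}$ so that $g$ is real, lifts $g$ to $\GSO_{2n}^J(\R)$ via Hilbert~90, and finally kills the residual central character. Your route instead computes the Hodge cocharacter of $z\mapsto h_\eps(\li z)$ as $w_h\cdot\mu_\eps^{-1}$, places it in the Weyl orbit of $\mu_{-\eps}$, and appeals to the principle that (for $\GSO_{2n}^J(\R)$ connected) the real conjugacy class of a Shimura morphism with fixed weight is determined by the $G(\C)$-conjugacy class of its Hodge cocharacter. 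This principle is correct: once one conjugates so that the centralizer equals a fixed $K$, the morphism factors through $Z(K)^0$, whose $U(1)$-part has rank~$1$, leaving exactly two possibilities distinguished by the sign of the cocharacter and hence by the Weyl orbit of $\mu$. What you call a ``standard argument'' thus has the same underlying content as the paper's appeal to Deligne (namely the uniqueness up to sign of Shimura morphisms with a given centralizer in an irreducible Hermitian symmetric situation), but your packaging via counting conjugacy classes and matching Hodge cocharacters is cleaner and avoids the explicit descent-plus-character-killing step. The paper's approach, on the other hand, makes the conjugating element more tangible.
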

\begin{proof}  In the proof, put $\eps=(-1)^n$.
(\textit{i}). Recall $C_X$ from \eqref{eq:PinDownIsoms}, which induces $\GSO_{2n}^J(\R)\hra \GSO_{2n}^{\cmpt}(\C) \stackrel{C_X}{\simeq} \GSO_{2n}(\C)$. The morphism $C_X h_\eps$ equals $x+yi \mapsto \vierkant {x+yi}00{x-yi}$. The holomorphic part of this morphism is $z \mapsto \vierkant  z 0 0 1$, which is $\mu_\eps$. Then $h_\eps = \vartheta^\circ h_{-\eps} \vartheta^\circ$, where $\vartheta^\circ$ is as in \eqref{eq:Elementw}.  Write $\vartheta^c = \vierkant {-1_{2n-1}}00{1}$. Note that $C_X(\vartheta^c) = \vartheta^\circ$, so $\vartheta^c$-conjugation becomes $\vartheta^\circ$-conjugation under $C_X$. As $\vartheta^\circ$ swaps $\mu_\eps$ and $\mu_{-\eps}$, we obtain $C_X h_{-\eps}|_{\C^\times} = \mu_{-\eps}$.

(\textit{ii}). Write $z = x + yi \in \C$. We compute 
$$
h_\eps(\li z) = \grootvierkant x{-y}yx  = T h_\eps(z) T\inv, 
$$
where $T = i \vierkant 0110 \in \GO_{2n}^J(\R)$. By Lemma~\ref{lem:Out(GSO)overR}(iv) $T \in \GSO_{2n}^J(\R)$ if $n$ is even, which proves (\textit{ii}) in that case. For $n$ odd the above identity shows $z\mapsto h_\eps(\li z)$ and $h_\eps$ are conjugate under an outer automorphism. Thus, by (i), the cocharacter attached to $h_\eps(\li z)$ is conjugate to $\mu_{-\eps}$.

By Lemma~\ref{lem:IsShimuraDatum} the conditions of \cite[Prop.~1.2.2]{DeligneCorvallis} on $(\PSO_{2n}^J, h_{\pm}(z))$ and $(\PSO_{2n}^J, h_{\pm}(\li {z}))$ are satisfied. By this proposition there exists a $g \in \PSO_{2n}^J(\C)$ that conjugates $(\PSO_{2n, \R}^J, h_\eps(\li z))$ to $(\PSO_{2n, \R}^J, h_{-\eps}(z))$. This implies that for all $x \in \PSO_{2n}^J(\C)$ we have $g \li x g\inv = \li {g x g\inv}$. Thus also $\li g \inv g \li x = \li x \li g\inv g$ and $\li g\inv g \in Z(\PSO_{2n}^J(\C)) = \{1\}$, and thus $\li g = g$, which means $g \in \PSO_{2n}^J(\R)$. By Hilbert 90 we may lift $g$ to an element $\wt g \in \GSO_{2n}^J(\R)$. Then the map $\C^\times \owns z \mapsto \chi(z) := h^{-1}_\eps(\li z) \wt g h_{-\eps}(z) \wt g\inv$ is a continuous homomorphism to $Z(\GSO_{2n}^J(\R)) = \R^\times$. It suffices to show that $\chi$ is trivial. The subgroup $\chi(U(1)) \subset \R^\times$ is connected and compact, hence trivial. Since $h_\eps$ and $h_{-\eps}$ agree and are central on $\R^\times \subset \C^\times$, we have $\chi(\R^\times)=\{1\}$ as well. So $\chi$ is trivial as desired. The proof for $h_{-\eps}$ is similar.
\end{proof}

Let $G$ be as in Lemma \ref{lem:existence-of-G}. Let $X^\eps$ be the $(\Res_{F/\Q}G)(\R)$-conjugacy class of the morphism
\begin{equation}\label{eq:Shimurah}
h^\eps \colon \SS \to (\Res_{F/\Q} G)_\R, \quad z \mapsto (h_\eps(z), 1, \ldots, 1) \in \prod_{y \in \cV_\infty} G_{F_y},
\end{equation}
where the non-trivial component corresponds to the place $y_\infty$. Then $\mu^\eps = (\mu_\eps, 1, \ldots, 1) \in X_*((\Res_{F/\Q} G)_\C) = X_*(\GSO_{2n,\C})^{\cV_\infty}$ is the cocharacter attached to $h_\eps$, in the same way as in Lemma \ref{lem:h+h-} (i). The reflex field of $(\Res_{F/\Q} G, X^\eps)$ means the field of definition for the conjugacy class of $\mu^\eps$, as a subfield of $\C$.

\begin{lemma}\label{lem:Shimura-datum} Let $\eps \in \{\pm 1\}$. Then
\begin{enumerate}[label=(\roman*)]
\item The pair $(\Res_{F/\Q} G, X^\eps)$ is a Shimura datum of abelian type.
\item The Shimura data $(\Res_{F/\Q} G, X^+)$ and $(\Res_{F/\Q} G, X^-)$ are isomorphic only if $n$ is odd.
\item If $F \neq \Q$, the Shimura varieties attached to $(\Res_{F/\Q} G, X^\eps)$ are projective.
\item The reflex field of the datum $(\Res_{F/\Q} G, X^\eps)$ is equal to $E$, equipped with an embedding $x_\infty: E\hra \C$ extending $y_\infty:F\hra \C$.
\end{enumerate}
\end{lemma}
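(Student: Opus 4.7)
The plan is to establish (i)--(iv) in turn; the four arguments are largely independent, so I address each separately.

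For (i), I would verify Deligne's axioms (SV1)--(SV3) and then identify the Shimura datum as abelian type. At $y_\infty$, axioms (SV1) and (SV2) are furnished by Lemma~\ref{lem:IsShimuraDatum}(i)--(ii). At the remaining archimedean places $y\in\cV_\infty\setminus\{y_\infty\}$, the morphism $h^\eps$ is trivial and $G(F_y)/Z(G)(F_y)$ is compact, so both axioms are automatic. Axiom (SV3) holds because $G^{\ad}$ is $F$-simple for $n\ge 3$, yielding a single $\Q$-simple factor in $\Res_{F/\Q}G^{\ad}$ which is non-compact at $y_\infty$. For the abelian-type property, the Hermitian symmetric domain attached to $(G^{\ad}(F_{y_\infty}),\ad\circ h_\eps)$ is of type $D_n^{\mathbb H}$ (associated with $\PSO^*(2n)$ and a half-spin minuscule cocharacter $\mu_\eps$), which by Deligne's classification~\cite[\S2.3]{DeligneCorvallis} is of abelian type, arising (up to isogeny on the adjoint part) from a Hodge-type datum constructed via the spin representation and Kuga--Satake-type constructions.

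For (ii), suppose $\phi\colon(\Res_{F/\Q}G,X^+)\isomto(\Res_{F/\Q}G,X^-)$ is an isomorphism of Shimura data; the task is to force $n$ odd. Since $\mu_+$ and $\mu_-$ lie in distinct Weyl orbits of $\TGSO$, they are not $\GSO_{2n}(\C)$-conjugate, so $\phi$ must induce an outer automorphism of the adjoint $\Q$-simple factor. Because $X^\pm$ are concentrated at $y_\infty$, the permutation of archimedean components induced by $\phi_\R$ fixes $y_\infty$, and $\phi$ restricts to an outer $\R$-automorphism $\phi_{y_\infty}$ of $\GSO_{2n}^J$ taking the $\GSO_{2n}^J(\R)$-conjugacy class of $h_+$ to that of $h_-$. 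Since both $h_+$ and $h_-$ restrict on $\R^\times\subset\SS$ to the central embedding $\R^\times\hra Z(\GSO_{2n}^J)(\R)$, the automorphism $\phi_{y_\infty}$ must act trivially on $Z(\GSO_{2n}^J)$. Lemma~\ref{lem:Out(GSO)overR}(v) then forces $n$ to be odd.

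For (iii), by the Baily--Borel compactness criterion, $\Sh_K(\Res_{F/\Q}G,X^\eps)$ is projective iff $\Res_{F/\Q}G^{\der}$ is $\Q$-anisotropic, equivalently $G^{\der}$ is $F$-anisotropic. A nontrivial $F$-split torus in $G^{\der}$ would embed as $(\R^\times)^r$ with $r\ge 1$ inside $G^{\der}(F_y)=\SO_{2n}^{\cmpt}(\R)$ at each archimedean $y\ne y_\infty$; but this latter group is compact and contains no such torus. As $F\ne\Q$ ensures the existence of such a place $y$, the required anisotropy follows. For (iv), the reflex field equals the fixed field in $\ol\Q$ of the stabilizer in $\Gamma_\Q$ of the $(\Res_{F/\Q}G)(\C)$-conjugacy class of $\mu^\eps$. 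Since $\mu^\eps$ is concentrated at $y_\infty$, this stabilizer lies in $\Gamma_F$ via $y_\infty$. For $n$ even, $G$ is $F$-split so the Galois action on $G_{\ol\Q}$ is trivial and the reflex field is $F=E$. For $n$ odd, the action of $\sigma\in\Gamma_F$ on $G_{\ol\Q}$ factors through $\Gamma_{E/F}$ via $\sigma|_E$, with the nontrivial element acting by $\theta$ and swapping $\mu_+\leftrightarrow\mu_-$; the additional constraint $\sigma|_E=\Id$ identifies the stabilizer with $\Gamma_E$ via $x_\infty$, giving reflex field $E$.

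The main obstacle will be the abelian-type verification in (i), which requires explicitly identifying the Shimura datum (up to isogeny on the adjoint part) as a rational factor of a Hodge-type one via the spin embedding. Part (ii) is delicate but reduces cleanly to Lemma~\ref{lem:Out(GSO)overR}(v) once one carefully verifies that $\phi_{y_\infty}$ must act trivially on the center.
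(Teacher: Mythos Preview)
Your proposal is correct and follows essentially the same route as the paper for all four parts. Two small remarks. For (ii), the paper works directly on the adjoint group $(G_{F_{y_\infty}})_{\ad}\simeq\PSO_{2n}^J$ rather than on $\GSO_{2n}^J$: since $\Ad\circ\mu_+$ and $\Ad\circ\mu_-$ are not $\GSO_{2n,\ad}(\C)$-conjugate, any isomorphism would have to induce an outer $\R$-automorphism of $\PSO_{2n}^J$, and Lemma~\ref{lem:Out(GSO)overR}(v) rules this out for $n$ even. This sidesteps having to check that $\phi_\R$ preserves the full factor $G_{F_{y_\infty}}$ (not just its adjoint quotient) and that $\phi_{y_\infty}$ is trivial on the center---your argument for the latter is fine, but the adjoint route is a touch cleaner. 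For (i), you overestimate the difficulty in your closing paragraph: the paper simply notes that the datum is of type $D^{\mathbb H}$ and invokes \cite[Prop.~2.3.10]{DeligneCorvallis}, which says such a datum is of abelian type as soon as the complexified derived group is (a product of copies of) $\SO_{2n,\C}$; no explicit spin embedding or Kuga--Satake construction needs to be carried out.
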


\begin{remark}
About (\textit{i}): When $F=\Q$, the Shimura datum  $(G, X^\eps)$ can be shown to be of Hodge type but we do not need this fact. About (\textit{ii}): If $n$ is odd and $\qst$ is inert in $E/F$, then one can show that $(\Res_{F/\Q} G, X^+) \simeq (\Res_{F/\Q} G, X^-)$.
\end{remark}

\begin{proof}
(\textit{i}) Clearly, $(\Res_{F/\Q} G)_{\tu{ad}}$ has no compact factor defined over $\Q$, which is one of Deligne's axioms of Shimura datum \cite[2.1]{DeligneCorvallis}. The remaining two axioms follow from Lemma~\ref{lem:IsShimuraDatum}, and hence $(\Res_{F/\Q} G, X^\eps)$ is a Shimura datum. In the terminology of \emph{loc.~cit.}, $(\Res_{F/\Q} G, X^\eps)$ is of type $D^{\mathbb H}$. By \cite[Prop.~2.3.10]{DeligneCorvallis}, a datum $(G', X')$ of type $D^{\mathbb H}$ is of abelian type if the derived group of $G'_{\C}$ is (a product of) $\SO_{2n,\C}$. (Not all Shimura data of type $D^{\mathbb H}$ are of abelian type.)

(\textit{ii}) If $n$ is even then every automorphism of $(G_{F_{y_\infty}})_{\ad}$ (isomorphic to $\GSO_{2n,\ad}^J$) is inner by Lemma~\ref{lem:Out(GSO)overR} (v). On the other hand, it follows from Lemma \ref{lem:h+h-} (i) that no inner automorphism of $\GSO_{2n,\ad}^J$ takes $\Ad\circ h_+$ to $\Ad\circ h_-$, since $\Ad\circ \mu_+$ to $\Ad\circ \mu_-$ are not conjugate by $\GSO_{2n,\ad}(\C)$. Hence no automorphism of $(\Res_{F/\Q} G)_{\R}$ (thus also of $\Res_{F/\Q} G$) carries $X^+$ onto $X^-$.

(\textit{iii}) If $F \neq \Q$ there exists some real place $y_\infty' \in \cV_\infty$ of $F$ different from $y_\infty$. Since $G_{y_\infty'}$ is compact modulo center, $\Res_{F/\Q}G$ is anisotropic modulo center over $\Q$. Hence the associated Shimura varieties are projective by Bailey-Borel \cite[Thm.~1]{BailyBorel}.

(\textit{iv}) Assume that $n$ is odd (thus $[E:F]=2$). Suppose that $\sigma \in \Aut(\C/\Q)$ stabilizes the conjugacy class of $\mu^\eps$. Since $\sigma(\mu^\eps) \sim \mu^\eps$ we have $\sigma(y_\infty) = y_\infty$, so $\sigma \in \Aut(\C/F)$ with respect to $y_\infty:F\hra \C$. If $\sigma$ has non-trivial image in $\Gal(E/F)$, then Lemma~\ref{lem:h+h-} (ii) tells us that $\sigma(\mu^\eps) \sim (\mu_{-\eps}, 1, \ldots, 1)$, which is not $\GSO_{2n}(\C)$-conjugate to $\mu^\eps$. Thus $\sigma$ is trivial on $E$ (embedded in $\C$ extending $y_\infty$). Conversely, if $\sigma \in \Aut(\C/E)$, then $\sigma(\mu^\eps) = \mu^\eps$. Hence the reflex field is $E$.  When $n$ is even (thus $E=F$), the preceding argument shows that the reflex field is $F$.
\end{proof}

We introduce the following notation. Let $\eps\in \{+,-\}$.
\begin{itemize}
\item Taking an algebraic closure of $E$ in $\C$ via $x_\infty:E\hra \C$, we fix $\ol F=\ol E\hra \C$.
\item We fix an isomorphism $G \otimes_F \A_F^{\infty, \qst} \simeq G^* \otimes_F \A_F^{\infty, \qst}$.
\item $Z$ is the center of $G$.
\item $\xi=\otimes_{y|\infty} \xi_y$ is an irreducible algebraic representation of $(\Res_{F/\Q}G)\times_{\Q} \C=\prod_{y|\infty}G\times_{F,y} \C$.
\item $\Pi_{\xi}^{G(F_\infty)}$ is the set of isomorphism classes of (irreducible) discrete series representations of $G(F_\infty)$ which have the same infinitesimal and central characters as $\xi^\vee$.
\item $K_\infty^\eps$ is the centralizer of $h^\eps$ in $(\Res_{F/\Q}G)(\R)=G(F_\infty)$.
\item For irreducible admissible representations $\tau_\infty$ of $G(F_\infty)$, put
\begin{equation}\label{eq:ep_pm}
\ep^\eps(\tau_\infty \otimes \xi): = \sum_{i=1}^{n(n-1)} (-1)^i \dim \uH^i(\Lie G(F_\infty), K_\infty^\eps; \tau_\infty \otimes \xi)
\end{equation}
\end{itemize}

Let $\pi^\natural$ be a cuspidal automorphic representation of $G(\A_F)$ such that
\begin{itemize}
  \item $\pi^\natural_{\qst}$ is a Steinberg representation up to a character twist,
  \item $\pi^\natural$ is $\xi$-cohomological.
\end{itemize}
The latter condition that implies via (the proof of) \cite[Lem.~7.1]{GSp} the following condition:
\begin{itemize}
\item[\textbf{(cent)}] There exists an integer $w \in \Z$, called the \emph{central weight} of $\xi$, such that for every infinite $F$-place $y|\infty$ the central character of $\xi_y$ is of the form $x \mapsto x^w$.
\end{itemize}
We also make the folowing assumption: 
\begin{itemize}
\item[\textbf{(temp)}] $\pi^\natural_{\qq}$ is essentially tempered at every finite $F$-place $\qq$ where $\pi_{\qq}$ is unramified.
\end{itemize}
This may seem strong, but (temp) will be satisfied in practice; see the paragraph above \eqref{eq:LGC-Shimura}. Let $A(\pi^\natural)$ be the set of (isomorphism classes of) cuspidal automorphic representations $\tau$ of $G(\A_F)$ such that
\begin{enumerate}[label=(\roman*)]
  \item $\tau_{\qst}\simeq \pi^\natural_{\qst}\otimes \delta$ for an unramified character $\delta$ of the group $G(F_{\qst})$,
  \item  $\tau^{\infty,\qst}\simeq \pi^{\natural, \infty, \qst}$, and
  \item $\tau_\infty$ is $\xi$-cohomological.
\end{enumerate}
By (temp), $\tau_{\qq}$ is essentially tempered at every $\qq$ where $\pi_{\qq}$ is unramified. Define
\begin{equation}\label{eq:Multiplicity}
a^\eps(\pi^{\natural}) := (-1)^{n(n-1)/2} N^{-1}_\infty \sum_{\tau \in A(\pi^\natural)} m(\tau) \cdot \tu{ep}^\eps(\tau_\infty \otimes \xi) \in \Q,
\end{equation}
where $m(\tau)$ is the multiplicity of $\tau$ in the discrete automorphic spectrum of $G$, and
\begin{equation}\label{eq:N_infty}
N_\infty := |\Pi_\xi^{G(F_\infty)}| \cdot |\pi_0(G(F_\infty)/Z(F_\infty))| = 
\begin{cases}
2^{n-1} \cdot 2, & \mbox{if}~n~\mbox{is~even},\\
2^{n-1} , & \mbox{if}~n~\mbox{is~odd}.
\end{cases}
\end{equation}
Here $|\pi_0(G(F_\infty)/Z(F_\infty))|\in \{1,2\}$ depending on the parity of $n$ from Lemma~\ref{lem:Out(GSO)overR} (iii), (vi).

\begin{lemma}\label{lem:a-=a+}
The groups $K_\infty^+$ and $K_\infty^-$ are $G(F_\infty)$-conjugate.  In particular $a^-(\pi^\natural) = a^+(\pi^\natural)$.
\end{lemma}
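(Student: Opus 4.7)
The plan is to deduce both assertions from earlier results with essentially no additional work. First I would observe that by the definition of $h^\eps$ in \eqref{eq:Shimurah}, only the component at $y_\infty$ depends on $\eps$, so
$$K_\infty^\eps = K_\eps \times \prod_{y \in \cV_\infty \setminus \{y_\infty\}} G(F_y),$$
with $K_\eps \subset G(F_{y_\infty}) \simeq \GSO_{2n}^J(\R)$ the centralizer of $h_\eps$ as in Lemma~\ref{lem:IsShimuraDatum}. Part (iii) of that lemma already produces $g_0 \in \GSO_{2n}^J(\R)$ with $g_0 K_+ g_0^{-1} = K_-$; setting $g := (g_0, 1, \ldots, 1) \in G(F_\infty)$ under the identification of the $y_\infty$-component then gives $g K_\infty^+ g^{-1} = K_\infty^-$, which is the first assertion.

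For the equality $a^-(\pi^\natural) = a^+(\pi^\natural)$, I would invoke the standard principle that relative Lie algebra cohomology is invariant under conjugation of the auxiliary compact-modulo-center subgroup. Concretely, for any admissible representation $V$ of $G(F_\infty)$, the assignment $\phi \mapsto \bigl( v \mapsto g \cdot \phi(\Ad(g^{-1}) v) \bigr)$ induces an isomorphism of Chevalley--Eilenberg complexes
$$\Hom_{K_\infty^+}\bigl(\wedge^\bullet(\Lie G(F_\infty)/\Lie K_\infty^+),\, V\bigr) \stackrel{\sim}{\longrightarrow} \Hom_{K_\infty^-}\bigl(\wedge^\bullet(\Lie G(F_\infty)/\Lie K_\infty^-),\, V\bigr),$$
because $\Ad(g^{-1})$ intertwines the two pairs $(\Lie G(F_\infty), \Lie K_\infty^\eps)$ equivariantly, and the action of $g$ on $V$ translates the $K_\infty^+$-equivariance into $K_\infty^-$-equivariance. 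Specializing to $V = \tau_\infty \otimes \xi$ for each $\tau_\infty \in A(\pi^\natural)$ yields $\dim H^i(\Lie G(F_\infty), K_\infty^+;\, \tau_\infty \otimes \xi) = \dim H^i(\Lie G(F_\infty), K_\infty^-;\, \tau_\infty \otimes \xi)$ for every $i$, so $\ep^+(\tau_\infty \otimes \xi) = \ep^-(\tau_\infty \otimes \xi)$; plugging into \eqref{eq:Multiplicity} then gives $a^+(\pi^\natural) = a^-(\pi^\natural)$.

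There is no substantial obstacle here: once Lemma~\ref{lem:IsShimuraDatum}(iii) is in hand, both claims are formal. The only sanity check is that the set $A(\pi^\natural)$ and the multiplicities $m(\tau)$ entering \eqref{eq:Multiplicity} are themselves independent of $\eps$, which is manifest from their definitions in terms of $\pi^\natural$ and $\xi$ alone without reference to $K_\infty^\eps$.
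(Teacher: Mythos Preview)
Your proof is correct and follows essentially the same approach as the paper: decompose $K_\infty^\eps$ into its components at each infinite place, invoke Lemma~\ref{lem:IsShimuraDatum}(iii) for the $y_\infty$-component, and then use that conjugation of the pair $(\Lie G(F_\infty), K_\infty^\eps)$ induces an isomorphism on relative Lie algebra cohomology. The paper additionally notes in passing that the $K_\infty^\eps$ are connected (using that $\GSO_{2n}^{\cmpt}(\R)$ is connected at the places $y\neq y_\infty$), but this is not needed for the conjugacy statement itself; your explicit Chevalley--Eilenberg isomorphism is exactly what underlies the paper's ``It then follows that\ldots''.
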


Henceforth we will write $a(\pi^\natural)\in \Q$ for the common value of $a^\eps(\pi^\natural)$.

\begin{proof} 
The $y_\infty$-components of $K_\infty^\eps$ is $K_\eps$, which are conjugate to each other by Lemma~\ref{lem:IsShimuraDatum}. The components of $K_\infty^\eps$ at the other real places $y$ equal $G(F_y)\simeq\GSO_{2n}^{\cmpt}(\R)$, which is connected. Therefore $K_\infty^+$ and $K_\infty^-$ are connected and $G(F_\infty)$-conjugate. It then follows that $\ep^+(\tau_\infty \otimes \xi) = \ep^-(\tau_\infty \otimes \xi)$ for all $\tau_\infty$. Thus $a^+(\pi^\natural) = a^-(\pi^\natural)$.
\end{proof}

Since condition (cent) holds, we can attach to $\xi$ a lisse $\lql$-sheaf $\cL_{\iota \xi}$ on $\Sh^\eps_K$ as in \cite[below Lem.~7.1]{GSp} and \cite[Sect.~2.1, 2.1.4]{Car86}. We have a canonical model $\Sh^\eps_{K}$ over $E$ for each neat open compact subgroup $K\subset G(\A_F^\infty)$ (see \cite[\S0]{PinkThesis} for the definition of neat subgroups) and a distinguished embedding $E\subset \ol F$ (compatible with $E\subset \C$ and the fixed embedding $\li F\hra \C$). We take the limit over $K$ of the \'etale cohomology of with compact support
$$
\uH_\uc^i(\Sh^\eps, \cL_{\iota\xi}):=\varinjlim\limits_{K} \uH_\uc^i(\Sh_{K}^\eps\times_{E} \ol F, \cL_{\iota\xi}),
$$
equipped with commuting linear actions of $\Gamma_E=\Gal(\ol F/E)$ and $G(\A_F^\infty)$. The two groups act continuously and admissibly, respectively. Write $\uH_\uc^i(\Sh_K^\eps, \cL_{\iota\xi})^{\tu{ss}}$ for the semisimplification as a $\Gamma_E\times G(\A_F^\infty)$-module. (No semisimplification is necessary for the $ G(\A_F^\infty)$-action if $F\neq \Q$, in which case $\Sh_K^\eps$ is projective. This can be seen from the semisimplicity of the discrete $L^2$-automorphic spectrum via Matsushima's formula.)

We construct Galois representations of $\Gamma_E$ by taking the $\iota\tau^\infty$-isotypic part in the cohomology as follows. We consider $\tau_1,\tau_2\in A(\pi^\natural)$ are equivalent and write $\tau_1\sim \tau_2$ if $\tau_1^\infty\simeq \tau_2^\infty$. Let $A(\pi^\natural)/{\sim}$ denote the set of (representatives for) equivalence classes. Let $\tau\in A(\pi^\natural)$. Define
\begin{equation}\label{eq:tau-isotypic}
 \uH_c^i(\Sh^\eps, \cL_\xi)[\iota\tau^\infty]:=\Hom_{G(\A^\infty_F)}\big(\iota\tau^{\infty}, \uH_\uc^i(\Sh^\eps, \cL_{\iota\xi})^{\tu{ss}}\big),
\end{equation}
\begin{equation}\label{eq:Rho2Plus}
\rho_{\pi^\natural}^{\Sh, \eps}:=(-1)^{n(n-1)/2} \sum_{\tau \in A(\pi^\natural)/\sim} \sum_{i=0}^{n(n-1)} (-1)^i  \uH_c^i(\Sh^\eps, \cL_\xi)[\iota\tau^\infty].
\end{equation}
A priori $\rho_{\pi^\natural}^{\Sh, \eps}$ is an alternating sum of semisimple representations of $\Gamma_E$, thus a virtual representation (but see Theorem~\ref{thm:PointCounting} below). Fix a neat open compact subgroup
$$
K = \prod_{\qq \nmid \infty} K_\qq \subset G(\A_F^\infty)\quad\mbox{such~that}\quad(\pi^{\natural, \infty})^ K \neq 0,
$$
and also such that $K_{\qq}$ is hyperspecial whenever $\pi^\natural_{\qq}$ (or equivalently $\pi_{\qq}$) is unramified. Let $\Sbad$ be the set of rational primes $p$ for which either
\begin{itemize}
  \item  $p=2$,
  \item  $\Res_{F/\Q} G$ is ramified over $\Q_p$, or
  \item $K_p = \prod_{\qq|p} K_\qq$ is not hyperspecial.
\end{itemize}
We write $\SFbad$ (resp.~$\SEbad$) for the $F$-places (resp.~${E}$-places) above $\Sbad$.
We apply the Langlands--Kottwitz method at level $K$ to compute the image of Frobenius elements under $\rho_{\pi^\natural}^{\Sh, \eps}$ at almost all primes.

\begin{theorem}\label{thm:PointCounting}
Consider $\pi^\natural$ satisfying condition (temp); see below \eqref{eq:ep_pm}. There exists a finite set of rational primes $S$ containing $\Sbad$, such that for all $\pp$ not above $S$ and all sufficiently large integers $j$ (with the lower bound for $j$ depending on $\pp$), writing $\qq:=\pp\cap F$, we have
\begin{equation}\label{eq:PointCounting}
\Tr\rho_{\pi^\natural}^{\Sh,\eps}(\Frob_\pp^j) = \iota a(\pi^\natural) q_\pp^{j n(n-1)/4} \cdot \Tr (\spin^{\eps,\vee}(\phi_{\pi_\qq^\natural}))(\Frob_\pp^j),\qquad \varepsilon\in\{+,-\}.
\end{equation}
Moreover the summand of \eqref{eq:Rho2Plus} is nonzero only if $i=n(n-1)/2$. In particular the virtual representation $\rho_{\pi^\natural}^{\Sh,\eps}$ is a true semisimple representation.
\end{theorem}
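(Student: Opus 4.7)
The plan is to establish the identity by the Langlands--Kottwitz--Kisin method, exactly as done in the authors' companion $\GSp_{2n}$ paper \cite{GSp}, adapted to the abelian-type Shimura data $(\Res_{F/\Q} G, X^\eps)$ of type $D^{\mathbb H}$ constructed in \S\ref{sect:Shimura}. First I would invoke the Kisin--Shin--Zhu generalization of Langlands--Kottwitz to Shimura varieties of abelian type. For a rational prime $p$ outside a sufficiently large finite set $S \supset \Sbad$, and for an $E$-prime $\pp | p$ at which $K_p$ is hyperspecial, this gives a trace-formula expression
\begin{equation*}
\Tr\bigl(\mathbf 1_{K^p} \times \Frob_\pp^j \,\big|\, H^*_c(\Sh_K^\eps,\cL_{\iota\xi})\bigr) = \sum_{(\gamma_0;\gamma,\delta)} c(\gamma_0;\gamma,\delta)\, O_\gamma(f^{\infty,p})\, TO_\delta(\phi_{\pp,j})\, \tr\xi(\gamma_0)
\end{equation*}
as a sum over Kottwitz triples, where $\phi_{\pp,j}$ is the Kottwitz function at $p$ whose base-changed Satake transform encodes the minuscule cocharacter $\mu^\eps$. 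Because $\mu^\eps$ is, by Lemma~\ref{lem:h+h-}(i) and Lemma~\ref{lem:half-spin-highest-weight}, the highest weight of $\spin^\eps$ on $\hat G^*=\GSpin_{2n}$ (twisted by the $n(n-1)/4$-th power of $|\text{sim}|$, the half-dimension of $\Sh^\eps$), the unramified transfer identity yields that for any unramified representation $\sigma$ of $G(F_\qq)$
\begin{equation*}
\tr\sigma(\phi_{\pp,j}) = q_\pp^{j\,n(n-1)/4}\,\Tr\bigl(\spin^{\eps,\vee}(\phi_\sigma(\Frob_\pp^j))\bigr).
\end{equation*}

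Next I would stabilize the expression along Kottwitz--Arthur's framework, using the fundamental lemma (Ngô, plus the weighted version for quasi-split even orthogonal groups which is available in the ranges needed here). This rewrites the right-hand side as $\tau(G)\,ST^{G^*}_{\el}(f^*) + \sum_{H}\iota(G,H)\,ST^H_\el(f^H)$ with $H$ running over proper elliptic endoscopic groups of $G^*$. The crucial use of hypothesis (St) is then to force all proper endoscopic contributions to vanish upon isolating the $\pi^\natural$-isotypic component: a Steinberg representation up to twist at $\qst$ is rigid and its $L$-packet is a singleton, so any automorphic representation contributing the same Hecke eigensystem as $\pi^\natural$ away from $\qst$ lies in the image of Arthur's transfer only from $G^*$ itself, and the usual pseudocoefficient/matching argument (as in \cite[\S9]{GSp}) shows endoscopic terms match only components whose Steinberg factor does not appear. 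Combined with (temp), which guarantees that $\tau \in A(\pi^\natural)$ has tempered local components away from ramified places, this allows the application of Arthur's multiplicity formula and Bin Xu's parameterization to collect the $\pi^\natural$-contribution as the single stable term on $G^*$.

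Then I would substitute the spectral expansion of $ST^{G^*}_\el$ on the isotypic part cut out by $\pi^{\natural,\infty,\qst}$. The archimedean factor, by Arthur's Euler--Poincar\'e function construction (or by the standard computation with pseudocoefficients of discrete series within $\Pi_\xi^{G(F_\infty)}$), produces precisely the sum $N_\infty^{-1}\sum_{\tau \in A(\pi^\natural)} m(\tau)\,\ep^\eps(\tau_\infty \otimes \xi)$, which is $(-1)^{n(n-1)/2}\,a(\pi^\natural)$. The sign $(-1)^{n(n-1)/2}$ is absorbed in the definition \eqref{eq:Rho2Plus}. This yields the desired identity \eqref{eq:PointCounting} after projecting to the $\iota\pi^{\natural,\infty}$-isotypic component via a suitably chosen Hecke operator $f^{\infty,p}\cdot \mathbf 1_{K_p}$. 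Concentration in middle degree follows because each $\tau$ contributing to \eqref{eq:Rho2Plus} is tempered at finite places by (temp) and a discrete series at infinity (by (coh) and Lemma~\ref{lem:coh-GSO-coh-SO}), so the $(\Lie G(F_\infty),K_\infty^\eps)$-cohomology of $\tau_\infty\otimes\xi$ vanishes outside degree $n(n-1)/2$ by Vogan--Zuckerman, while the semisimplicity/purity of the \'etale cohomology of the (projective, when $F\neq\Q$) Shimura variety together with the Zucker conjecture in the non-compact case ensures that only middle-degree classes contribute to the alternating sum on the $\pi^\natural$-part.

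The most delicate step will be the vanishing of endoscopic contributions after Steinberg-isolation: while the principle is the same as in \cite[\S9]{GSp}, here $G$ is a non-quasi-split inner form whose local group at $\qst$ is unitary over a quaternion division algebra, so matching the Steinberg representation across $G$, $G^*$, and the endoscopic groups $H$ must be handled with care (using Lemma~\ref{lem:SteinbergCheck} and the transfer of Steinberg representations from \cite{ArthurBook}); this is the sole place where the appendix on $\SO_{2n}^{E/F}$-Steinberg parameters is used in a nontrivial way, and everything else is an abelian-type generalization of the arguments already laid out in our $\GSp$ paper.
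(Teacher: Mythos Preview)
Your overall strategy is the same as the paper's: invoke the Kisin--Shin--Zhu stabilized Langlands--Kottwitz formula for the abelian-type datum $(\Res_{F/\Q}G,X^\eps)$, simplify via Lefschetz/Euler--Poincar\'e functions at $\infty$ and $\qst$ so that only the main stable term survives, and identify the Kottwitz function at $p$ with $q_\pp^{jn(n-1)/4}\spin^{\eps,\vee}$ by Lemma~\ref{lem:h+h-}. A few remarks:

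\medskip

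\textbf{The middle-degree argument has a gap.} You assert that $\tau_\infty$ is a discrete series ``by (coh) and Lemma~\ref{lem:coh-GSO-coh-SO}'', and then invoke Vogan--Zuckerman. But Lemma~\ref{lem:coh-GSO-coh-SO} does not say that $\xi$-cohomological implies discrete series; a unitary $\xi$-cohomological representation of $\GSO_{2n}^J(\R)$ is an $A_{\mathfrak q}(\lambda)$-module, and when $\mathfrak q$ is not a Borel its $(\mathfrak g,K)$-cohomology sits in degrees strictly away from the middle. At this point in the argument no one has shown $\tau_\infty$ is tempered---that is in fact the content of Corollary~\ref{cor:Pi-is-in-Disc-series-Packet-2}, which is a \emph{consequence} of the theorem, so invoking it here would be circular. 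The paper avoids the archimedean input entirely: properness of Kisin's canonical integral model (via \cite{Youcis,MadapusiPeraToroidal}) gives that $\Frob_\pp$ acts on $H^i_c$ purely of weight $-w+i$ by \cite{WeilII}; condition (temp) at the finite place $\qq$ bounds $|\spin^{\eps,\vee}(\phi_{\pi^\natural_\qq}(\Frob_\pp))|$, and comparing with \eqref{eq:LKPCF4} forces $i=n(n-1)/2$. This weight argument is what you should use.

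\medskip

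\textbf{Unneeded ingredients.} The weighted fundamental lemma is not used: the Lefschetz functions at $\infty$ and $\qst$ kill all non-elliptic terms, so the stabilization needed is only that of the elliptic/cuspidal part (this is exactly \cite[(8.8)--(8.9)]{GSp}), for which the ordinary fundamental lemma suffices. Likewise, Arthur's multiplicity formula and Bin Xu's packet parametrization play no role in this theorem; they enter only later, in \S\ref{sect:CompatUnrPlaces}, when one must compare different $\tau\in B(\pi^\natural)$ at a single bad prime.

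\medskip

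\textbf{The $F=\Q$ case.} ``Zucker conjecture'' is the right keyword, but the actual content used is that $H^i_c(\Sh^\eps,\cL_\xi)[\iota\tau^\infty]$ agrees with the intersection cohomology on the $\tau^\infty$-part because the Steinberg component prevents $\tau$ from arising via parabolic induction from a proper Levi (cf.\ \cite[Lem.~8.1]{GSp} and the end of the proof in the paper). Your sketch of this is too thin to be called a proof.
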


\begin{proof} 
We mimic the proof of \cite[Prop.~8.2]{GSp} closely. Note that our $\rho_{\pi^\natural}^{\Sh,\eps}$ corresponds to $\rho_2^\coh$ there. Another difference is that we use $S$ to denote a set of primes of $\Q$ (not $F$ or $E$). It is enough to find $S$ as in the theorem for each $\eps$ separately, as we can take the union of the set for each of $+$ and $-$ (and take the maximum of lower bounds for $j$). We suppose that $F\neq \Q$ so that our Shimura varieties are proper. The case $F=\Q$ will be addressed at the end of proof.

Let $f_\infty= N_\infty^{-1} f_{\xi}$, where $f_{\xi}$ is the Euler-Poincar\'e (a.k.a.~Lefschetz) function for $\xi$ on $G(F_\infty)$ as recalled in \cite[Appendix~A]{GSp}. Then
$$
\Tr \tau_\infty(f_\infty)=N_\infty^{-1} \ep^\eps(\tau_\infty\otimes\xi)=N_\infty^{-1}\sum_{i = 0}^\infty (-1)^i \dim \uH^i(\ig, K^\eps_\infty; \tau_\infty \otimes \xi).
$$
Choose a decomposable Hecke operator $f^{\infty,\qst}=\prod_{\qq\neq \qst} f_\qq \in \cH(G(\A^{\infty,\qst}_F)\sslash K^{\qst})$ such that for all automorphic representations $\tau$ of $G(\A_F)$ with $\tau^{\infty,K} \neq 0$ and $\Tr \tau_\infty(f_\infty) \neq 0$ we have
$$
\Tr\tau^{\infty,\qst}(f^{\infty,\qst}) = \begin{cases}
1 & \tu{if $\tau^{\infty,\qst} \simeq \pi^{\natural,\infty,\qst}$} \cr
0 & \tu{otherwise.}
\end{cases}
$$
This is possible since there are only finitely many such $\tau$ (one of which is $\pi^\natural$). Let $f_{\qst}$ be a Lefschetz function from \cite[Eq.~(A.4)]{GSp}. There exists a finite set of primes $\Sigma \supset \Sbad\cup \{\ell\}$ such that $f_{\pp'}$ is the characteristic function of $K_{\pp'}$ (which is hyperspecial) for every $\pp'$ not above $\Sigma$. We fix $\Sigma$ and $f^\infty=\prod_{v\nmid \infty} f_v$ as above.

In the rest of the proof we fix an $E$-prime $\pp$ not above $\Sigma\cup \{\ell\}$. Write $\qq:=\pp\cap F$, and $p$ for the rational prime below $\pp$. To apply the Langlands--Kottwitz method, we need an integral model for $\Sh^\varepsilon_K$ over $\cO_{E_{\pp}}$. Thus we choose an isomorphism $\iota_p:\C\isom \lqp$ such that the valuation on $\lqp$ restricts to the $\pp$-adic valuation via $\iota_p x_\infty: E\hra \lqp$. (Recall $x_\infty$ from Lemma~\ref{lem:Shimura-datum} (iv).) The $(\Res_{F/\Q}G)(\lqp)$-conjugacy class of $\iota_p \mu:\G_m\ra (\Res_{F/\Q}G)_{\lqp}$ is defined over $E_\pp$.

For $j\in \Z_{\ge 1}$, let $f^{(j)}_p$ denote the function in the unramified Hecke algebra of $G(F_p)$ constructed in \cite[\S7]{KottwitzAnnArbor} for the endoscopic group $H=G^*$, which is isomorphic to $G$ over $F_p=F\otimes_\Q \Q_p$. (This is the function $h_p$ in \emph{loc.~cit.} We take $s$ and $t_i$'s on p.179 there to be trivial, so that $h_p$ is the image of $\phi_j$ under the standard base change map on p.180.)
 The $L$-group for $(\Res_{F/\Q}G)_{E_{\pp}}$ (with coefficients in $\C$) can be identified as
$$^L (\Res_{F/\Q}G)_{E_{\pp}} = \Big(\prod_{\sigma\in \Hom(F,\lqp)} \hat G \Big) \rtimes \Gamma_{E_{\pp}},$$
where $\Gamma_{E_{\pp}}$ acts trivially on the factor for $\sigma=\iota_p y_\infty$. (The Galois action may permute the other factors via its natural action on $\Hom(F,\lqp)$ but this does not matter to us.) The representation of $^L (\Res_{F/\Q}G)_{E_{\pp}}$ of highest weight $\iota_p \mu$ is the representation
$(\spin^\varepsilon,1,\ldots,1)$. Here $\spin^\eps$ is on the factor for $\sigma=\iota_p y_\infty$, where we identify 
$$
G\times_{F,\sigma} \lqp = \GSO^{E/F}_{2n}\times_{F,\sigma} \lqp \stackrel{\textup{via}~\iota_p x_\infty}{=\joinrel=\joinrel=} \GSO_{2n,\lqp},
$$
(in the ambient group $\GL_{2n}(E\otimes_F \lqp)\simeq \GL_{2n}(\lqp)\times \GL_{2n}(\lqp)$ of the left hand side, we project onto the $\iota_p x_\infty$-component) thus identify $\hat G=\GSpin_{2n}$ on the $\iota_p y_\infty$-component. Now let $\tau_p=\prod_{\qq'|p} \tau_{\qq'} $ be an unramified representation of $G(F_p)=(\Res_{F/\Q}G)(\Q_p)=\prod_{\qq'|p} G(F_{\qq'})$, and denote by $\phi_{\tau_p}:W_{\Q_p}\ra {}^L (\Res_{F/\Q} G)_{\Q_p}$ its $L$-parameter. Then the $\iota_p y_\infty$-component of $\phi_{\tau_p}|_{W_{E_\pp}}$ is given by $\phi_{\tau_{\qq}}|_{W_{E_\pp}}$. All in all, we can explicate \cite[(2.2.1)]{KottwitzTwistedOrbital} in our setup as\footnote{A word on the sign convention is appropriate here. The sign of \cite[(2.2.1)]{KottwitzTwistedOrbital} was flipped on \cite[p.193]{KottwitzAnnArbor}, meaning that the highest weight $-\iota_p \mu_{\eps}$ (up to the Weyl group action) should be used in \eqref{eq:trace-identity-at-p}. This was caused by the arithmetic vs geometric convention for Frobenius, and explains why $\spin^\eps$ is dualized, cf.~the paragraph above Lemma \ref{lem:Duality}. (It may appear that the sign has to be changed once again when going from \cite{KottwitzAnnArbor} to \cite{KottwitzPoints}, since the latter paper asserts that $(G,h^{-1})$ in its notation, not $(G,h)$, corresponds to the canonical model of \cite{DeligneCorvallis}. However we think the sign change is unnecessary; it should be $(G,h)$ as long as we fix the sign errors in \cite{DeligneCorvallis} as pointed out at the end of \S12 in \cite{MilneIntro}.)
}
\begin{equation}\label{eq:trace-identity-at-p}
\Tr \tau_p(f^{(j)}_p) = q_{\pp}^{j n(n-1)/4} \Tr (\spin^{\eps,\vee}(\phi_{\tau_{\qq}})(\Frob_{\pp}^j)).
\end{equation}

As in the proof of \cite[Prop.~8.2]{GSp} (where our $f^{(j)}_p$ is denoted by $h^{G^*}_p$), the Lefschetz functions $f_\infty$ and $f_{\qst}$ allow us to simplify the stabilized Langlands-Kottwitz formula \cite[Thm.~8.3.11]{KSZ} (recalled in \cite[Thm.~7.3]{GSp})  and obtain a simple stabilization of the trace formula for $G$; the outcomes are formulas (8.8) and (8.9) of \cite{GSp}. Combining them, we obtain
\begin{equation}\label{eq:Stabilizing}
\iota^{-1}\Tr(\iota f^{\infty,p}f_p \times \Frob_\pp^j,\, \uH_c(\Sh^\eps, \cL_{\iota\xi})) = T^G_{\cusp,\chi}(f^{\infty,p} f^{(j)}_p f_\infty),\qquad j\gg 1.
\end{equation}
Note that $f_p$ is the characteristic function of the hyperspecial subgroup $K_p=\prod_{\pp|p} K_\pp$. Following the argument from Equation (8.10) to (8.13) in \cite{GSp}, we compute
\begin{equation}\label{eq:LKPCF4}
\iota^{-1}\Tr \rho_{\pi^\natural}^{\Sh,\eps}(\Frob_\pp^j) = a(\pi^\natural) \Tr\pi^\natural_p(f^{(j)}_p)\stackrel{\eqref{eq:trace-identity-at-p}}{=\joinrel=} a(\pi^\natural) q_\pp^{j n(n-1)/4} \cdot \Tr (\spin^{\eps,\vee}(\phi_{\pi_\qq^\natural}))(\Frob_\pp^j).
\end{equation}

Let us show that $\rho_{\pi^\natural}^{\Sh,\eps}$ is a true representation by showing that only the middle degree cohomology contributes to $\rho_{\pi^\natural}^{\Sh,\eps}$. Since the canonical smooth integral model of $\Sh_K^\eps$ constructed by Kisin is proper as shown in \cite[Thm.~2.1.29]{Youcis} (extending the analogous result for Hodge-type Shimura varieties by Madapusi Pera \cite[Cor.~4.1.7]{MadapusiPeraToroidal}), the action of $\Frob_\pp$ on $\uH_c^i(\Sh_K, \cL_\xi)$ is pure of weight $-w+i$ by \cite[Cor.~3.3.6]{WeilII} since $\cL_\xi$ is pure of weight $-w$ \cite[\S5.4,~Prop.~5.6.2]{Pinkelladic}. (To obtain purity from Pink's result, we enlarge the set $S$ if necessary; cf.~\cite[1.3]{MorelBook}, especially the proof of (7) in Proposition 1.3.4 there.) The argument for Part (2) of \cite[Lem.~8.1]{GSp} (replacing Lemma 2.7 in the proof therein with our condition (temp)) implies that $\tau_\qq|\simil|^{w/2} = \pi^\natural_\qq|\simil|^{w/2}$ is tempered and unitary. Combining with~\eqref{eq:LKPCF4} we conclude that $\uH_c^i(\Sh^\eps, \cL_\xi)[\iota\tau^\infty]=0$ unless $i=n(n-1)/2$.

Finally, the case $F=\Q$ is handled via intersection cohomology as in the proof of \cite[Prop.~8.2]{GSp}. Thus we content ourselves with giving a sketch. For each $\tau\in A(\pi^\natural)$, one observes as in \cite[Lem.~8.1]{GSp} that $\uH_c^i(\Sh^\eps, \cL_\xi)[\iota\tau^\infty]$ is isomorphic to the $\iota\tau^\infty$-isotypic part of the intersection cohomology as $\Gamma_E$-representations. The point is that $\tau^\infty$ does not appear in any parabolic induction of an automorphic representation on a proper Levi subgroup of $G(\A)$. (If it does appear, then restricting $\tau$ from $G(\A)$ to its derived subgroup $G^{\der}(\A)$ and transferring to the quasi-split inner form $\SO_{2n}^{E/F}(\A)$ via \cite[Prop.~6.3]{GSp}, we would have a cohomological automorphic representation $\tau^{\flat}$ of $\SO_{2n}^{E/F}(\A)$ with a Steinberg component up to a twist that appears as a constituent in a parabolically induced representation. Then the Arthur parameter for $\tau^{\flat}$ cannot have the shape described in Proposition~\ref{prop:SO-to-GL}, leading to a contradiction.) The rest of the proof of \cite[Prop.~8.2]{GSp} carries over, via the analogue of part 2 of \cite[Lem.~8.1]{GSp} (the latter is justified using condition (temp) in our case), bearing in mind that the middle degree is $n(n-1)/2$ for us (which was $n(n+1)/2$ for the group $\GSp_{2n}$).
\end{proof}

\begin{corollary}\label{cor:Pi-is-in-Disc-series-Packet-2}
Let $\pi^\natural$ be as above. If $\tau\in A(\pi^\natural)$ then
\begin{enumerate}
  \item $\tau_\infty$ belongs to the discrete series $L$-packet $\Pi^{G(F_\infty)}_\xi$,
  \item  $\tau^\infty\tau'_\infty\in A(\pi^\natural)$ and $m(\tau)=m(\tau^\infty\tau'_\infty)$ for all $\tau'_\infty\in \Pi^{G(F_\infty)}_\xi$.
\end{enumerate}
Moreover $a(\pi^\natural)=\sum_{\tau\in A(\pi^\natural)/\sim} m(\tau)\in \Z_{>0}$.
\end{corollary}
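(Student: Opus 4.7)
The plan is to establish (1), (2), and the Euler-characteristic identity in sequence, transferring from the inner form $G$ to the quasi-split $G^*=\GSO^{E/F}_{2n}$ whenever Arthur's endoscopic machinery is needed, and then computing cohomology directly.

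For (1), given $\tau\in A(\pi^\natural)$, I would restrict $\tau$ to an irreducible $G_{\der}(\A_F)$-subrepresentation $\tau^\flat$. Since $G_{\der}$ and $\SO^{E/F}_{2n,\der}$ are locally isomorphic at every place different from $\qst$ (Lemma~\ref{lem:existence-of-G}) and the Steinberg condition is preserved under restriction (Lemma~\ref{lem:SteinbergCheck}), a local Jacquet--Langlands-type transfer at $\qst$ produces an automorphic representation of the quasi-split $\SO^{E/F}_{2n}(\A_F)$ satisfying $(\tu{St}^\circ)$ and $(\tu{coh}^\circ)$. Proposition~\ref{prop:SO-to-GL} then shows that its global $A$-parameter has the Steinberg shape, in particular is generic, so (Ar4) gives temperedness of the archimedean components. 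Because $G(F_\infty)$ admits discrete series (Lemma~\ref{lem:IsShimuraDatum}) and $\tau_\infty$ is $\xi$-cohomological, Vogan--Zuckerman implies that a cohomological tempered representation is a discrete series, and Lemma~\ref{lem:coh-GSO-coh-SO}(2) lifts this conclusion from $\SO^{E/F}_{2n}$ back to $G$; hence $\tau_\infty\in\Pi_\xi^{G(F_\infty)}$.

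For (2), I would invoke Arthur's multiplicity formula for $\SO^{E/F}_{2n}$ \cite{ArthurBook} together with Bin Xu's extension to the similitude group $\GSO^{E/F}_{2n}$ \cite{BinXuLPackets}, transferred between the inner form $G$ and the quasi-split $G^*$. The global Arthur parameter of any $\tau\in A(\pi^\natural)$ is determined by $\tau^\infty$---through unramified Satake away from $\qst$ and through the Steinberg shape at $\qst$---hence is constant on $\sim$-classes. The multiplicity $m(\tau)$ is then governed by this parameter and characters on local packets; because the parameter is generic, the packet at each infinite place is a full discrete series $L$-packet on which the local pairing is stable. This shows $\tau^\infty\tau'_\infty\in A(\pi^\natural)$ with $m(\tau^\infty\tau'_\infty)=m(\tau)$ for every $\tau'_\infty\in\Pi_\xi^{G(F_\infty)}$.

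For the final identity, I would compute
\[
\ep^\eps(\tau_\infty\otimes\xi)=(-1)^{n(n-1)/2}\,\bigl|\pi_0\bigl(G(F_\infty)/Z(F_\infty)\bigr)\bigr|
\]
for every $\tau_\infty\in\Pi_\xi^{G(F_\infty)}$ via Vogan--Zuckerman: at $y_\infty$ the $(\Lie G(F_{y_\infty}),K^\eps_{y_\infty})$-cohomology of a cohomological discrete series is concentrated in middle degree $n(n-1)/2$ with total dimension $|\pi_0(G(F_\infty)/Z(F_\infty))|$ (reflecting the possibly disconnected structure of $G(F_\infty)$), while each remaining infinite place, where $G$ is compact modulo center, contributes $+1$. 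Plugging this into \eqref{eq:Multiplicity}, grouping by $\sim$ using (2), and applying $N_\infty=|\Pi_\xi^{G(F_\infty)}|\cdot|\pi_0(G(F_\infty)/Z(F_\infty))|$ from \eqref{eq:N_infty} collapses all signs and factors into $a(\pi^\natural)=\sum_{\tau\in A(\pi^\natural)/\sim}m(\tau)$; positivity and integrality follow because $\pi^\natural\in A(\pi^\natural)$ and each $m(\tau)\in\Z_{\ge 1}$. The main obstacle will be (2), as setting up Arthur's multiplicity formula for the \emph{inner form} $G$ in the precise form needed---including stability of the local character on $\Pi_\xi^{G(F_\infty)}$ and transfer of multiplicities between $G$ and $G^*$---combines \cite{ArthurBook} and \cite{BinXuLPackets} with a delicate inner-form argument.
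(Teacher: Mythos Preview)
Your approach differs substantially from the paper's, and part (2) has a genuine gap.

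The paper's proof is by reference to \cite[Cor.~8.4, Cor.~8.5]{GSp}, whose arguments run as follows. Part (1) is a direct consequence of Theorem~\ref{thm:PointCounting}, already established: the vanishing of $\uH^i_c(\Sh^\eps,\cL_\xi)[\iota\tau^\infty]$ outside the middle degree, combined with Matsushima's formula, forces each automorphic $\tau'_\infty$ contributing to $\tau^\infty$ to have $(\Lie G(F_\infty),K_\infty^\eps)$-cohomology concentrated in degree $n(n-1)/2$; by Vogan--Zuckerman this characterizes the discrete series $L$-packet. No transfer to the quasi-split form is needed. Your route through Proposition~\ref{prop:SO-to-GL} would work in principle, but it requires a global transfer from the inner form $G_{\der}$ back to $\SO^{E/F}_{2n}$, which is not quite the content of any cited result (the transfer used in the paper, \cite[Prop.~6.3]{GSp}, goes the other way) and in any case duplicates information already encoded in Theorem~\ref{thm:PointCounting}.

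For part (2), the paper's argument replaces the Euler--Poincar\'e function $f_\xi$ by a pseudo-coefficient of a single $\tau'_\infty\in\Pi_\xi^{G(F_\infty)}$. Because all such pseudo-coefficients have the same stable orbital integrals (Shelstad) and the Lefschetz function at $\qst$ kills the contributions of proper endoscopic groups, the stabilized trace formula already set up in the proof of Theorem~\ref{thm:PointCounting} yields a spectral expression independent of $\tau'_\infty$; comparing the spectral sides gives $m(\tau^\infty\tau'_\infty)=m(\tau)$. Your proposal instead invokes Arthur's multiplicity formula and Xu's extension to similitude groups directly on the inner form $G$. This is the gap: \cite{ArthurBook} treats only quasi-split groups, and \cite{BinXuLPackets} likewise works over quasi-split $\GSO_{2n}$. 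A multiplicity formula for the non-quasi-split $G$ of Lemma~\ref{lem:existence-of-G} is not available in the cited literature, and your ``delicate inner-form argument'' would amount to proving it from scratch. The paper sidesteps this entirely: the simple trace formula with Steinberg and pseudo-coefficient test functions delivers exactly the needed packet-independence without any multiplicity formula for $G$. Your computation for the final identity $a(\pi^\natural)=\sum m(\tau)$ is correct and matches the paper's once (1) and (2) are in hand.
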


\begin{proof}
This is the exact analogue of \cite[Cor.~8.4, Cor.~8.5]{GSp} and the same proof applies. (Since $a(\pi^\natural)=a^+(\pi^\natural)=a^-(\pi^\natural)$, we adapt the argument there to either $\eps\in \{+,-\}$ to compute.)
\end{proof}

\begin{proposition}\label{prop:HT-weights}
Assume that $F\neq \Q$.  Let $x_\infty:E\hra \C$ and $y_\infty:F\hra \C$ be as in Lemma \ref{lem:Shimura-datum}. Then   
$$ 
\mu_{\HT}( \rho_{\pi^\natural}^{\Sh,\varepsilon} ,\iota x_\infty) \sim i_{a(\pi^\natural)}\circ  \spin^{\varepsilon,\vee}\circ \Big(\mu_{\Hodge}(\xi_{y_\infty}) - \tfrac{n(n-1)}{4} \simil \Big), \quad \varepsilon\in \{\pm 1\}.
$$
\end{proposition}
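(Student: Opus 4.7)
The plan is to apply the standard $p$-adic Hodge-theoretic decomposition of the cohomology of (abelian-type) Shimura varieties with automorphic coefficients, then specialize to the datum $(\Res_{F/\Q}G,X^\varepsilon)$.

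Since $F\neq\Q$, the Shimura variety $\Sh^\varepsilon_K$ is proper and smooth over $E$ (Lemma \ref{lem:Shimura-datum}(iii)), with canonical smooth integral models at primes not above $\Sbad$, as used in the proof of Theorem \ref{thm:PointCounting}. I would apply the $p$-adic Hodge comparison theorem (Faltings, together with its extension to local systems $\cL_{\iota\xi}$ via the BGG complex on Shimura varieties, or via the general theory of Liu--Zhu) to $H^i_c(\Sh^\varepsilon_K\otimes_E\ol F,\cL_{\iota\xi})$, deducing that it is de Rham at every prime of $E$ above $\ell$, with Hodge--Tate cocharacter computable from the Shimura cocharacter $\mu^\varepsilon$ and the Hodge cocharacter $\mu_{\Hodge}(\xi)$. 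Concretely, the formula reads that the Hodge--Tate cocharacter at $\iota x_\infty$ factors as the contragredient of the minuscule representation of $\hat{(\Res_{F/\Q}G)}_{E_\pp}$ with highest weight $\mu^\varepsilon$, composed with $\iota\mu_{\Hodge}(\xi)$ (the contragredient arising because \'etale cohomology is contravariantly related to the expected motive, consistently with the sign in the Frobenius trace formula \eqref{eq:PointCounting}).

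Next I would identify the relevant representation explicitly. By Lemma \ref{lem:h+h-}(i), the $y_\infty$-component of $\mu^\varepsilon$ is $\mu_\varepsilon$, while all other infinite-place components are trivial; by Lemma \ref{lem:half-spin-highest-weight}, the irreducible representation of $\GSpin_{2n}$ with highest weight $\mu_\varepsilon$ is $\spin^\varepsilon$. Hence the Hodge--Tate cocharacter becomes $\spin^{\varepsilon,\vee}\circ\iota\mu_{\Hodge}(\xi_{y_\infty})$, shifted by the Tate twist $-\tfrac{n(n-1)}{4}\simil$ arising from condition (L-coh), which twists $\pi_\infty$ by $|\simil|^{-n(n-1)/4}$ before requiring $\xi$-cohomologicality (so the $L$-algebraic Hodge cocharacter of the conjectural $\rho_\pi$ differs from the $C$-algebraic $\mu_{\Hodge}(\xi)$ by this correction, cf.\ the formula in Theorem \ref{thm:A}\ref{thm:Aiii}(a)). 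The prefactor $i_{a(\pi^\natural)}$ encodes that, by Corollary \ref{cor:Pi-is-in-Disc-series-Packet-2} and the Frobenius trace identity \eqref{eq:PointCounting}, the Galois representation $\rho_{\pi^\natural}^{\Sh,\varepsilon}$ is an $a(\pi^\natural)$-fold direct sum of a single Galois representation of dimension $2^{n-1}$ (the ``$\spin^{\varepsilon,\vee}$-part'' of the expected $\rho_\pi$).

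The main technical obstacle is justifying the Hodge--Tate/BGG decomposition in the abelian-type (rather than Hodge-type) setting with nontrivial coefficients $\cL_{\iota\xi}$. This can be handled either by pulling back along a finite \'etale cover by a Hodge-type Shimura variety supplied by Deligne or \cite{KSZ} (Hodge--Tate cocharacters being preserved under finite \'etale pullback, so the formula transfers verbatim), or by invoking the general $p$-adic Hodge theory of Liu--Zhu for smooth proper rigid-analytic varieties equipped with $\ell$-adic local systems. Either route yields the claimed identification of the Hodge--Tate cocharacter up to $\GL(\rho_{\pi^\natural}^{\Sh,\varepsilon})$-conjugation.
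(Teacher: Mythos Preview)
Your overall strategy is the paper's: invoke a $p$-adic comparison theorem for the proper variety $\Sh^\varepsilon_K$ (Lan--Liu--Zhu in the abelian-type setting, as you correctly identify) to relate Hodge--Tate weights to a BGG-type decomposition into coherent cohomology of automorphic vector bundles. But you skip the substantive computation, and your justification of the multiplicity $a(\pi^\natural)$ has a genuine gap.

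You assert that Corollary~\ref{cor:Pi-is-in-Disc-series-Packet-2} and the trace identity \eqref{eq:PointCounting} imply $\rho_{\pi^\natural}^{\Sh,\varepsilon}$ is an $a(\pi^\natural)$-fold direct sum of a single $2^{n-1}$-dimensional Galois representation. They do not: \eqref{eq:PointCounting} controls only Frobenius eigenvalues at primes outside $S\cup\{\ell\}$, which says nothing directly about the Hodge filtration at $\ell$, and the existence of the putative $2^{n-1}$-dimensional summand (namely $\spin^{\varepsilon,\vee}\circ\rho_\pi$) is precisely what is constructed later in \S\ref{sect:Construction}, with this proposition used as an input for (A3) in Theorem~\ref{thm:ThmAisTrue}. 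So this step is circular. The paper instead computes the Hodge--Tate multiset from scratch. Via \cite[Thm.~6.2.9]{LanLiuZhu} the multiplicity of each weight $j$ in the $\tau^\infty$-isotypic part is a sum, over Kostant representatives $\omega\in\Omega_{\tu{nc}}(j)$, of $\dim H^{n(n-1)/2-l(\omega)}(\Sh^\varepsilon(\C),\cE_{\omega\star\lambda})[\tau^\infty]$; Harris's results \cite{HarrisAnnArbor} then compute each such dimension as a one-dimensional $(\Lie Q,K^\varepsilon)$-cohomology for a unique discrete series $\tau'_\infty$, and Corollary~\ref{cor:Pi-is-in-Disc-series-Packet-2} is used not to split the Galois representation but to guarantee that every $\tau'_\infty\in\Pi^{G(F_\infty)}_\xi$ occurs with the same automorphic multiplicity $m(\tau)$. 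This yields multiplicity $a(\pi^\natural)\cdot|\Omega_{\tu{nc}}(j)|$, and an explicit Weyl-group calculation (pairing $\omega_I\star\lambda$ against $(\mu^\varepsilon)^{-1}$) identifies $|\Omega_{\tu{nc}}(j)|$ with the number of $I\in\mathscr{P}^\varepsilon(n)$ producing weight $j$ under $\spin^{\varepsilon,\vee}$. Your sketch jumps over this entire chain. (Incidentally, the shift $-\tfrac{n(n-1)}{4}\simil$ has nothing to do with (L-coh)---$\pi^\natural$ here satisfies (coh), not (L-coh)---but falls out of the $e_0$-component of the half-sum of positive roots in that Weyl computation.)
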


\begin{proof}
We start by setting up some notation. Let $\pp$ be a prime of $E$ above $\ell$, and $\sigma: E\hra \lql$ an embedding inducing the $\pp$-adic valuation on $E$. Let $r$ be a de Rham Galois representation of $\Gamma_E$ on a $\lql$-vector space. Write $D_{\tu{dR},\sigma}(r)$ for the filtered $\lql$-vector space associated with $r|_{\Gamma_{E_{\pp}}}$ with respect to $\sigma$ (as on p.99 of \cite{HarrisTaylor}). Define $\tu{HT}_\sigma(r)$ to be the multi-set containing each $j\in \Z$ with multiplicity $\dim \tu{gr}^j (D_{\tu{dR},\sigma}(r))$. (So the cardinality of $\tu{HT}_\sigma(r)$ equals $\dim r$.) When $a\in \Z_{>0}$ and $A$ is a multi-set, we write $ A^{\oplus a}$ to denote the multi-set such that the multiplicity of each element in $A^{\oplus a}$ is $a$ times that in $A$.

Write $\lambda(\xi)=\{\lambda(\xi_y)\}_{y|\infty}$ for the highest weight of $\xi=\otimes_{y|\infty} \xi_y$. In the basis of \S\ref{sect:RootDatum} for $X^*(\uT_{\GSO})=X_*(\uT_{\GSpin})=\Z^{n+1}$, we write $\xi_{y_\infty}$ and the half sum of positive roots $\rho$ for $\GSO_{2n}$ as
\begin{eqnarray*}
  \lambda(\xi_{y_\infty}) &=& (a_0,a_1,\ldots,a_n) ,\qquad a_1\ge a_2\ge \cdots \ge |a_n|\ge 0, \\
  \rho &=& (-n(n-1)/4,n-1,n-2,\ldots,1,0).
\end{eqnarray*}
Let $\mathscr P^\varepsilon(n)$ denote the collection of subsets of $\{1,2,\ldots,n\}$ whose cardinality has the same parity as $n$ if $\varepsilon=(-1)^n$ and different parity if $\varepsilon=(-1)^{n+1}$. Put
\begin{eqnarray*}
 (b_0,b_1,\ldots,b_n)&:=&(a_0-\tfrac{n(n-1)}{2},a_1+n-1,a_2+n-2,\ldots,a_{n-1}+1,a_n)\\
 &=&\lambda(\xi_{y_\infty})+\rho - (n(n-1)/4,0,0,\ldots,0),
\end{eqnarray*}
which equals $\mu_{\Hodge}(\xi_{x_\infty}) - \frac{n(n-1)}{4} \simil$. Via the description of weights in the representation $\spin^\varepsilon$ in \eqref{eq:spin-highest-weights} (which gives the weights in $\spin^{\eps,\vee}$), the proposition amounts to the assertion that
\begin{equation}\label{eq:HTweights}
\HT_{\iota x_\infty}(\rho_{\pi^\natural}^{\Sh,\varepsilon}) =  \Big\{ -b_0 - \sum_{i\in I} b_i \,\Big|\, I\in   \mathscr P^\varepsilon(n)   \Big\}^{\oplus a(\pi^\natural)}
=  \Big\{ -a_0 - \sum_{i\in I} a_i +\sum_{i\notin I}  (n-i)\,\Big|\, I\in   \mathscr P^\varepsilon(n)   \Big\}^{\oplus a(\pi^\natural)}.
\end{equation}

We prove this assertion using a result from \cite{LanLiuZhu}. Let us introduce some more notation. Write $\Sh^\varepsilon_{K}(\C)$ for the complex manifold obtained from $\Sh^\varepsilon_{K}$ by base change along $x_\infty:E\hra \C$, and $\cL_{\xi}^{\tu{top}}$ for the topological local system on $\Sh^\varepsilon_{K}(\C)$ coming from $\xi$. Writing $K^\varepsilon$ (Lemma~\ref{lem:IsShimuraDatum}) as $K^\eps=\prod_{y} K^\eps_{y}$, we have $K^\eps_{y_\infty}=K_\varepsilon$ and $K^\eps_{y}=G(F_y)\simeq\GSO^{\cmpt}_{2n}(F_y)$ for $y\neq y_\infty$. Restricting $h^\eps_{\C}$ to the first factor of $\SS_\C = \G_{m,\C}\times \G_{m,\C}$ (labeled by the identity $\C\ra\C$, not the complex conjugation), we obtain a cocharacter $\G_{m,\C}\ra K^\eps_{\C}$, which we denote by $\mu^\eps$; this is consistent with the definition of $\mu^\eps$ below \eqref{eq:Shimurah}.
We also have a parabolic subgroup $Q\subset (\Res_{F/\Q}G)_{\C}$ with Levi component $K^\eps_{\C}$ as \cite[p.57]{FaltingsLocallySymmetric} (such that the Borel embedding goes into $(\Res_{F/\Q}G)(\C)/Q$).
Fix an elliptic maximal torus $T_\infty\subset K^\varepsilon$ and a Borel subgroup $B\subset (\Res_{F/\Q}G)_{\C}$ contained in $Q$ such that $B$ contains $T_\infty$. Let $R^+$ denote the set of positive roots of $T_\infty$ in $B$. By $R^-$ we denote the set of roots of $T_\infty$ in the opposite Borel subgroup. Write $\Omega$ for the Weyl group of $T_{\infty,\C}$ in $(\Res_{F/\Q}G)_\C$, and $\Omega_{\tu{nc}}$ for the subset of $\omega\in \Omega$ such that $\omega\lambda$ is $B\cap K^\eps_{\C}$-dominant whenever $\lambda\in X^*(T_\infty)$ is $B$-dominant. Let $\Omega_{\tu{c}}$ denote the Weyl group of $T_{\infty,\C}$ in $K^\eps_\C$. The inclusion $\Omega_{\tu{nc}}\subset \Omega$ induces a canonical bijection $\Omega_{\tu{nc}}\simeq \Omega/\Omega_{\tu{c}}$. We parametrize members of the discrete series $L$-packet $\Pi^{G(F_\infty)}_\xi$ as $\{ \pi(\omega) | \omega\in \Omega_{\tu{nc}}\}$ following \cite[3.3]{HarrisAnnArbor}. (Our $\pi(\omega)$ is $\pi(\omega\lambda,\omega R^+)$ in their notation.)
Even though $Q,T_\infty,B,\Omega_c$ depend on $\eps$ (since they do on $K^\eps$), we suppress it from the notation for simplicity.

Write $\rho_G\in X^*(T)$ for the half sum of all roots in $R^+$, and define $\omega\star \lambda_0:=\omega(\lambda_0+\rho_G)-\rho_G$ for $\lambda_0\in X^*(T_\infty)$. Every irreducible representation $V_{\lambda_0}$ of $K^\varepsilon_\C$ of highest weight $\lambda_0\in X^*(T_\infty)$ gives rise to an automorphic vector bundle, to be denoted by $\cE_{\lambda_0}$. Write $\lambda=\lambda(\xi)\in X^*(T_\infty)$ for the $B$-dominant highest weight of $\xi$. 

For a finite multi-set $A$, write $\tu{mult}(a|A)\in \Z_{\ge 0}$ for the multiplicity of $a$ in $A$. For each $j\in \Z$, define $\Omega_{\tu{nc}}(j)$ to be the set of $\omega\in \Omega_{\tu{nc}}$ such that the composition $\G_m \stackrel{(\mu^\eps)^{-1}}{\ra} T_{\infty,\C}\stackrel{\omega\star\lambda}{\ra} \G_m$ equals $z\mapsto z^j$. Then
\begin{eqnarray*}
& &\tu{mult}\big(j \,|\, \HT_{\iota x_\infty}(\rho_{\pi^\natural}^{\Sh,\varepsilon})\big) \nonumber\\
&=& \sum_{\tau\in A(\pi^\natural)/\sim} \tu{mult}\big(j \,|\, \HT_{\iota x_\infty}(H^{n(n-1)/2}(\Sh^\varepsilon,\cL_\xi)[\iota \tau^\infty]\big) \quad \mbox{by~Thm.~\ref{thm:PointCounting}~and~\eqref{eq:Rho2Plus}},\nonumber\\
&=& \sum_{\tau\in A(\pi^\natural)/\sim} \sum_{\omega\in \Omega_{\tu{nc}}(j)}
\dim H^{\frac{n(n-1)}{2}-l(\omega)}(\Sh^\epsilon(\C),\cE_{\omega\star \lambda})[\tau^\infty], \quad \mbox{by~\cite[Thm.~6.2.9]{LanLiuZhu}}. 
\end{eqnarray*}
From \cite[\S3]{HarrisAnnArbor} we have an isomorphism of $G(\A_F^\infty)$-modules:
$$
H^{k}(\Sh(\C),\cE_{\omega\star \lambda})\simeq \bigoplus_{\tau}
m(\tau)\tau^\infty\otimes H^{k} (\Lie Q, K^\eps,\tau_\infty\otimes V_{\omega\star \lambda}).
$$
For each $\tau\in A(\pi^\natural)$, we pass to the $\tau^\infty$-isotypic parts (with notation as in \eqref{eq:tau-isotypic}) to obtain
$$ H^k(\Sh(\C),\cE_{\omega\star \lambda})[\tau^\infty]
\simeq \bigoplus_{\tau'_\infty} m(\tau^\infty\otimes \tau'_\infty) H^{k} (\Lie Q, K^\eps,\tau'_\infty\otimes V_{\omega\star \lambda}),$$
where the sum runs over irreducible unitary representations of $G(F_\infty)$. 
By \cite[Prop.~4.4.12]{HarrisAnnArbor}, if the cohomology on the right hand side is nonzero then $\tau'_\infty$ is $\xi$-cohomological, so $\tau^\infty\otimes\tau'_\infty\in A(\pi^\infty)$. It follows from Corollary \ref{cor:Pi-is-in-Disc-series-Packet-2} that $\tau'_\infty\in \Pi^{G(F_\infty)}_\infty$ and that $ m(\tau^\infty\otimes \tau'_\infty)=m(\tau)$. Moreover, \cite[Thm.~3.4]{HarrisAnnArbor} implies that $H^{k} (\Lie Q, K^\eps,\tau'_\infty\otimes V_{\omega\star \lambda})$ is nonzero for a unique $\tau'_\infty$, in which case the cohomology is one-dimensional. We use this to resume the computation of $ \tu{mult}(j \,|\, \HT_{\iota x_\infty}(\rho_{\pi^\natural}^{\Sh,\varepsilon}))$ and obtain
$$
\tu{mult}\big(j \,|\, \HT_{\iota x_\infty}(\rho_{\pi^\natural}^{\Sh,\varepsilon})\big)
=  \sum_{\tau\in A(\pi^\natural)/\sim} \sum_{\omega\in \Omega_{\tu{nc}}(j)}
m(\tau) = a(\pi^\natural) \cdot |\Omega_{\tu{nc}}(j)|.
$$

To conclude \eqref{eq:HTweights}, it remains to prove the following claim: that $| \Omega_{\tu{nc}}(j)|$ is precisely the number of ways $j$ can be written as $-a_0 - \sum_{i\in I} a_i +\sum_{i\notin I}  (n-i)$ with $I\in   \mathscr P^\varepsilon(n)$.

As a preparation, we fix an isomorphism between the pairs $(T_{\infty,\C},B)$ and $(\TGSO,B_{\GSO})$ induced by an inner twist $(\Res_{F/\Q}G)_{\C}\simeq (\Res_{F/\Q}G^*)_{\C}$. So the Weyl action of $\Omega$ is identified with the $W_{\GSO}$-action in \eqref{eq:WeylGroupAction}, while $\Omega_{\tu{c}}$ is identified with $\mathfrak{S}_n$ therein if $\eps=(-1)^n$. (If $\eps=(-1)^{n+1}$ then $\Omega_{\tu{c}}$ is the $\theta^\circ$-conjugate of $\mathfrak{S}_n$.) For a subset $I\subset \{1,...,n\}$, let $\omega'_I$ denote the action on $(t_0,t_1,\ldots,t_n)\in X^*(\TGSO)$ by $t_i\mapsto t_i$ for $i\in I$, $t_i\mapsto -t_i$ for $i\notin I$, and $t_0\mapsto t_0+\sum_{i\in I} t_i$. Then $\omega'_I\in\Omega$ if and only if $n-|I|$ is even.

Let us prove the claim, starting with the case $\eps=(-1)^n$. Then $n-|I|$ is even for each $I\in   \mathscr P^\varepsilon(n)$. Write $\omega_I\in \Omega_{\tu{nc}}$ for the unique intersection of the $\Omega_{\tu{c}}$-orbit of $\omega_I$ with $\Omega_{\tu{nc}}$. We have bijections
$$ 
\mathscr P^\varepsilon(n) \stackrel{\sim}{\ra}   \Omega/\Omega_{\tu{c}}   \stackrel{\sim}{\leftarrow} \Omega_{\tu{nc}}, \qquad I\mapsto \omega'_I \mapsto \omega_I.
$$
Since $\eps=(-1)^n$, we have from \eqref{eq:Spin-eps-def}
$$
\mu^\eps =(z\mapsto (z,z,,\ldots,z,z))\in  X_*(\TGSO) \simeq X_*(T_\infty),
$$ 
a priori up to the $\Omega_{\tu{c}}$-action, but $\mu^\eps$ is $\Omega_{\tu{c}}$-invariant. From this, we compute for $\lambda=(a_0,a_1,\ldots,a_n)\in X^*(T_\infty)\simeq X^*(\TGSO)$:
$$
(\omega_I \star \lambda)\circ (\mu^\eps)^{-1} = (\omega'_I \star \lambda)\circ (\mu^\eps)^{-1} =
  -a_0 - \sum_{i\in I}(a_i+n-i) + \sum_{\tu{all}~i} (n-i)= -a_0-\sum_{i\in I}a_i + \sum_{i\notin I} (n-i).
$$
Thus the claim for $\eps=(-1)^n$ follows. 

Keep $\eps=(-1)^n$ and let us prove the claim for $P^{-\varepsilon}(n)$. Since $n-|I|$ is odd, we no longer have $\omega'_I\in \Omega$ but instead have $\omega''_I:=\theta^\circ\omega'_I = \omega'_I \theta^\circ \in \Omega$. Replacing $\omega'_I$ with $\omega''_I$ in the previous paragraph, we obtain $\omega_I$ and analogous bijections
$$ 
 \mathscr P^{-\varepsilon}(n) \stackrel{\sim}{\ra}   \Omega/\Omega_{\tu{c}}   \stackrel{\sim}{\leftarrow} \Omega_{\tu{nc}},
\qquad I\mapsto \omega''_I \mapsto \omega_I.
$$
It follows from $\mu^{-\eps}=\theta^\circ \mu^{\eps}$ and $\Omega_{\tu{c}}$-invariance of $\mu^\eps$ that
$$
(\omega_I \star \lambda)\circ (\mu^{-\eps})= (\theta^\circ(\omega_I \star \lambda))\circ (\mu^\eps)= (\omega'_I \star \lambda)\circ (\mu^\eps).
$$
The proof is now done since the computation of $(\omega'_I \star \lambda)\circ(\mu^\eps)^{-1}$ in the preceding case goes through verbatim.
\end{proof}

\section{Construction of $\GSpin_{2n}$-valued Galois representations}\label{sect:Construction}

We continue in the setting of \S\ref{sect:forms-of-GSO} and \S\ref{sect:Shimura}. The goal of this section is to attach $\GSpin_{2n}$-valued Galois representations of $\Gamma_{E}$ to the automorphic representations of $G^*=\GSO^{E/F}_{2n}$ under consideration. The main input comes from the cohomology of Shimura varieties studied in the last section. Write $\std \colon \GSpin_{2n} \hra \GL_{2n}$ for the composite of $\tu{pr} \colon \GSpin_{2n}\ra \GSO_{2n}$ and the inclusion $\GSO_{2n}\subset \GL_{2n}$.

Let $\pi$ be a cuspidal automorphic representation of $G^*(\A_F)$. Let $\phi_{\pi_y}$ denote the $L$-parameter of $\pi_y$ for $y\in \cV_\infty$. Throughout this section, we assume that
\begin{itemize}[leftmargin=+.8in]
\item [\textbf{(St)}] for some finite $F$-place $\qst$ the local representation $\pi_{\qst}$ is isomorphic to the Steinberg representation up to a character twist,
\item [\textbf{(coh)}] the representation $\pi_\infty$ is cohomological for some representation $\xi$ of $(\Res_{F/\Q}G^*)\otimes_{\Q} \C$
(then $\xi$ satisfies condition (cent) by \cite[Lem.~7.1]{GSp} as before).
\end{itemize}

Choose $\pi^\flat$ a cuspidal automorphic representation of $\SO^{E/F}_{2n}(\A_F)$ contained in $\pi|_{\SO^{E/F}_{2n}(\A_F)}$ (see \cite{LabesseSchwermer2018}).
We observe that $\pi^\flat$ satisfies conditions (St$^\circ$) and (coh$^\circ$) of \S\ref{sect:GSO-valued-Galois} thanks to Lemma~\ref{lem:SteinbergCheck} and~\ref{lem:coh-GSO-coh-SO}.
Consider the following analogue of (std-reg$^\circ$) for $\pi$
\begin{itemize}[leftmargin=+.8in]
\item [\textbf{(std-reg)}] $\std\circ \phi_{\pi_y}|_{W_{\ol F_y}}$ is regular at every $y\in \cV_\infty$.
\end{itemize}
In addition to (St) and (coh), the following is also assumed throughout:
\begin{itemize}
\item Either (std-reg) holds for $\pi$, or Hypothesis \ref{hypo:non-std-reg} is true for $\pi^\flat$.
\end{itemize}
So Hypothesis \ref{hypo:non-std-reg} comes into play only when (std-reg) does not hold.

Condition (std-reg) is equivalent to the one given in the introduction via local Langlands for real groups, e.g., see \cite[\S2.3]{BuzzardGee}. If (std-reg) is imposed on $\pi$, then (std-reg$^\circ$) follows from (coh$^\circ$). By \cite[\S3, (iv)]{LanglandsRealClassification}, we have that $\phi_{\pi^\flat,y}=\tu{pr}^\circ\circ \phi_{\pi,y}$ at each $y\in \cV_\infty$. We can also see (std-reg$^\circ$) from this and (std-reg).

\begin{lemma}\label{lem:tempered}
In addition to (St) and (coh) for $\pi$, assume either (std-reg) for $\pi$ or Hypothesis \ref{hypo:non-std-reg} for $\pi^\flat$. Then $\pi^\flat$ is tempered at all places, and $\pi$ is essentially tempered at all places.
\end{lemma}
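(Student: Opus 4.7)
The plan is to reduce temperedness of $\pi^\flat$ to the conclusion of Proposition~\ref{prop:SO-to-GL}, and then transfer temperedness from $\pi^\flat$ to $\pi$. First, $\pi^\flat$ inherits conditions (St$^\circ$) and (coh$^\circ$) from $\pi$ by Lemmas~\ref{lem:SteinbergCheck} and~\ref{lem:coh-GSO-coh-SO}. If (std-reg) is assumed for $\pi$, then (std-reg$^\circ$) holds for $\pi^\flat$ via the recipe \eqref{eq:L-parameter-SO-infty} together with $\phi_{\pi^\flat_y}=\tu{pr}^\circ\circ\phi_{\pi_y}$, and Proposition~\ref{prop:SO-to-GL}(Ar4)+ yields temperedness of $\pi^\flat_v$ at every place $v$.

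Under Hypothesis~\ref{hypo:non-std-reg} in place of (std-reg), temperedness at unramified finite places is supplied directly by part (1), while temperedness at infinite places follows from Proposition~\ref{prop:SO-to-GL}(Ar4), which requires only (St$^\circ$) and (coh$^\circ$). At $\qst$, $\pi^\flat_{\qst}$ is a character twist of the Steinberg representation and is therefore tempered. What remains is the handling of ramified finite places $\qq\neq\qst$. Here I would revisit the global Arthur parameter $\psi$ from the proof of Proposition~\ref{prop:SO-to-GL}: the analysis at $\qst$ via Proposition~\ref{prop:A-parameter-1-St} forces $\psi$ to be generic (its $\SU(2)$-part is trivial) and equal to a cuspidal self-dual $\pi^\#$ on $\GL_{2n}(\A_F)$ or to the isobaric sum $\pi^\#_1\boxplus\pi^\#_2$ on $\GL_{2n-1}(\A_F)\times\GL_1(\A_F)$. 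Since $\psi$ is generic, the local packet $\tilde\Pi(\psi_\qq)$ consists of tempered irreducible representations whenever $\psi_\qq$ is tempered, so it suffices to prove that $\pi^\#$ is tempered at every finite place.

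The key observation is that $\pi^\#$ is already tempered at $\infty$ (by Ar4), at $\qst$ (where it is essentially a Steinberg), and at all unramified finite places (transferring unramified temperedness of $\pi^\flat_\qq$ through (Ar3)). To finish, one invokes the Galois representation $\std\circ\rho_{\pi^\flat}$ provided by Hypothesis~\ref{hypo:non-std-reg}(2); its compatibility at unramified places forces pure Frobenius eigenvalues there, and standard purity/rigidity arguments for the resulting compatible system yield purity (hence temperedness) of the Frobenius-semisimplified Weil--Deligne representation at the remaining ramified places, translating to temperedness of $\pi^\#_\qq$. The main obstacle is precisely this last purity step, which in the (std-reg) case is available through Caraiani's work on monodromy of nearby cycles but in the non-regular setting has to be handled via the Galois-theoretic input packaged in Hypothesis~\ref{hypo:non-std-reg}.

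Finally, to pass from $\pi^\flat$ to $\pi$, note that $\pi^\flat_v\hookrightarrow \pi_v|_{\SO^{E/F}_{2n}(F_v)}$ at every place $v$ and that the cokernel $\GSO^{E/F}_{2n}/\SO^{E/F}_{2n}\simeq\G_m$ in \eqref{eq:SO-GSO-exact-seq} is abelian. Matrix coefficients of $\pi_v$ restrict to sums of matrix coefficients of $\pi^\flat_v$, so their growth modulo $Z(\GSO^{E/F}_{2n})(F_v)\cdot\SO^{E/F}_{2n}(F_v)$ is controlled by temperedness of $\pi^\flat_v$; twisting $\pi_v$ by a suitable real power of $|\simil|_v$ (chosen according to the central character of $\pi_v$) then makes $\pi_v$ tempered. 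Hence $\pi_v$ is essentially tempered at every place, as claimed.
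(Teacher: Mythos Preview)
Your argument largely tracks the paper's: under (std-reg) you invoke (Ar4)+ of Proposition~\ref{prop:SO-to-GL}, and under Hypothesis~\ref{hypo:non-std-reg} you use (Ar4) at infinite places and part~(1) of the hypothesis at unramified finite places; the passage from $\pi^\flat$ to $\pi$ via matrix coefficients is the standard one the paper also cites. The paper's own proof is two lines and does not attempt to treat ramified finite places under Hypothesis~\ref{hypo:non-std-reg} separately --- it simply says the hypothesis supplies temperedness at finite places (which, read literally, covers only the unramified ones). This is harmless for the paper's purposes because the sole downstream use of the lemma is to verify condition (temp), which concerns only unramified places.

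Your attempt to fill in the ramified finite places via ``standard purity/rigidity arguments'' does not go through from the stated hypotheses. First, Hypothesis~\ref{hypo:non-std-reg}(2) supplies a single $\ell$-adic representation $\rho_{\pi^\flat}$ for the fixed $\iota$, not a compatible system, so there is no system to feed into such an argument. Second, and more fundamentally, part~(2) asserts (SO-i) only at \emph{unramified} primes; there is no local--global compatibility at a ramified $\qq$, so even if you could establish purity of $\tu{WD}(\rho_{\pi^\flat}|_{\Gamma_{F_\qq}})$ (which itself would need extra input such as weight--monodromy or a geometric realization not packaged into the hypothesis), you have no bridge from that to temperedness of $\pi^\#_\qq$ or $\pi^\flat_\qq$. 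So this step is a genuine gap --- though, as noted, one the paper neither fills nor needs.
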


\begin{proof}
This follows from Proposition \ref{prop:SO-to-GL} if (std-reg) holds. Otherwise, the same proposition implies $\pi^\flat$ is tempered at infinite places, and Hypothesis \ref{hypo:non-std-reg} asserts that $\pi^\flat$ is tempered at finite places. The last assertion easily follows from the temperedness of $\pi^\flat$. (See the proof of \cite[Lem.~2.7]{GSp}.)
\end{proof}

The right hand side of \eqref{eq:QuasiSplitGSO} is easily extended to a model of $\GSO_{2n}^{E/F}$ over $\cO_{F}$ (by replacing $E,F$ with $\cO_E,\cO_F$). Similarly we have a model of $\SO_{2n}^{E/F}$ closed in the model of $\GSO_{2n}^{E/F}$, defined by the condition $\lambda=1$. At each $F$-prime $\qq$ not above $2$ and unramified in $E$, we have the hyperspecial group $H_{\qq}:=\GSO_{2n}^{E/F}(\cO_{F_\qq})$, whose intersection with $\SO_{2n}^{E/F}(F_{\qq})$ is the hyperspecial subgroup $H_{0,\qq}:=\SO_{2n}^{E/F}(\cO_{F_{\qq}})$ in the latter. We will fix these choices of hyperspecial subgroups for $\GSO_{2n}^{E/F}$ and $\SO_{2n}^{E/F}$.
At each $\qq\in \tu{Unr}(\pi)$ (so that $\pi_v^{H_{\qq}}$ is nontrivial), we can thus find an irreducible $\SO_{2n}^{E/F}(F_{\qq})$-subrepresentation in $\pi_v$ with nonzero $H_{0,\qq}$-fixed vectors.
Consequently, after translating $\pi^\flat$ inside of $\pi$ by a suitable $g \in \GSO_{2n}^{E/F}(\A_F)$, we may assume that $\pi^\flat_{\qq}$ is unramified at every $\qq$ not above $\Sbad$ (with respect to the hyperspecial subgroups above).

Thanks to Theorem \ref{thm:ExistGaloisSO} if (std-reg) is assumed, or instead by Hypothesis \ref{hypo:non-std-reg}, we have a Galois representation
$$
\rho_{\pi^\flat} \colon \Gamma_{F, \Sbad} \to \SO_{2n}(\lql)\rtimes \Gal(E/F),
$$
whose restriction to $\Gamma_{E,\Sbad}$ satisfies, writing $\qq:=\pp\cap F$ for each $\pp$,
\begin{equation}\label{eq:SmallAmbiguity}
\rho_{\pi^\flat}(\Frob_\pp)_{\tu{ss}} \osim  \iota \phi_{\pi^\flat_\qq}(\Frob_\pp) \in \SO_{2n}(\lql),
\end{equation}
for all ${E}$-places $\pp \notin \SEbad$. Here $\osim$ indicates $\OO_{2n}$-conjugacy (instead of $\SO_{2n}$-conjugacy).

Let $H\subset \SO_{2n}$ denote the Zariski closure of the image of $\rho_{\pi^\flat} \colon \Gamma_{E,\Sbad}\ra \SO_{2n}(\lql)$. By Proposition~\ref{prop:containingRegularUnipotent}, either $H$ is connected or $H=H^0\times Z(\SO_{2n})$.
Therefore, via $\{\pm1\}=Z(\SO_{2n})$, we can find a Galois character
\begin{equation}\label{eq:eta-def}
\eta \colon \Gamma_{E,\Sbad} \ra \{\pm1\}
\end{equation}
such that the product morphism $\eta \rho_{\pi^\flat}$ has Zariski dense image in $H^0$. In particular if $H^0 = H$ we take $\eta = 1$. We define the character
\begin{equation}\label{eq:eta-tilde-def}
\wt \eta \colon \Gamma_{E, \Sbad} \ra \langle z^+ \rangle \subset \GSpin_{2n}
\end{equation}
to be the character so that the composition $\wt \eta \colon \Gamma_{E, \Sbad} \to \langle z^+ \rangle \isomto_{\pr^\circ} \{\pm 1\}$ is equal to $\eta$. 

Recall that $G$ is an inner form of $G^*=\GSO^{E/F}_{2n}$ giving rise to the Shimura data $(\Res_{F/\Q} G,X^{\pm})$ studied earlier. By \cite[Prop.~6.3]{GSp}, there exists a cuspidal automorphic representation $\pi^\natural$ of $G(\A_F)$ such that
\begin{itemize}
\item $\pi^\natural_{\qq'} \simeq \pi_{\qq'}$ at every finite prime $\qq'$ where $\pi_{\qq'}$ is unramified (we have $G_{\qq'}\simeq G^*_{\qq'}$ at such $\qq'$),
\item $\pi^\natural_{\qst}$ is a character twist of the Steinberg representation,
\item $\pi^\natural_\infty$ is $\xi$-cohomological.
\end{itemize}
The first condition and Lemma \ref{lem:tempered} imply that $\pi^\natural$ satisfies condition (temp) of \S\ref{sect:Shimura}. Theorem~\ref{thm:PointCounting} yields semisimple representations $\rho_{\pi^\natural}^{\Sh,\eps}$ of $\Gamma_{E,S}$ for $\eps \in \{\pm 1\}$ such that its dual $\rho_{\pi^\natural}^{\Sh,\eps,\vee}$ has the following property:
\begin{equation}\label{eq:LGC-Shimura}
\rho_{\pi^\natural}^{\Sh,\eps,\vee}(\Frob_\pp)_{\tu{ss}} \sim \iota q_\pp^{-n(n-1)/4}\big( i_{a_\pi}\circ \spin^{\eps}(\phi_{\pi_{\qq}}(\Frob_\pp))\big)\in \GL_{a_\pi 2^{n-1}}(\lql),
 \qquad \pp\notin S^E,
\end{equation}
where $S$ is a finite set of rational primes containing $\Sbad$, large enough, so that Theorem \ref{thm:PointCounting} holds for both $\eps= +$ and $\eps = -$. We define $\rho_{\pi}^{\Sh, \eps} := \rho_{\pi^\natural}^{\Sh, \eps}$ for $\eps\in \{\pm\}$ (which depends not on the choice of $\pi^\natural$ but only on $\pi$ by \eqref{eq:LGC-Shimura}), and
$$
\wt{\rho}^{\Sh,\vee}_\pi := \rho_\pi^{\Sh, +,\vee} \oplus \big( \eta\otimes \rho_\pi^{\Sh, -,\vee}\big).
$$
Then $\wt \rho^{\Sh,\vee}_\pi$ is a $\Gamma_{E,S}$-representation of dimension $a_\pi 2^n$, where $a_\pi := a(\pi^\natural)$. We set
$$
\tspin(\cdot):=\spin^+(\cdot) \oplus ( \eta\otimes \spin^-(\cdot))
$$
when the input is a $\GSpin_{2n}$-valued Galois representation or a local $L$-parameter, and write $\tspin^{a}(\cdot)$ for the $a$-fold self-direct sum of $\tspin(\cdot)$. (So $\tspin=\spin$ if $\eta=1$.)
We have
\begin{equation}\label{eq:ShimuraTrace}
\wt{\rho}_\pi^{\Sh,\vee}(\Frob_\pp)_{\tu{ss}} \sim \iota q_\pp^{-n(n-1)/4} \tspin^{a_\pi}(\phi_{\pi_{\qq}}(\Frob_\pp))\in \GL_{a_\pi 2^{n}}(\lql),\qquad \pp\notin S^E.
\end{equation}
 Then $\rho^{\Sh,\vee}_\pi$, $\wt \rho^{\Sh,\vee}_\pi$, are a $\Gamma_{E,S}$-representation of dimension $a_\pi 2^n$, where $a_\pi := a(\pi^\natural)$. 

When $*$ is a map (resp.~an element), we use $\ol{*}$ to denote the composition with the adjoint map (resp.~the image under the adjoint map) that is clear from the context.

\begin{proposition}\label{prop:UpToOuter}
There exists a continuous semisimple representation
$$
\rho_\pi^C \colon \Gamma_{{E},S} \to \GSpin_{2n}(\lql)
$$
(with $C$ standing for a cohomological normalization) such that we have
\begin{eqnarray}\label{eq:ConsequencesOfDiagram0}
\forall \pp \notin S^E: & & \quad \tspin \big(\rho_\pi^C(\Frob_\pp)_{\tu{ss}}\big) \sim \iota q_\pp^{-n(n-1)/4}  \tspin (\phi_{\pi_\qq}(\Frob_\pp)) \in \GL_{2^{n}}(\lql), \\
\forall \pp \notin \Sbad^E: && \quad  \pr^\circ \rho_\pi^C(\Frob_\pp)_{\tu{ss}} \osim \iota  \pr^\circ \phi_{\pi_\qq}(\Frob_\pp) \in \SO_{2n}(\lql).\label{eq:ConsequencesOfDiagram}
\end{eqnarray}
\end{proposition}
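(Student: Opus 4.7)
The plan is to construct $\rho_\pi^C$ by producing a continuous homomorphism into $\GSpin_{2n}(\lql)$ whose image under $\pr^\circ$ matches $\eta\rho_{\pi^\flat}|_{\Gamma_E}$ and whose image under $\tspin$ matches the Shimura cohomology representation $\wt\rho_\pi^{\Sh,\vee}$ up to multiplicity $a_\pi$. The main tool is the Saxl--Seitz classification in the form of Proposition \ref{prop:containingRegularUnipotent}, applied to the Zariski closure of the image of $\rho_{\pi^\flat}$.

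First I would pin down this image. By Corollary \ref{cor:image-has-reg-unip}, the Zariski closure of $\rho_{\pi^\flat}(\Gamma_F)$ in $\SO_{2n}$ contains a regular unipotent element, so Proposition \ref{prop:containingRegularUnipotent} forces its identity component $H^0$ to be one of an explicit short list of connected reductive subgroups: $\SO_{2n}$ itself, $i^\circ_{\std}(\SO_{2n-1})$, $i^\circ_{\reg}(\PGL_2)$, or, only when $n=4$, one of $\spin^\circ(\Spin_7)$, $\theta^\circ\spin^\circ(\Spin_7)$, or $i^\circ_{\spin}(G_2)$. By the choice of $\eta$, the twist $\eta\rho_{\pi^\flat}|_{\Gamma_E}$ has Zariski-dense image in $H^0$. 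The preimage $\wt H^0 := (\pr^\circ)^{-1}(H^0)\subset \GSpin_{2n}$ is connected (since $\ker\pr^\circ=\Gm$ is central) and sits in a central extension $1\to \Gm \to \wt H^0 \to H^0 \to 1$, along which a lift needs to be chosen.

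Next I would exhibit a lift. In the non-generic cases $H^0 \neq \SO_{2n}$, the explicit covers from Section \ref{sec:subgroups-Gspin}---the embeddings $i_{\std}\colon \GSpin_{2n-1}\hookrightarrow \GSpin_{2n}$, $i_{\reg}\colon \Gm\times \SL_2\to \GSpin_{2n}$, and the covers through $\Spin_7$ or $G_2$---provide explicit group-theoretic sections of $\wt H^0 \twoheadrightarrow H^0$ on the relevant subgroups, which by composition with $\eta\rho_{\pi^\flat}|_{\Gamma_E}$ (absorbing the central fiber into a character) yield a canonical lift. In the generic case $H^0 = \SO_{2n}$, lifting via the isogeny $\Spin_{2n}\times \Gm \twoheadrightarrow \GSpin_{2n}$ of Lemma \ref{lem:SurjectionOntoGSO}(v) requires first producing a $\Spin_{2n}$-valued lift; the obstruction in $H^2(\Gamma_E, \mu_2)$ is killed by a suitable quadratic character twist. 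In all cases the lift is unique up to twisting by a continuous character $\chi\colon \Gamma_E \to \Gm$ through the central kernel of $\pr^\circ$. I would fix $\chi$ by matching $\tspin^{a_\pi}(\rho_\pi^C)$ with $\wt\rho_\pi^{\Sh,\vee}$: the trace identity \eqref{eq:ShimuraTrace} prescribes the eigenvalues of $\tspin^{a_\pi}(\rho_\pi^C(\Frob_\pp))$ together with the $q_\pp^{-n(n-1)/4}$-normalization, and Chebotarev (Lemma \ref{lem:Chebotarev}) pins down $\chi$ from this data.

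Finally, both conclusions of the proposition would follow: \eqref{eq:ConsequencesOfDiagram} from $\pr^\circ\rho_\pi^C = \eta\rho_{\pi^\flat}|_{\Gamma_E}$ together with the outer-conjugacy identity \eqref{eq:SmallAmbiguity} for $\rho_{\pi^\flat}$, and \eqref{eq:ConsequencesOfDiagram0} from the choice of $\chi$ together with \eqref{eq:ShimuraTrace}. The hard part will be matching $\tspin^{a_\pi}(\rho_\pi^C)$ with $\wt\rho_\pi^{\Sh,\vee}$ when $H^0$ is small (especially the $\PGL_2$ and $G_2$ subcases), since the half-spin representations $\spin^\pm \rho_\pi^C$ may become reducible or even mutually isomorphic, so the two summands of $\tspin$ cannot be separated inside the Shimura cohomology by representation-theoretic invariants alone. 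I expect to appeal to Lemma \ref{lem:for-mult-one} (which classifies when a character twist can intertwine $\spin^+\rho_\pi^C$ with $\spin^-\rho_\pi^C$ for a representation with regular-unipotent image) to disambiguate and verify that $\chi$ is forced to be compatible with both halves simultaneously.
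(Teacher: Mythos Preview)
Your approach differs substantially from the paper's, and it contains a genuine gap.

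The paper does \emph{not} attempt to lift $\eta\rho_{\pi^\flat}$ through $\pr^\circ\colon\GSpin_{2n}\twoheadrightarrow\SO_{2n}$ by abstract obstruction theory or by case analysis on $H^0$. Instead it uses the Shimura-cohomological representation $\wt\rho_\pi^{\Sh,\vee}$ as the lift directly. The argument runs as follows: view $\GSpin_{2n}\hookrightarrow\GL_{a_\pi 2^n}$ via the faithful $\spin^{a_\pi}$, and compare the projectivizations $\overline{\wt\rho_\pi^{\Sh,\vee}}$ and $\overline{\spin^{a_\pi}}\circ(\eta\rho_{\pi^\flat})$ in $\PGL_{a_\pi 2^n}$. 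By \eqref{eq:SmallAmbiguity} and \eqref{eq:ShimuraTrace} these agree Frobenius-by-Frobenius; since $\eta\rho_{\pi^\flat}$ has \emph{connected} Zariski-dense image (that was the whole purpose of the $\eta$-twist), \cite[Prop.~4.6, Ex.~4.7]{GSp} upgrades this to a global $\PGL_{a_\pi 2^n}$-conjugacy. After conjugating $\wt\rho_\pi^{\Sh,\vee}$, one has literal equality of projectivizations, hence $\wt\rho_\pi^{\Sh,\vee}(\Gamma_{E,S})\subset (\pr^\circ)^{-1}\big((\eta\rho_{\pi^\flat})(\Gamma_{E,S})\big)\subset\GSpin_{2n}$ inside $\GL_{a_\pi 2^n}$. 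This yields $\wt\rho_\pi^C$ with $\spin^{a_\pi}(\wt\rho_\pi^C)=\wt\rho_\pi^{\Sh,\vee}$ and $\pr^\circ\wt\rho_\pi^C=\eta\rho_{\pi^\flat}$, and one finishes by setting $\rho_\pi^C:=\wt\eta\,\wt\rho_\pi^C$. No case split on $H^0$, no obstruction class, no character $\chi$ to solve for.

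Your route has a concrete error in the generic case $H^0=\SO_{2n}$: the obstruction to lifting through $\Spin_{2n}\to\SO_{2n}$ lies in $H^2(\Gamma_E,\mu_2)$, and a quadratic character twist of $\eta\rho_{\pi^\flat}$ (i.e.\ multiplication by a $Z(\SO_{2n})=\{\pm1\}$-valued character) does \emph{not} alter that class --- the obstruction is insensitive to central twists of the target representation. What does work is lifting directly through the $\Gm$-kernel surjection $\GSpin_{2n}\to\SO_{2n}$ and invoking Tate's theorem that continuous $H^2$ of a global Galois group with $\lql^\times$-coefficients allows such lifts; but then you still face the problem of pinning down the free $\Gm$-character $\chi$, and your proposed method (read it off from \eqref{eq:ShimuraTrace} via Chebotarev) is circular: you only know $\pr^\circ\rho_\pi^C(\Frob_\pp)$ up to $\OO_{2n}$-conjugacy, so the comparison with $\phi_{\pi_\qq}(\Frob_\pp)$ does not determine a scalar $\chi(\Frob_\pp)$ unambiguously. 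The paper sidesteps all of this because $\wt\rho_\pi^{\Sh,\vee}$ already \emph{is} the correctly normalized lift.

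Finally, your last paragraph is addressing the wrong proposition. Separating $\spin^+$ from $\spin^-$ inside $\tspin$, and the invocation of Lemma~\ref{lem:for-mult-one}, is the content of Proposition~\ref{prop:separation}, not of Proposition~\ref{prop:UpToOuter}. Here only the combined $\tspin$-statement \eqref{eq:ConsequencesOfDiagram0} is claimed, and it falls out immediately from $\spin^{a_\pi}(\wt\rho_\pi^C)=\wt\rho_\pi^{\Sh,\vee}$ together with \eqref{eq:ShimuraTrace}.
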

\begin{proof}
Consider the diagram
\begin{equation}\label{eq:BigDiagram}
\xymatrix{
\Gamma_{E,S} \ar@/^1.5pc/[rrr]^{\tilde\rho_\pi^{\Sh,\vee}} \ar@/_1.5pc/[drr]_{\eta\rho_{\pi^\flat}}  && \GSpin_{2n}(\lql) \ar[d]_{\pr^\circ}\ar@{^(->}[r]_-{\spin^{a_\pi}} &\GL_{a_\pi 2^{n}}(\lql)\ar[d] &\cr && \SO_{2n}(\lql) \ar@{^(->}[r] \ar@{^(->}[r]_{\overline{\spin^{a_\pi}}\quad}&\PGL_{a_\pi 2^{n}}(\lql). \cr }
\end{equation}
At each prime $\pp$ of $E$ not above $S$, we obtain from~\eqref{eq:SmallAmbiguity} that
\begin{eqnarray}
&& \li {\spin^{a_\pi}} ( (\eta \rho_{\pi^\flat})(\Frob_\pp)_{\tu{ss}}) \sim \iota \li {\spin^{a_\pi}}((\eta \phi_{\pi_\qq^\flat})(\Frob_\pp)) \nonumber\\
& = &\iota \li { \spin^{a_\pi}((\tilde\eta\phi_{\pi_\qq^\flat})(\Frob_\pp)) } \sim \li{\wt{\rho}_\pi^{\Sh,\vee}(\Frob_\pp)_{\tu{ss}}} \in \PGL_{a_\pi 2^n}(\lql).
\label{eq:FrobeniusConjugacy}
\end{eqnarray}
Recall that $\eta \rho_{\pi^\flat}$ has connected image. So \eqref{eq:FrobeniusConjugacy} implies, via \cite[Prop.~4.6, Ex.~4.7]{GSp}, the existence of $g \in \GL_{a_\pi 2^{n}}(\lql)$ such that
$$
\overline {\wt{\rho}_\pi^{\Sh,\vee}} = g \big( \li {\spin^{a_\pi}}(\eta \rho_{\pi^\flat}) \big) g^{-1} \colon \Gamma_{{E},S} \to \PGL_{a_\pi 2^{n}}(\lql).
$$
Replace $\wt{\rho}_\pi^{\Sh,\vee}$ by $g^{-1} \wt{\rho}_\pi^{\Sh,\vee} g$ so that $\li{\wt{\rho}_\pi^{\Sh,\vee}} = \li {\spin^{a_\pi}}( \eta\rho_{\pi^\flat})$. From Diagram~\eqref{eq:BigDiagram} we deduce that
$$
\wt{\rho}_{\pi}^{\Sh,\vee}(\Gamma_{{E},S}) \subset \pr^{\circ,-1}\big( (\eta \rho_{\pi^\flat})(\Gamma_{{E},S})\big) \subset \GSpin_{2n}(\lql),
$$
where $\GSpin_{2n}$ is viewed as a subgroup of $\GL_{a_\pi 2^n}$ via $\spin^{a_\pi}$. That is, there exists a representation $\wt{\rho}_\pi^C \colon \Gamma_{{E},S} \to \GSpin_{2n}(\lql)$ such that 
$$
\spin^{a_\pi}(\wt{\rho}^C_\pi) = \wt{\rho}_\pi^{\Sh,\vee}\quad\mbox{and}\quad \pr^\circ \wt{\rho}_{\pi}^C = \eta\otimes \rho_{\pi^\flat}.
$$
Define $\rho_\pi^C:=\tilde\eta \wt{\rho}_\pi^C$. Then it follows that
$$
\tspin^{a_\pi}(\rho^C_\pi) = \wt{\rho}_\pi^{\Sh,\vee}\quad\mbox{and}\quad \pr^\circ \rho_{\pi}^C = \rho_{\pi^\flat}.
$$
Thanks to \eqref{eq:SmallAmbiguity} and \eqref{eq:ShimuraTrace}, $\rho_\pi^C$ satisfies \eqref{eq:ConsequencesOfDiagram} and \eqref{eq:ConsequencesOfDiagram0}. The proof is complete.
\end{proof}

\begin{remark}
The bottom row in \eqref{eq:BigDiagram} cannot be replaced with $\PSO_{2n}$. (If it did, since $\ol \rho_{\pi^\flat}$ has connected image in $\PSO_{2n}$ by Proposition~\ref{prop:containingRegularUnipotent}, the argument above would work without introducing the $\eta$-twist.) For instance, observe that $\GSpin_{2n} \stackrel{\spin}{\longrightarrow} \GL_{2^{n}} \ra \PGL_{2^{n}}$ does not factor through $\PSO_{2n}$ since $\spin^+$ and $\spin^-$ have different central characters.
\end{remark}

We can refine \eqref{eq:ConsequencesOfDiagram0} by separating $\spin^+$ and $\spin^-$, which is a key intermediate step towards the main theorem. Our argument is quite delicate and sensitive to the underlying group-theoretic structures.

\begin{proposition}\label{prop:separation}
Up to replacing $\rho_\pi^C$ by $\eta \theta(\rho_\pi^C)$ if necessary, we have the following.
For every $\pp \notin S^E$ and $\varepsilon\in \{+,-\}$
\begin{eqnarray}\label{eq:separation0}
\forall \pp \notin S^E: && \quad \spin^\varepsilon \rho_\pi^C(\Frob_\pp)_{\tu{ss}} \sim \iota q_\pp^{-n(n-1)/4}  \spin^\varepsilon \phi_{\pi_\qq}(\Frob_\pp) \in \GL_{2^{n-1}}(\lql), \\
\forall \pp \notin \Sbad^E: && \quad   \pr^\circ \rho_\pi^C(\Frob_\pp)_{\tu{ss}} \osim \iota  \pr^\circ\phi_{\pi_\qq}(\Frob_\pp) \in \SO_{2n}(\lql).\label{eq:separation}
\end{eqnarray}
where we write $\qq$ for the prime of $F$ below $\pp$.
\end{proposition}

\begin{proof}
Recall from \S\ref{sect:Notation} that we often write $G_0$ to mean $G_0(\lql)$ when $G_0$ is a reductive group over $\lql$. Moreover we assume $\pp \notin S^E$ throughout, without repeating this condition.

The assertion \eqref{eq:separation} follows from \eqref{eq:ConsequencesOfDiagram} (and it is invariant under conjugation by an element of $\GPin_{2n}$). The main thing to prove is \eqref{eq:separation0}. For simplicity, write $\rho:=\rho_\pi^C$, $\check\rho^{\Sh, \eps} := \rho_\pi^{\Sh, \eps,\vee}$,
$\rho^\circ:=\pr^\circ\rho_\pi^C$, and $a:=a_\pi$. 
 From \eqref{eq:ShimuraTrace} and \eqref{eq:ConsequencesOfDiagram0} we have
\begin{equation}\label{eq:Sh+-Sh-}
\check\rho^{\Sh,+}\oplus (\eta\otimes \check\rho^{\Sh,-})\simeq \left(\spin^+ \rho \oplus (\eta\otimes\spin^- \rho)\right)^{\oplus a}.
\end{equation}

Write $Z:=Z(\GSpin_{2n})$ and $H$ for the Zariski closure of $\textup{im}(\rho^\circ)$ in $\SO_{2n}$. Then $H$ contains a regular unipotent element by Corollary~\ref{cor:image-has-reg-unip}. We divide into two cases based on Proposition~\ref{prop:containingRegularUnipotent}.

\smallskip

\paragraph{\textbf{Case 1}} Assume $\spin^\varepsilon\rho_\pi^C$ is irreducible for $\eps = -$. This happens when $H^0$ is $\SO_{2n}$, $i^\circ_{\std}(\SO_{2n-1})$, or $n = 4$ and $H^0 = \spin^\circ(\Spin_7)$ (possibly after conjugation in $\GSpin_{2n}$). In the first two subcases $\spin^+ \rho$ is also irreducible; for irreducibility in the third case, see Lemma~\ref{lem:IrreducibilityOfSpin-}. 
 
If $\spin^+ \rho \simeq \eta\otimes\spin^- \rho$ then it is clear from \eqref{eq:Sh+-Sh-} that $\check\rho^{\Sh,+}\simeq \eta\otimes \check\rho^{\Sh,-}\simeq (\spin^+\rho)^{\oplus a} \simeq (\eta\otimes \spin^-\rho)^{\oplus a}$. So the proposition follows from Theorem~\ref{thm:PointCounting}. 

Henceforth assume that  $\spin^+ \rho \not\simeq \eta\otimes\spin^- \rho$.

We claim that $\spin^-\rho(\gamma)_{\textup{ss}}$ is regular in $\GL_{2^{n-1}}$ on a density 1 set of $\gamma\in \Gamma$.  Define $X^-$ to be the subset of $h\in H(\lql)$ such that the semisimple part of $\li{\spin}^-(h)$ is non-regular in $\PGL_{2^{n-1}}$. Then $X^-$ is Zariski-closed and conjugation-invariant in $H$. 
To show $H\neq X^-$, let $\tilde H\subset \GSpin_{2n}$ be the preimage of $H$, so that $\tilde H^0$ equals  $\Spin_{2n}$, $i_{\std}(\Spin_{2n-1})$, or $\li\spin(\Spin_7)$ in the three cases, respectively.
 Then the restriction of $\spin^-$ via $\tilde H^0 \hra \GSpin_{2n}$ is an irreducible representation with distinct weight vectors. (When $\tilde H^0=i_{\std}(\Spin_{2n-1})$, the restriction is the spin representation of $\Spin_{2n-1}$ by Proposition~\ref{prop:res-of-spin}.) So some element $h_0$ of $\tilde H^0$ maps to a regular element of $\GL_{2^{n-1}}$ under $\spin^-$. It follows that some element of $H^0$ maps to a regular element of $\PGL_{2^{n-1}}$. 
We know that $\wt H$ is a subgroup of $Z(\GSpin_{2n})\wt H \subset \GSpin_{2n}$, 
thus by multiplying $h_0$ by elements in the center, we obtain in each connected component of $H$ an element with regular image in $\GL_{2^{n-1}}$.
 In particular for each connected component $C$ of $H$ we have $X^- \cap C \neq C$  and thus $\dim X^- <\dim H$. Therefore the set of $\gamma$ such that $\rho^\circ(\gamma)\notin X^-$ has density 1 according to  Lemma~\ref{lem:Chebotarev}, and in this case $\li{\spin^-\rho(\gamma)_{\textup{ss}}}=\li{\spin^-}(\rho^\circ(\gamma)_{\tu{ss}})$ is regular. The claim is verified.

Given a square matrix $g$, let $\mathscr{EV}(g)$ for the multi-set of its eigenvalues. Since  $\spin^+ \rho \not\simeq \eta\otimes \spin^- \rho$, there exists $\gamma\in \Gamma$ such that 
\begin{itemize}
  \item $\spin^- \rho(\gamma)$ has distinct eigenvalues,
  \item $\mathscr{EV}(\eta(\gamma) \spin^- \rho(\gamma))\neq \mathscr{EV}(\spin^+ \rho(\gamma))$.
\end{itemize}
In particular there exists an eigenvalue $\alpha $ of $\eta(\gamma) \spin^- \rho(\gamma)$ which is not an eigenvalue of $\spin^+ \rho(\gamma)$. Then $\alpha$ appears as an eigenvalue with multiplicity $a$ on the right hand side of \eqref{eq:Sh+-Sh-}. We know from Theorem \ref{thm:PointCounting} that each eigenvalue of $\check\rho^{\Sh,+}(\gamma)$ and $\eta(\gamma)\check\rho^{\Sh,-}(\gamma)$ appears with multiplicity divisible by $a$. Thus $\alpha$ is an eigenvalue of either $\check\rho^{\Sh,+}(\gamma)$ or $\eta(\gamma) \check\rho^{\Sh,-}(\gamma)$ but not both. This implies, together with Theorem \ref{thm:PointCounting} and the irreducibility of $\spin^-\rho$, that (i) $(\eta \otimes \spin^-\rho)^{\oplus a} \simeq \eta \otimes \check \rho^{\Sh,-}$ and $(\spin^+ \rho)^{\oplus a} \simeq \check\rho^{\Sh,+}$, or (ii) $(\eta \otimes \spin^-\rho)^{\oplus a} \simeq \check\rho^{\Sh,+}$ and $(\spin^+\rho)^{\oplus a}\simeq \eta \otimes \check\rho^{\Sh,-}$. In case (i), Equation~\eqref{eq:separation0} follows from Theorem~\ref{thm:PointCounting}. If (ii) occurs, replace $\rho$ with $\eta\otimes (\vartheta \rho \vartheta^{-1})$, where $\vartheta\in \GPin_{2n}$ is as in \eqref{eq:Elementw_spin}. (Here $\tu{im}(\eta)=\{\pm1\}$ is viewed as the subgroup of $\ker(\pr^\circ)=\G_m$.) Then equations \eqref{eq:ConsequencesOfDiagram0} and \eqref{eq:ConsequencesOfDiagram} are still true (as $\pr^\circ(\eta) = 1$). Hence if we run the current proof again, we will be in Case 1(i). We are done in Case 1.

\smallskip

\paragraph{\textbf{Case 2}} We now assume $H^0 \subset i^{\circ}_{\std}(\SO_{2n-1})$, which covers the  cases $H^0 = i^\circ_{\reg}(\PGL_2)$, $H^0 = i^\circ_\std(G_2)$ and $n = 4$. By Proposition \ref{prop:containingRegularUnipotent} we have $H \subset H^0 Z(\SO_{2n})$, and $\rho$ has image in the group $H_{2n-1}$ from \eqref{eq:H-subgroup}. By \eqref{eq:eta-def}, $\eta \rho^\circ$ has dense image in $H^0$, and by \eqref{eq:eta-tilde-def}, $\wt \rho$ has image in $\GSpin_{2n-1} \subset H_{2n-1}$. In particular $\eta$ is equal to $\kappa_0 \circ \rho$, with $\kappa_0$ from \eqref{eq:kappa_0-def}. From \eqref{eq:kappa_0-and-theta} we obtain
\begin{equation}\label{eq:Case2-thetaStability-1}
\theta \rho_\p \sim \eta_\p \rho_\p  \in \GSpin_{2n},
\end{equation}
where we write $\rho_\p := \rho(\Frob_\p)_\tu{ss}$ and $\eta_\p := \eta(\Frob_\p)$. Similarly we write $\wt \eta_\p := \wt \eta(\Frob_\p)$ and $\phi_\p := \iota q_\p^{-n(n-1)/4} \phi_{\pi_\qq}(\Frob_\p)$. We claim
\begin{equation}\label{eq:Case2-thetaStability-2}
\theta \phi_\p \sim \eta_\p \phi_\p \in \GSpin_{2n}.
\end{equation}
By Equation \eqref{eq:SmallAmbiguity} we have $\pr^\circ \rho_\p \osim \pr^\circ \phi_\p \in \SO_{2n}$. Multiplying $\eta$, we obtain $\pr^\circ(\wt \eta_\p \phi_p) \osim \pr^\circ(\wt \eta_p \rho_\p)$. By assumption, $\pr^\circ(\wt \eta_p \rho_\p) \in i^{\circ}_{\std}(\SO_{2n-1})$. Hence $\wt \eta_\p \phi_\p = g x g\inv$ for some $g\in \GPin_{2n}$ and $x \in i_{\std}(\GSpin_{2n-1})$. We have $\theta(x) = x$ and thus
$$
\theta (\wt \eta_\p \phi_\p) = \theta(g) x \theta(g)\inv = (\theta(g) g\inv) \wt \eta_\p \phi_\p (g \theta(g)\inv)
$$
As $\theta(g) g\inv \in \GSpin_{2n}$, this implies that $\theta (\wt \eta_\p \phi_\p) \sim \wt \eta_\p \phi_\p$. Since $\theta(\wt \eta_\p) = \eta_\p \wt \eta_\p$, \eqref{eq:Case2-thetaStability-2} follows. 

In \eqref{eq:ConsequencesOfDiagram0} we established 
$$
\spin^+(\rho_\p) \oplus \eta_\p \spin^-(\rho_\p) \sim \spin^+(\phi_\p) \oplus \eta_\p \spin^-(\phi_\p),
$$
which implies by \eqref{eq:Case2-thetaStability-1} and \eqref{eq:Case2-thetaStability-2} $\spin^{+, \oplus 2}(\rho_\p) \sim \spin^{+, \oplus 2}(\phi_\p)$. It follows that $\spin^{+}(\rho_\p) \sim \spin^{+}(\phi_\p)$, Similarly we deduce $ \spin^{-}(\rho_\p) \sim \spin^{-}(\phi_\p)$.
\end{proof}

From now on, we replace, if necessary, $\rho_\pi^C$ by $\eta \theta(\rho_\pi^C)$ so that  the conclusion of Proposition \ref{prop:separation} holds for $\rho_\pi^C$. 

\begin{proposition}\label{prop:FinalProp}
We have that (writing $\qq:=\pp\cap F$)
\begin{equation}\label{eq:FinalCompatib}
\forall \pp \notin S^E:  \quad \rho_\pi^C(\Frob_\pp)_{\tu{ss}} \sim \iota q_\pp^{-n(n-1)/4} \phi_{\pi_\qq}(\Frob_\pp) \in \GSpin_{2n}(\lql).
\end{equation}
\end{proposition}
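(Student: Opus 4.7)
The plan is to leverage the fact that $\{\spin^+,\spin^-,\std,\cN\}$ is a fundamental set for $\GSpin_{2n}$ (as recorded in the proof of Lemma~\ref{lem:GPin-conjugacy}). Proposition~\ref{prop:separation} already yields matching under $\spin^+$, $\spin^-$, and $\std$, so the task reduces to showing
\[
\cN(\rho_\pi^C(\Frob_\pp)) \;=\; \iota q_\pp^{-n(n-1)/2}\,\cN(\phi_{\pi_\qq}(\Frob_\pp)),\qquad \pp\notin S^E.
\]
Moreover, once this last equality is established, the combination of $\spin^+$- and $\spin^-$-matching (not merely the $\spin$-matching) rules out the $\theta$-ambiguity, so $\GPin_{2n}$-conjugacy from Lemma~\ref{lem:GPin-conjugacy} automatically upgrades to $\GSpin_{2n}$-conjugacy.

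First I would extract a determinant relation from Proposition~\ref{prop:separation}. By Lemma~\ref{lem:Duality} we have $\spin^\varepsilon\simeq\spin^{(-1)^n\varepsilon,\vee}\otimes\cN$, and since $\GSpin_{2n}$ is connected it admits no nontrivial character to $\{\pm1\}$, so the induced identity $\det\spin^\varepsilon=\cN^{2^{n-2}}$ holds on $\GSpin_{2n}$. Taking determinants of $\spin^\varepsilon\rho_\pi^C(\Frob_\pp)_{\tu{ss}}\sim \iota q_\pp^{-n(n-1)/4}\spin^\varepsilon\phi_{\pi_\qq}(\Frob_\pp)$ therefore gives
\[
\cN(\rho_\pi^C(\Frob_\pp))^{2^{n-2}} \;=\; \bigl(\iota q_\pp^{-n(n-1)/2}\cN(\phi_{\pi_\qq}(\Frob_\pp))\bigr)^{2^{n-2}}.
\]
Using Lemma~\ref{lem:dualizing-spinor-norm-and-similitude} to identify $\cN\circ\phi_{\pi_\qq}$ at unramified $\qq$ with the restriction of $\omega_\pi$ along $\tu{cent}^\circ$, the right-hand side is the value at $\Frob_\pp$ of the Galois character $\chi_{\cN}$ attached by class field theory to $\omega_\pi\cdot|\cdot|^{n(n-1)/2}$. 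Consequently $\chi:=(\cN\circ\rho_\pi^C)\cdot\chi_\cN^{-1}$ is a continuous character $\Gamma_{E,S}\to\mu_{2^{n-2}}$ of finite $2$-power order.

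Second, to pin down $\chi=1$, I would exploit the global construction of $\rho_\pi^C$ via Shimura cohomology in Proposition~\ref{prop:UpToOuter}. Recall $\spin^{a_\pi}(\tilde\rho_\pi^C)\simeq\rho_\pi^{\Sh,+,\vee}\oplus(\eta\otimes\rho_\pi^{\Sh,-,\vee})$ and $\rho_\pi^C=\tilde\eta\tilde\rho_\pi^C$; in particular $\spin(\rho_\pi^C)^{\oplus a_\pi}\simeq \rho_\pi^{\Sh,+,\vee}\oplus \rho_\pi^{\Sh,-,\vee}$. Applying $\det$ and the $\GSpin_{2n}$-identity $\det\spin=\cN^{2^{n-1}}$ reduces the determination of $\cN\circ\rho_\pi^C$ to the computation of $\det\rho_\pi^{\Sh,\pm}$. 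The latter can be read off the cohomology of $\Sh^\pm$ via Poincar\'e duality together with the central-character action of $\xi$ (i.e.\ condition (cent) in \S\ref{sect:Shimura}) and the trace identity on $\Frob_\pp$ given by Theorem~\ref{thm:PointCounting}. Matching this explicit determinant with $\chi_\cN^{a_\pi\cdot 2^{n-1}}$ will then force $\chi=1$.

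The hard part is precisely this last step: bounding $\chi$ to lie in $\mu_{2^{n-2}}$ is cheap, but showing it is trivial requires either (i) a direct Poincar\'e-duality computation of $\det\rho_\pi^{\Sh,\pm}$ (matching it with the automorphic central character, against which the only available ``slack'' is a character killed by $a_\pi 2^{n-1}$), or (ii) a rigidity argument anchored at the Steinberg place $\qst$ where $\pi_{\qst}$ is an explicit twist of Steinberg and hence $\cN\circ\phi_{\pi_{\qst}}$ is computable. Either way, identifying the spinor-norm character with the expected global Hecke character is the only serious obstacle; the other three matchings are handed to us by Propositions~\ref{prop:UpToOuter}–\ref{prop:separation}.
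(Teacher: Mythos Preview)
Your reduction is correct: after Proposition~\ref{prop:separation} the only missing ingredient is the spinor-norm identity, and once that is in hand the combination of $\spin^+$, $\spin^-$, $\std$, $\cN$ determines the $\GSpin_{2n}$-conjugacy class by \cite[Lem.~1.1]{GSp}. The gap is in how you propose to kill the residual finite-order character~$\chi$.

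Both routes you sketch run into trouble. Route (i): taking $\det$ of the Shimura realization $\spin(\rho_\pi^C)^{\oplus a_\pi}\simeq\rho_\pi^{\Sh,+,\vee}\oplus\rho_\pi^{\Sh,-,\vee}$ recovers only $(\cN\rho_\pi^C)^{a_\pi 2^{n-1}}$, which is \emph{weaker} than the $2^{n-2}$-th-power relation you already extracted; a Poincar\'e-duality computation of $\det\rho_\pi^{\Sh,\pm}$ would not improve this exponent. Route (ii): the place $\qst$ lies in $\Sbad^F$ (since $\pi^\natural_{\qst}$ is Steinberg, so $K_{\qst}$ is not hyperspecial), and you have no established compatibility for $\cN\circ\rho_\pi^C$ there --- the $\SO_{2n}$-valued compatibility (SO-i) at $\qst$ carries no information about~$\cN$.

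The paper avoids determinants entirely. The missing idea is to apply the duality $\spin^\eps\simeq\spin^{(-1)^n\eps,\vee}\otimes\cN$ of Lemma~\ref{lem:Duality} \emph{twice}: once globally to $\rho_\pi^C$, and once locally to $\phi_{\pi_\qq}$ inside the $\spin^\eps$-compatibility from Proposition~\ref{prop:separation}. Chaining these gives two expressions for $\spin^\eps(\rho_\pi^C)$ as $(\spin^{(-1)^n\eps})^\vee(\rho_\pi^C)$ tensored with a character --- once with $\cN\rho_\pi^C$, once with $\chi_\ell^{n(n-1)/2}\iota\omega_\pi$ --- so the ratio $\chi$ satisfies $\chi\otimes\spin^\eps(\rho_\pi^C)\simeq\spin^\eps(\rho_\pi^C)$. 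Now Lemma~\ref{lem:for-mult-one}(i), which uses the regular unipotent in the image, forces $\chi=1$ immediately. Keeping the full $\spin^\eps$-module rather than collapsing to $\det$ is what makes the rigidity lemma from \S\ref{sec:subgroups-Gspin} applicable.
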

\begin{proof}
We first establish the claim that $\chi_\ell^{n(n-1)/2} \iota\omega_\pi = \cN\rho_\pi^C$, where $\chi_\ell$ is the cyclotomic character and we view $\omega_\pi$ as a Galois character via class field theory. In view of Lemma~\ref{lem:for-mult-one}(i), it suffices to check that
\begin{equation}\label{eq:SpinorNormCheckA}
\chi_\ell^{n(n-1)/2}\iota \omega_\pi \cdot \spin^\eps(\rho^C_{\pi})   \simeq \cN\rho_\pi^C \cdot \spin^\eps(\rho^C_\pi),\quad \eps \in \{\pm 1\}.
\end{equation}
By Lemma~\ref{lem:Duality} we have
\begin{equation}\label{eq:SpinorNormCheckB}
\spin^\eps(\rho_\pi^C) \simeq ( \spin^{(-1)^n \eps})^\vee(\rho_\pi^C) \otimes \cN\rho_\pi^C.
\end{equation}
Let $\pp \notin S^E$, and write shorthand $\rho_\p := \rho^C_\pi(\Frob_{\pp})_{\tu{ss}}$ and $\phi_\p := \iota q_\p^{-n(n-1)/4}\phi_{\pi_\qq}(\Frob_\pp)$. 
We apply \eqref{eq:separation0} and compute using Lemma~\ref{lem:Duality} again (but now locally)
\begin{align}\label{eq:SpinorNormCheckC}
\spin^\eps(& \rho_\p) \simeq \spin^\eps(\phi_\p) \simeq (\spin^{(-1)^n \eps})^\vee(\phi_\p) \otimes \cN(\phi_\p) \simeq (\spin^{(-1)^n \eps})^\vee(\rho_\p) \otimes \cN(\phi_\p)
\end{align}
We now appeal to functoriality of the Satake isomorphism (unramified local Langlands correspondence) with respect to $\Gm \hra \GSO_{2n}$ (dual to $\cN:\GSpin_{2n}\ra \Gm$), to get $\cN(\phi_\p) = \chi_\ell^{n(n-1)/2}(\Frob_\p) \iota \omega_\pi(\Frob_p)$. Therefore 
$$
\spin^\eps(\rho_\pi^C) \simeq ( \spin^{(-1)^n \eps})^\vee(\rho_\pi^C) \otimes \chi_\ell^{n(n-1)/2}\iota\omega_{\pi}.
$$ 
Comparing with \eqref{eq:SpinorNormCheckB}, we obtain \eqref{eq:SpinorNormCheckA}. At this point we have established that 
\begin{align}\label{eq:GatherEverything}
\spin^\varepsilon \rho_\pp  &\sim \spin^\varepsilon \phi_{\pp} \in \GL_{2^{n-1}}(\lql) & \textup{(Prop.~\ref{prop:separation}),} \cr
\pr^\circ \rho_\pp  & \osim \pr^\circ \phi_{\pp}  \in \SO_{2n}(\lql) & \textup{(Prop.~\ref{prop:separation}),} \cr
\cN \rho_\pp  & = \cN\phi_{\pp} \in \Gm(\lql) & \textup{(claim~above)}
\end{align}
By \cite[Lem.~1.1, table]{GSp} a semi-simple element $\gamma$ of $\GSpin_{2n}(\lql)$ is determined up to conjugacy by the conjugacy classes of $\spin^+\gamma, \spin^-\gamma \in \GL_{2^{n-1}}$, $\std \gamma \in \GL_{2n}$ and $\cN\gamma \in \Gm$. We complete the proof by noting that the two sides of \eqref{eq:FinalCompatib} become conjugate under $\spin^+$, $\spin^-$, $\std$, and $\cN$ by \eqref{eq:GatherEverything}.
\end{proof}

\section{Compatibility at unramified places}\label{sect:CompatUnrPlaces}

We continue in the setup of \S\ref{sect:Construction} with the same running assumptions. We determined the image of Frobenius under $\rho_{\pi}^C$ at each prime away from some finite set $S$. Now we compute the image at the finite places $\pp \nmid \ell$ above $S \backslash \Sbad$. The argument follows that of \cite[\S10]{GSp} but there are new technical difficulties due to half-spin representations and the automorphism $\theta$.

\begin{proposition}\label{prop:AllUnramifiedPlaces}
Let $\pp$ be a prime of $E$ not lying above $S_{\tu{bad}} \cup \{\ell\}$. Then $\rho^C_{\pi}$ is unramified at $\pp$. Moreover writing $\qq:=\pp\cap F$,
$$
\rho^C_{\pi}(\Frob_\pp)_{\ssimple} \sim \iota q_\pp^{-n(n-1)/4} \phi_{\pi_\qq} (\Frob_\pp) \in \GSpin_{2n}(\lql).
$$
\end{proposition}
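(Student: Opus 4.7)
The plan is to first establish that $\rho_\pi^C$ is unramified at $\pp$, and then identify its semisimple Frobenius by matching the four invariants $\cN$, $\std$, $\spin^+$, $\spin^-$ as in the proof of Proposition~\ref{prop:FinalProp}. The first two invariants will match essentially for free from global identities already in hand; the subtle point, and the main obstacle, is to upgrade the Langlands--Kottwitz input of Theorem~\ref{thm:PointCounting} from the density-$1$ set $\pp\notin S^E$ to all $\pp$ not above $\Sbad\cup\{\ell\}$.

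For unramifiedness, the hypothesis $\pp\nmid \Sbad\cup\{\ell\}$ guarantees that the underlying rational prime $p$ satisfies $p\neq 2$, $\Res_{F/\Q}G$ is unramified at $p$, and $K_p$ is hyperspecial. Hence the canonical integral models of $\Sh_K^\eps$ from \cite{KisinPoints,KSZ} are smooth and proper over $\cO_{E_\pp}$, so by smooth and proper base change $\rho_\pi^{\Sh,\eps}$ is unramified at $\pp$ for both $\eps\in\{\pm\}$. The representation $\rho_{\pi^\flat}$ is unramified at $\pp$ by Theorem~\ref{thm:ExistGaloisSO}, since $\pi^\flat_\qq$ has a hyperspecial fixed vector. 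The quadratic character $\eta$ factors through the finite component group of the Zariski closure of the image of $\rho_{\pi^\flat}$, so $\eta$ is unramified at $\pp$ as well. Combining these, the identities $\pr^\circ \rho_\pi^C = \rho_{\pi^\flat}$ and $\tspin^{a_\pi}(\rho_\pi^C) = \wt\rho_\pi^{\Sh,\vee}$ force $\pr^\circ \rho_\pi^C$, $\spin^+ \rho_\pi^C$, and $\spin^- \rho_\pi^C$ to all be unramified at $\pp$. Since $\spin=\spin^+\oplus\spin^-$ is faithful on $\GSpin_{2n}$, the representation $\rho_\pi^C$ itself is unramified at $\pp$.

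For the Frobenius, by the fundamental-set argument in the proof of Proposition~\ref{prop:FinalProp} it suffices to match $\cN$, $\std$, $\spin^+$, and $\spin^-$ of $\rho_\pi^C(\Frob_\pp)_{\ssimple}$ with those of $\iota q_\pp^{-n(n-1)/4}\phi_{\pi_\qq}(\Frob_\pp)$. The $\cN$ match follows from the global equality $\cN \rho_\pi^C = \chi_\ell^{n(n-1)/2}\iota\omega_\pi$ established in the proof of Proposition~\ref{prop:FinalProp}, evaluated at $\Frob_\pp$ using that the unramified Langlands correspondence for tori identifies $\iota\cN(\phi_{\pi_\qq}(\Frob_\pp))$ with $\iota\omega_{\pi_\qq}(\Frob_\pp)$. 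The $\std$ match follows from $\std\rho_\pi^C = \std\rho_{\pi^\flat}$ and Theorem~\ref{thm:ExistGaloisSO}(SO-i) applied at the unramified place $\qq$, noting that the $\OO_{2n}$-ambiguity there is invisible to $\std$.

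The core of the proof, and the main obstacle, is matching $\spin^\eps$ at $\pp$. The strategy is to rerun the Langlands--Kottwitz argument of Theorem~\ref{thm:PointCounting} at the specific prime $\pp$: the geometric inputs (the smooth proper canonical model of $\Sh_K^\eps$ at $\pp$ and the stabilized point-counting formula of \cite{KSZ}) are available because $\pp\notin \Sbad^E$, and the restriction ``$\pp\notin S^E$'' in Theorem~\ref{thm:PointCounting} was only a bookkeeping convenience for the Lefschetz functions at $\qst$ and $\infty$, not a geometric obstruction. Running the argument with the unramified test function $f_p^{(j)}$ at the prime $p$ below $\pp$ yields, for $j\gg 1$,
\[
\Tr\bigl(\Frob_\pp^j,\, \rho_\pi^{\Sh,\eps}\bigr) \;=\; \iota\, a_\pi\, q_\pp^{jn(n-1)/4}\, \Tr\bigl(\spin^{\eps,\vee}(\phi_{\pi_\qq}(\Frob_\pp^j))\bigr),
\]
which, combined with $\wt\rho_\pi^{\Sh,\vee}\simeq\bigl(\spin^+\rho_\pi^C\oplus(\eta\otimes\spin^-\rho_\pi^C)\bigr)^{\oplus a_\pi}$, fixes the combined trace $\Tr(\spin^+\rho_\pi^C(\Frob_\pp^j))+\eta(\Frob_\pp)^j\Tr(\spin^-\rho_\pi^C(\Frob_\pp^j))$ to the correct value. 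Separating the $+$ and $-$ summands at $\pp$ reduces to the global argument of Proposition~\ref{prop:separation}: when $\spin^+\rho_\pi^C\not\simeq \eta\otimes\spin^-\rho_\pi^C$ as $\Gamma_E$-representations, each summand is determined globally as a semisimple representation by its Frobenius on the density-$1$ set $\pp'\notin S^E$ (Chebotarev and Brauer--Nesbitt), and since both are unramified at $\pp$ this transfers to give their Frobenius at $\pp$ individually; when $\spin^+\rho_\pi^C\simeq \eta\otimes\spin^-\rho_\pi^C$, Lemma~\ref{lem:for-mult-one}(ii) forces the image of $\rho_\pi^C$ to lie in $H_{2n-1}\cdot Z(\GSpin_{2n})$ up to conjugation and the $\theta$-ambiguity collapses. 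In either case the four-invariant match holds at $\Frob_\pp$, completing the proof.
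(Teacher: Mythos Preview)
There is a genuine gap in your treatment of the $\spin^\eps$ match, and it lies precisely in the displayed formula you claim to obtain by ``rerunning'' Theorem~\ref{thm:PointCounting} at $\pp$. The restriction $\pp\notin S^E$ in that theorem is \emph{not} bookkeeping tied to the Lefschetz functions at $\qst$ and $\infty$; it comes from the test function $f^{\infty,\qst}=\prod_{\qq'\neq \qst} f_{\qq'}$ chosen to isolate the isomorphism class of $\pi^{\natural,\infty,\qst}$. By construction (see the proof of Theorem~\ref{thm:PointCounting}), the set $\Sigma=S$ is exactly the set of primes at which the components $f_{\qq'}$ may fail to be $\one_{K_{\qq'}}$. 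If $p$ lies under a prime of $S\setminus(\Sbad\cup\{\ell\})$ and you replace $f_p$ by $\one_{K_p}$ so that you can insert the Kottwitz function $f_p^{(j)}$, then the remaining test function $f^{\infty,\qst,p}$ no longer isolates $\pi^\natural$: it only detects $\tau^{\infty,\qst,p}\simeq \pi^{\natural,\infty,\qst,p}$ together with $\tau_p$ unramified. So the trace formula picks up contributions from every $\tau$ in the set $B(\pi^\natural)$ that the paper introduces, and what one actually gets is the formula~\eqref{eq:Bpi-natural-sum},
\[
\iota^{-1}\Tr\big(\Frob_\pp^j,\ \rho_B^{\Sh,\eps}\big)\;=\;\sum_{\tau\in B} a(\tau)\, q_\pp^{jn(n-1)/4}\,\Tr\big(\spin^{\eps,\vee}(\phi_{\tau_\qq})\big)(\Frob_\pp^j),
\]
on the summed cohomology $\rho_B^{\Sh,\eps}=\sum_{\tau\in B}\rho_\tau^{\Sh,\eps}$, not your formula with $a_\pi$ and the single parameter $\phi_{\pi_\qq}$.

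The real work in the paper's proof is then to compare the unknown parameters $\phi_{\tau_\qq}$ with $\phi_{\pi_\qq}$. This is where Xu's theorem \cite[Thm.~1.8]{BinXuLPackets} is invoked: each $\tau\in B$ differs from $\pi^\natural$ at almost all places by an automorphic character twist $\omega$ and possibly the outer automorphism $\theta$, so that $\spin^\eps(\phi_{\tau_\qq})\simeq \chi^{\pm}\spin^{\pm\eps}(\phi_{\pi_\qq})$ for a character $\chi$ built from $\omega$. One must then control these twists via Lemma~\ref{lem:for-mult-one} and split into cases (the ``first case'' $\chi=1$ for all $\tau$ versus the ``second case'' where $\rho_\pi^C$ has image in $H_{2n-1}$), each requiring a separate $\Bgood/\Bbad$ argument to exclude the possibility that $\theta$ flips the sign on a nonempty subset. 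None of this can be absorbed into Proposition~\ref{prop:separation} or a Chebotarev argument: the difficulty is not separating $\spin^+\rho_\pi^C$ from $\spin^-\rho_\pi^C$ globally, but identifying the individual local terms $\spin^{\eps,\vee}(\phi_{\tau_\qq})(\Frob_\pp)$ on the automorphic side of the point-counting formula at the specific bad prime $\pp$.
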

\begin{proof}
Fix $\pp$ as in the statement. Let $p$ denote the prime of $\Q$ below $\pp$. Let $\pi^\natural$ be a transfer of $\pi$ from $G^*(\A_F)$  to $G(\A_F)$ as in the paragraph above \eqref{eq:LGC-Shimura}. Let $B(\pi^\natural)$ be the set of cuspidal automorphic representations $\tau$ of $G(\A_F)$ such that
\begin{itemize}
\item $\tau_{\qst}$ and $\pi^\natural_{\qst}$ are isomorphic up to a twist by an unramified character,
\item $\tau^{\infty, \qst,p}$ and $\pi^{\natural, \infty, \qst,p}$ are isomorphic,
\item $\tau_p$ is unramified,
\item $\tau_\infty$ is $\xi$-cohomological
\end{itemize}
We define an equivalence relation $\approx$ on the set $B(\pi^\natural)$ by declaring that $\tau_1\approx \tau_2$ if and only if $\tau_2 \in A(\tau_1)$. (Recall the definition of $A(\tau_1)$ from above \eqref{eq:Multiplicity}; notice that $\tau_1 \approx \tau_2$ if and only if $\tau_{1, \qq} \simeq \tau_{2,\qq}$.) 
To simplify notation, we will write $B$ for a set of representatives for $B(\pi^\natural)/{\approx}$.

For $\eps\in \{+,-\}$, define (true) representations of $\Gamma_{E}$ by $\rho^{\Sh, \eps}_B:=\sum_{\tau \in B} \rho^{\Sh,\eps}_\tau$ (see Theorem~\ref{thm:PointCounting}). Put $b(\pi^\natural) := \sum_{\tau\in B} a(\tau)\in \Z_{>0}$.
Since $\rho_{\tau}^{\Sh,\eps,\vee}$ satisfies \eqref{eq:LGC-Shimura} for each $\tau\in B$,
we deduce the following on the dual of $\rho^{\Sh, \eps}_B$ by comparing the images of Frobenius conjugacy classes at all but finitely many places 
via \eqref{eq:LGC-Shimura} and \eqref{eq:separation0}:
\begin{equation}\label{eq:rhoB}
  \rho_B^{\Sh, \eps,\vee} \simeq i_{b(\pi^\natural)}\circ \spin^\eps\circ\rho^C_{\pi}.
\end{equation}

We adapt the argument of Theorem~\ref{thm:PointCounting}. Consider the function $f$ on $G(\A_F)$ of the form $f = f_\infty  f_{\qst}  \one_{K_p} f^{\infty, \qst, p}$, where $f_\infty$ and $f_{\qst}$ are as in that argument, and $f^{\infty,\qst, p}$ is such that, for all automorphic representations $\tau$ of $G(\A_F)$ with $(\tau^{\infty})^K \neq 0$ and $\Tr \tau_\infty(f_\infty) \neq 0$, we have:
\begin{equation}\label{eq:AuxFunction2}
\Tr \tau^{\infty, \qst, p
}(f^{\infty, \qst, p
}) = \begin{cases}
1 & \tu{if $\tau^{\infty,\qst, p} \simeq \pi^{\natural,\infty,\qst,p}$}, \cr
0 & \tu{otherwise.}
\end{cases}
\end{equation}
Arguing as in Theorem~\ref{thm:PointCounting} we obtain 
\begin{equation}\label{eq:Bpi-natural-sum}
\iota^{-1}\Tr (\Frob_\pp^j, \rho_B^{\Sh, \eps}) = \sum_{\tau\in B} a(\tau) \Tr\tau_p(f_p^{(j)}) = \sum_{\tau\in B}  a(\tau)  q_\pp^{jn(n-1)/4} \Tr(\spin^{\eps,\vee}(\phi_{\tau_\qq}))(\Frob_\pp^j).
\end{equation}
Define $\rho_B^{\Sh} := \rho_B^{\Sh, +} \oplus \rho_B^{\Sh,-}$. Applying \eqref{eq:Bpi-natural-sum} for both $\eps = \pm$ and taking the sum, we obtain
\begin{equation}\label{eq:Bpi-natural-sum-outer}
\iota^{-1} \Tr (\Frob_\pp^j, \rho_B^{\Sh}) = \sum_{\tau\in B}  a(\tau) q_\pp^{jn(n-1)/4} \Tr(\spin^{\vee}(\phi_{\tau_\qq}))(\Frob_\pp^j).
\end{equation}
Since Xu \cite[Thm.~1.8]{BinXuLPackets} describes global $L$-packets for (not only $\GSp_{2n}$ but) quasi-split forms of $\GSO_{2n}$, the argument for \cite[Lem.~10.2]{GSp} goes through unchanged, except Corollary~\ref{cor:Pi-is-in-Disc-series-Packet-2} replaces  \cite[Cor.~8.4]{GSp}.
This argument shows that $\pi^\natural$ and $\tau\otimes \omega$ belong to the same global packet in Xu's paper for an automorphic quadratic character $\omega \colon \GSO_{2n}^{E/F}(\A_F) \to \C^\times$. Since each member of the packet in \cite{BinXuLPackets} is a $\theta$-orbit of representations, this tells us that either $\pi^\natural_x \simeq \tau_x \otimes \omega_x$ or  $\theta(\pi_x^\natural) \simeq \tau_x \otimes \omega_x$ at almost all places $x$ (where both $\pi^\natural_x$, $\tau_x$, and $\omega_x$ are unramified). Since $\pi^\natural_x\simeq\tau_x$ at almost all $x$,
\begin{equation}\label{eq:TwoOptionsForLocalFactor}
\pi^\natural_x \simeq \pi^\natural_x \otimes \omega_x \textup{ or } \theta(\pi_x^\natural) \simeq \pi^\natural_x \otimes \omega_x.
\end{equation}
Let us define characters $\chi^\eps:\Gamma_E\ra \{\pm1\}$ from $\omega$ via $\spin^\eps$ as follows. Via class field theory and Galois cohomology (applying \cite[Lem.~A.1]{LapidMao} to $\GSO_{2n}^{E/F}$; see also \cite{WaldspurgerCharacter}) we assign to $\omega$ the continuous character
$$
 W_F \to Z(\GSpin_{2n}) \rtimes \Gamma_{E/F},
$$
whose restriction to $W_E$ factors through a character $c \colon \Gamma_E \to  Z(\GSpin_{2n})$. We then define $\chi^\eps := \spin^\eps(c)$. We deduce
\begin{equation}\label{eq:chi+chi-spin}
\chi^+\spin^+(\rho_\pi^C) \oplus \chi^- \spin^-(\rho_\pi^C) \simeq \spin(\rho_\pi^C), 
\end{equation}
by using \eqref{eq:TwoOptionsForLocalFactor} to verify that the semisimplfication of the image of Frobenius matches at almost all places.

By Lemma~\ref{lem:for-mult-one}(\textit{iii}) we have $\chi^+ = \chi^-$. Set $\chi:=\chi^+$. The same lemma tells us that $\chi = 1$ or that $\rho_\pi^C$ has image in the group $H_{2n-1}$ from \eqref{eq:H-subgroup} and $\chi = \kappa_0 \circ \rho_\pi^C$.

\vspace{.05in}

\noindent\textbf{First case.} Suppose that $\chi = 1$ for every $\tau \in B$. Then, for each $\tau \in B$ there is some $l \in \Z/2\Z$ we have
$$
\spin^{\eps, \vee}(\phi_{\tau_\qq}) \simeq \spin^{\eps, \vee}(\theta^l(\omega_{\qq} \phi_{\pi_\qq})) \simeq \chi^{\eps (-1)^l}  \spin^{\eps(-1)^l,\vee}(\phi_{\pi_\qq}) = \spin^{\eps(-1)^l,\vee}(\phi_{\pi_\qq}).
$$
As $l$ does not depend on $\eps$, we obtain from \eqref{eq:Bpi-natural-sum-outer} that
$$
\iota\inv \Tr \rho_B^{\Sh}(\Frob_\pp^j) = b(\pi^\natural) q_\pp^{jn(n-1)/4} \Tr(\spin^\vee(\phi_{\pi_\qq}))(\Frob_\pp^j),\qquad j\gg1.
$$
Thus $\rho^{\Sh}_B(\Frob_\pp)_{\tu{ss}}\sim \iota q_\pp^{n(n-1)/4} i_{b(\pi^\natural)}\circ \spin^{\vee}(\phi_{\pi_\qq})(\Frob_\pp)$. Comparing the dual of this with \eqref{eq:rhoB}, we deduce that
$$ 
\spin \rho^C_{\pi}(\Frob_\pp)_{\tu{ss}} \sim \iota q_\pp^{-n(n-1)/4}\spin(\phi_{\pi_\qq}) (\Frob_\pp).
$$
Since we also know the conjugacy relation with $\std$ and $\cN$ in place of $\spin$  from \eqref{eq:separation} and Proposition~\ref{prop:FinalProp} (and the argument at~\eqref{eq:SpinorNormCheckA} in its proof), we use Lemma \ref{lem:GPin-conjugacy} to conclude that
\begin{equation}\label{eq:Bpi-only-outer}
\rho_\pi^C(\Frob_\pp)_{\tu{ss}} \sim \iota q_\pp^{-n(n-1)/4}  \theta^k \phi_{\pi_\qq}(\Frob_\pp)\in \GSpin_{2n}(\lql), \quad \tu{ for some } k \in \Z/2\Z.
\end{equation}
If $k = 0$, then \eqref{eq:Bpi-only-outer} implies the theorem. So we assume $k = 1$ in the rest of the argument.

We now distinguish between those $\tau \in B$ according to whether or not their Satake parameter at $\qq$ becomes conjugate to that of $\pi$ under $\spin^+$ and $\spin^-$: Write
\begin{align}\label{eq:GoodAndBadSet}
\Bgood := \{\tau \in B\,|\, \spin^{\eps}(\phi_{\tau_\qq}) \simeq \spin^{\eps}(\phi_{\pi_{\qq}}) ,~\eps\in \{+,-\} \}  
\end{align}
and $\Bbad := B - \Bgood$.
Thus \eqref{eq:Bpi-natural-sum} implies 
\begin{align}\label{eq:Bpi-only-outer2}
q_\pp^{-jn(n-1)/4}  \iota^{-1} b(\pi^\natural) \Tr \spin^\eps(\rho_\pi^C)(\Frob_\pp^j) & = \sum_{\tau \in \Bgood}  a(\tau)  \Tr(\spin^{\eps}(\phi_{\pi_\qq}))(\Frob_\pp^j) \cr 
& + \sum_{\tau \in \Bbad}  a(\tau)  \Tr(\spin^{-\eps}(\phi_{\pi_\qq}))(\Frob_\pp^j).
\end{align} 
Equation \eqref{eq:Bpi-only-outer} and \eqref{eq:Bpi-only-outer2} imply that $\spin^{ - \eps}(\phi_{\pi_\qq})^{b(\pi^\natural)} \simeq   \spin^{\eps}(\phi_{\pi_\qq})^{b_0} \oplus   \spin^{-\eps}(\phi_{\pi_\qq})^{b_1}$, as $W_{E_\pp}$-representations, where $b_0 = \sum_{\tau \in \Bgood}  a(\tau)$, $b_1 = \sum_{\tau \in \Bbad}  a(\tau)$ and $b(\pi^\natural) = b_0 + b_1$. Thus
\begin{equation}\label{eq:Bpi-only-outer3}
\spin^{-\eps}(\phi_{\pi_\qq})^{b(\pi^\natural) - b_1} \simeq  \spin^{\eps}(\phi_{\pi_\qq})^{b_0}.
\end{equation}
As $\pi^\natural$ contributes to $\Bgood$, we have $b_0 = b(\pi^\natural) - b_1 > 0$. Thus $\spin^{-\eps}(\phi_{\pi_\qq}) \simeq  \spin^{\eps}(\phi_{\pi_\qq})$, and $\phi_{\pi_\qq} \sim \theta \phi_{\pi_\qq}$, in which case the proposition follows from \eqref{eq:Bpi-only-outer}. 
Here we applied Lemma 1.1 of \cite{GSp} and the fact that $\spin^\pm$, $\std$, $\cN$ are fundamental representations (see table above Lemma 1.1 in [\textit{loc. cit}]).

\vspace{.05in}

\noindent\textbf{Second case.} Suppose that $\chi \neq 1$ for some $\tau \in B$. As explained, then $\rho_{\pi}^C$ has image in the group $H_{2n-1}$ from \eqref{eq:H-subgroup}. We obtain from \eqref{eq:Bpi-natural-sum-outer} and \eqref{eq:rhoB} that 
\begin{align}\label{eq:ClaimTrueIn4cases}
\iota\inv b(\pi^\natural) \Tr & \lhk \spin(\rho_\pi^C) \oplus \chi \spin(\rho_\pi^C) \rhk(\Frob_\pp^j) = \cr
&= \sum_{\tau\in B}  a(\tau) (1+ \chi(\Frob_\pp)^j)q_\pp^{jn(n-1)/4}\Tr(\spin(\phi_{\tau_\qq}(\Frob_\pp)^j)). 
\end{align}
For each $\tau \in B$ there exist $a, b \in \Z/2\Z$ such that for both $\eps \in \{\pm 1\}$ we have
$
\spin^\eps(\phi_{\tau_\pp}) \simeq \chi^{b} \spin^{(-1)^{a}\eps}(\phi_{\pi_\pp}).
$
Thus we have
\begin{equation*}
(1 \oplus \chi) \otimes \spin(\phi_{\tau_\pp}) \simeq 
(\chi^{b} \oplus \chi^{b + 1}) \otimes \left[\spin^{(-1)^{a}} (\phi_{\pi_\pp}) \oplus \spin^{-(-1)^{a}}(\phi_{\pi_\pp}) \right] \simeq
(1 \oplus \chi) \otimes \spin(\phi_{\pi_\pp}). 
\end{equation*}
as $W_{E_{\pp}}$-representations. In particular, on the right hand side of \eqref{eq:ClaimTrueIn4cases} we may replace $\phi_{\tau_\pp}$ by $\phi_{\pi_\pp}$. 
We have $\chi \spin^+(\rho_\pi^C) \simeq \spin^-(\rho_\pi^C)$ by Lemma \ref{lem:for-mult-one}(\textit{ii}) and so $ \chi \spin(\rho_\pi^C) \simeq \spin(\rho_\pi^C)$. By removing the multiplicity $b(\pi^\natural)$ on both sides of \eqref{eq:ClaimTrueIn4cases} we thus find that
\begin{equation}\label{eq:StillNeedToGetRidOfChi}
\spin(\rho_\pi^C)^{\oplus 2}|_{W_{E_\pp}} \simeq [1 \oplus \chi] \otimes \spin(\phi_{\pi_\qq}) \otimes |\cdot|^{-jn(n-1)/4}_{\qq}.
\end{equation}
We claim that in fact also 
$\chi \spin^+(\phi_{\pi_\qq}) \simeq \spin^-(\phi_{\pi_\qq})$. If true, \eqref{eq:StillNeedToGetRidOfChi} would imply that
\begin{equation}\label{eq:GotRidOfChi}
\spin(\rho_\pi^C)|_{W_{E_\pp}} \simeq \spin(\phi_{\pi_\qq}) \otimes |\cdot|^{-jn(n-1)/4}_{\qq}.
\end{equation}
We check the claim. As $\pr^\circ$ surjects $Z(\GSpin_{2n})$ onto $Z(\SO_{2n})$, we see that $\pr^\circ$ induces an isomorphism from the component group of $H_{2n-1}$ to the component group of $H_{2n-1}^\circ = \SO_{2n-1} Z(\SO_{2n})$. Consequently, $\chi$ (which equals $\kappa_0 \circ \rho_{\pi}^C$ by Lemma~\ref{lem:for-mult-one}(\textit{iii}))
is  equal to the composition
$$
\Gamma_E \overset {\rho_{\pi^\flat}^C} \to H_{2n-1}^\circ \surjects \{\pm 1\}.
$$
We know that $\pr^\circ(\phi_{\pi_{\qq}}(\Frob_\pp)) \osim \pr^\circ(\rho_\pi^C(\Frob_\pp)_\tu{ss}) \in \SO_{2n}$ since they become conjugate after applying $\std$. Therefore $\chi|_{W_{E_\pp}}$ equals 
$$
W_{E_\pp} \overset {\phi_{\pi_{\qq}^\flat}} \to H_{2n-1}^\circ \surjects \{\pm1\} 
$$
and hence equals $W_{E_\pp} \overset {\phi_{\pi_{\qq}}} \to H_{2n-1} \surjects \{\pm1\}$ (the argument is similar to the one below \eqref{eq:Case2-thetaStability-2}), which in turn implies that $\chi \spin^+(\phi_{\pi_\qq}) \simeq \spin^-(\phi_{\pi_\qq})$. Hence the claim is proved, and \eqref{eq:GotRidOfChi} holds true as observed above. 

We thus find \eqref{eq:Bpi-only-outer} again. If $k = 0$ in that equation, we are done. Now assume $k = 1$. Define $\Bgood, \Bbad$ as in \eqref{eq:GoodAndBadSet}. As $\chi \spin^+(\phi_{\pi_\qq}) \simeq \spin^-(\phi_{\pi_\qq})$,  it follows from \eqref{eq:chi+chi-spin} that for each $\tau \in \Bbad$, we have $\spin^{\eps}(\phi_{\tau_\qq}) \simeq \spin^{-\eps}(\phi_{\pi_{\qq}})$ for both signs $\eps \in\{\pm1\}$. Thus we obtain \eqref{eq:Bpi-only-outer2} with $\# \Bgood > 0$ again. By the same argument as in \eqref{eq:Bpi-only-outer3} we deduce that $\phi_{\pi_\qq} \sim \theta \phi_{\pi_\qq}$, in which case the proposition follows from \eqref{eq:Bpi-only-outer}.
\end{proof}

\section{The main theorem}\label{s:main-thm}

In this section we prove Theorem~\ref{thm:A} (Theorem~\ref{thm:ThmAisTrue}), the main result of this paper. Before doing this, we switch the normalization for $\pi$ from (coh) to (L-coh), and extend the Galois action from $\Gamma_E$ to $\Gamma_F$.

As in Theorem~\ref{thm:A}, let $\pi$ be a cuspidal automorphic representation of $G^*(\A_F)$ satisfying (St) and (L-coh). Fix a cuspidal automorphic representation $\pi^\flat$ of $\SO^{E/F}_{2n}(\A_F)$ which embeds in $\pi|_{\SO^{E/F}_{2n}(\A_F)}$ as it is possible by \cite{LabesseSchwermer2018}. Assume either (std-reg) for $\pi$ or Hypothesis \ref{hypo:non-std-reg} for an $\SO_{2n}(\A_F)$-subrepresentation $\pi^\flat$ of $\pi$. Define $\tilde \pi:=\pi |\simil|^{-n(n-1)/4}$. Then $\tilde \pi$ is $\xi$-cohomological and will play the role of $\pi$ in Sections \ref{sect:Construction} and \ref{sect:CompatUnrPlaces}. Naturally $\pi^\flat$ is a subrepresentation of $\tilde\pi|_{\SO_{2n}(\A_F)}$ since $|\simil|$ is trivial when restricted to $\SO_{2n}(\A_F)$.

Let $S^F$ (resp.~$S^E$) be the finite set of places of $F$ (resp.~$E$) above $S:=S_{\tu{bad}}\cup \{\ell\}$. Fix an infinite place $y_\infty\in \cV_\infty$ and also fix a finite place $\qq$ as in (St). (Recall that the group $G$, Shimura varieties, and the resulting $\GSpin_{2n}$-valued Galois representations in earlier sections depend on the choice of $y_\infty$ and $\qq$.) From Propositions \ref{prop:UpToOuter} and \ref{prop:AllUnramifiedPlaces}, we obtain
$$
\rho^C_{\tilde \pi}\colon
\Gamma_{E,S}\ra \GSpin_{2n}(\lql)
$$
such that for every $\pp\notin S^E$, writing $\qq := \pp|_F$, we have
\begin{equation}\label{eq:compatibility-over-E}
\rho^C_{\tilde \pi}(\Frob_\pp)_{\tu{ss}}\sim \iota q_\pp^{-n(n-1)/4} \phi_{\tilde \pi_\qq}(\Frob_\pp) = \iota\phi_{\pi_\qq}(\Frob_\pp).
\end{equation}

Let us explain the definition of $\rho_\pi$ on $\Gamma_{F,S}$. If $n$ is even (thus $E=F$) then we simply take $\rho_\pi:=\rho^C_{\tilde \pi}$. In case $n$ is odd (so $[E:F]=2$), write $c_{y_\infty} \in \Gamma$ for the complex conjugation corresponding to $y_\infty$ (canonical up to conjugacy). In order to apply Lemma \ref{lem:extend}, we check
\begin{lemma}\label{lem:condition-to-extend}
When $n$ is odd, we have ${}^{c_{y_\infty}} \rho^C_{\tilde \pi}\simeq \theta\circ \rho^C_{\tilde \pi}$.
\end{lemma}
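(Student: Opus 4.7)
The plan is to reduce the question to the triviality of a character of $\Gamma_E$, then resolve this using the spinor norm and the half-spin decomposition of the Shimura cohomology.

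First, from Theorem~\ref{thm:ExistGaloisSO}, $\rho_{\pi^\flat}$ extends to $\Gamma_F\to\SO_{2n}(\lql)\rtimes\Gamma_{E/F}$. Choosing a lift of $c_y$ with inner part $h_0\in\SO_{2n}(\lql)$, lifting $h_0$ along $\pr^\circ$ to $h\in\GSpin_{2n}(\lql)$, and using $\pr^\circ\rho^C_{\tilde\pi}=\rho_{\pi^\flat}|_{\Gamma_E}$ from Proposition~\ref{prop:UpToOuter}, we obtain (after replacing $\theta\rho^C_{\tilde\pi}$ by an inner $\GSpin_{2n}$-conjugate via $h$) that $\pr^\circ({}^{c_y}\rho^C_{\tilde\pi})=\pr^\circ(\theta\rho^C_{\tilde\pi})$. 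Hence
$${}^{c_y}\rho^C_{\tilde\pi}\simeq\chi\cdot\theta\rho^C_{\tilde\pi}$$
for some continuous character $\chi:\Gamma_E\to\ker(\pr^\circ)=\Gm(\lql)$, and the goal becomes showing $\chi=1$.

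To see $\chi^2=1$, I would apply the spinor norm. By Lemma~\ref{lem:ThetaGSpin}, $\theta$ acts trivially on the cocenter, so $\cN\theta=\cN$. On the other hand, the proof of Proposition~\ref{prop:FinalProp} shows $\cN\rho^C_{\tilde\pi}=\chi_\ell^{n(n-1)/2}\iota\omega_{\tilde\pi}|_{\Gamma_E}$, which arises by class field theory from a character of $\Gamma_F$ and is therefore $c_y$-invariant on $\Gamma_E$. Applying $\cN$ to the displayed relation yields $\cN\rho^C_{\tilde\pi}=\chi^2\cdot\cN\rho^C_{\tilde\pi}$, so $\chi^2=1$. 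To then show $\chi=1$, I would apply $\spin^+$. Using $\spin^+\theta\simeq\spin^-$ (Lemma~\ref{lem:CliffordSpinThetaAction}), we get ${}^{c_y}\spin^+\rho^C_{\tilde\pi}\simeq\chi\cdot\spin^-\rho^C_{\tilde\pi}$. The key input is a Galois swap: for $n$ odd, complex conjugation exchanges the Shimura data $(G,X^+)$ and $(G,X^-)$ on their common reflex field $E$ by Lemma~\ref{lem:h+h-}(ii), which via Galois conjugation of canonical models should induce an isomorphism ${}^{c_y}\rho^{\Sh,+}_{\tilde\pi}\simeq\rho^{\Sh,-}_{\tilde\pi}$ of $\Gamma_E$-representations. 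Matching Frobenius traces using Theorem~\ref{thm:PointCounting}, Proposition~\ref{prop:separation}, and Proposition~\ref{prop:AllUnramifiedPlaces} yields the identifications $(\spin^\eps\rho^C_{\tilde\pi})^{\oplus a_\pi}\simeq\rho^{\Sh,\eps,\vee}_{\tilde\pi}$ for each $\eps\in\{+,-\}$; combined with the Galois swap this forces ${}^{c_y}\spin^+\rho^C_{\tilde\pi}\simeq\spin^-\rho^C_{\tilde\pi}$, so $\chi\cdot\spin^-\rho^C_{\tilde\pi}\simeq\spin^-\rho^C_{\tilde\pi}$. Since $\rho^C_{\tilde\pi}(\Gamma_E)$ contains a regular unipotent (as $\pr^\circ\rho^C_{\tilde\pi}=\rho_{\pi^\flat}|_{\Gamma_E}$ does by Corollary~\ref{cor:image-has-reg-unip}, and regular unipotents lift along $\pr^\circ$), Lemma~\ref{lem:for-mult-one}(i) then forces $\chi=1$.

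The main obstacle is rigorously establishing the Galois swap ${}^{c_y}\rho^{\Sh,+}_{\tilde\pi}\simeq\rho^{\Sh,-}_{\tilde\pi}$. This should follow by identifying $\Sh^+\otimes_{E,c_y}E\simeq\Sh^-$ as $E$-schemes compatibly with the level structures and with the local system $\cL_\xi$, via the functoriality of canonical models under Galois conjugation of Shimura data; care will be needed with the precise normalization of the canonical model and the behavior of $\xi$ under $c_y$. An alternative route avoiding the Galois swap would be to apply Proposition~\ref{prop:local-conjugacy-global-conjugacy} directly for $\GSpin_{2n}$, reducing to local conjugacy at Frobenii; this is immediate at inert primes (where $\phi_{\pi_\qq}(\Frob_\pp)=g\theta(g)$ with $\phi_{\pi_\qq}(\Frob_\qq)=g\cdot c$ is easily seen to be $\GSpin_{2n}$-conjugate to its $\theta$-image via $\theta(g)$) but forces a $\theta$-stability of the Satake parameter at split primes that is itself a consequence of the same Galois-swap structure.
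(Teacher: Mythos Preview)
Your alternative route---applying Proposition~\ref{prop:local-conjugacy-global-conjugacy} directly---is exactly what the paper does, and the split-prime case is not the obstacle you think it is. At a prime $\qq$ of $F$ splitting as $\pp\cdot c(\pp)$ in $E$, one has $c_y\Frob_\pp c_y^{-1}\sim\Frob_{c(\pp)}$, so what is needed is not $\theta$-stability of $\phi_{\pi_\qq}(\Frob_\pp)$ but rather the relation $\phi_{\pi_\qq}(\Frob_{c(\pp)})\sim\theta(\phi_{\pi_\qq}(\Frob_\pp))$. This is a direct consequence of the definition of the quasi-split outer form: from~\eqref{eq:QuasiSplitGSO}, an element of $\GSO^{E/F}_{2n}(F_\qq)$ embedded in $\GL_{2n}(E_\pp)\times\GL_{2n}(E_{c(\pp)})$ has the form $(g,\theta(g))$, so the dual group of $\GSO^{E/F}_{2n,F_\qq}$ is the subgroup $\{(h,\theta(h))\}\subset\GSpin_{2n}\times\GSpin_{2n}$, and the two components of the Satake parameter at $\pp$ and $c(\pp)$ are automatically $\theta$-twists of each other. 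No Galois swap on Shimura varieties is required; this is a purely local fact about the unramified Langlands correspondence for the outer form. Combined with your (correct) inert-prime argument and~\eqref{eq:compatibility-over-E}, Proposition~\ref{prop:local-conjugacy-global-conjugacy} finishes the proof.

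Your main approach via the Galois swap ${}^{c_y}\rho^{\Sh,+}_{\tilde\pi}\simeq\rho^{\Sh,-}_{\tilde\pi}$ is a genuinely different route; it is plausible (Lemma~\ref{lem:h+h-}(ii) is the right ingredient on the level of Shimura data) but, as you acknowledge, requires a careful compatibility statement for canonical models and local systems under Galois conjugation of the reflex field that the paper never establishes. The paper's approach trades this geometric input for the elementary observation about the quasi-split form above, which is both shorter and avoids any new dependence on the Shimura-variety machinery beyond what was already used to construct $\rho^C_{\tilde\pi}$.
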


\begin{proof}
In light of Proposition \ref{prop:local-conjugacy-global-conjugacy}, it is enough to check this locally, namely that
$$
\rho^C_{\tilde \pi}(c_{y_\infty} \Frob_\pp c_{y_\infty}^{-1})_{\tu{ss}} \sim \theta\circ \rho^C_{\tilde \pi}(\Frob_\pp)_{\tu{ss}} \qquad \mbox{in}~~~\GSpin_{2n}(\lql)
$$
for almost all primes $\pp$ of $E$. For each $\pp$, write $\qq:=\pp\cap F$.
Firstly if $\qq$ splits in $E$ as $\pp c(\pp)$ then we use \eqref{eq:compatibility-over-E} to deduce that
$$
\rho^C_{\tilde \pi}(c_{y_\infty} \Frob_\pp c_{y_\infty}^{-1})_{\tu{ss}} \sim \rho^C_{\tilde \pi}(\Frob_{c(\pp)})_{\tu{ss}} \sim \iota\phi_{\pi_\qq}(\Frob_{c(\pp)})
\sim \iota \theta(\phi_{\pi_\qq}(\Frob_{\pp})) \sim \theta(\rho^C_{\tilde \pi}(\Frob_\pp)).
$$
(To see the third conjugacy relation, we argue as follows. From \eqref{eq:QuasiSplitGSO} we see that an element of $\GSO^{E/F}_{2n,F_\qq}$ has the form $(g,\theta(g))$ with $g\in \GSO_{2n,E_{\pp}}$ and that $\GSO^{E/F}_{2n,F_\qq}$ is isomorphic to $\GSO_{2n,E_{\pp}}$ and $\GSO_{2n,E_{c(\pp)}}$ by the projection map onto the first and second components, respectively. Likewise the dual group of $\GSO^{E/F}_{2n,F_\qq}$ is naturally the subgroup of $\GSpin_{2n}\times \GSpin_{2n}$ consisting of elements of the form $(g,\theta(g))$, the two components corresponding to $\pp$ and $c(\pp)$. It follows that $\phi_{\pi_\qq}(\Frob_{c(\pp)}) \sim \theta(\phi_{\pi_\qq}(\Frob_{\pp}))$.)

Secondly if $\qq$ is inert in $E$ then $c_{y_\infty} \Frob_\pp c_{y_\infty}^{-1}\sim \Frob_\pp$. Thus we need to check that the conjugacy class of $\rho^C_{\tilde \pi}(\Frob_\pp)_{\tu{ss}}$ is $\theta$-invariant. Writing $\theta(\phi_{\pi_\qq}(\Frob_{\qq}))=s\rtimes c\in \GSpin_{2n}(\lql)\rtimes \Gamma_{E/F}$, 
$$
\theta(\phi_{\pi_\qq}(\Frob_{\pp}))\sim\theta(\phi_{\pi_\qq}(\Frob_{\qq}^2))=s \theta(s)\sim \theta(s)s\qquad \mbox{in}~~~\GSpin_{2n}(\lql).
$$
This implies the desired $\theta$-invariance via \eqref{eq:compatibility-over-E}. The proof is complete.
\end{proof}

We are assuming that $n$ is odd. By Lemmas \ref{lem:condition-to-extend} and \ref{lem:extend}, we extend $\rho^C_{\tilde \pi}$ to a Galois representation to be denoted $\rho_\pi$:
\begin{equation}\label{eq:rho-pi}
\rho_\pi \colon \Gamma_{F,S}\ra \GSpin_{2n}(\lql)\rtimes \Gamma_{E/F}.
\end{equation}
There are two choices up to conjugacy (Example \ref{ex:GSpin2n-extend}). We choose one arbitrarily and possibly modify the choice below.

We return to treating both parities of $n$. We fixed $\pi^\flat$ above. Theorem \ref{thm:ExistGaloisSO}, or Hypothesis \ref{hypo:non-std-reg} if (std-reg) is not assumed, supplies us with
$$
\rho_{\pi^\flat}:\Gamma_{F,S}\ra \SO_{2n}(\lql)\rtimes \Gamma_{E/F}
$$
such that $\rho_{\pi^\flat}(\Frob_\qq)_{\tu{ss}}\osim  \iota\phi_{\pi^\flat_\qq}(\Frob_\qq)$ for $\qq,\pp$ as above. Thanks to \eqref{eq:compatibility-over-E} and the unramified Langlands functoriality with respect to $\SO_{2n}\ra \GSO^{E/F}_{2n}$ (whose dual morphism is $\tu{pr}^\circ$),
$$
\rho_{\pi^\flat}(\Frob_\pp)_{\tu{ss}} \osim \iota\phi_{\pi^\flat_\qq}(\Frob_\pp) \sim \iota\,\tu{pr}^\circ(\phi_{\pi_\qq}(\Frob_\pp))
\sim \tu{pr}^\circ(\rho_{\pi}(\Frob_\pp)_{\tu{ss}}).
$$
Thus the conjugacy classes at the left and right ends are $\OO_{2n}(\lql)$-conjugate, under the identification $\SO_{2n}(\lql)\rtimes \Gamma_{E/F}= \OO_{2n}(\lql)$. Since $\OO_{2n}$ is acceptable, $\rho_{\pi^\flat}|_{\Gamma_{E,S}}$ and $\tu{pr}^\circ\circ\rho_{\pi}|_{\Gamma_{E,S}}$ are $\OO_{2n}(\lql)$-conjugate. Replacing $\rho_{\pi^\flat}$ by an $\OO_{2n}(\lql)$-conjugate, we may and will assume that
$$
\rho_{\pi^\flat}|_{\Gamma_{E,S}}=\pr^\circ\circ \rho_{\pi}|_{\Gamma_{E,S}}
$$
without disturbing the validity of (SO-i) through (SO-v) in Theorem \ref{thm:ExistGaloisSO}. When $n$ is odd, we take an extra step as follows. Observe that $\rho_{\pi^\flat}$ and $\pr^\circ\circ \rho_\pi$ are two $\SO_{2n}(\lql)\rtimes \Gamma_{E/F}$-valued representations of $\Gamma_{F,S}$ extending \eqref{eq:equality-over-E}. If they are not equal then $\pr^\circ\circ \rho_\pi=\rho_{\pi^\flat}\otimes \chi_{E/F}$ by Example \ref{ex:SO2n-extend} with $\chi_{E/F}:\Gamma_F\twoheadrightarrow \Gamma_{E/F}\isom \{\pm1\}$. Then we go back to \eqref{eq:rho-pi} and replace $\rho_\pi$ with $\rho_\pi\otimes \chi$, where $\chi$ is as in Example \ref{ex:GSpin2n-extend}; this does not affect the discussion between \eqref{eq:rho-pi} and here. Since $\pr^\circ\circ\chi=\chi_{E/F}$, this ensures that
\begin{equation}\label{eq:equality-over-E}
\rho_{\pi^\flat}=\pr^\circ\circ \rho_{\pi}.
\end{equation}
As in \S\ref{sect:RootDatum}, let $(s_0,s_1,\ldots,s_n)\in (\lql^\times)^{n+1}$ denote an element of $T_{\GSpin}(\lql)\subset \GSpin_{2n}(\lql)$. This element maps to $\diag(s_1,\ldots,s_n,s_1^{-1},\ldots,s_n^{-1})\in \SO_{2n}(\lql)$ under $\pr^\circ$, and maps to $s_0^2s_1s_2\cdots s_n$ under the spinor norm $\cN$.

\begin{lemma}\label{lem:image-of-complex-conjugation}
At every infinite place $y$ of $F$, the following are $\GSpin_{2n}(\lql)$-conjugate:
\begin{equation}\label{eq:image-of-complex-conjugation}
\rho_\pi(c_y)\sim
\begin{cases}
(a,\underbrace{1,\ldots,1}_{n/2},\underbrace{-1,\ldots,-1}_{n/2}), ~~a\in \{\pm 1\},& n:\tu{even},\\
(1,\underbrace{1,\ldots,1}_{(n-1)/2},\underbrace{-1,\ldots,-1}_{(n-1)/2},1)\rtimes c, & n:\tu{odd}.
\end{cases}
\end{equation}
where the right hand side lies in $\TGspin(\lql) \rtimes \Gal(E/F)$.
\end{lemma}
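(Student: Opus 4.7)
The strategy is to descend to $\SO^{E/F}_{2n}$ via \eqref{eq:equality-over-E}, namely $\pr^\circ \circ \rho_\pi = \rho_{\pi^\flat}$, invoke the explicit description of $\rho_{\pi^\flat}(c_y)$ given by Theorem \ref{thm:ExistGaloisSO} (SO-v) (or by Hypothesis \ref{hypo:non-std-reg} if (std-reg) is not available), and then analyze the possible $\GSpin_{2n}$-lifts subject to the constraint $\rho_\pi(c_y)^2 = 1$.

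Since $\pr^\circ \colon \GSpin_{2n}(\lql) \twoheadrightarrow \SO_{2n}(\lql)$ is surjective, after conjugating $\rho_\pi$ by an element of $\GSpin_{2n}(\lql)$ we may assume that $\pr^\circ(\rho_\pi(c_y))$ equals the specific diagonal element $t_0$ (for $n$ even) or $t_0 \rtimes c$ (for $n$ odd) from (SO-v). Because $\ker(\pr^\circ)$ is the central $\G_m \subset T_{\GSpin}$ and hence $\pr^{\circ,-1}(T_{\SO}) = T_{\GSpin}$, writing $\rho_\pi(c_y) = g$ (resp.~$g \rtimes c$) forces $g \in T_{\GSpin}(\lql)$ of the form $(s_0, 1, \ldots, 1, -1, \ldots, -1)$ for $n$ even and $(s_0, 1, \ldots, 1, -1, \ldots, -1, 1)$ for $n$ odd, with only $s_0 \in \lql^\times$ remaining to be pinned down. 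For $n$ even, $g^2 = 1$ immediately forces $s_0 \in \{\pm 1\}$, matching the claim. For $n$ odd, since $F_y = \R$ and $E_y = \C$ ($E$ is totally imaginary), $c_y$ has nontrivial image in $\Gal(E/F)$, justifying the $\rtimes c$ factor; squaring then gives $g\theta(g) = 1$, and using the formula of Lemma \ref{lem:ThetaGSpin} together with $s_n = 1$ one finds $\theta(g) = g$, so again $s_0 \in \{\pm 1\}$.

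The remaining step, which is the only real subtlety, is to normalize $s_0 = 1$ in the odd case by a $\GSpin_{2n}(\lql)$-conjugation. Conjugating $g \rtimes c$ by a central element $z \in Z(\GSpin_{2n})$ yields $z\theta(z)^{-1} g \rtimes c$; taking $z = (1, -1)$ in the coordinates $\G_m \times \{\pm 1\}$ of Lemma \ref{lem:ComputeCenter}, a short calculation using Lemma \ref{lem:ThetaGSpin} gives $z\theta(z)^{-1} = (-1, 1, \ldots, 1)$, which multiplies the first coordinate of $g$ by $-1$ and hence flips the sign of $s_0$. Thus the two candidate elements for $s_0 = \pm 1$ are $\GSpin_{2n}(\lql)$-conjugate, and we may take $s_0 = 1$. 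In the even case no such normalization is possible because central elements act trivially by conjugation, which is precisely why the ambiguity $a \in \{\pm 1\}$ must be left open in the statement.
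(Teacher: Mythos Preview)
Your proof is correct and follows essentially the same approach as the paper: descend via $\pr^\circ$ to invoke (SO-v), observe that the lift lies in $T_{\GSpin}$ with only the $s_0$-coordinate free, use $\rho_\pi(c_y)^2=1$ to force $s_0\in\{\pm1\}$, and in the odd case conjugate by a central element to normalize $s_0$. The paper carries out the last step by conjugating with an order-$4$ element $\zeta\in Z(\Spin_{2n})$ satisfying $\theta(\zeta)=\zeta^{-1}$, whereas you use $z=(1,-1)\in Z(\GSpin_{2n})$; since $z\theta(z)^{-1}=\zeta\theta(\zeta)^{-1}=(-1,1,\ldots,1)$, these are the same computation.
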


\begin{proof}
In light of \eqref{eq:equality-over-E} (which is valid for both odd and even $n$ as discussed above) and Theorem \ref{thm:ExistGaloisSO} (SO-v) (or Hypothesis \ref{hypo:non-std-reg}) which describes $\rho_{\pi^\flat}(c_y)$, the following are $\GSpin_{2n}(\lql)$-conjugate:
$$\pr^\circ(\rho_\pi(c_y)) \sim \begin{cases}
  \diag(\underbrace{1,\ldots,1}_{n/2},\underbrace{-1,\ldots,-1}_{n/2},\underbrace{1,\ldots,1}_{n/2},\underbrace{-1,\ldots,-1}_{n/2}), & n:\mbox{even},\\
  \diag(\underbrace{1,\ldots,1}_{(n-1)/2},\underbrace{-1,\ldots,-1}_{(n-1)/2},1,\underbrace{1,\ldots,1}_{(n-1)/2},\underbrace{-1,\ldots,-1}_{(n-1)/2},1) \rtimes \theta,
  & n:\mbox{odd}.
\end{cases}
 $$
 Therefore $\rho_\pi(c_y)$ is a lift of the right hand side (up to $\GSpin_{2n}(\lql)$-conjugacy) via $\pr^\circ$. Moreover $\rho_\pi(c_y)^2=\rho_\pi(c_y^2)=1$. We claim that these two conditions imply \eqref{eq:image-of-complex-conjugation}.

This is straightforward when $n$ is even. Now suppose that $n$ is odd. Evidently the right hand side of \eqref{eq:image-of-complex-conjugation} satisfies the two conditions. Any other lift of order 2 can only differ (possibly after conjugation) from the right hand side of \eqref{eq:image-of-complex-conjugation} by scalars $\{\pm1\}$. (Use Lemma \ref{lem:SurjectionOntoGSO} (ii) and the order two condition.) This implies \eqref{eq:image-of-complex-conjugation} since every $g\in \GSpin_{2n}(\lql)\rtimes c$ is conjugate to $-g$; indeed, $-g= \zeta g \zeta^{-1}$ if $\zeta\in Z_{\Spin}(\lql)$ is an element of order 4, noting that $\theta(\zeta)=\zeta^{-1}$.
\end{proof}

Let $\omega_\pi: F^\times \backslash \A_F^\times \ra \C^\times$ denote the central character of $\pi$. By abuse of notation, we still write $\omega_\pi$ (depending on the choice of $\iota$) for the $\ell$-adic character of $\Gamma_F$ corresponding to $\omega_\pi$ via class field theory (as in \cite[pp.20--21]{HarrisTaylor}). To make $\omega_\pi$ explicit, recall that $\tilde \pi=\pi|\simil|^{-n(n-1)/4}$ is $\xi$-cohomological. By condition (cent), the central character of $\xi$ is $z\mapsto z^w$ on $F_y^\times$ at every real place $y$ of $F$, for an integer $w$ independent of $y$. Therefore (recalling $\simil$ is the squaring map on the center)
$$
\omega_{\pi,y}(z)=z^{-w} |z|^{n(n-1)/2}=\tu{sgn}(z)^w |z|^{-w+n(n-1)/2},\qquad z\in F_y^\times.
$$
Then $\omega_\pi |\cdot|^{w-n(n-1)/4}$ is a finite-order Hecke character which is $\tu{sgn}^w$ at every real place. Hence $\omega_\pi=\chi_{\tu{cyc}}^{-w+n(n-1)/2} \chi_0$, where $\chi_{\tu{cyc}}$ is the $\ell$-adic cyclotomic character, and $\chi_0$ a finite-order character with $\chi_0(c_y)=(-1)^w$ at each real place $y$. The upshot is that 
\begin{equation}\label{eq:omega(c)}
\omega_{\pi}(c_y)=(-1)^{-w+n(n-1)/2} (-1)^w = (-1)^{n(n-1)/2},\qquad y:\tu{real~place~of}~F.
\end{equation}
We are ready to upgrade \eqref{eq:compatibility-over-E} to a compatibility at places of $F$ for odd $n$ (thus $[E:F]=2$).

\begin{corollary}\label{cor:Norm-and-compatibility-over-F}
We have $\cN\circ \rho_\pi = \omega_\pi$. Moreover, at every finite place $\qq$ of $F$ not above $S_{\tu{bad}}\cup \{\ell\}$, 
$$\rho_\pi(\Frob_\qq)_{\tu{ss}}\sim \iota\phi_{\pi_\qq}(\Frob_\qq).$$
\end{corollary}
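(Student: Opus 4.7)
The plan is to establish the two assertions in turn.

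For the character equality $\cN\circ\rho_\pi=\omega_\pi$, both sides are continuous $\ell$-adic characters of $\Gamma_F$, so it suffices to check agreement on a Chebotarev-dense subset of Frobenius classes. For any finite $F$-prime $\qq$ split in $E/F$ and outside $S_{\tu{bad}}\cup\{\ell\}$, I would pick $\pp\mid\qq$, so that $\Frob_\qq$ is identified with $\Frob_\pp\in\Gamma_E$. Then applying $\cN$ to \eqref{eq:compatibility-over-E} together with Lemma~\ref{lem:dualizing-spinor-norm-and-similitude} (and the functoriality of the unramified Satake isomorphism with respect to the dual pair $(\cN,\tu{cent}^\circ)$) yields $\cN(\rho_\pi(\Frob_\qq))=\iota\omega_{\pi_\qq}(\Frob_\qq)=\omega_\pi(\Frob_\qq)$. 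Since the split Frobenius classes are Chebotarev-dense in $\Gamma_E$, this forces $\cN\circ\rho_\pi|_{\Gamma_E}=\omega_\pi|_{\Gamma_E}$. When $n$ is even, $E=F$ and we are done; when $n$ is odd, the ratio $\cN\circ\rho_\pi\cdot\omega_\pi^{-1}$ factors through the quotient $\Gamma_F/\Gamma_E=\{\pm1\}$, so it remains to check agreement on one element of $\Gamma_F\setminus\Gamma_E$. A complex conjugation $c_y$ at a real place $y$ works: using Lemma~\ref{lem:image-of-complex-conjugation} to describe $\rho_\pi(c_y)$ up to conjugation, together with Remark~\ref{rem:explicit-spinor-norm} and $\cN(\vartheta)=\vartheta\beta(\vartheta)=\vartheta^2=1$, one computes $\cN(\rho_\pi(c_y))=(-1)^{(n-1)/2}$, which matches $\omega_\pi(c_y)=(-1)^{n(n-1)/2}=(-1)^{(n-1)/2}$ from \eqref{eq:omega(c)}.

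For the Frobenius compatibility at $\qq\notin S_{\tu{bad}}^F\cup\{\ell\}$, the case of $\qq$ split in $E/F$ is immediate from \eqref{eq:compatibility-over-E} upon identifying $\Frob_\qq$ with $\Frob_\pp$ for $\pp\mid\qq$. For $\qq$ inert, set $x:=\rho_\pi(\Frob_\qq)_{\tu{ss}}$ and $y:=\iota\phi_{\pi_\qq}(\Frob_\qq)$, both in $\GSpin_{2n}(\lql)\cdot c\subset\GPin_{2n}(\lql)$. I would first establish that $x$ and $y$ are $\GPin_{2n}$-conjugate via Lemma~\ref{lem:GPin-conjugacy}, checking agreement under the fundamental set $\{\cN,\std,\spin\}$. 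The equality $\cN(x)=\cN(y)$ follows from the preceding paragraph. For $\std$, by \eqref{eq:equality-over-E} we have $\pr^\circ(x)=\rho_{\pi^\flat}(\Frob_\qq)_{\tu{ss}}$, which by Theorem~\ref{thm:ExistGaloisSO}(SO-i) (or Hypothesis~\ref{hypo:non-std-reg}) is $\OO_{2n}(\lql)$-conjugate to $\iota\phi_{\pi^\flat_\qq}(\Frob_\qq)=\pr^\circ(y)$; composing with the standard embedding $\std\colon\SO_{2n}\rtimes c\hookrightarrow\GL_{2n}$ yields $\std(x)\sim\std(y)$. For $\spin$, \eqref{eq:compatibility-over-E} applied to $\Frob_\pp=\Frob_\qq^2$ gives $\spin(x)^2=\spin(x^2)\sim\spin(y^2)=\spin(y)^2$; since $x,y\in\GPin_{2n}\setminus\GSpin_{2n}$, Lemma~\ref{lem:CliffordSpinThetaAction} shows $\spin(x)$ and $\spin(y)$ are block anti-diagonal on $\bigwedge^+W\oplus\bigwedge^-W$, so their eigenvalues come in $\pm\sqrt\mu$-pairs for eigenvalues $\mu$ of their squares, forcing $\spin(x)\sim\spin(y)$.

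The main obstacle is upgrading this $\GPin_{2n}$-conjugacy to $\GSpin_{2n}$-conjugacy, which is what equivalence of $\GSpin_{2n}\rtimes c$-valued representations demands. Since $[\GPin_{2n}:\GSpin_{2n}]=2$, a $\GPin_{2n}$-orbit in $\GSpin_{2n}\cdot c$ breaks into one or two $\GSpin_{2n}$-classes swapped by $\vartheta$-conjugation (equivalently $\theta$), a discrepancy invisible to the $\theta$-invariant invariants $\cN,\std,\spin$. To resolve it, I would argue globally: if at some inert $\qq_0$ one had $x\sim_{\GSpin}\vartheta y\vartheta^{-1}$ instead of $x\sim_{\GSpin}y$, then the $\vartheta$-conjugate global representation $\vartheta\rho_\pi\vartheta^{-1}$ (corresponding to replacing the chosen $\pi^\flat$ by its outer-twist) would have the correct Frobenius image at $\qq_0$, but at each split $\qq$ its Frobenius would be $\theta(\rho_\pi(\Frob_\qq))\sim\theta(\iota\phi_{\pi_\qq}(\Frob_\qq))$, disagreeing with the correct $\GSpin_{2n}$-class $\iota\phi_{\pi_\qq}(\Frob_\qq)$ at a density-$1$ set of split $\qq$ by Lemma~\ref{lem:Chebotarev} (applied to $\rho_\pi|_{\Gamma_E}$), unless the Zariski closure of $\rho_\pi(\Gamma_E)$ lies in the $\theta$-fixed subgroup $H_{2n-1}Z(\GSpin_{2n})$. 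In that degenerate sub-case $\theta$ acts trivially on the image, so the $\GPin/\GSpin$ ambiguity collapses and the two classes coincide; invoking Proposition~\ref{prop:local-conjugacy-global-conjugacy} for the uniqueness of the Galois representation determined by its split-prime Frobenius images rules out the inconsistent scenario, so $x\sim_{\GSpin_{2n}}y$ at every inert $\qq$.
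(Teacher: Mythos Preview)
Your treatment of $\cN\circ\rho_\pi=\omega_\pi$ is essentially the paper's argument: restrict to $\Gamma_E$ via \eqref{eq:compatibility-over-E}, then for odd $n$ distinguish the two possible extensions by evaluating at $c_y$ using Lemma~\ref{lem:image-of-complex-conjugation} and \eqref{eq:omega(c)}.

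For the Frobenius compatibility at inert $\qq$, however, you miss the key structural observation and end up fighting a phantom obstacle. The crucial fact is that \emph{on the non-identity coset $\GSpin_{2n}\rtimes c$, $\GPin_{2n}$-conjugacy coincides with $\GSpin_{2n}$-conjugacy}: for any $a\rtimes c$, conjugation by $\vartheta$ sends it to $\theta(a)\rtimes c$, but conjugation by $a^{-1}\in\GSpin_{2n}$ does the same, since $a^{-1}(a\rtimes c)a = a^{-1}a\,\theta(a^{-1})^{-1}\rtimes c=\theta(a)\rtimes c$. The identical computation shows that the $\OO_{2n}$-ambiguity in (SO-i) collapses to genuine $\SO_{2n}$-conjugacy on $\SO_{2n}\rtimes c$. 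Consequently your ``main obstacle'' does not exist: the two $\GSpin_{2n}$-classes you fear might be swapped by $\vartheta$ are in fact a single class, so your global argument in the last paragraph is attempting to rule out a case that cannot occur.

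The paper exploits this directly and avoids Lemma~\ref{lem:GPin-conjugacy} altogether (which, as stated, applies only to elements of $\GSpin_{2n}$, not of the non-identity coset). From $\pr^\circ\circ\rho_\pi=\rho_{\pi^\flat}$ and (SO-i) one gets that $\pr^\circ(\rho_\pi(\Frob_\qq)_{\tu{ss}})$ and $\iota\pr^\circ(\phi_{\pi_\qq}(\Frob_\qq))$ are $\SO_{2n}$-conjugate (the outer ambiguity having vanished on the coset). Hence, after a $\GSpin_{2n}$-conjugation, $\rho_\pi(\Frob_\qq)_{\tu{ss}}=z\cdot\iota\phi_{\pi_\qq}(\Frob_\qq)$ for some $z\in\ker(\pr^\circ)=\lql^\times$. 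Applying $\cN$ and the first part gives $z^2=1$, so $z\in\{\pm1\}$. Finally, any $g\rtimes c$ is $\GSpin_{2n}$-conjugate to $-(g\rtimes c)$: take $\zeta\in Z(\Spin_{2n})$ of order $4$ (so $\theta(\zeta)=\zeta^{-1}$) and compute $\zeta(g\rtimes c)\zeta^{-1}=\zeta\theta(\zeta)^{-1}g\rtimes c=\zeta^2 g\rtimes c=-g\rtimes c$. This disposes of the sign and finishes the argument without any global input.
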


\begin{remark}
The corollary is certainly not automatic from \eqref{eq:compatibility-over-E} since the unramified base change from $G^*(F_\qq)$ to $G^*(E_\pp)$ is not injective when $\qq$ does not split in $E$. Curiously our proof crucially relies on the image of complex conjugation. We have not found a local or global proof only using properties at finite places.
\end{remark}

\begin{proof}
Via the unramified Langlands functoriality with respect to the central embedding $\G_m\hra \GSO_{2n}^{E/F}$, \eqref{eq:compatibility-over-E} implies that $\cN\circ \rho_\pi|_{\Gamma_E}=\omega_\pi|_{\Gamma_E}$. If $n$ is even then $E=F$ so there is no more to prove as the latter assertion is already true by \eqref{eq:compatibility-over-E}.

Henceforth assume that $n$ is odd (so $[E:F]=2$). Then either $\cN\circ \rho_\pi = \omega_\pi$ or $\cN\circ \rho_\pi = \omega_\pi\otimes \chi_{E/F}$, where $\chi_{E/F}\colon\Gamma_F \twoheadrightarrow \Gamma_{E/F}\isom \{\pm1\}$. To exclude the latter case, let $y$ be a real place of $F$. We have $\cN(\rho_\pi(c_y))=(-1)^{(n-1)/2}$ from Lemma \ref{lem:image-of-complex-conjugation}, and $\omega_\pi(c_y)=(-1)^{n(n-1)/2}$ from \eqref{eq:omega(c)}, but clearly $\chi_{E/F}(c_y)=-1$. Then the only possibility is that $\cN\circ \rho_\pi = \omega_\pi$.

We prove the second assertion. If $\qq$ splits in $E$, this follows immediately from \eqref{eq:compatibility-over-E} for $\rho_\pi|_{\Gamma_{E,S}}$. Henceforth assume that $\qq$ is inert in $E$. We have seen that $\pr^\circ\circ\rho_\pi|_{\Gamma_{E,S}}=\rho_{\pi^\flat}|_{\Gamma_{E,S}}$. Theorem \ref{thm:ExistGaloisSO} (SO-i) (or Hypothesis \ref{hypo:non-std-reg}) tells us that
$$
\rho_{\pi^\flat_\qq}(\Frob_\qq)_{\tu{ss}} \sim \iota\phi_{\pi^\flat_\qq}(\Frob_\qq) = \iota \pr^\circ(\phi_{\pi_\qq}(\Frob_\qq)).
$$
(Note that the outer automorphism ambiguity disappears as it is absorbed by the $\SO_{2n}$-conjugacy on the nontrivial coset of $\SO_{2n}\rtimes \Gamma_{E/F}$; since $\qq$ is inert in $E$, the image of $\Frob_\qq$ in $\Gamma_{E/F}$ is nontrivial.) Therefore $\rho_\pi(\Frob_\qq)_{\tu{ss}}\sim z \iota\phi_{\pi_\qq}(\Frob_\qq)$ for some $z\in \lql^\times$. Taking the spinor norm,
$$
\cN(z)=(\cN\circ\rho_\pi(\Frob_\qq)_{\tu{ss}})\cN(\iota\phi_{ \pi_\qq}(\Frob_\qq))^{-1}
= \omega_\pi(\Frob_\qq) \omega_\pi(\Frob_\qq)^{-1}=1.
$$
It follows that $z\in \{\pm 1\}$. Since every $g\in \GSpin_{2n}(\lql)\rtimes c$ is conjugate to $-g$ (proof of Lemma \ref{lem:image-of-complex-conjugation}), we conclude that
$\rho_\pi(\Frob_\qq)_{\tu{ss}}$ is conjugate to $\iota\phi_{\pi_\qq}(\Frob_\qq)$.
\end{proof}

\begin{theorem}\label{thm:ThmAisTrue}
Theorem~\ref{thm:A} is true.
\end{theorem}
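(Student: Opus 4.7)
The plan is to verify properties (A1)--(A6) of Theorem~\ref{thm:A} one by one for the representation $\rho_\pi$ already constructed in \eqref{eq:rho-pi}, since at this point almost all of the real work has been carried out in the preceding sections. Property (A1) is precisely the second assertion of Corollary~\ref{cor:Norm-and-compatibility-over-F}. For (A2), the claim on $\pr^\circ\circ\rho_\pi$ follows from the identity \eqref{eq:equality-over-E} $\pr^\circ\circ\rho_\pi=\rho_{\pi^\flat}$ combined with Theorem~\ref{thm:ExistGaloisSO}(SO-i) (or Hypothesis~\ref{hypo:non-std-reg} if (std-reg) fails), while the claim on $\cN\circ\rho_\pi$ is the first assertion of Corollary~\ref{cor:Norm-and-compatibility-over-F}.

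For (A3) I would reduce to verifying the $\ell$-adic Hodge-theoretic assertions on the four fundamental representations $\{\std,\spin^+,\spin^-,\cN\}$ of $^L G^*$, since being de Rham, crystalline, or semistable is closed under subquotients and tensor products (Tannakian formalism), and since $\GSpin_{2n}$-conjugacy of semisimple elements (hence of cocharacters) is detected by these four representations by \cite[Lem.~1.1]{GSp}. For $\std\circ\rho_\pi=\std\circ\rho_{\pi^\flat}$ the assertions follow from Theorem~\ref{thm:ExistGaloisSO}(SO-iii),(SO-iv), adjusted by the central cocharacter $-\tfrac{n(n-1)}{4}\simil$ coming from the twist $\tilde\pi=\pi|\simil|^{-n(n-1)/4}$. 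For $\cN\circ\rho_\pi=\omega_\pi$ they are standard from the theory of algebraic Hecke characters. For the half-spin representations, the identification $\wt\rho_\pi^{\Sh,\vee}\simeq\tspin^{a_\pi}(\rho_{\tilde\pi}^C)$ used in the proof of Proposition~\ref{prop:UpToOuter}, combined with the Chebotarev density theorem, realizes $\spin^\pm\circ\rho_{\tilde\pi}^C$ (up to the twist by $\eta$ and multiplicity $a_\pi$) as direct summands of the semisimplified compact-support cohomology of the proper abelian-type Shimura varieties $\Sh^\pm$ of Section~\ref{sect:Shimura}. Properness and smoothness of the canonical integral models (Kisin--Shin--Zhu) yield the de Rham property at $\qq\mid\ell$ and the crystalline/semistable statements at hyperspecial/Iwahori level, while Proposition~\ref{prop:HT-weights} identifies the Hodge--Tate cocharacter on $\spin^\pm$ with $\spin^\pm\bigl(\mu_{\Hodge}(\xi_y)-\tfrac{n(n-1)}{4}\simil\bigr)$, from which (A3)(a) follows after assembling the four fundamental cases.

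Property (A4) is then a direct computation: Lemma~\ref{lem:image-of-complex-conjugation} pins down $\rho_\pi(c_y)$ up to $\GSpin_{2n}(\lql)$-conjugation, and the trace of $\Ad(\rho_\pi(c_y))$ on $\Lie\Spin_{2n}(\lql)$ can be evaluated directly as in the proof of \cite[Lem.~1.9]{GSp} and comes out to $n=\mathrm{rk}\,\Spin_{2n}$ in both parity cases. For (A5), condition (St) at $\qst$ together with Lemma~\ref{lem:SteinbergCheck} make $\pi^\flat_{\qst}$ a twist of the Steinberg representation, so Corollary~\ref{cor:image-has-reg-unip} guarantees that $\rho_{\pi^\flat}(\Gamma_E)=\pr^\circ\rho_\pi(\Gamma_E)$ contains a regular unipotent element of $\SO_{2n}(\lql)$. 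Projecting further to $\PSO_{2n}$ and invoking Proposition~\ref{prop:containingRegularUnipotent} then gives exactly the four possibilities (a)--(d) of (A5).

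Finally, (A6) follows from Proposition~\ref{prop:local-conjugacy-global-conjugacy} applied to $H=\GSpin_{2n}\rtimes\Gamma_{E/F}$: the Zariski closure of $\rho_\pi(\Gamma_F)$ contains a regular unipotent by (A5) just verified, and the upgrade from Frobenius semisimple conjugacy at almost all primes to pointwise local conjugacy on $\Gamma_F$ uses the Chebotarev density theorem together with the fact (combining Lemma~\ref{lem:GPin-conjugacy} and \cite[Lem.~1.1]{GSp}) that conjugacy classes in $\GSpin_{2n}\rtimes\Gamma_{E/F}$ are detected by the characters of $\std$, $\spin^\pm$, and $\cN$. The step I anticipate as the main obstacle is the bookkeeping in (A3)(a): converting the weight-by-weight computation of Proposition~\ref{prop:HT-weights} into a $\GSpin_{2n}$-valued cocharacter identity conjugate to $\iota\mu_{\Hodge}(\xi_y)-\tfrac{n(n-1)}{4}\simil$ requires consistent matching on all four fundamental representations simultaneously, and one must carefully track the half-integral shifts introduced by the central twist $|\simil|^{-n(n-1)/4}$ (which are integral only after the prescribed correction) when passing between the Hodge and Hodge--Tate normalizations.
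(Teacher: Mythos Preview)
Your overall strategy matches the paper's proof: (A1), (A2), (A4), (A5), (A6) are handled exactly as you outline, and the reduction of (A3) to the four representations $\{\std,\cN,\spin^+,\spin^-\}$ via \cite[Lem.~1.1]{GSp} is the correct framework.

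There is, however, a real gap in (A3)(a). Proposition~\ref{prop:HT-weights} only computes $\mu_{\HT}(\rho_{\pi^\natural}^{\Sh,\varepsilon},\iota x_\infty)$ at the \emph{single} embedding arising from the distinguished place $y_\infty$ fixed in the construction of the Shimura datum; it says nothing about the other $y\colon F\hra\C$. The paper closes this gap by redoing the construction of \S\ref{sect:forms-of-GSO}--\S\ref{sect:Construction} with an arbitrary $y$ in place of $y_\infty$ (yielding a different inner form $G$ and different Shimura varieties), obtaining a representation $\rho_\pi(y)$, and then invoking the already-proven (A1) and (A6) to conclude $\rho_\pi\simeq\rho_\pi(y)$; applying Proposition~\ref{prop:HT-weights} to the new datum then gives the half-spin Hodge--Tate weights at $y$. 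Both this step and (A3)(c) also require a preliminary reduction to $F\neq\Q$ (the hypothesis of Proposition~\ref{prop:HT-weights} and the properness needed for \cite{Lovering}), carried out via the base change of \cite[Prop.~6.6]{GSp} together with (A6). Your closing paragraph correctly anticipates that (A3)(a) is the delicate point, but the obstacle is this varying-of-$y_\infty$ step rather than the half-integral bookkeeping.

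A secondary point: your Shimura-variety route to (A3)(b) does not work as stated, since good or semistable reduction of the integral model at $\ell$ is governed by the level at \emph{all} $F$-primes above $\ell$, whereas (A3)(b) is a hypothesis on a single $\qq\mid\ell$. The paper instead deduces (b) from (A2): both $\pr^\circ\circ\rho_\pi=\rho_{\pi^\flat}$ and $\cN\circ\rho_\pi=\omega_\pi$ are crystalline (resp.\ semistable) at $\qq$ by (SO-iv) and class field theory, and since $(\pr^\circ,\cN)$ has kernel $\mu_2$ this pins down $\rho_{\pi,\qq}$ only up to a quadratic twist---which is exactly the form of (A3)(b).
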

\begin{proof}

Let $\pi$ be as in the theorem. We fix an automorphic representation $\pi^\flat$ of $\SO^{E/F}_{2n}(\A_F)$ in $\pi|_{\SO_{2n}^{E/F}(\A_F)}$, take $\rho_{\pi^\flat}:\Gamma_F\ra \SO_{2n}(\lql)\rtimes \Gamma_{E/F}$ to be as in Theorem \ref{thm:ExistGaloisSO}, or Hypothesis \ref{hypo:non-std-reg} if (std-reg) is false, and define
\begin{equation}\label{eq:rhopi}
\rho_\pi:\Gamma_F \ra \GSpin_{2n}(\lql)\rtimes \Gamma_{E/F}
\end{equation}
such that $\rho_{\pi^\flat}=\pr^\circ\circ \rho_\pi$ as explained at the start of this section. We can inflate $\rho_\pi$ to a representation $\Gamma_F \ra \GSpin_{2n}(\lql)\rtimes \Gamma_{F}$ of Theorem~\ref{thm:A}, but we work with $\rho_\pi$ in the form of \eqref{eq:rhopi} as this is harmless for verifying the theorem.

The equality $\rho_{\pi^\flat}=\pr^\circ\circ \rho_\pi$ and Corollary \ref{cor:Norm-and-compatibility-over-F} imply (A2). Corollary \ref{cor:Norm-and-compatibility-over-F} exactly gives (A1). Item (A4) is straightforward from Lemma \ref{lem:image-of-complex-conjugation}. To see (A5), note that the image of $\rho_\pi$ in $\PSO_{2n}(\lql)$ is the same as the image of $\rho_{\pi^\flat}$ in the same group. The Zariski closure of the image is (possibly disconnected and) reductive since $\rho_{\pi^\flat}$ is semisimple and contains a regular unipotent element by Corollary \ref{cor:image-has-reg-unip}. Hence (A5) is implied by Proposition~\ref{prop:containingRegularUnipotent}. Now $\rho_{\pi}$ also contains a regular unipotent in the image, so (A6) and the uniqueness of $\rho_\pi$ up to conjugacy are consequences of Proposition \ref{prop:local-conjugacy-global-conjugacy}.

It remains to verify (A3). We begin with part (b). If $\pi_\qq$ has nonzero invariants under a hyperspecial (resp.~Iwahori) subgroup, then $\pi^\flat_\qq$ and $\omega_{\pi,\qq}$ enjoy the same property. Therefore (b) follows from (A2) and Theorem \ref{thm:ExistGaloisSO} (SO-iv). To prove part (c), write $\pp$ for a place of $E$ above $\qq$. Since $\pp$ is unramified over $E$, it suffices to check that $\rho_{\pi}|_{\Gamma_E}$ is crystalline at $\pp$. Moreover we may assume that $F\neq \Q$ by the automorphic base change of \cite[Prop.~6.6]{GSp} and (A6). (If $F=\Q$ then replace $F$ with a real quadratic field $F'$ in which $\ell$ is split, and $E$ with $EF'$. By (A6), $\rho_\pi|_{\Gamma_{F'}}\simeq \rho_{\pi_{F'}}$, where $\pi_{F'}$ is the base change of $\pi$ to $F'$ constructed in \emph{loc.~cit.} Thus the question is now about $\rho_{\pi_{F'}}$.) 
Now that $F\neq \Q$, the Shimura varieties in \S\ref{sect:Shimura} are proper, and $\rho_{\pi^\natural}^{\Sh,\varepsilon}$ is crystalline at all places above $\ell$ by \cite{Lovering}. Since $\spin\circ \rho_{\pi}|_{\Gamma_E}$ embeds in $\rho_{\pi^\natural}^{\Sh,+}\oplus \rho_{\pi^\natural}^{\Sh,-}$ (which is isomorphic to the $a(\pi^\natural)$-fold direct sum of $\spin\circ \rho_{\pi}$), and since $\spin$ is faithful, we deduce that $\rho_{\pi}|_{\Gamma_E}$ is crystalline at $\pp$ as desired.

Finally we prove (A3), part (a).  We first claim that if two cocharacters $\mu_1,\mu_2\in X_*(T_{\GSpin})$ become conjugate after composition with each of $\spin^{+,\vee}$, $\spin^{-,\vee}$, $\std^\circ$, and $\cN$, then $\mu_1$ and $\mu_2$ are $\GSpin_{2n}$-conjugate. To see this, note that a semi-simple conjugacy class $\gamma$ in $\GSpin_{2n}(\C)$ is determined by the conjugacy classes $\spin^\pm(\gamma)$, $\cN(\gamma)$ and $\std(\gamma)$ by \cite[Lem.~1.3]{GSp} (thus also determined by $\spin^{\pm,\vee}(\gamma)$, $\cN(\gamma)$ and $\std(\gamma)$) and the table above Lemma 1.1 therein. The same statement holds for the cocharacters via the Weyl group-equivariant isomorphism $X_*(T_{\GSpin}) \otimes_{\Z} \C^\times \to T_{\GSpin}(\C)$, proving the claim.

Our second claim is that for every $y:F\hra \C$,
\begin{equation}\label{eq:HT-weights}
\spin^{\varepsilon}(\mu_{\tu{HT}}(\rho_{\pi, \qq}, \iota y))\sim \spin^{\varepsilon}(\iota \mu_{\tu{Hodge}}(\xi_y) - \tfrac{n(n-1)}{4}\simil),\qquad \varepsilon\in \{\pm\}.
\end{equation}
Accept this for now.  
The representations $\std^\circ$ and $\cN$ factor over the isogeny $(\cN,\pr^\circ):\GSpin_{2n}\ra \Gm\times \SO_{2n}$,
so it follows easily from (A2) and Theorem \ref{thm:ExistGaloisSO} (SO-iii) that \eqref{eq:HT-weights} holds with $\std^\circ$ and $\cN$ in place of $\spin^{\varepsilon}$. Thus we can conclude by the first claim. 

To complete the proof of (A3)(a), we check the second claim \eqref{eq:HT-weights}. A base-change argument as in the preceding paragraph allows us to assume that $F\neq \Q$. Recall that we chose an embedding $y_\infty:F\hra \C$ in the definition of $G$ and the Shimura data $(\Res_{F/\Q} G,X^\varepsilon)$. It follows from Proposition \ref{prop:HT-weights} (applicable as $F\neq \Q$) that 
\begin{equation}\label{eq:HT-weights2}
\mu_{\HT}(\spin^{\varepsilon,\vee}\circ  \rho_{\pi} ,\iota y_\infty) ~\sim ~\spin^{\varepsilon,\vee}\circ \left( 
\iota \mu_{\Hodge}(\xi_{y_\infty}) -\tfrac{n(n-1)}{4} \simil \right),
\end{equation}
We can repeat the construction from the beginning of \S\ref{s:main-thm} up to now, with $y:F\hra \C$ in place of $y_\infty$. Write $\rho_\pi(y):\Gamma_F\ra \GSpin_{2n}(\lql)\rtimes \Gamma_{E/F}$ for the resulting Galois representation. From (A1) and (A6) of Theorem~\ref{thm:A} (which have already been verified) to $\rho_\pi$ and $\rho_\pi(y)$, we deduce that $\rho_\pi\simeq \rho_\pi(y)$. Applying Proposition \ref{prop:HT-weights} to $\rho_\pi(y)$, we see that \eqref{eq:HT-weights2} holds with $y$ in place of $y_\infty$.

Now the left hand side of \eqref{eq:HT-weights2} equals $\spin^{\varepsilon,\vee}\circ\mu_{\HT}(  \rho_{\pi} ,\iota y)$ by construction of Hodge--Tate cocharacters, so we are done with proving \eqref{eq:HT-weights} as desired.
\end{proof}

\begin{remark}\label{rem:complex-conj-GSO}
Lemma \ref{lem:image-of-complex-conjugation} tells us that $\rho_{\pi}$ is totally odd. Our result also shows that $\rho_{\pi}(c_y)$ is as predicted by \cite[Conj. 3.2.1, 3.2.2]{BuzzardGee} for every infinite place $y$ of $F$. Indeed, as explained in \S6 of their paper, their conjectures are compatible with the functoriality. Considering the $L$-morphism $^L \GSO_{2n}^{E/F} \ra {}^L \SO_{2n}^{E/F}$ dual to the inclusion $\SO_{2n}^{E/F} \hra \GSO_{2n}^{E/F}$, we reduce the question to the case of $\SO_{2n}^{E/F}$ in view of the characterization of $\rho_{\pi}(c_y)$ in terms of $\pr^\circ(\rho_{\pi_\pi}(c_y))$. The latter is conjugate to $\rho_{\pi^\flat}$, which is as conjectured by \emph{loc.~cit.}~by Remark \ref{rem:complex-conj-SO}.
\end{remark}

\begin{remark}
It was easier to determine the Hodge--Tate cocharacter in the $\GSp$-case \cite{GSp} , thanks to the absence of nontrivial outer automorphisms. In particular we did not need to prove the analogue of Proposition \ref{prop:HT-weights}. Compare with the proof of Theorem 9.1 (iii.a') of \emph{loc.~cit}.
\end{remark}

\section{Refinement for $\SO_{2n}$-valued Galois representations}\label{sect:refinement}

As an application of our results we improve upon Theorem \ref{thm:ExistGaloisSO} in this section by removing the outer ambiguity in the images of Frobenius conjugacy classes.

\bigskip

Let $E/F$ be a quadratic CM extension of $F$ in case $n$ is odd, and $E := F$ for $n$ even. Let $\SO_{2n}^{E/F}$ be the corresponding group defined above \eqref{eq:std-rep-L-gp}. If $\pi^\flat$ (resp.~$\pi$) is an automorphic representation of $\SO_{2n}^{E/F}(\A_F)$ (resp.~$\GSO_{2n}^{E/F}(\A_F)$), we write $\Sbad(\pi^\flat)$ (resp $\Sbad(\pi)$) for the set of rational prime numbers $p$, such that $p = 2$, $p$ ramifies in $E$, or $\pi^\flat_p$ (resp.~$\pi_p$) is a  ramified representation. For other notation, we refer to Section \ref{sect:Notation}.

In order to extend a given cohomological representation $\pi^\flat$ of $\SO_{2n}^{E/F}(\A_F)$ to a \emph{cohomological} representation $\pi$ of $\GSO_{2n}^{E/F}(\A_F)$, the following condition on the central character $\omega_{\pi^\flat}\colon \mu_2(F) \backslash \mu_2(\A_F) \to \C^\times$ is necessary in view of condition (cent) of \S\ref{sect:Shimura}. (If $\pi$ is $\xi$-cohomological with $w\in \Z$ as in (cent) then all $\omega_{\pi^\flat,y}$ are trivial, resp.~nontrivial, according as $w$ is even, resp.~odd.)

\begin{itemize}
\item[(\textbf{cent}$^\circ$)] The sign character $\omega_{\pi^\flat,y}: \mu_2(F_y)=\{\pm1\} \to \C^\times$ does not depend on $y|\infty$.
\end{itemize}

\begin{theorem}\label{thm:ExistGaloisSO-Plus}
Let $\pi^\flat$ be a cuspidal automorphic representation of $\SO^{E/F}_{2n}(\A_F)$ satisfying \tu{(cent$^\circ$), (coh$^\circ$), (St$^\circ$), and (std-reg$^\circ$)} of \S\ref{sect:GSO-valued-Galois}. Then there exists a semisimple Galois representation (depending on~$\iota$)
$$
\rho_{\pi^\flat} = \rho_{\pi^\flat, \iota} \colon \Gamma_F \to \SO_{2n}(\lql)\rtimes \Gamma_{E/F}
$$
satisfying \tu{(SO-i)--(SO-v)} as in Theorem \ref{thm:ExistGaloisSO} as well as the following.
\begin{enumerate}[leftmargin=+0.7in]
\item[\tu{(SO-i+)}]
For every finite prime $\qq$ of $F$ not above $\Sbad(\pi^\flat) \cup \{\ell\}$,
$$
\iota \phi_{\pi^\flat_\qq} \sim \textup{WD}(\rho_{\pi^\flat}|_{\Gamma_{F_\qq}})^{\textup{F-ss}},
$$
as $\SO_{2n}(\lql)$-parameters.
\item[\tu{(SO-iii+)}]
 For every $\qq | \ell$, the representation $ \rho_{\pi^\flat, \qq}$ is potentially semistable. For each $y \colon F \hra \C$ such that $\iota y$ induces $\qq$, we have
$\mu_{\HT}(\rho_{\pi^\flat,\qq},\iota y) \sim \iota\mu_{\Hodge}(\xi^\flat, y)$.
\end{enumerate}
Condition (SO-i+) characterizes $\rho_{\pi^\flat}$ uniquely up to $\SO_{2n}(\lql)$-conjugation.
\end{theorem}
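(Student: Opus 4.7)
The plan is to deduce Theorem~\ref{thm:ExistGaloisSO-Plus} from Theorem~\ref{thm:A} by first extending $\pi^\flat$ to a suitable cuspidal automorphic representation $\pi$ of $\GSO^{E/F}_{2n}(\A_F)$ satisfying the hypotheses \tu{(St)}, \tu{(L-coh)}, and \tu{(std-reg)}, and then defining $\rho_{\pi^\flat} := \pr^\circ \circ \rho_\pi$, where $\pr^\circ \colon \GSpin_{2n}(\lql) \rtimes \Gamma_{E/F} \to \SO_{2n}(\lql) \rtimes \Gamma_{E/F}$ is the projection from Lemma~\ref{lem:SurjectionOntoGSO} and $\rho_\pi$ is the representation furnished by Theorem~\ref{thm:A}. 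The crucial point is that Theorem~\ref{thm:A} attaches to $\pi$ a genuine $\GSpin_{2n}$-valued Galois representation with no outer-automorphism ambiguity, so composing with $\pr^\circ$ lands in $\SO_{2n}(\lql) \rtimes \Gamma_{E/F}$ without residual indeterminacy; this is exactly what upgrades $(\tu{SO-i})$ to $(\tu{SO-i+})$ and $(\tu{SO-iii})$ to $(\tu{SO-iii+})$.

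The extension step is the main technical input. Using the exact sequence $1 \to \SO^{E/F}_{2n} \to \GSO^{E/F}_{2n} \stackrel{\simil}{\to} \G_m \to 1$, one extends the central character $\omega_{\pi^\flat}$ of $\mu_2(\A_F)$ to a Hecke character of $\A_F^\times$, and then builds via the standard restriction--induction theory (drawing on the results collected in Section~7) a cuspidal automorphic representation $\pi$ of $\GSO^{E/F}_{2n}(\A_F)$ whose restriction contains $\pi^\flat$. Condition \centSO\ is precisely what is needed to arrange for $\pi_\infty |\simil|^{-n(n-1)/4}$ to be $\xi$-cohomological for some irreducible algebraic $\xi$: by Lemma~\ref{lem:coh-GSO-coh-SO}(3) combined with condition~\tu{(cent)}, the central character of $\pi$ at each real place must be of the form $z \mapsto z^w$ for a single integer $w$ independent of $y$, which forces $\omega_{\pi^\flat,y}(-1) = (-1)^w$ to be constant in $y$. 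Once $\pi$ is constructed, \tu{(St)} follows from \tu{(St$^\circ$)} by Lemma~\ref{lem:SteinbergCheck}, and \tu{(std-reg)} follows from \tu{(std-reg$^\circ$)} because $\std \circ \phi_{\pi_y}$ factors through $\pr^\circ \circ \phi_{\pi_y} = \phi_{\pi^\flat_y}$.

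With $\pi$ in hand, Theorem~\ref{thm:A} produces $\rho_\pi$, and one verifies the properties of $\rho_{\pi^\flat} := \pr^\circ \circ \rho_\pi$ by pushing forward those of $\rho_\pi$ along $\pr^\circ$. Property $(\tu{SO-i+})$ follows from $(\tu{A1})$ because $\pr^\circ$ is the $L$-morphism dual to $\SO^{E/F}_{2n} \hookrightarrow \GSO^{E/F}_{2n}$, and $(\tu{SO-ii})$ follows from $(\tu{A1})$ together with temperedness (Lemma~\ref{lem:tempered}). For $(\tu{SO-iii+})$, applying $\pr^\circ$ to the formula of $(\tu{A3})(a)$ kills the $\simil$-correction term (since $\simil$ generates the kernel of $\pr^\circ$ as a cocharacter by Lemmas~\ref{lem:SurjectionOntoGSO} and~\ref{lem:dualizing-spinor-norm-and-similitude}), leaving $\mu_{\HT}(\rho_{\pi^\flat,\qq},\iota y) \sim \iota \mu_{\Hodge}(\xi^\flat, y)$; potential semistability is inherited via faithful composition. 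Properties $(\tu{SO-iv})$ and $(\tu{SO-v})$ follow by applying $\pr^\circ$ to Lemma~\ref{lem:image-of-complex-conjugation} and to property $(\tu{A5})$ respectively. Uniqueness up to $\SO_{2n}(\lql)$-conjugation follows from Proposition~\ref{prop:local-conjugacy-global-conjugacy}, since Corollary~\ref{cor:image-has-reg-unip} guarantees that the image of $\rho_{\pi^\flat}$ contains a regular unipotent element.

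The principal obstacle is the extension step: producing an automorphic $\pi$ on $\GSO^{E/F}_{2n}$ that is simultaneously cuspidal, extends $\pi^\flat$, is cohomological (this is where \centSO\ is essential), and preserves the Steinberg component (automatic by Lemma~\ref{lem:SteinbergCheck}). Once this construction is secured, the remaining verifications are routine transfers of the corresponding assertions of Theorem~\ref{thm:A} along the $L$-morphism $\pr^\circ$.
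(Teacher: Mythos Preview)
Your overall strategy matches the paper's: extend $\pi^\flat$ to a cuspidal automorphic $\pi$ on $\GSO_{2n}^{E/F}$ satisfying the hypotheses of Theorem~\ref{thm:A}, then set $\rho_{\pi^\flat}=\pr^\circ\circ\rho_\pi$ and read off the properties. The extension step is indeed carried out via extending $\omega_{\pi^\flat}$ to a Hecke character (using \tu{(cent$^\circ$)}) and then applying Xu's multiplicity comparison \cite[Lem.~5.4]{BinXuLPackets} to guarantee automorphy of some twist of the locally constructed $\pi$.

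However, there is a genuine gap in your derivation of \tu{(SO-i+)}. Property \tu{(A1)} of Theorem~\ref{thm:A} controls $\rho_\pi(\Frob_\qq)_{\tu{ss}}$ only for $\qq$ not above $\Sbad(\pi)\cup\{\ell\}$, whereas \tu{(SO-i+)} demands control for $\qq$ not above $\Sbad(\pi^\flat)\cup\{\ell\}$. The inclusion $\Sbad(\pi^\flat)\subset\Sbad(\pi)$ is strict in general: the extension $\pi$ can be ramified at primes where $\pi^\flat$ is unramified, because the extended central character $\chi$ (and hence the $\GL_1$-part of $\pi$) may ramify at such primes. Your sentence ``Property $(\tu{SO-i+})$ follows from $(\tu{A1})$'' therefore leaves uncovered every prime $p\in\Sbad(\pi)\setminus\Sbad(\pi^\flat)$.

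The paper closes this gap with an additional twisting argument. For each such prime $p$, the obstruction to unramifiedness of $\pi_\qq$ (for $\qq|p$) lives in the action of the hyperspecial $K_\qq$ on the $K_{0\qq}$-fixed vectors, i.e.\ in a character $\chi^0_\qq$ of $K_\qq/K_{0\qq}\simeq\cO_{F_\qq}^\times$. One globalizes the collection $\{\chi^0_\qq\}_{\qq|p}$ to a Hecke character $\chi$ with trivial archimedean components (so that twisting preserves \tu{(L-coh)}), sets $\pi':=\pi\otimes\chi^{-1}$, and observes that $p\notin\Sbad(\pi')$. Since $\pi'$ still satisfies the hypotheses of Theorem~\ref{thm:A} and $\pr^\circ\circ\rho_{\pi'}$ agrees with $\pr^\circ\circ\rho_\pi$ (the twist is central, killed by $\pr^\circ$), one obtains \tu{(SO-i+)} at each $\qq|p$ from \tu{(A1)} applied to $\pi'$. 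You should incorporate this step.
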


\begin{remark}\label{rem:SO-i-vs-SO-i+}
Statement (SO-\textit{i+}) is stronger than (SO-\textit{i}) in that the statement is up to $\SO_{2n}(\lql)$-conjugacy, but also weaker as it excludes the  places above $\Sbad(\pi^\flat) \cup \{\ell\}$. Clearly (SO-\textit{iii+}) strengthens (SO-\textit{iii}). If we drop (std-reg$^\circ$) from the assumption, then the theorem can be proved by the same argument but conditionally on Hypothesis \ref{hypo:non-std-reg}.
\end{remark}

\begin{proof}[Proof of Theorem \ref{thm:ExistGaloisSO-Plus}]
We have $\mu_2 = Z(\SO_{2n}^{E/F})$. We claim that the central character $\omega_{\pi^\flat}$ extends (via $\mu_2(\A_F)\subset \A_F^\times$) to a Hecke character 
$$
\chi \colon F^\times \backslash \A_F^\times \to \C^\times
$$ 
such that $\chi_y(z)=z^w$ at every infinite place $y$, where $w=0$ (resp.~$w=1$) if $\omega_{\pi^\flat,y}$ is trivial (resp.~nontrivial) at every $y|\infty$.

To prove the claim, let $E'$ be a quadratic CM extension of $F$. We start by extending $\omega_{\pi^\flat}$ to a (unitary) Hecke character $\chi':F^\times \backslash \A_F^\times \to \C^\times$ whose infinite components are trivial if $w=0$ and the sign character if $w=1$. If $w=0$, then such a $\chi'$ exists since  $\mu_2(F)\mu_2(F_\infty)\backslash \mu_2(\A_F)$ is a closed subgroup of $\ol{F^\times F_{\infty}^\times}\backslash \A_F^\times $, where the bar means the closure in $\A_F^\times$. If $w=1$, consider the quadratic Hecke character $\chi_{E'/F}$ associated with $E'/F$ via class field theory. Then $\omega_{\pi^\flat}(\chi_{E'/F}|_{\mu_2(\A_F)})$ extends to a Hecke character with trivial components at $\infty$ by the $w=0$ case. Multiplying $\chi_{E'/F}$, we obtain a desired  choice of $\chi'$. Whether $w=0$ or $w=1$, we now see that $\chi:=\chi' |\cdot|^w$ has desired components at $\infty$, where $|\cdot|$ is the absolute value character on $\A_F^\times$. The claim is proved.

Consider the multiplication map $f \colon \GL_1\times \SO_{2n}^{E/F} \to \GSO_{2n}^{E/F}$. Let $\xi^\flat$ be such that $\pi^\flat$ is $\xi^\flat$-cohomological. Write $\varsigma$ for the algebraic character $z \mapsto z^w$ of $\GL_1$ over $F$. Then $(\varsigma, \xi^\flat)$ descends to an algebraic representation $\xi$ of $\GSO_{2n}^{E/F}$ via $f$.

Let us extend $\pi^\flat$ to an irreducible admissible $\GSO_{2n}^{E/F}(\A_F)$-representation, by decomposing $\pi^\flat = \otimes_v' \pi_v^\flat$ and taking an irreducible subrepresentation $\pi_v$ of
$$
\tu{Ind}_{\GL_1(F_v) \SO_{2n}^{E/F}(F_v)}^{\GSO_{2n}^{E/F}(F_v)}
\chi_v \pi_v^\flat,
$$
which is semisimple \cite[pp.1832--1833]{Xu-lifting}. Take $\pi_v$ to be unramified for almost all $v$, and define $\pi := \otimes_v' \pi_v$. Lemma 5.4 of \cite{BinXuLPackets} states that
$$
\sum_{\omega \in X/YX(\pi)} m(\pi \otimes \omega) =
\sum_{g \in \GSO_{2n}^{E/F}(\A_F)/\wt G(\pi) \GSO_{2n}^{E/F}(F)} m\big((\pi^\flat)^g\big),
$$
where $X$ is the set of characters of $\GSO_{2n}^{E/F}(\A_F)/\SO_{2n}^{E/F}(\A_F) Z(\GSO_{2n}^{E/F})(\A_F)$, and $\omega$ in the formula is represented by such characters. We refer to \textit{loc.~cit.} for some undefined notation that is not important for us but content ourselves with pointing out that both sides are finite sums. Since $m(\pi^\flat)>0$, the right hand side is positive. Thus the left hand side is positive, and thus we may (and do) twist $\pi$ so that it is discrete automorphic.

We now check that $\pi$ satisfies the conditions of Theorem~\ref{thm:A}. Since $\pi^\flat_\infty$ is $\xi^\flat$-cohomological, by construction $\pi_\infty$ is cohomological according to Lemma \ref{lem:coh-GSO-coh-SO}. By Lemma~\ref{lem:SteinbergCheck}, $\pi$ satisfies (St) thus also cuspidal. Condition (std-reg) is implied by (std-reg$^\circ$) on $\pi^\flat$. Hence we have a Galois representation
$$
\rho_\pi \colon \Gamma \to \GSpin_{2n}(\lql) \rtimes \Gamma_{E/F}.
$$
such that for every finite $F$-place $\qq$ not above $\Sbad(\pi) \cup \{\ell\}$,
\begin{equation}\label{eq:Thm33-plus-1}
\rho_\pi(\Frob_\qq)_{\tu{ss}} \sim \iota \phi_{\pi_\qq}(\Frob_\qq) \in \GSpin_{2n}(\lql) \rtimes \Gamma_{E/F}.
\end{equation}
As in the preceding section, we can arrange that $\rho_{\pi^\flat} = \pr^\circ \circ \rho_\pi$ (not just up to outer automorphism). The Satake parameter of $\pi^\flat_\qq$ is equal to the composition of the Satake parameter of $\pi_\qq$ with the natural surjection (cf. \cite[Lem.~5.2]{BinXuLPackets})
$$
(\pr^\circ,\tu{id}):\GSpin_{2n}(\C) \rtimes \Gamma \to \SO_{2n}(\C) \rtimes \Gamma.
$$
In particular (SO-\textit{i+}) follows from \eqref{eq:Thm33-plus-1} for the places not above $\Sbad(\pi) \cup \{\ell\}$. Similarly (SO-\textit{iii+}) follows from Theorem \ref{thm:A} (A3)(a).

At this point we have not yet completely proved (SO-\textit{i+}), as the inclusion $\Sbad(\pi^\flat) \subset \Sbad(\pi)$ is strict in general. Thus it remains to treat $\qq$ above a prime $p \in \Sbad(\pi) \backslash \Sbad(\pi^\flat)$. Consider for $n$ odd (resp.~even) the obvious hyperspecial subgroup (recall $\qq \nmid 2$)
\begin{align*}
K_{\qq} := \begin{cases} \{(g, \lambda) \in \GL_{2n}(\cO_E \otimes_{\cO_F} \cO_{F_\qq}) \times \cO_{F_\qq}^\times \ |\  \li g = \vartheta^\circ g \vartheta^\circ, g^t \vierkant 0{1_n}{1_n}0 g = \lambda \vierkant 0{1_n}{1_n}0, \det(g) = \lambda^n\} \cr
\tu{resp.} \cr
 \lbr (g, \lambda) \in \GL_{2n}(\cO_{F_\qq}) \times \cO_{F_\qq}^\times \ \left| \ g\tra \cdot \vierkant \ {1_n}{1_n}\ \cdot g = \lambda \cdot \vierkant \ {1_n}{1_n}\ , \det(g) = \lambda^n \right. \ \rbr
\end{cases}
\end{align*}
of $\GSO_{2n}^{E/F}(F_\qq)$. Define $K_{0\qq}$ to be the kernel of the similitudes mapping $K_\qq \to \cO_{F_\qq}^\times$, $(g, \lambda) \mapsto \lambda$. Then $\pi_\qq$ is a ramified representation of $\GSO_{2n}^{E/F}(F_\qq)$, but has nonzero $K_{0\qq}$-fixed vectors, on which $K_\qq$ acts through nontrivial characters of $K_\qq / K_{0\qq}\simeq \cO_{F_\qq}^\times$. We fix one such character $\chi^0_\qq$ of $K_\qq$, and do this at every $\qq$ above $p$. Now we globalize $\{\chi^0_\qq\}_{\qq|p}$ to a Hecke character $\chi:F^\times\backslash \A_F^\times\ra \C^\times$ whose restriction to each $\cO_{F_\qq}^\times$ is $\chi^0_{\qq}$ and whose archimedean components are trivial. (This is possible by \cite[Lem.~4.1.1]{CHT08}.) Define $\pi' := \pi \otimes \chi\inv$. Then $\pi'$ also satisfies the conditions of Theorem \ref{thm:A}. Moreover, $p \notin \Sbad(\pi')$ by construction. Therefore \eqref{eq:Thm33-plus-1} is true at each $\qq|p$, with $\pi'$ in place of $\pi$. Then (SO-\textit{i+}) for $\qq$ follows as before.
\end{proof}

\section{Automorphic multiplicity one}

Let $E/F$ be a quadratic CM extension of $F$ in case $n$ is odd, and $E := F$ for $n$ even. Let $\SO_{2n}^{E/F}$ and $\GSO_{2n}^{E/F}$ be as before. If $\pi$ (resp.~ $\pi^\flat$) is an automorphic representation of $\GSO_{2n}^{E/F}(\A_F)$ (resp.~$\SO_{2n}^{E/F}(\A_F)$), we write $m(\pi)$ (resp.~$m(\pi^\flat)$) for its automorphic multiplicity. In this section we will show that $m(\pi^\flat)$ and $m(\pi)$ are $1$ for certain classes of automorphic representations of $\SO_{2n}^{E/F}(\A_F)$ and $\GSO_{2n}^{E/F}(\A_F)$ (and some inner forms of those groups). To do this we combine our results with Arthur's result on multiplicities for $\SO_{2n}^{E/F}$, and Xu's result on multiplicities for $\GSO_{2n}^{E/F}$.

Let $\pi^\flat$ be a discrete automorphic representation of $\SO_{2n}^{E/F}(\A_F)$. Arthur gives in the discussion below \cite[Thm.~1.5.2]{ArthurBook} the following result towards the computation of $m(\pi^\flat)$. Let $\psi = \psi_1 \boxplus \cdots \boxplus \psi_r$ be the global (formal) parameter of $\pi^\flat$ \cite[\S1.4]{ArthurBook} (cf.~Section~\ref{sect:GSO-valued-Galois}). Technically, $\psi$ is an automorphic representation $\pi^\sharp$ of $\GL_{2n}(\A_F)$ given as an isobaric sum of discrete automorphic representations $\pi_i^\sharp$ of $\GL_{n_i}(\A_F)$, with $\pi_i^\sharp$ representing the formal parameter $\psi_i$. In terms of these parameters  Arthur proves a decomposition of the form
$$
\tu{L}^2_{\tu{disc}}\big(\SO_{2n}^{E/F}(F)\backslash \SO_{2n}^{E/F}(\A_F)\big) \isomto \bigoplus_{\psi \in \wt \Psi_2(\SO_{2n}^{E/F}) } \bigoplus_{\tau \in \wt \Pi_\psi(\eps_\psi)} m_\psi \tau
$$
as an $\wt \cH(\SO_{2n}^{E/F})$-Hecke module. It takes us too far afield to recall all the notation here, but we emphasize that $\wt \cH(\SO_{2n}^{E/F})$ is the restricted tensor product of the local algebras $\wt \cH(\SO_{2n}^{E/F}(F_v))$ consisting of $\theta^\circ$-invariant functions \cite[before (1.5.3)]{ArthurBook}. Similarly, the local packet $\wt \Pi_{\psi_v}(\eps_\psi)$ consists of $\theta^\circ$-orbits of representations. 

Assume $\pi^\flat \not\simeq \pi^\flat \circ \theta^\circ$ for the moment. Both $\pi^\flat$ and $\pi^\flat \circ \theta^\circ$ map to the same global parameter $\psi$, and are isomorphic as $\wt \cH(\SO_{2n}^{E/F})$-modules. Arthur proves $m_\psi \leq 2$ for all $\psi$. Thus
\begin{equation}\label{eq:m_psi}
m(\pi^\flat) + m(\pi^\flat \circ \theta^\circ) \leq m_\psi \leq 2.
\end{equation}
On the other hand, $\theta^\circ$ acts on $\tu{L}^2_{\tu{disc}}(\SO_{2n}^{E/F}(F)\backslash \SO_{2n}^{E/F}(\A_F))$, so if $\pi^\flat$ appears, then $\pi^\flat \circ \theta^\circ$ also appears. Hence $m(\pi^\flat), m(\pi^\flat \circ \theta^\circ) \geq 1$, forcing  $m(\pi^\flat) = 1$ and $m(\pi^\flat \circ \theta^\circ) = 1$. 

From now on we impose the assumption (std-reg${}^\circ$) on $\pi^\flat$. At the infinite $F$-places $v$ the infinitesimal character of $\pi^\flat_v$ is then not fixed by $\theta^\circ$. In particular $\pi^\flat \not\simeq \pi^\flat \circ \theta^\circ$. By the preceding paragraph, we have $m(\pi^\flat) = 1$, $m(\pi^\flat \circ \theta^\circ) = 1$, and $m_\psi = 2$.

\begin{proposition}\label{prop:AutomMult2}
Let $\pi$ be a cuspidal automorphic representation of $\GSO_{2n}^{E/F}(\A_F)$ satisfying (L-coh), (St) and (std-reg). Then $m(\pi) = 1$.
\end{proposition}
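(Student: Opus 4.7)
The plan is to deduce $m(\pi) = 1$ from the equality $m(\pi^\flat) = 1$ established in the preceding paragraph, by comparing the two sides of Bin Xu's multiplicity identity for a carefully chosen $\SO^{E/F}_{2n}(\A_F)$-subrepresentation $\pi^\flat$ of $\pi$.

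First I would fix an irreducible cuspidal $\SO_{2n}^{E/F}(\A_F)$-subrepresentation $\pi^\flat \subset \pi|_{\SO_{2n}^{E/F}(\A_F)}$, which exists by~\cite{LabesseSchwermer2018}. Lemmas~\ref{lem:SteinbergCheck} and~\ref{lem:coh-GSO-coh-SO}, together with the identity $\phi_{\pi^\flat_y} = \pr^\circ\circ\phi_{\pi_y}$ at the infinite places, show that $\pi^\flat$ satisfies (St$^\circ$), (coh$^\circ$), and (std-reg$^\circ$). The discussion preceding the proposition therefore gives $m(\pi^\flat) = m(\pi^\flat\circ\theta^\circ) = 1$.

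Next I would invoke Xu's identity~\cite[Lem.~5.4]{BinXuLPackets}, already quoted in the proof of Proposition~\ref{prop:AllUnramifiedPlaces},
$$
\sum_{\omega \in X/YX(\pi)} m(\pi\otimes\omega)\ =\ \sum_{g \in \GSO_{2n}^{E/F}(\A_F)/\wt G(\pi)\GSO_{2n}^{E/F}(F)} m\big((\pi^\flat)^g\big).
$$
Every term on the right equals $m(\pi^\flat)=1$ since conjugation preserves automorphic multiplicity, so the right-hand side is simply the cardinality $N_R$ of its indexing set. On the left, distinct classes of $\omega$ parametrise pairwise non-isomorphic twists of $\pi$ (which is the point of quotienting by $X(\pi)$), and twisting by a Hecke character preserves multiplicities, so the left-hand side equals $N_L\cdot m(\pi)$ with $N_L = |X/YX(\pi)|$.

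It remains to verify $N_L = N_R$, which yields $m(\pi) = m(\pi^\flat) = 1$. The key input is Theorem~\ref{thm:A}, which removes the outer-automorphism ambiguity in the correspondence between $\pi$ and the $\GSO$-orbits of its $\SO^{E/F}_{2n}(\A_F)$-subrepresentations. Concretely, an isomorphism $\pi\otimes\omega\simeq\pi$ translates, via Theorem~\ref{thm:A} and Lemma~\ref{lem:for-mult-one}, into the statement that the Galois character of $\Gamma_F$ valued in $Z(\GSpin_{2n})$ dual to $\omega$ is trivial, so that $\omega$ already lies in $X(\pi)$. Dually, two $\GSO_{2n}^{E/F}(\A_F)$-conjugate $\SO$-subrepresentations of $\pi|_{\SO^{E/F}_{2n}(\A_F)}$ coincide up to $\wt G(\pi)$, since their $\GSO$-equivalence is measured by the same Galois-cohomological datum. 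Together these two statements identify $N_L$ with $N_R$.

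The main obstacle I anticipate is the last step: carefully matching the two indexing sets appearing in Xu's formula. The endoscopic and trace-formula ingredients are already furnished by~\cite{BinXuLPackets}; the new content is the use of Theorem~\ref{thm:A} to break the outer-automorphism symmetry and thereby promote Xu's orbit identity into an honest equality of multiplicities.
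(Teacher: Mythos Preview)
Your proposal reaches for a different tool in~\cite{BinXuLPackets} than the paper does and leaves the decisive step unfinished. The paper does not use the sum identity~\cite[Lem.~5.4]{BinXuLPackets}; it applies~\cite[Prop.~1.7]{BinXuLPackets} instead, which gives $m(\pi) = m_{\wt\psi}\,|Y(\pi)/\alpha(\cS_\phi)|$ directly, where $Y(\pi)$ is the group of automorphic characters $\omega$ (trivial on $\GSO_{2n}^{E/F}(F)\A_F^\times\SO_{2n}^{E/F}(\A_F)$) with $\pi\otimes\omega\simeq\pi$. Theorem~\ref{thm:A} enters exactly as you anticipate: for $\omega\in Y(\pi)$ with associated Galois character $\chi$, Proposition~\ref{prop:local-conjugacy-global-conjugacy} gives $\chi\rho_\pi\simeq\rho_\pi$ and Lemma~\ref{lem:for-mult-one} forces $\chi=1$, so $Y(\pi)=\{\mathbf 1\}$. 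Then~\cite[Cor.~5.10]{BinXuLPackets} gives $m_{\wt\psi}=m_\psi/\#\Sigma_Y(\pi)$; one has $m_\psi=2$ from the paragraph preceding the proposition and $\#\Sigma_Y(\pi)=2$ because (std-reg) forces $\pi\not\simeq\pi^\theta$ via the infinitesimal character at infinity, yielding $m_{\wt\psi}=1$ and hence $m(\pi)=1$.

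Your route via the sum identity has two real gaps. First, the claim that the left side equals $N_L\cdot m(\pi)$ presumes that every $\pi\otimes\omega$, for $\omega$ ranging over coset representatives of $X/YX(\pi)$, has multiplicity $m(\pi)$; but such $\omega$ need not lie in $Y$ (i.e., need not be automorphic), and you give no argument that those twists have nonzero, let alone equal, multiplicity. Second, your justification for $N_L=N_R$ is, as you concede, not carried out: the sentence ``so that $\omega$ already lies in $X(\pi)$'' is circular (that is the definition of $X(\pi)$) or a slip for ``$\omega$ is trivial'', and even granting $X(\pi)=\{1\}$ you have not identified the two indexing sets. The paper's formula avoids this entirely by isolating $m(\pi)$ rather than embedding it in a sum. (Minor point: the Xu sum identity is quoted in the proof of Theorem~\ref{thm:ExistGaloisSO-Plus}, not Proposition~\ref{prop:AllUnramifiedPlaces}.)
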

\begin{proof}
(cf.~\cite[Thm.~12.1]{GSp} ). Let $\pi^\flat$ be a cuspidal automorphic representation of $\SO_{2n}^{E/F}(\A_F)$ contained in $\pi$. Then $\pi^\flat$ satisfies (coh$^\circ$) and (St$^\circ$) as explained at the start of \S\ref{sect:Construction}. Let $Y(\pi)$ be the set of continuous characters $\omega \colon \GSO_{2n}^{E/F} (\A_F) \to \C^\times$ which are trivial on the subgroup $\GSO_{2n}^{E/F}(F)\A_F^\times\SO_{2n}^{E/F}(\A_F)$ of $\GSO_{2n}^{E/F}(\A_F)$ and such that $\pi \simeq \pi \otimes \omega$. Xu \cite[Prop.~1.7]{BinXuLPackets} proves that
\begin{equation}\label{eq:XuFormula}
m(\pi) = m_{\wt \psi} \, |Y(\pi)/\alpha(\cS_\phi)|,
\end{equation}
where $\wt \psi$ is the global parameter of $\pi$ as defined in~\cite[Sect.~3]{BinXuLPackets} ($\wt \psi$ is denoted $\wt \phi$ there), and $\alpha(\cS_\phi)$ will not be important for us.

We claim that $Y(\pi) = \{\mathbf 1\}$ in \eqref{eq:XuFormula}. Let $\omega \in Y(\pi)$ and let $\chi \colon \Gamma \to \lql^\times$ be the corresponding character via class field theory. As $\chi \rho_\pi$ and $\rho_\pi$ have conjugate Frobenius images at almost all places, we obtain $\chi \rho_\pi \simeq \rho_\pi$ by Proposition \ref{prop:local-conjugacy-global-conjugacy}, and thus $\chi = 1$ by Lemma \ref{lem:for-mult-one}. Hence $Y(\pi) =\{\mathbf 1\}$.
 
Let $\psi$ denote the Arthur parameter of $\pi^\flat$. In \cite[Cor.~5.10]{BinXuLPackets}, Xu states that $m_{\wt \psi} = m_{\psi} / \# \Sigma_Y(\pi)$, where $\Sigma_Y(\pi) := \Sigma_0 / \Sigma_0(\pi, Y)$, where $\Sigma_0$ is the $2$-group $\{1, \theta\}$,  and $\Sigma_0(\pi, Y)$ is the group of $\theta' \in \Sigma_0$ such that $\pi \otimes \omega \simeq \pi^{\theta'}$ for some $\omega \in Y(\pi)$. We saw below \eqref{eq:m_psi} that $m_\psi = 2$. It is enough to check that $\# \Sigma_Y(\pi) = 2$, which would imply $m_{\wt \psi} =1$. As $Y(\pi) =\{\mathbf 1\}$ this reduces to $\pi \not \simeq \pi^{\theta}$. Again by (std-reg) the infinitesimal character of $\pi_v$ for $v | \infty$ is not fixed by $\theta$, so this is indeed true.
\end{proof}

Let $G$ be the inner form of $\GSO_{2n}^{E/F}$ which was constructed in \eqref{eq:ShimuraInnerForm} and used in our Shimura data. We close this section with computing automorphic multiplicities for this $G$. In particular we prove that the multiplicities $a(\cdot)$ appearing in Section~\ref{sect:Shimura} are in fact equal to $1$.

\begin{proposition}\label{prop:AutomMult3}
Let $\pi$ be a cuspidal automorphic representation of $G(\A_F)$, satisfying (coh), (St) and (str-reg). Then $m(\pi) = 1$.
\end{proposition}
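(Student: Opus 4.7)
The approach is to mimic the proof of Proposition \ref{prop:AutomMult2}, this time applying Xu's multiplicity formula to the inner form $G$. First I would transfer $\pi$ to a cuspidal automorphic representation $\pi^*$ of $G^*(\A_F) = \GSO_{2n}^{E/F}(\A_F)$ via a reverse-direction analogue of \cite[Prop.~6.3]{GSp} (a simple trace formula comparison), arranged so that $\pi^*_v \simeq \pi_v$ at all finite places where $G_v \simeq G^*_v$ (in particular at almost every finite place, and away from $\qst$), so that $\pi^*_{\qst}$ is a character twist of the Steinberg representation, and so that $\pi^*_\infty$ is $\xi$-cohomological. Then $\pi^*$ satisfies the hypotheses of Theorem A and of Proposition \ref{prop:AutomMult2}, yielding a Galois representation $\rho_{\pi^*} \colon \Gamma_F \to \GSpin_{2n}(\lql) \rtimes \Gamma_{E/F}$ together with the equality $m(\pi^*) = 1$.

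Next I would apply Xu's multiplicity formula \cite[Prop.~1.7]{BinXuLPackets} to $G$; since Xu's framework treats inner forms of $\GSO_{2n}^{E/F}$ within the setting of \cite{ArthurBook}, we obtain
\[
m(\pi) = m_{\widetilde\psi} \cdot |Y(\pi)/\alpha(\mathcal{S}_\phi)|,
\]
where $\widetilde\psi$ coincides with the global parameter of $\pi^*$ via the transfer. To prove $Y(\pi) = \{1\}$, observe that any $\omega \in Y(\pi)$ corresponds via class field theory to a character $\chi \colon \Gamma_F \to \lql^\times$ with $\chi \rho_{\pi^*} \simeq \rho_{\pi^*}$ at almost all places, since the Satake parameters of $\pi_v$ and $\pi^*_v$ agree outside a finite set. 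Proposition \ref{prop:local-conjugacy-global-conjugacy} then upgrades this to a global isomorphism $\chi \rho_{\pi^*} \simeq \rho_{\pi^*}$, and Lemma \ref{lem:for-mult-one} forces $\chi = 1$.

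For the factor $m_{\widetilde\psi}$, I would use Xu's identity $m_{\widetilde\psi} = m_\psi / \#\Sigma_Y(\pi)$, where $\psi$ is the underlying $\SO$-level Arthur parameter. The condition (std-reg) forces the infinitesimal character of $\pi_\infty$ to be non-$\theta$-invariant, so $\pi \not\simeq \pi^\theta$ (and the same for any $\SO_{2n}^{E/F}$-subrepresentation of $\pi^*$), giving $m_\psi = 2$ from Arthur's classification (exactly as in the discussion preceding Proposition \ref{prop:AutomMult2}) and $\Sigma_0(\pi,Y) = \{1\}$, hence $\#\Sigma_Y(\pi) = 2$. Combining, $m_{\widetilde\psi} = 1$ and therefore $m(\pi) = 1$.

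The main anticipated obstacle is justifying that Xu's multiplicity formula, developed primarily in the quasi-split setting of \cite{BinXuLPackets}, applies in full force to the specific inner form $G$ of Lemma \ref{lem:existence-of-G}, for which $G_{\qst}$ is a non-quasi-split unitary group over a quaternion algebra. The saving grace is that $\pi_{\qst}$ is a twist of the Steinberg representation, so the local packet at $\qst$ is a singleton; this should make the local contribution to both $Y(\pi)$ and $\Sigma_Y(\pi)$ elementary, and reduce the formula to the same shape as in the quasi-split case. A secondary technical point is the existence of the quasi-split transfer $\pi \leadsto \pi^*$ with the claimed local properties, but this is a routine simple trace formula comparison using the Steinberg component at $\qst$ and discrete series at infinity, directly parallel to \cite[Prop.~6.3]{GSp}.
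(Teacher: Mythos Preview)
Your overall shape is reasonable, but there is a genuine gap at the step where you invoke Xu's multiplicity formula \cite[Prop.~1.7]{BinXuLPackets} directly for the inner form $G$. Xu's results, like Arthur's in \cite{ArthurBook}, are established only for \emph{quasi-split} forms of $\GSO_{2n}$; the title of \cite{BinXuLPackets} already signals this, and nothing in that paper extends the formula $m(\pi) = m_{\widetilde\psi}\,|Y(\pi)/\alpha(\cS_\phi)|$ to non-quasi-split inner forms. Your claim that ``Xu's framework treats inner forms of $\GSO_{2n}^{E/F}$ within the setting of \cite{ArthurBook}'' is simply not correct, and the fact that $\pi_{\qst}$ is Steinberg does not bypass this: having a singleton local packet at one place says nothing about the validity of the global multiplicity formula for $G$. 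Extending Arthur's classification to inner forms is a substantial project (compare \cite{KMSW} for unitary groups), and you cannot assume it here.

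The paper avoids this problem entirely by a different route: it follows the argument of \cite[Thm.~12.2]{GSp}, which is a trace-formula comparison between $G$ and $G^*$ rather than an application of any multiplicity formula on $G$ itself. The key input is that every automorphic representation $\tau^*$ of $G^*(\A_F)$ contributing to the analogue of \cite[Eq.~(12.2)]{GSp} has $m(\tau^*)=1$, which is exactly Proposition~\ref{prop:AutomMult2}; one then uses \cite[Thm.~1.8]{BinXuLPackets} together with Theorem~A to show that all such $\tau^*$ have $\tau^*_{\qst}\simeq \pi^*_{\qst}$, which makes the trace identity collapse and forces $m(\pi)=1$. Your transfer $\pi\leadsto\pi^*$ and the Galois-side arguments for $Y(\pi)=\{1\}$ are fine ingredients, but they feed into this trace-formula comparison, not into a direct application of Xu's formula on $G$.
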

\begin{proof}
The proof is the same as the argument for \cite[Thm.~12.2]{GSp}. The main point is that automorphic representations $\tau^*$ of $G^*(\A_F)= \GSO_{2n}^{E/F}(\A_F)$ contributing to the analogue of [\textit{loc.~cit.},~Eq.~(12.2)] have automorphic multiplicity $1$. Notice that \cite[Thm.~1.8]{BinXuLPackets} may be used again, together with the existence of Galois representations (our Theorem A) to prove that for all $\pi^*$ and $\tau^*$ contributing to \cite[Eq.~(12.2)]{GSp} we have $\tau^*_{\qst} \simeq \pi^*_{\qst}$.
\end{proof}

\section{Meromorphic continuation of spin $L$-functions}

Let $n\in \Z_{\ge 3}$, and $\fke$ be as in \eqref{eq:fke}. Let $\pi$ be a cuspidal automorphic representation of $\GSO^{E/F}_{2n}(\A_F)$ unramified away from a finite set of places $S$ satisfying (St), (L-coh), and (spin-reg). This implies (std-reg) for $\pi$. Indeed, if the image of $(s_0,s_1,\ldots,s_n)\in T_{\GSpin}$ under $\spin^\varepsilon$ is regular for some $\varepsilon\in \fke$ then $s_1,\ldots,s_n$ must be mutually distinct, as the weights in $\spin^\varepsilon$ are described as the Weyl orbit(s) of \eqref{eq:Spin-eps-def}.

\begin{proposition}\label{prop:weakly-compatible}
Assume that $\pi$ satisfies (St), (L-coh), and (spin-reg). Let $n\in \Z_{\ge 3}$. There exist a number field $M_\pi$ and a semisimple representation
$$
R^\varepsilon_{\pi,\lambda}: \Gamma\ra \GL_{2^{n}/|\fke|}(\ol{M}_{\pi,\lambda})
$$
for each finite place $\lambda$ of $M_{\pi}$ such that the following hold for every $\varepsilon\in \fke$. (Write $\ell$ for the rational prime below $\lambda$.)
\begin{enumerate}
\item At each place $\qq$ of $F$ not above $S_{\bad}\cup \{\ell\}$, we have $$\mathrm{char}(R^\varepsilon_{\pi,\lambda}(\Frob_\qq))=\mathrm{char}(\spin^\varepsilon(\iota\phi_{\pi_\qq}(\Frob_\qq)))\in M_{\pi}[X].$$
\item $R^\varepsilon_{\pi,\lambda}|_{\Gamma_\qq}$ is de Rham for every $\qq|\ell$. Moreover it is crystalline if $\pi_v$ is unramified and $\qq\notin S^F_{\bad}$.
\item For each $\qq|\ell$ and each $y:F\hra \C$ such that $\iota y$ induces $\qq$, we have $\mu_{\HT}(R^\varepsilon_{\pi,\lambda}|_{\Gamma_\qq},\iota y)=\iota(\spin^\varepsilon\circ \mu_{\Hodge}(\phi_{\pi_y}))$. In particular $\mu_{\HT}(R^\varepsilon_{\pi,\lambda}|_{\Gamma_\qq},\iota y)$ is a regular cocharacter for each $y$.
\item $R^\varepsilon_{\pi,\lambda}$ is pure.
\item $R^\varepsilon_{\pi,\lambda}$ maps into $\GSp_{2^n/|\fke|}(\ol{M}_{\pi,\lambda})$ if $n\equiv 2,3~(\tu{mod}~4)$ (resp.~$\GO_{2^n/|\fke|}(\ol{M}_{\pi,\lambda})$ if $n\equiv 0,1~(\tu{mod}~4)$) for a nondegenerate alternating (resp.~symmetric) pairing on the underlying $2^{n}/|\fke|$-dimensional space over $\ol{M}_{\pi,\lambda}$. The multiplier character $\mu^\varepsilon_\lambda:\Gamma\ra\GL_{1}(\ol{M}_{\pi,\lambda})$ (so that $R^\varepsilon_{\pi,\lambda}\simeq (R^\varepsilon_{\pi,\lambda})^\vee\otimes \mu^\varepsilon_\lambda$) is totally of sign $(-1)^{n(n-1)/2}$ and associated with $\omega_\pi$ via class field theory and $\iota_\lambda$.
\end{enumerate}
\end{proposition}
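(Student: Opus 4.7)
The plan is to define $R^\varepsilon_{\pi,\lambda}$ by composing $\rho_{\pi,\iota_\lambda}$ from Theorem~\ref{thm:A} with the appropriate (half-)spin representation. Concretely: when $n$ is even, $E=F$ and $\rho_{\pi,\iota_\lambda}$ takes values in $\GSpin_{2n}(\lql)$, so I set $R^\varepsilon_{\pi,\lambda}:=\spin^\varepsilon\circ\rho_{\pi,\iota_\lambda}$ for $\varepsilon\in\{+,-\}$; when $n$ is odd, $\rho_{\pi,\iota_\lambda}$ takes values in $\GPin_{2n}(\lql)=\GSpin_{2n}(\lql)\rtimes\Gamma_{E/F}$ and I set $R^\emptyset_{\pi,\lambda}:=\spin\circ\rho_{\pi,\iota_\lambda}$, using Lemma~\ref{lem:CliffordSpinThetaAction} to extend $\spin=\spin^+\oplus\spin^-$ from $\GSpin_{2n}$ to $\GPin_{2n}$. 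Let $M_\pi\subset\C$ be the field generated over $\Q$ by the Hecke eigenvalues $\Tr(\spin^\varepsilon(\phi_{\pi_\qq}(\Frob_\qq^j)))$ for $\varepsilon\in\fke$, $\qq\notin\Sbad$, $j\ge 1$; this $M_\pi$ is a number field by the standard finiteness of Hecke eigenvalues attached to a cohomological cuspidal $\pi$. For each finite place $\lambda$ of $M_\pi$ above a rational prime $\ell$, I fix $\iota_\lambda:\C\isom\lql$ so that the induced embedding of $M_\pi$ into $\lql$ agrees with the $\lambda$-adic one, and identify $\lql$ with $\ol{M}_{\pi,\lambda}$; then $R^\varepsilon_{\pi,\lambda}$ is the desired semisimple representation.

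Properties (1)--(3) and (5) follow rather directly from the corresponding parts of Theorem~\ref{thm:A}. Property (1) is (A1) together with the construction of $M_\pi$, and the independence on $\lambda$ is built in. Property (2) is immediate from (A3)(b)(c), since $\spin^\varepsilon$ is algebraic and preserves the de Rham / semistable / crystalline classes. For (3), (A3)(a) reads $\mu_{\HT}(\rho_{\pi,\qq},\iota_\lambda y)=\iota_\lambda\mu_{\Hodge}(\xi_y)-\tfrac{n(n-1)}{4}\simil$; since $\pi|\simil|^{-n(n-1)/4}$ is $\xi$-cohomological by (L-coh), the L-algebraic shift identifies the right-hand side with $\iota_\lambda\mu_{\Hodge}(\phi_{\pi_y})$, and composing with $\spin^\varepsilon$ yields the desired formula, with regularity supplied by (spin-reg). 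Property (5) uses Lemma~\ref{lem:spin-inv-pairing} (Duality), which supplies a nondegenerate $\GPin_{2n}$-invariant pairing on $\bigwedge^\varepsilon W$ (for $n$ even) or on $\bigwedge W$ (for $n$ odd), alternating or symmetric according to $n\bmod 4$ as in the claim, with multiplier $\cN$. Composing with $\rho_{\pi,\iota_\lambda}$ and applying Corollary~\ref{cor:Norm-and-compatibility-over-F} identifies $\mu^\varepsilon_\lambda=\cN\circ\rho_{\pi,\iota_\lambda}$ with $\omega_\pi$ via class field theory and $\iota_\lambda$; the value $\omega_\pi(c_y)=(-1)^{n(n-1)/2}$ computed in~\eqref{eq:omega(c)} confirms the sign, which is consistent with the symplectic/orthogonal dichotomy determined by $n\bmod 4$.

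The main obstacle is property (4) (purity). Here I would invoke Theorem~\ref{thm:PointCounting}: its Frobenius trace formula, combined with Theorem~\ref{thm:A}(A1), shows that $\rho_\pi^{\Sh,\varepsilon,\vee}|_{\Gamma_E}$ and the $a(\pi^\natural)$-fold direct sum of $(\spin^\varepsilon\rho_{\pi,\iota_\lambda})|_{\Gamma_E}(-n(n-1)/4)$ have matching Frobenius traces at every $\pp\notin S^E$, hence are isomorphic as semisimple $\Gamma_E$-representations by Chebotarev. Since $a(\pi^\natural)\ge 1$ by Corollary~\ref{cor:Pi-is-in-Disc-series-Packet-2}, the representation $R^\varepsilon_{\pi,\lambda}|_{\Gamma_E}$ is, up to a Tate twist, a direct summand of $\rho_\pi^{\Sh,\varepsilon,\vee}|_{\Gamma_E}$. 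When $F\neq\Q$, the Shimura variety $\Sh^\varepsilon$ is proper by Lemma~\ref{lem:Shimura-datum}(iii), so its $\ell$-adic cohomology is pure by Deligne's Weil~II, yielding purity of $R^\varepsilon_{\pi,\lambda}|_{\Gamma_E}$. When $F=\Q$ one replaces \'etale cohomology by intersection cohomology of the Baily--Borel compactification, as in the closing argument of the proof of Theorem~\ref{thm:PointCounting}; the Morel--Pink purity results then supply the required weight estimates. Finally, purity on $\Gamma_E$ transfers to $\Gamma_F$ at unramified places by comparing Frobenius eigenvalues at primes of $F$ to those at (split or inert) primes of $E$ above them, using that $q_\pp=q_\qq^{[E_\pp:F_\qq]}$.
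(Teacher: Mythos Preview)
Your construction and the verification of (1)--(3), (5) match the paper's proof essentially line for line: set $R^\varepsilon_{\pi,\lambda}:=\spin^\varepsilon\circ\rho_{\pi,\iota_\lambda}$ and invoke Theorem~\ref{thm:A} together with Lemma~\ref{lem:spin-inv-pairing} and~\eqref{eq:omega(c)}. Your choice of $M_\pi$ via Hecke eigenvalues differs cosmetically from the paper's (field of definition of the $\pi^\infty$-isotypic part in Betti cohomology of $\Sh^\pm$), but either works.

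The one substantive divergence is your proof of (4). The paper dispatches purity in a single line by citing (SO-ii) of Theorem~\ref{thm:ExistGaloisSO} via (A2): since $\pr^\circ\circ\rho_\pi=\rho_{\pi^\flat}$, the eigenvalues $s_1^{\pm1},\ldots,s_n^{\pm1}$ of $\std(\rho_{\pi^\flat}(\Frob_\qq))$ all have absolute value $1$, whence every eigenvalue $s_0\prod_{i\in U}s_i$ of $\spin^\varepsilon(\rho_\pi(\Frob_\qq))$ has absolute value $|s_0|=|\cN\rho_\pi(\Frob_\qq)|^{1/2}=|\omega_\pi(\Frob_\qq)|^{1/2}$, a fixed power of $q_\qq$. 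This avoids Shimura varieties entirely; the purity input comes from $\GL_{2n}$ through Arthur's transfer. Your route through $\rho_\pi^{\Sh,\varepsilon}$ and Weil~II is also valid but longer, and contains a small imprecision: when $n(n-1)/4\notin\Z$ (e.g.\ $n=3$) you cannot literally realize $R^\varepsilon_{\pi,\lambda}|_{\Gamma_E}$ as a summand of $\rho_\pi^{\Sh,\varepsilon,\vee}$ up to an honest Tate twist. The fix is immediate (compare absolute values of Frobenius eigenvalues directly, the scalar $q_\pp^{-n(n-1)/4}$ still making sense), but the paper's argument sidesteps both this and the $F=\Q$ intersection-cohomology detour.
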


\begin{proof}
Let $M$ be the field of definition of $\xi$, which is a finite extension of $\Q$ in $\C$. We can choose $M_\pi$ to be the field of definition for the $\pi^\infty$-isotypic part in the (compact support) Betti cohomology of $H^\bullet(\Sh^+(\C),\cL_\xi)\oplus H^\bullet(\Sh^-(\C),\cL_\xi)$ with $M$-coefficient. Then $M_\pi$ is a finite extension of $M$ in $\C$. For each prime $\ell$ and a finite place $\lambda$ of $M_\pi$ above $\ell$, extend $M\hra \C$ to an isomorphism $\ol M_{\pi,\lambda}\simeq \C$. Identifying $\ol M_{\pi,\lambda}\simeq \lql$, we have $\iota_\lambda:\C\isom \lql$. Take
$$
R^\varepsilon_{\pi,\lambda}:= \spin^\varepsilon\circ \rho_{\pi,\iota_\lambda}.
$$
Then (1), (2), and (3) follow from (A2) and (A3) of Theorem \ref{thm:A}, respectively. Part (4) follows from (SO-ii) of Theorem \ref{thm:ExistGaloisSO} via (A2). The first part of (5) holds true since $\spin^\varepsilon: \GSpin_{2n}\ra \GL_{2^{n-1}}$ is an irreducible representation preserving a nondegenerate symplectic (resp.~symmetric) pairing up to scalar if $n$ is 2 (resp.~0) mod 4, and since $\spin: \GPin_{2n}\ra \GL_{2^{n}}$ is irreducible and preserves a nondegenerate symplectic (resp.~symmetric) pairing up to scalar  if $n$ is 3 (resp.~1) mod 4. Indeed, the irreducibility is standard and the rest follows from Lemma \ref{lem:spin-inv-pairing} (with the pairing given as in the lemma). Lemma \ref{lem:spin-inv-pairing} also tells us that $\mu^\varepsilon_\lambda=\cN\circ \rho_{\pi,\iota_\lambda}$. By (A2), $\omega_\pi=\cN\circ \rho_{\pi,\iota_\lambda}$ so $\mu^\varepsilon_\lambda$ is associated with $\omega_\pi$. As in the proof of part 5 of \cite[Prop.~13.1]{GSp}, $\omega_\pi \otimes |\cdot|^{n(n-1)/2}$ corresponds to an even Galois character of $\Gamma$. (We change $n(n+1)/2$ in \cite{GSp}  to $n(n-1)/2$ here due to the difference in the definition of (L-coh).) It follows that $\mu_{\lambda,y}(c_y)=(-1)^{n(n-1)/2}$ for every $y|\infty$.
\end{proof}

Now we apply potential automorphy results to the weakly compatible system of $R^\varepsilon_{\pi,\lambda}$.

\begin{theorem}
Theorem~\ref{thm:meromorphic} is true.
\end{theorem}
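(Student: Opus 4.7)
The plan is to apply the potential automorphy theorem of Barnet-Lamb--Gee--Geraghty--Taylor \cite{BLGGT14-PA} to the weakly compatible system $\{R^\varepsilon_{\pi,\lambda}\}_\lambda$ constructed in Proposition~\ref{prop:weakly-compatible}. First I would verify that this system satisfies the hypotheses of the relevant potential-automorphy result in its totally real incarnation: parts (2)--(3) of Proposition~\ref{prop:weakly-compatible} supply the required $\ell$-adic Hodge-theoretic properties and the regularity of Hodge--Tate weights (using (spin-reg)); part~(4) gives purity; and part~(5) furnishes a polarization of the correct type (essentially symplectic when $n\equiv 2,3~(\tu{mod}~4)$, essentially orthogonal when $n\equiv 0,1~(\tu{mod}~4)$), with multiplier $\mu^\varepsilon_\lambda=\cN\circ\rho_{\pi,\iota_\lambda}$ whose archimedean sign $(-1)^{n(n-1)/2}$ computed in \eqref{eq:omega(c)} matches the parity demanded by the potential-automorphy machinery. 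Weak compatibility of characteristic polynomials of Frobenius at unramified places is part~(1).

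Once the hypotheses are checked, \cite{BLGGT14-PA} produces a finite totally real extension $F'/F$, which we arrange to be linearly disjoint from any prescribed finite extension of $F$ in $\ol F$, together with a regular algebraic cuspidal polarizable automorphic representation $\Pi^\varepsilon$ of $\GL_{2^n/|\fke|}(\A_{F'})$ such that the compatible system $\{R^\varepsilon_{\pi,\lambda}|_{\Gamma_{F'}}\}_\lambda$ is associated with $\Pi^\varepsilon$ via the Langlands correspondence for $\GL_{2^n/|\fke|}$. Local-global compatibility at finite places of $F'$ not above $S_\bad\cup\{\ell\}$, combined with Proposition~\ref{prop:weakly-compatible}(1), yields the first bulleted equality of Langlands parameters at each $\qq'$. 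The second bullet, concerning archimedean parameters, follows from Proposition~\ref{prop:weakly-compatible}(3) (identifying the Hodge--Tate cocharacters with $\spin^\varepsilon\circ\mu_{\Hodge}(\phi_{\pi_y})$) and the standard dictionary between Hodge cocharacters of regular algebraic cuspidal representations of $\GL_N$ and their archimedean $L$-parameters.

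Meromorphic continuation of $L^S(s,\pi,\spin^\varepsilon)$ then follows from the Brauer--Taylor bootstrap used throughout \cite{BLGGT14-PA}. Writing $r^\varepsilon := \spin^\varepsilon\circ\rho_\pi$, I would arrange $F'/F$ so that its Galois closure $F''/F$ is solvable over $F'$, then apply Arthur--Clozel solvable descent to propagate automorphy of $r^\varepsilon|_{\Gamma_{F''}}$ down to $r^\varepsilon|_{\Gamma_{F_i}}$ for every intermediate field $F_i$ with $F'\subset F_i\subset F''$. Brauer's theorem writes $\mathbf{1}_{\Gamma_F} = \sum_i m_i \Ind_{\Gamma_{F_i}}^{\Gamma_F}\chi_i$ with one-dimensional characters $\chi_i$ of the $F_i$, so inductivity of $L$-functions yields
\begin{equation*}
L^S(s,\pi,\spin^\varepsilon) \;=\; \prod_i L^{S_i}\!\bigl(s,\chi_i\otimes r^\varepsilon|_{\Gamma_{F_i}}\bigr)^{m_i}
\end{equation*}
as an identity of meromorphic functions on $\C$, since each factor on the right is (a finite-order twist of) the standard $L$-function of a cuspidal automorphic representation of some $\GL_N(\A_{F_i})$ \cite{JacquetShalika}. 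The holomorphy and non-vanishing in the explicit right half-plane (e.g.\ $\Re(s)\ge 1$ when $\pi$ has unitary central character) then follow from the classical results for $\GL_N$ combined with the purity of $r^\varepsilon$.

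The hard part will be ensuring compatibility of all the choices: the totally real base $F'$ produced by potential automorphy must be chosen to be both disjoint from the prescribed finite extension of $F$ and solvable inside a common Galois closure on which Brauer induction can be carried out. This flexibility is already built into \cite[Thm.~4.5.1]{BLGGT14-PA} via the freedom in choosing the auxiliary CM field that enters the Harris tensor-product trick, and propagating through Arthur--Clozel descent is standard; so no ideas beyond those in \cite{BLGGT14-PA} are needed, but the bookkeeping must be done carefully for each $\varepsilon\in\fke$ in parallel.
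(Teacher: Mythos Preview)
There is a genuine gap: the potential automorphy theorems of \cite{BLGGT14-PA} that you invoke all require the Galois representation (or compatible system) to be \emph{irreducible}, and you have not verified this. In fact it can fail. By (A5) of Theorem~\ref{thm:A}, the Zariski closure of the image of $\rho_\pi$ in $\PSO_{2n}$ may be a proper subgroup such as $\PSO_{2n-1}$, the principal $\PGL_2$, or (when $n=4$) $G_2$ or $\SO_7$; restricting $\spin^\varepsilon$ to the preimage of such a subgroup in $\GSpin_{2n}$ typically yields a reducible representation (for instance, $\spin^\varepsilon$ pulled back via $i_{\std}$ to $\GSpin_{2n-1}$ is the spin representation of $\GSpin_{2n-1}$, which is irreducible, but pulled back to $G_2\hookrightarrow\Spin_8$ it is $\mathbf{1}\oplus\std$). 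So $R^\varepsilon_{\pi,\lambda}=\spin^\varepsilon\circ\rho_{\pi,\iota_\lambda}$ may well be reducible, and the hypotheses of \cite[Thm.~A]{BLGGT14-PA} are not met.

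The paper circumvents this by appealing instead to \cite[Thm.~A]{PatrikisTaylor}, whose point is precisely to replace the irreducibility hypothesis of \cite{BLGGT14-PA} by a \emph{purity} hypothesis. Purity of the system $\{R^\varepsilon_{\pi,\lambda}\}$ is exactly what Proposition~\ref{prop:weakly-compatible}(4) supplies, so the Patrikis--Taylor theorem applies directly and yields all the conclusions of Theorem~\ref{thm:meromorphic} (potential automorphy over a totally real $F'$, local compatibility, and meromorphic continuation via the Brauer argument you sketch). Your outline of the Brauer--Taylor bootstrap and the archimedean matching is fine; the only correction needed is to swap the reference from \cite{BLGGT14-PA} to \cite{PatrikisTaylor} and to recognise that purity, not irreducibility, is the operative input.
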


\begin{proof}
  This follows from \cite[Thm.~A]{PatrikisTaylor}, which can be applied to the weakly compatible system $\{R^\varepsilon_{\pi,\lambda}\}$ thanks to the preceding proposition.
\end{proof}

\begin{remark}
  We cannot appeal to the potential automorphy as in \cite[Thm.~A]{BLGGT14-PA} as $R^\varepsilon_{\pi,\lambda}$ may be reducible. The point of \cite{PatrikisTaylor} is to replace the irreducibility hypothesis with a purity hypothesis (guaranteed by (iv) of Proposition \ref{prop:weakly-compatible}). We take advantage of this.
\end{remark}

\newpage

\appendix

\section{Extending a Galois representation}\label{app:extending}

Here we investigate the problem of extending a $\hat G$-valued Galois representation to an $^L G$-valued representation over a quadratic extension.

We freely use the notation and terminology of \S\ref{sect:Notation}. Let $E$ be a CM quadratic extension over a totally real field $F$ in an algebraic closure $\ol F$. Set $\Gamma=\Gamma_F:=\Gal(\ol{F}/F)$, $\Gamma_E:=\Gal(\ol{F}/E)$, and $\Gamma_{E/F}:=\Gal(E/F)=\{1,c\}$. Let $G$ be a quasi-split group over $F$ which splits over $E$. Let $\theta\in \Aut(\hat G)$ denote the action of $c$ on $\hat G$ (with respect to a pinning over $F$). By $\hat G(\lql) \rtimes \Gamma_{E/F}$, we mean the $L$-group relative to $E/F$, namely the semi-direct product such that $c g c = \theta(g)$ for $g\in \hat G(\lql)$.

Fix an infinite place $y$ of $F$. Write $c_y\in \Gamma_F$ for the corresponding complex conjugation (well defined up to conjugacy). Let $\rho': \Gamma_E\ra \hat G(\lql)$ be a Galois representation. Define $$^{c_y} \rho'(\gamma):=\rho'(c_y\gamma c^{-1}_y).$$ (Of course $c^{-1}_y=c_y$.) We will sometimes impose the following hypotheses.
\begin{enumerate}
\item[(H1)] $\tu{Cent}_{\hat G}(\tu{im}(\rho'))=Z(\hat G)$.
\item[(H2)] The map $Z(\hat G)\ra Z(\hat G)^\theta$ given by $z\mapsto z\theta(z)$ is a surjection on $\lql$-points.
\end{enumerate}

\begin{lemma}\label{lem:extend}
Consider the following statements.
\begin{enumerate}
\item $\rho'$ extends to some $\rho:\Gamma_F\ra \hat G(\lql) \rtimes \Gamma_{E/F}$.
\item $^{c_y}\rho'\simeq \theta\circ \rho'$.
\item there exists $g\in \hat G(\lql)$ such that $g\theta(g)=1$ and $\rho'(c_y \gamma c_y^{-1})=g \theta(\rho'(\gamma))g^{-1}$ for every $\gamma\in \Gamma_E$.
\end{enumerate}
Then (3)$\Leftrightarrow$(1)$\Rightarrow$(2). In particular if $\rho$ is as in (1) then the element $g$ such that $\rho(c_y)=g\rtimes c$ enjoys the property of (3).
If (H1) and (H2) are satisfied, then we also have (2)$\Rightarrow$(3), so all three statements are equivalent.
\end{lemma}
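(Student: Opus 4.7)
The plan is to establish the chain $(1) \Leftrightarrow (3) \Rightarrow (2)$ unconditionally, and then $(2) \Rightarrow (3)$ using (H1) and (H2). The pivot throughout is the identity $c_y^2 = 1$ in $\Gamma_F$ (complex conjugation is an involution), which encodes a quadratic cocycle condition on the element of $\hat G(\lql)$ representing $\rho(c_y)$.

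For $(1) \Rightarrow (3)$, write $\rho(c_y) = g \rtimes c$. Squaring in $\hat G(\lql) \rtimes \Gamma_{E/F}$ and using $c_y^2 = 1$ gives $g\theta(g) = 1$ at once. Then I would expand $\rho(c_y)\rho(\gamma)\rho(c_y)^{-1}$ for $\gamma \in \Gamma_E$ in the semi-direct product, computing the inverse from $g\theta(g) = 1$ (so $\theta(g^{-1}) = g$), to recover the conjugation formula in (3). Conversely for $(3) \Rightarrow (1)$, I would \emph{define} $\rho(c_y\gamma) := g\theta(\rho'(\gamma)) \rtimes c$ for $\gamma \in \Gamma_E$ and verify the homomorphism property in the four product cases of the decomposition $\Gamma_F = \Gamma_E \sqcup c_y\Gamma_E$. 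The only non-trivial case is a product of the form $(c_y\gamma_1)(c_y\gamma_2) = (c_y\gamma_1 c_y^{-1})\gamma_2$ (using $c_y^2 = 1$), which reduces precisely to the identity from (3) combined with $g\theta(g) = 1$. Continuity is inherited from $\rho'$. The implication $(1) \Rightarrow (2)$ is then immediate, since the formula in (3) exhibits $\Int(g)$ as an intertwiner $\theta \circ \rho' \isomto {}^{c_y}\rho'$.

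The real content lies in $(2) \Rightarrow (3)$ under (H1) and (H2). By (2) I may fix some $g \in \hat G(\lql)$ with $\rho'(c_y\gamma c_y^{-1}) = g\theta(\rho'(\gamma))g^{-1}$ for all $\gamma \in \Gamma_E$; a priori nothing is known about $g\theta(g)$. I would then apply this identity twice and use $c_y^2 = 1$ to get $\rho'(\gamma) = (g\theta(g))\,\rho'(\gamma)\,(g\theta(g))^{-1}$, so $z := g\theta(g)$ centralizes $\tu{im}(\rho')$. Hypothesis (H1) promotes this to $z \in Z(\hat G)$. Because $z$ is central it commutes with $g$, which forces $\theta(g)g = g\theta(g)$ and hence $\theta(z) = \theta(g)g = z$, placing $z$ in $Z(\hat G)^\theta$. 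Now (H2) supplies $z_0 \in Z(\hat G)$ with $z_0\theta(z_0) = z^{-1}$, and I would replace $g$ by $g' := z_0 g$. Centrality of $z_0$ preserves the conjugation formula, while $g'\theta(g') = z_0\theta(z_0)\, g\theta(g) = z^{-1}z = 1$, verifying (3).

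The main obstacle is the last step: the conjugation relation in (2) only determines $g$ modulo $\tu{Cent}_{\hat G}(\tu{im}(\rho'))$, and the cocycle $g\theta(g)$ a priori lives in this centralizer. Hypothesis (H1) is exactly what shrinks this centralizer to $Z(\hat G)$, and (H2) is precisely the vanishing of the resulting $H^1$-style obstruction in $Z(\hat G)^\theta / \{z_0\theta(z_0) : z_0 \in Z(\hat G)\}$. Without either hypothesis, the correction cannot in general be performed, so both appear naturally rather than as mere technical convenience.
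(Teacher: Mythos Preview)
Your proof is correct and follows essentially the same approach as the paper's: the same explicit construction of $\rho$ from $g$ for $(3)\Rightarrow(1)$, the same squaring argument to show $g\theta(g)\in Z(\hat G)^\theta$ under (H1), and the same central correction using (H2). The only differences are cosmetic (you parametrize the nontrivial coset as $c_y\Gamma_E$ rather than $\Gamma_E c_y$, and multiply by the central corrector on the left rather than the right).
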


\begin{remark}
  We recommend  \cite[Section A.11]{BellaicheChenevier} as a useful guide to similar ideas.
\end{remark}

\begin{remark}
  Often (2) is the condition to verify to extend a Galois representation, as we did in Lemma \ref{lem:condition-to-extend} of this paper.
\end{remark}

\begin{proof}

(3)$\Leftrightarrow$(1): First we show (3)$\Rightarrow$(1). Define $\rho$ by $\rho|_{\Gamma_E}:=\rho'$ and $\rho(\gamma c_ y):=\rho'(\gamma)g c$ ($\gamma \in \Gamma_E$). Then
$$
\rho(c_ y ^2)=gc gc = g\theta(g)=1,
$$
$$
\rho(c_ y \gamma c_ y ^{-1}) = {}^{c_ y} \rho'(\gamma) = g \theta(\rho'(\gamma))g^{-1},
$$
and using this, one checks that $\rho$ is a homomorphism on the entire $\Gamma$. A similar computation shows (1)$\Rightarrow$(3) for $g$ such that $\rho(c_y)=g\rtimes c$.

\medskip

(1)$\Rightarrow$(2): Write $\rho(c_y)=gc$ with $g\in \hat G(\lql)$. For every $\gamma\in \Gamma_E$,
$$^{c_y}\rho'(\gamma)=\rho(c_y \gamma c^{-1}_ y) = g c \rho'(\gamma) c^{-1} g^{-1} = g \theta(\rho'(\gamma)) g^{-1}.$$

\medskip

(2)$\Rightarrow$(3), assuming (H1) and (H2): There exists $g\in \hat G(\lql)$ such that
\begin{equation}\label{eq:c-gamma-c}
\rho'(c_ y \gamma c^{-1}_ y) = g\theta(\rho'(\gamma)) g^{-1},\qquad \gamma\in \Gamma_E.
\end{equation}
Putting $c_ y \gamma c^{-1}_ y $ in place of $\gamma$, we obtain
$$ 
\rho'(\gamma)=\rho'(c_y^2 \gamma c_y^{-2}) = g \theta( g\theta(\rho'(\gamma)) g^{-1} ) g^{-1}
= g\theta(g) \rho'(\gamma) (g\theta(g))^{-1}.
$$
Hence $g\theta(g)\in Z(\hat G)$ as $\tu{Cent}_{\hat G}(\rho')=Z(\hat G)$ by (H1). As a central element,
$$
g\theta(g)= g^{-1}(g\theta(g)) g =\theta(g) g= \theta(g\theta(g)),
$$
namely $g\theta(g)\in Z(\hat G)^\theta$. By (H2), $g\theta(g)=z\theta(z)$ for some $z\in Z(\hat G)$. Replacing $g$ with $gz^{-1}$, we can arrange that 
$$
g \theta(g)=1.
$$
This does not affect \eqref{eq:c-gamma-c} so we are done.
\end{proof}

\begin{lemma}\label{lem:uniquely-extend}
 Assume (H1). Then the set of $\hat{G}$-conjugacy classes of extensions of $\rho'$ to $\Gamma$ is an $H^1(\Gamma_{E/F},Z(\hat G))$-torsor if nonempty.
\end{lemma}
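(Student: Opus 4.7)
The plan is to parametrize extensions concretely via the bijection of Lemma~\ref{lem:extend}, then read off the torsor structure by comparing two such parametrizations.

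First I would fix a base point: assume the set of extensions is nonempty and pick an extension $\rho_0$ corresponding (via Lemma~\ref{lem:extend}(3)) to an element $g_0\in \hat G(\lql)$ with $g_0\theta(g_0)=1$ and $\rho'(c_y\gamma c_y^{-1})=g_0\theta(\rho'(\gamma))g_0^{-1}$ for all $\gamma\in\Gamma_E$. For any other extension $\rho$ with parameter $g$, the relation $g\theta(\rho'(\gamma))g^{-1}=g_0\theta(\rho'(\gamma))g_0^{-1}$ shows that $g_0^{-1}g$ centralizes $\theta\circ\rho'$; since $\theta\in\Aut(\hat G)$ and $\tu{Cent}_{\hat G}(\rho')=Z(\hat G)$ by (H1), we get $z:=g_0^{-1}g\in Z(\hat G)$. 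The identity $g\theta(g)=1=g_0\theta(g_0)$ combined with centrality of $z$ forces $z\theta(z)=1$, i.e.\ $z$ is a $1$-cocycle for $\Gamma_{E/F}=\{1,c\}$ acting on $Z(\hat G)$ through $\theta$.

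Next I would analyze the effect of $\hat G$-conjugacy. If $\rho_1,\rho_2$ are two extensions with $\rho_2=h\rho_1 h^{-1}$, then restricting to $\Gamma_E$ gives $\rho'=h\rho'h^{-1}$, so by (H1) we must have $h\in Z(\hat G)$. A direct computation using centrality of $h$ gives $(h\rho_1 h^{-1})(c_y)=h g_1\theta(h)^{-1}c=g_1\cdot h\theta(h)^{-1}c$, so conjugation replaces the parameter $g_1$ by $g_1\cdot h\theta(h)^{-1}$. Thus $\hat G$-conjugation translates $g$ by a $1$-coboundary in $Z(\hat G)$.

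Combining the two previous steps, the map $\rho\mapsto g_0^{-1}g$ descends to a well-defined bijection between $\hat G$-conjugacy classes of extensions of $\rho'$ and the set $H^1(\Gamma_{E/F},Z(\hat G))$, where $c$ acts on $Z(\hat G)$ via $\theta$. To get the torsor statement, I would verify that the natural action is simply transitive: given an extension $\rho$ with parameter $g$ and a cocycle $z$, define $\rho\cdot[z]$ by keeping $\rho|_{\Gamma_E}$ and setting $(\rho\cdot[z])(c_y):=gz\cdot c$. One checks directly that $gz$ satisfies the two conditions of Lemma~\ref{lem:extend}(3) (condition (i) uses $z\theta(z)=1$ and centrality; condition (ii) uses that $z$ is central), that the class of $\rho\cdot[z]$ depends only on the cohomology class of $z$ (by the coboundary computation above), and that every extension arises from $\rho_0$ in this way (by the cocycle computation in the first step). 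Simple transitivity then follows since the stabilizer of any class is trivial modulo coboundaries.

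The only mildly delicate point—and the main thing to be careful about—is keeping track of signs/conventions so that the subgroup of coboundaries $\{h\theta(h)^{-1}:h\in Z(\hat G)\}$ coming from conjugation really matches the standard $1$-coboundaries for $H^1(\Gamma_{E/F},Z(\hat G))$. Everything else is formal once Lemma~\ref{lem:extend} is in hand; note in particular that we do not need (H2) here, since we are not asserting existence of an extension but only describing the set of extensions when it is nonempty.
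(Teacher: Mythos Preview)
Your proposal is correct and follows essentially the same approach as the paper: parametrize extensions by the element $g$ from Lemma~\ref{lem:extend}(3), compare two such parameters to obtain a $Z(\hat G)$-valued $1$-cocycle (using (H1) to land in the center), and identify $\hat G$-conjugacy with modification by coboundaries. The only cosmetic difference is that the paper sets $z=g_0 g^{-1}$ rather than your $z=g_0^{-1}g$, which is immaterial.
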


\begin{proof}
Fix an extension $\rho_0$ of $\rho'$, which exists by Lemma \ref{lem:extend}. If $\rho$ is another extension of $\rho'$, then set $z:=\rho_0(c_y)\rho(c_y)^{-1}$. Writing $\rho_0(c_y)=g_0 \rtimes c$ and $\rho(c_y)=g\rtimes c$, we have $z g= g_0$, and both $g_0,g$ satisfy the condition of Lemma \ref{lem:extend} (3). It follows that $z$ centralizes $\theta(\tu{im}(\rho'))$, hence $z\in Z(\hat G)$, and also that $z\theta(z)=1$. Thus $z$ defines a $Z(\hat G)$-valued 1-cocycle on $\Gamma_{E/F}$, and by reversing the process, such a cocycle determines an extension of $\rho'$.

Let $\rho_z$ be the extension given by $z\in Z(\hat G)$ such that $z\theta(z)=1$. It remains to show that $\rho_z\sim \rho_0$ if and only if $z=\theta(x)/x$ for some $x\in Z(\hat G)$. If $\rho_z\sim \rho_0$ then $\rho_z=\tu{Int}(x) \rho_0$ for some $x\in \hat G$. By (H1), $x\in Z(\hat G)$. Evaluating at $c_y$, we obtain $z^{-1}\rho_0(c_y)= x \rho_0(c_y) x^{-1}$. Therefore $z=\theta(x)/x$. The converse direction is shown similarly by arguing backward.
\end{proof}

We illustrate assumptions (H1), (H2), and the lemmas in the following examples.

\begin{example}\label{ex:SO2n-extend}
Consider $\hat G=\SO_{2n}$ ($n\ge 3$) with $\theta$ being the conjugation by $\vartheta^\circ\in\tu{O}_{2n}(\lql)-\SO_{2n}(\lql)$ as in \eqref{eq:Elementw}. Assume that $\tu{im}(\rho')$ contains a regular unipotent of $\SO_{2n}(\lql)$. In this case $Z(\hat G)=Z(\hat G)^\theta = \{\pm1\}$. Then (H2) is trivially false but (H1) is true. To see this, by assumption, $\tu{std}\circ \rho'$ is either irreducible or the direct sum of an irreducible $(2n-1)$-dimensional representation and a character. In the former case (H1) is clear by Schur's lemma. In the latter case, again by Schur's lemma, a centralizer of $\tu{im}(\rho')$ in $\SO_{2n}(\lql)$ is contained in $(\begin{smallmatrix} a \cdot 1_{2n-1} & 0 \\ 0 & b \end{smallmatrix})$ with $a,b\in \{\pm 1\}$ up to $\OO_{2n}(\lql)$-conjugacy. Since the determinant equals 1, we deduce that $a=b$, i.e., the centralizer belongs to $Z(\hat G)$.

We easily compute $Z^1(\Gamma_{E/F},Z(\hat G))=H^1(\Gamma_{E/F},Z(\hat G))\simeq \Z/2\Z$, the nontrivial element sending $c$ to $-1$. In fact if $\rho$ extends $\rho'$ in the setup of the preceding lemmas, the other extension is easily described as $\rho\otimes \chi_{E/F}$, where $\chi_{E/F}:\Gamma \twoheadrightarrow \Gamma_{E/F}\isom \{\pm 1\}$.
\end{example}

\begin{example}\label{ex:GSpin2n-extend}
The main case of interest for us is when
\begin{itemize}
\item $\hat G=\GSpin_{2n}$ ($n\ge 3$) ,
\item $\theta$ is the conjugation by an element of $\GPin_{2n}(\lql)-\GSpin_{2n}(\lql)$,
\item $\tu{im}(\rho')$ contains a regular unipotent.
\end{itemize}
Since $Z(\hat G)^\theta=\G_m$ (identified with invertible scalars in the Clifford algebra underlying $\hat G$ as a $\GSpin$ group; see \S\ref{sect:CliffordGroups}), assumption (H2) is satisfied. (The squaring map $\G_m\ra \G_m$ is clearly surjective on $\lql$-points.) To check (H1), $\tu{Cent}_{\hat G}(\tu{im}(\rho'))$ is contained in the preimage of $\tu{Cent}_{\SO_{2n}}(\tu{im}(\rho^{\prime,\circ}))$ via $\pr^\circ:\GSpin_{2n}\ra \SO_{2n}$. Since the latter centralizer is $\{\pm1\}\subset \SO_{2n}(\lql)$, we see that  $\tu{Cent}_{\hat G}(\tu{im}(\rho'))\subset \tu{pr}^{\prime,-1}(\{\pm1\}) = Z(\hat G)$.

In the coordinates for $Z(\hat G)$ of Lemma \ref{lem:ComputeCenter}, $Z^1(\Gamma_{E/F},Z(\hat G))=\{(s_0,s_1): s_1\in\{\pm1\},~s_1=s_0^2\}\simeq \mu_4$, of which coboundaries are $\{(\pm1,1)\}\simeq \mu_2$. (The first identification is given by taking the image of $c$.) Hence $H^1(\Gamma_{E/F},Z(\hat G))\simeq \Z/2\Z$. Let $\zeta=(\zeta_4,-1)\in Z(\hat G)$, where $\zeta_4$ is a primitive fourth root of unity, cf.~Lemma \ref{lem:spin-center2}. If $\rho$ is an extension of $\rho'$, then the other extension (up to $\hat G$-conjugacy) is described as $\rho\otimes \chi$, where $\chi:\Gamma \ra Z(\hat G)\rtimes \{1,c\}$ is inflated from $\Gamma_{E/F}\isom \{1,\zeta\rtimes c\}$.  Notice that $\pr^\circ\circ \chi=\chi_{E/F}$, for $\chi_{E/F}$ as in the preceding example.
\end{example}

\begin{example}
When studying Galois representations arising from automorphic representations on a unitary group $U_n$ in $n$ variables, two target groups appear in the literature: the group $\mathcal{G}_n$ in \cite[\S2.1]{CHT08} and the C-group of $U_n$ in \cite{BuzzardGee}; the two are isogenous as explained in \cite[\S8.3]{BuzzardGee}. The latter is the $L$-group of a $\G_m$-extension of $U_n$; it does not satisfy (H2). The former is not an $L$-group, but still a semi-direct product $(\GL_n\times \GL_1)\rtimes \Gamma_{E/F}$, with $c(g,\mu)=(\mu g^{-t},\mu)$ for an anti-diagonal matrix $\Phi_n\in \GL_n$. As such, the discussion in this appendix goes through for $\mathcal{G}_n$. An easy computation shows that $\mathcal{G}_n$ satisfies (H2) and that $H^1(\Gamma_{E/F},Z(\GL_n\times \GL_1))=\{1\}$ for the given Galois action. Thus provided that $\rho'$ satisfies (H1) (e.g., if $\rho'$ is irreducible), an extension of $\rho'$ exists if and only if $^{c_y}\rho'\simeq \theta\circ \rho'$, and the extension is unique up to conjugacy. Compare this with \cite[Lem.~2.1.4]{CHT08} (which allows a general coefficient field of characteristic~$0$).
\end{example}

\section{On local $A$-packets of even special orthogonal groups}

In this appendix we study the $A$-packets of the trivial and Steinberg representations of quasi-split forms of $\SO_{2n}$, with $n\ge 3$, often following the notation and formulation of \cite{ArthurBook}.

Let $F$ be a finite extension of $\Q_p$. Suppose that $E=F$ or that $E$ is a quadratic extension of $F$. Let $\chi_{E/F}:F^\times \ra \{\pm1\}$ denote the quadratic character associated with $E/F$ via class field theory. Let $G := \SO^{E/F}_{2n}$ denote the quasi-split form of the split group $\SO_{2n}$ over $F$ twisted by $\chi_{E/F}$. Write $\tilde{\Out}_{2n}(G):=\OO_{2n}(\C)/\SO_{2n}(\C)$ for the outer automorphism group on $\SO_{2n}(\C)$. Denote by $\mathbf 1$ and $\St$ the trivial and Steinberg representations of $G(F)$. We aim to identify local $A$-packets containing each of $\mathbf 1$ and $\St$.

Let $\cL_F:=W_F\times \SU(2)$ denote the local Langlands group. Let $|\cdot|:W_F\ra \R^\times_{>0}$ denote the absolute value character sending a geometric Frobenius element to the inverse of the residue cardinality of $F$. By abuse, keep writing $|\cdot|$ for its pullback to $\cL_F$ via projection. 

Denote by $\Psi^+(G)$ the set of isomorphism classes of extended $A$-parameters, that is, continuous morphisms $\psi:\cL_{F}\times \SU(2)\ra {}^L G$ such that $\psi|_{\cL_F}$ is an $L$-parameter. (Two $A$-parameters are considered isomorphic if they are in the same $\hat G$-orbit.) An extended $A$-parameter $\psi\in \Psi^+(G)$ gives rise to an $L$-parameter: 
$$
\phi_{\psi}:\cL_F \ra {}^L G,\qquad \gamma\mapsto \psi(\gamma,\diag(|\gamma|^{1/2},|\gamma|^{-1/2})).
$$

Write $\Psi(G)$ for the subset of $\Psi^+(G)$ consisting of $\psi\in \Psi^+(G)$ such that the image of $\psi(\cL_F)$ in $\SO_{2n}(\C)\rtimes \Gamma_{E/F}$ is bounded. (Such a property is $\hat G$-invariant.) The set of $\tilde{\Out}_{2n}(G)$-orbits in $\Psi^+(G)$ (resp.~$\Psi(G)$) is denoted by $\tilde\Psi^+(G)$ (resp.~$\tilde\Psi(G)$). The group $\cL_F \times \SU(2)$ admits the involution permuting the two $\SU(2)$-components (acting as the identity on $W_F$). This involution induces an involution 
$$
\psi\mapsto \hat \psi \quad\mbox{on each of}\quad \tilde\Psi^+(G)~\mbox{and}~\tilde\Psi(G).
$$

We say $\psi\in \tilde\Psi^+(G)$ is square-integrable if $g\psi g^{-1}=\psi$ for at most finitely many elements $g\in \hat G$. Then $\psi$ lies in $\tilde\Psi(G)$. To see this, let $w\in W_F$ be a lift of (geometric) Frobenius. Then $\psi(w)^m$ centralizes the image of $\psi$ for some $m\in \Z_{\ge1}$ as in \cite[proof of Lem.~8.4.3]{DeligneEquationsFonctionelles}. It follows that, replacing $m$ with a suitable multiple, $\psi(w)^m$ has trivial image in $\SO_{2n}(\C)\rtimes \Gamma_{E/F}$.  Write $I_F\subset W_F$ for the inertia subgroup. Since $I_F\times \SU(2)\times \SU(2)\subset \cL_F\times \SU(2)$ has already bounded image in $\SO_{2n}(\C)\rtimes \Gamma_{E/F}$ under $\psi$, we see that $\psi\in \Psi(G)$. Denote by $\Psi_2(G)$ the subset of $\Psi(G)$ consisting of square-integrable members.

Define $\psi_{\tu{triv}} \colon \cL_F \times \SU(2) \to {}^L G$ as follows. On $\cL_F$ it is the composite map $\cL_F\twoheadrightarrow W_F \twoheadrightarrow \Gamma_{E/F}\subset {}^L G$ through the natural projections. On $\SU(2)$ (outside $\cL_F$), $\psi_{\tu{triv}}$ is a principal embedding $i_{\tu{pri}} \colon \SU(2) \to \widehat G$ that is $\theta^\circ$-invariant, i.e., $i_{\tu{pri}}$ commutes with the $L$-action of $\Gamma_{E/F}$ on $\hat G$. (Such an $i_{\tu{pri}}$ into $\widehat G$ can be realized as the $\SU(2)$-representation $\Sym^{2n-2}\oplus \mathbf 1$ into $\GL_{2n-1}\times \GL_1$, where the latter is identified with the centralizer of the element $\vartheta^\circ\in \GL_{2n}$ from \S\ref{sect:CliffordGroups}.
 Write $\psi_{\St}:=\hat \psi_{\tu{triv}}$. Then $\psi_{\tu{triv}},\psi_{\St}\in \Psi(G)$ and they are $\tilde{\Out}_{2n}(G)$-stable.

To every $\psi\in \tilde\Psi(G)$, Arthur \cite[Thm.~1.5.1]{ArthurBook} assigned an $A$-packet $\tilde\Pi(\psi)$, which is a certain finite multi-set consisting of $\tilde{\Out}_{2n}(G)$-orbits of irreducible unitary representations of $G(F)$. Below \emph{loc.~cit.}~he also defines $\tilde\Pi(\psi)$ for $\psi\in \tilde\Psi^+(G)$, consisting of $\tilde{\Out}_{2n}(G)$-orbits of parabolically induced representations of $G(F)$ (which need not be irreducible or unitary).

By a globalization $(\dot E/\dot F,\qq,\dot G)$ of $(E/F,G)$ as above, we mean an extension of number fields $\dot E/\dot F$, a finite place $\qq$, and a quasi-split form $\dot G$ of the split $\SO_{2n}$ over $\dot F$ such that $\dot E_\qq\simeq E$, $\dot F_\qq  \simeq F$, and $\dot G_\qq  \simeq G$. It is an elementary fact that such a globalization always exists. Recall that a (formal) global parameter $\dot \psi\in \tilde\Psi(\dot G)$ gives rise to a parameter $\dot \psi_v\in \tilde \Psi^+(\dot G_{\dot F_v})$ and a packet $\tilde \Pi(\dot \psi_v)$ at each place $v$ of $\dot F$.

\begin{proposition}\label{prop:A-parameter-1-St} Let $\psi\in \tilde \Psi^+(G)$. The following are true.
\begin{enumerate}
\item $\tilde\Pi(\psi_{\tu{triv}})=\{\mathbf 1\}$ and $\tilde\Pi(\psi_{\St})=\{\St\}$.
\item Assume $\psi\in \tilde \Psi(G)$.
If $\mathbf 1$ (resp.~$\St$) is a member of $\tilde\Pi(\psi)$ then $\psi=\psi_{\tu{triv}}$ (resp.~$\psi=\psi_{\St}$).
\item Assume that $\psi=\dot \psi_{\qq}\in \tilde \Psi^+(G)$ for global data $(\dot E/\dot F,\qq,\dot G)$ and $\dot \psi$ as above.
If $\mathbf 1$ (resp.~$\St$) is a subquotient of a member of $\tilde\Pi(\psi)$ then $\psi=\psi_{\tu{triv}}$ (resp.~$\psi=\psi_{\St}$).
\end{enumerate}
\end{proposition}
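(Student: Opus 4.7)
The plan is to establish (1) by combining a globalization argument with a component-group computation, then deduce (2) from Arthur's twisted endoscopic character identity (matched to $\GL_{2n}$), and reduce (3) to (2) via Langlands classification of irreducible subquotients.

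For the $\mathbf 1$-half of (1), I would globalize: choose $(\dot E/\dot F,\qq,\dot G)$ with $\dot G_\qq \simeq G$ and consider the global trivial representation on $\dot G(\A_{\dot F})$. It is automorphic of multiplicity one, and its global Arthur parameter is identified through the functorial lift to $\GL_{2n}(\A_{\dot F})$, which equals the isobaric sum $\mathbf{1}_{\GL_{2n-1}} \boxplus \chi_{\dot E/\dot F}$ (the $\GL_{2n-1}$-factor being the Speh representation attached to the trivial character and the principal $\SL_2$). This matches the global analogue of $\psi_{\tu{triv}}$, and localizing at $\qq$ yields $\mathbf 1 \in \tilde\Pi(\psi_{\tu{triv}})$ by \cite[Thm.~1.5.2]{ArthurBook}. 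For singleton-ness, the centralizer of a principal $\SL_2$ in $\hat G = \SO_{2n}(\C)$ is the center $Z(\hat G)$, so $\tilde{\mathcal S}_{\psi_{\tu{triv}}}$ collapses to the trivial group after the quotient by $Z(\hat G)^\Gamma$ and the $\tilde{\Out}_{2n}$-action. The Steinberg statement then follows either by a parallel globalization with a cuspidal $\dot\pi$ having Steinberg component at $\qq$, or by applying Aubert--Zelevinsky duality, whose compatibility with $\psi \leftrightarrow \hat\psi$ is standard for $\psi \in \tilde\Psi(G)$.

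For (2), assume $\mathbf 1 \in \tilde\Pi(\psi)$ with $\psi \in \tilde\Psi(G)$. Arthur's twisted character identity forces $\std \circ \psi$ to equal the $\GL_{2n}$-A-parameter of the functorial lift of $\mathbf 1$, namely $\mathbf{1}_{\GL_{2n-1}} \boxplus \chi_{E/F}$. Matching this against $\std \circ \psi_{\tu{triv}}$, combined with the uniqueness up to conjugation of principal $\SL_2$-embeddings into $\hat G$ and the fact that parameters in $\tilde\Psi(G)$ are determined by their standard lifts (up to the $\tilde{\Out}_{2n}$-ambiguity inherent in $\tilde\Psi$), forces $\psi = \psi_{\tu{triv}}$. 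The Steinberg analogue uses the lift $\St_{2n-1} \boxplus \chi_{E/F}$ supplied by Proposition~\ref{prop:SO-to-GL}. For (3), assume $\psi = \dot\psi_\qq \in \tilde\Psi^+(G)$ with $\mathbf 1$ (resp.~$\St$) a subquotient of a member of $\tilde\Pi(\psi)$. Members of $\tilde\Pi(\psi)$ are parabolic inductions $\Ind_P^G(\sigma_{M,\lambda})$ from a bounded $\psi_M$ on a Levi $M$ with a twist $\lambda$ in the open chamber. The Langlands data of $\mathbf 1$ (resp.~$\St$) as an irreducible subquotient pin $(P,\sigma_M,\lambda)$ down: for $\mathbf 1$ this gives the Borel $P = B$, $\sigma_M = \mathbf{1}_T$, and $\lambda$ equal to the half-sum of positive roots, recovering $\psi = \psi_{\tu{triv}}$; for $\St$, essential temperedness forces $\lambda = 0$, so $\psi \in \tilde\Psi(G)$ and Part~(2) applies.

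The main obstacle will be the precise computation of the $\GL_{2n}$-functorial lift of the trivial representation of $\SO_{2n}^{E/F}(F)$ in the non-split case $E \neq F$, and controlling the outer automorphism ambiguity to pin $\psi$ down on the nose rather than merely up to $\tilde{\Out}_{2n}$-orbits. A secondary technical point is the explicit analysis of the Jacquet modules of $\mathbf 1$ and $\St$ along the standard parabolics, needed to exclude degenerate Levi subgroups in Part~(3); this should be routine from the Langlands/Bernstein classification for quasi-split even orthogonal groups.
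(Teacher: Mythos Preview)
Your Part~(1) matches the paper's approach. Parts~(2) and~(3) have genuine gaps.

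For Part~(2): the twisted character identity relates the \emph{entire} packet $\tilde\Pi(\psi)$, as a signed linear combination of characters, to a twisted trace on $\GL_{2n}$ determined by $\std\circ\psi$. It does not attach a $\GL_{2n}$-parameter to an individual member, and A-packets for distinct $\psi$ can overlap, so you cannot read off $\std\circ\psi$ from $\mathbf 1\in\tilde\Pi(\psi)$ alone. The paper proceeds quite differently: it globalizes $\psi$ to $\dot\psi$ with $\dot\psi_\qq=\psi$, builds an automorphic $\dot\pi$ with $\dot\pi_\qq=\mathbf 1$, invokes \emph{strong approximation} to force $\dim\dot\pi=1$, and then uses the Jacquet--Shalika bound at almost all places to show the Arthur $\SL_2$-part of $\dot\psi$ must be principal.

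For Part~(3): your claim that ``essential temperedness forces $\lambda=0$'' for $\St$ is false, since $\St$ occurs as a subquotient of $\Ind^G_B(\delta_B^{1/2})$, which arises in $\tilde\Pi(\psi)$ for non-bounded $\psi$. The paper handles $\St$ by Aubert involution (compatible with endoscopy and with parabolic induction) to reduce to $\mathbf 1$. For $\mathbf 1$, your idea that the Langlands data ``pin $(P,\sigma_M,\lambda)$ down'' overlooks that $\sigma_M$ lies in an A-packet of a bounded parameter and is unitary but not tempered in general; writing $\sigma_M$ as its own Langlands quotient introduces a further twist $\chi'$, and one only gets $\delta_B^{1/2}=\chi\chi'$ with $\chi'$ unitary on the center. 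The decisive ingredient you are missing is that the \emph{global} origin $\psi=\dot\psi_\qq$, via Jacquet--Shalika, forces the exponents of $\chi$ to satisfy $0<a_i<1/2$; this strict inequality is what contradicts $\delta_B^{1/2}=\chi\chi'$ on the $\GL_{n_1}$-center when $M\neq G$. Without it, your contradiction does not close.
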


\begin{remark}
We use (3) in the main text. Part (3) would be subsumed by (2) if the generalized Ramanujan conjecture for general linear groups was known, cf.~proof of (3) below.
\end{remark}

\begin{proof}
(1)  
According to \cite[Lem.~7.1.1]{ArthurBook} (and the discussion following it), the involution $\psi\mapsto \hat \psi$ changes members of $A$-packets by the Aubert involution, which carries $\mathbf 1$ to $\St$ and vice versa. So it suffices to consider the case of $\psi_{\tu{triv}}$. Choose a globalization $(\dot E/\dot F,\qq,\dot G)$ of $(E/F,G)$. Arthur's global theorem \cite[Thm.~1.5.2]{ArthurBook} assigns a global parameter $\dot \psi$ whose packet contains the trivial representation $\mathbf 1_{\dot G}$ of $\dot G(\A_F)$. Considering the Satake parameters at almost all places, we identify
\begin{equation}\label{eq:dot-psi}
\dot \psi=(\mathbf{1}\boxtimes \nu_{2n-1})\boxplus (\chi_{E/F}\boxtimes \nu_{1})
\end{equation}
in Arthur's notation \cite[\S1.4]{ArthurBook}, where $\nu_i$ denotes the $i$-dimensional irreducible representation of $\SU(2)$. From this, we see that $\dot\psi_\qq=\psi_{\tu{triv}}$.

In our case, Arthur's global packet $\tilde\Pi(\dot\psi)=\otimes'_v \tilde\Pi(\dot\psi_v)$ consists of $\dot G(\A_{\dot F})$-representations $\dot\pi=\otimes'_v \dot\pi_v$ with $\dot\pi_v\in\tilde\Pi(\dot\psi_v)$ at each place $v$. Since the groups $\cS_{\dot \psi}$ and $\cS_{\dot \psi_v}$ are trivial, every member of $\tilde\Pi(\dot\psi)$ is automorphic by \cite[Thm.~1.5.2]{ArthurBook}. Moreover, each multi-set $\tilde\Pi(\dot\psi_v)$ contains $\mathbf 1$ with multiplicity one by \cite[Prop.~7.4.1]{ArthurBook} since $\mathbf 1$ is easily seen to be the unique member of the local $L$-packet for the unramified $L$-parameter $\phi_{\dot \psi_v}$.

Now let $\pi_{\qq}\in \tilde\Pi(\dot \psi_{\qq})$. Then $\dot \pi=\pi_{\qq}\otimes(\otimes'_{v\neq \qq} \mathbf 1)\in \tilde\Pi(\dot\psi)$, so it appears in the $L^2$-discrete automorphic spectrum. This implies that $\pi_{\qq}=\mathbf 1$ since $G(F)G(\A_F^{\qq})$ is dense in $G(\A_F)$ by weak approximation. Therefore $\tilde\Pi(\dot \psi_{\qq})=\{\mathbf 1\}$ (with multiplicity one) as desired.

\smallskip

(2) We deduce this from Moeglin--Waldspurger's description of $A$-packets for $\psi\in \tilde \Psi(G)$ (see \cite{MW2006,Moeglin2009} and the summary in \cite[\S5]{Xu-Apackets}), which coincides with Arthur's $A$-packets thanks to the main theorem of \cite{Xu-Apackets}.
If we write $W_F\times \Delta(\SU(2))$ for the obvious subgroup of $W_F\times \SU(2)\times \SU(2)$ via the diagonal embedding $\Delta$ on $\SU(2)$, then the first sentence in \cite[Rem.~7.6]{Moeglin2009} tells us that $\psi$ as in the statement of (2) has the property that $\psi|_{W_F\times \Delta(\SU(2))}$ is isomorphic to $\psi_{\tu{triv}}$, resp.~$\psi_{\St}$. This implies that $\psi\simeq \psi_{\tu{triv}}$, resp.~$\psi\simeq \psi_{\St}$, by a simple exercise with representations of $\SU(2)$. (In fact, we could start from scratch and deduce part (1) from Moeglin--Waldspurger as well, before part (2).)

Since no proof is provided for \cite[Rem.~7.6]{Moeglin2009} (the proof is given only when $\psi|_{W_F\times \Delta(\SU(2))}$ is a discrete parameter; see \cite[Prop.~4.6]{Moeglin2009}), we explain another way to deduce (2) by analyzing supercuspidal support. Viewing $\psi$ as a parameter for $\GL_{2n}$, decompose 
\begin{equation}\label{eq:psi-appendix}
\psi=\oplus_{i=1}^r l_i(\rho_i\otimes \nu_{a_i}\otimes \nu_{b_i}),\qquad l_i,a_i,b_i\in \Z_{\ge 1},
\end{equation}
where $\rho_i$ are irreducible unitary representations of $W_F$,
as in \cite[p.892]{Xu-Apackets}; if $i\neq i'$ then $\rho_i\ncong \rho_{i'}$ or $a_i\neq a_{i'}$ or $b_i\neq b_{i'}$. When $\psi$ is trivial on the $\SU(2)$ outside $\cL_F$, the $A$-packet for $\psi$ coincides with the $L$-packet. For arbitrary $\psi$, Moeglin--Waldspurger construct $A$-packets in increasing generality. We do not recall the construction, but only extract enough on the supercuspidal support $(M,\sigma)$ of $\pi\in \tilde\Pi(\psi)$ from \cite[\S5, pp.907--909]{Xu-Apackets}. In a nutshell, $(M,\sigma)$ has the following shape depending on $\psi$:
\begin{itemize}
\item $M=\SO^{E/F}_{2n_-}\times \prod_{j\in J} \GL_{n_j}$ is a Levi subgroup of $G$,
\item $\sigma=\sigma_-\otimes (\otimes_{j\in J}\sigma_j)$ is an irreducible supercuspidal representation of $M(F)$, and
\item $I_-\subset \{1,...,r\}$ is a possibly empty subset such that $\rho_i$ is self-dual of orthogonal type for each $i\in I_-$,
\end{itemize}
such that $\sigma_-$ belongs to the $L$-packet of $\oplus_{i\in I_-} (\rho_i\otimes\nu_1\otimes \nu_1)$, which defines a discrete $L$-parameter $\psi_-$ for $\SO^{E/F}_{2n_-}$, and for each $j\in J$, we have $1\le i \le r$ depending on $j$ such that
\begin{equation}\label{eq:sigmaj-appendix}
\sigma_j\simeq \tu{LL}(\rho_{i} \otimes|\cdot|^{x_j})\quad \mbox{for some}~x_j\in \R,
\end{equation}
where $\tu{LL}$ denotes the local Langlands correspondence for general linear groups. 
When \eqref{eq:sigmaj-appendix} holds, we will say that $\rho_i$ contributes to $\sigma_j$. 
Conversely, for every $1\le i\le r$, either or both of the following are true: $\rho_i$ contributes to $\sigma_j$ for some $j\in J$, or $\rho_i$ is isomorphic to $\rho_{i'}$ for some $i'\in I_-$. (This condition is obviously satisfied for $i\in I_-$ but possibly also for some $i\notin I_-$ since $\rho_i$'s need not be mutually non-isomorphic in \eqref{eq:psi-appendix}.)

We apply the above to the case where $\pi=\textbf{1}$ or $\pi=\St$. In either case, the supercuspidal support is $(T,\delta_B^{-1/2})$, where $B$ is a Borel subgroup of $G$ containing a maximal torus $T$. Hence we can identify $(M,\sigma)=(T,\delta_B^{-1/2})$, possibly after applying a Weyl group action, so in the notation above, 
\begin{itemize}
\item  $n_j=1$ for all $j\in J$,
\item if $E=F$ then $n_-=0$; otherwise $n_-=2$,
\item $\sigma_-=\textbf{1}$ and each $\sigma_j$ is a half-integral or integral power of the modulus character $|\cdot|$.
\end{itemize}
The last fact tells us that for each $1\le i\le r$, if $\rho_i$ contributes to $\sigma_j$ for some $j$ then $\rho_i=\textbf{1}$ since $\rho_i$ is unitary. On the other hand, if $E\neq F$ (thus $n_-=2$), $\psi_-$ is the parameter of the torus $SO^{E/F}_2$ containing $\textbf{1}$ as a summand, so $\psi_-=\textbf{1}\oplus \chi_{E/F}$. (If $E=F$ then $\psi_-$ is non-existent as $n_-=0$.) The argument so far shows that $\rho_i=\textbf{1}$ for every $i$, with exactly one exception if $E\neq F$, in which case there is one $i_0$ such that $\rho_{i_0}=\chi_{E/F}$. Since $\chi_{E/F}$ does not contribute to $\sigma_j$'s, it also follows from \cite[\S5, pp.907--909]{Xu-Apackets} that $l_{i_0}=a_{i_0}=b_{i_0}=1$.

It remains to check that there is an $i$ with $\rho_i=\textbf{1}$ such that $a_i=2n-1$ and $b_i=1$ or $a_i=1$ and $b_i=2n-1$. (Then it follows that $r=2$ and the complementary 1-dimensional factor in \eqref{eq:psi-appendix} is $\textbf{1}$ if $E=F$ and $\chi_{E/F}$ if $E\neq F$.)  Again we can read this off from \emph{loc.~cit.}, keeping in mind that $\mathbf{1}\otimes \nu_{a_i}\otimes \nu_{b_i}$ corresponds to $(\rho,A,B,\zeta)$ in the notation there with $\rho=\mathbf{1}$, $A=\tfrac{a+b}{2}-1$, $B=\tfrac{|a-b|}{2}$, and $\zeta=\tu{sgn}(a-b)$ if $a\neq b$ and $\zeta$ is arbitrary if $a=b$ (see \cite[p.901]{Xu-Apackets}). It follows from the successive procedures of \cite[\S5, pp.907--909]{Xu-Apackets} to embed $\pi$ in a parabolic induction  that each factor $\mathbf{1}\otimes \nu_{a_i}\otimes \nu_{b_i}$  contributes 
$$|\cdot|^{x_j} \quad\mbox{with}\quad x_j\in \tfrac12 \Z \quad\mbox{such that}\quad |x_j|\in \{\tfrac{a_i+b_i}{2} - 1, \tfrac{a_i+b_i}{2} - 2, ..., \}$$
 to the supercuspidal support of $\pi$ as $\sigma_j$ for some $j$'s. (Namely each $i$ may contribute several powers of the modulus character of the above form to the supercuspidal support.) On the other hand, calculation of $\delta_B^{-1/2}$ on $T$ shows that $\sigma_j=|\cdot|^{n-1}$ must appear in the supercuspidal support $(T,\delta_B^{-1/2})$ for some $j$. Hence
 $\tfrac{a_i+b_i}{2}-1-(n-1) = \tfrac{a_i+b_i}{2}-n\in \Z_{\ge 0}$ for some $i$.
 This is only possible when $(a_i,b_i)=(2n-1,1)$ or $(1,2n-1)$ due to the obvious constraint $a_i b_i\le 2n$ and $n\ge 3$. The proof of (2) is finished.

\smallskip

(3) We claim that $\psi\in \tilde \Psi^+(G)$ as in the statement belongs to $\tilde\Psi(G)$. To see this, we review the construction of the packet $\Pi(G)$ from \cite[1.5]{ArthurBook}, made explicit in \cite[App.~A]{BinXuLPackets}.

Since $\psi$ comes from a global parameter, $\psi|_{\cL_F}$ is contained by what local components of cuspidal automorphic representations of general linear groups can be. Following the same observations as in \cite[App.~A]{BinXuLPackets} we can express $\psi$ concretely as follows:
$$ 
\psi = \psi_{G_-} \oplus \bigoplus_{i=1}^m ( |\cdot|^{a_i} \psi_i \oplus  |\cdot|^{-a_i} \psi_i^\vee),\qquad 0<a_m<\cdots < a_1<1/2.
$$
where $\psi_{G_-}\in \tilde\Psi(\SO^{E/F}_{2n_-})$ and $\psi_i\in \Psi(\GL_{n_i})$ such that $n_-+\sum_{i=1}^m n_i = n$, and if we take 
\begin{equation}\label{eq:M-decomposition}
M=\SO^{E/F}_{2n_-}\times \prod_{i=1}^r \GL_{n_i},
\end{equation}
we have
$$
\psi_M := \psi_{G_-} \times \textstyle (\prod_{i=1}^r  \psi_i ) \in \tilde\Psi(M).
$$
Actually a weaker inequality $a_m\le \cdots\le a_1$ holds for the exponents in \cite{BinXuLPackets}. This is because he wants $\psi_i$ to be simple parameters. We only require $\psi_i$ to be bounded parameters, so the simple parameters with the same exponent will go into a single $\psi_i$ in our case.

Define a character $\chi:M(F)\ra \C^\times$ to be trivial on $\SO^{E/F}_{2n_-}(F)$ and $|\det|^{a_i}$ on $\GL_{n_i}(F)$. We can also view $\chi$ as a central $L$-morphism $\cL_F\ra {}^L M$. Then $\psi$ is the image of $\psi_M\otimes \chi$ under the natural map $\tilde\Psi^+(M)\ra \tilde\Psi^+(G)$. (The image of $\psi_M\otimes \chi$ is still $\psi$ if $|\det|^{a_i}$ is changed to $|\det|^{-a_i}$ for some $i$'s.) Write $\mathfrak a_M^*:=\Hom_F(M,\G_m)\otimes_{\Z} \R$. The character $\chi$ corresponds to $\nu\in \mathfrak a_M^*$, as in \cite[p.45]{ArthurBook}. We choose a parabolic subgroup $P$ with Levi factor $M$ such that $\nu$ is in the open chamber determined by $P$. Let $B$ be a Borel subgroup of $G$ contained in $P$, and denote by $T$ a maximal torus contained in $B$. The packet $\Pi_{\psi}$ consists of ($\Out_{2n}(G)$-orbits of) representations $\Ind^G_P(\sigma)$ for $\sigma$ in the $A$-packet $\Pi_{\psi_M}$ for $M$. Note that $\sigma$ is unitary since $\psi_M\in \tilde\Psi(M)$ (rather than $\psi_M\in \tilde\Psi^+(M)$).

Now suppose that $\mathbf 1$ is a subquotient of a member of $\Pi_{\psi}$, i.e., for some $\sigma\in \Pi_{\psi_M}$,
\begin{equation}\label{eq:1-in-Ind}
\mathbf 1\in \tu{JH}(\Ind^G_P(\sigma\otimes \chi)).
\end{equation}
To show that $\psi\in\tilde \Psi(G)$, let us verify the equivalent statement that $M=G$. 
As a representation of $M(F)$, we write $\sigma$ as the Langlands quotient $J^M_{P',\sigma',\chi'}$ in the convention of \cite[XI.2.9]{BorelWallach}, where $P'$ is a parabolic subgroup of $M$ containing $B\cap M$ with a Levi factor $M'$, $\sigma'$ is a tempered representation of $M'(F)$, and $\chi'$ lies in the open chamber of $a^*_{M'}$ given by $P'$. Then the Langlands quotient of $\Ind^G_P(\sigma\otimes \chi)$ is $J^G_{P'',\sigma',\chi\chi'}$, where $P''\supset B$ is a parabolic subgroup of $G$ with Levi factor $M'$. We observe that $\chi'$ is unitary on $Z(M')$ since $\sigma$ has unitary central character.

On the other hand, considering the supercuspidal support in \eqref{eq:1-in-Ind}, we see that every irreducible constituent of $\Ind^G_P(\sigma\otimes \chi)$ appears in $\Ind^G_B(\delta_{B}^{1/2})$. We know $\mathbf 1 = J^G_{B,\mathbf 1,\delta_B^{1/2}}$. On the other hand, the Langlands quotient in a parabolic induction is the unique extremal constituent in the sense of \cite[Lem.~2.13]{BorelWallach}. Applying this fact to $\Ind^G_P(\sigma\otimes \chi)$ and $\Ind^G_B(\delta_{B}^{1/2})$, we have 
$$
J_{B,\mathbf 1,\delta_B^{1/2}}=J_{P'',\sigma',\chi \chi'},
$$ 
which in turn implies that $B=P''$, $\mathbf 1=\sigma'$, and
$$
\delta_B^{1/2}=\chi\chi'
$$ 
by \emph{loc.~cit.} To prove $M=G$ by contradiction, suppose $M\neq G$. Then $r\ge 1$, and $M$ has $\GL_{n_1}$ as a direct factor. Consider elements of the form $m=(1,m_1,1,...,1)\in Z(M)$ according to \eqref{eq:M-decomposition}, where $m_1$ is a scalar matrix in $ \GL_{n_1}$. Then $|\chi(m)\chi'(m)|=|\chi(m)|=|m_1|^{a_1n_1}$, whereas there exists $d\in \Z$ (which depends on how $a_1$ compares to $a_2,...,a_r$ in terms of size) such that $\delta_B^{1/2}(m)=|m_1|^{dn_1/2}$. This is a contradiction since $0<a_1<1/2$.

If $\mathbf 1$ is a subquotient of a member of $\Pi_{\psi}$, we have shown that $M=G$, namely $\psi\in \tilde\Psi(G)$. Then every member of $\Pi_{\psi}$ is irreducible by Arthur's main local theorem \cite[Thm.~1.5.1]{ArthurBook}, so we conclude that $\psi=\psi_{\tu{triv}}$ by (2).

The remaining case is when $\St$ is a subquotient of a member of $\Pi_{\psi}$. Then
$$
\St\in \tu{JH}(\Ind^G_P(\sigma\otimes \chi)),\qquad  \sigma \in \Pi_{\psi_M}.
$$
Let us apply the Aubert involution, denoted by the hat symbol on both parameters and representations. From the compatibility of the involution with endoscopy and parabolic induction  by \cite[App.~A]{Xu-Apackets} and \cite[Thm.~1.7]{Aubert}, we deduce that 
$$
\mathbf 1\in \tu{JH}(\Ind^G_P(\hat\sigma\otimes \chi)),\qquad  \hat\sigma \in \Pi_{\hat \psi_M}.
$$
Since $\hat \psi_M\in \Psi(M)$, the argument in the preceding case carries over with $\hat \psi_M$ and $\hat \sigma$ in place of $\psi_M$ and $\sigma$. Thereby we conclude again that $M=G$. Then $\psi=\psi_{\tu{St}}$ by (2).
\end{proof}

\bibliographystyle{amsalpha}

\providecommand{\bysame}{\leavevmode\hbox to3em{\hrulefill}\thinspace}
\providecommand{\MR}{\relax\ifhmode\unskip\space\fi MR }
\providecommand{\MRhref}[2]{\href{http://www.ams.org/mathscinet-getitem?mr=#1}{#2} }
\providecommand{\href}[2]{#2}

\end{document}